\newtheorem{thm}{Theorem}[section]
\newtheorem{prop}[thm]{Proposition}
\newtheorem{lemma}[thm]{Lemma}
\newtheorem{corollary}[thm]{Corollary}
\newtheorem{defi}[thm]{Definition}
\theoremstyle{remark}
\newtheorem{remark}[thm]{Remark}
\newtheoremstyle{dotless}{}{}{\itshape}{}{\bfseries}{}{ }{}
\theoremstyle{dotless}
\newtheorem*{mmtheorem}{Main Theorem}
\def\cP{{\mathcal P}}
\def\cA{{\mathcal A}}
\def\gs{\sigma}
\def\Cartan{\theta}
\def\gS{{\Sigma}}
\def\faq{{\mathfrak a}_{\rm q}}
\def\fah{{\mathfrak a}_{\rm h}}
\def\faqd{{\mathfrak a}_{\rm q}^*}
\def\fahd{{\mathfrak a}_{\rm h}^*}
\def\ih{{\rm h}}
\def\iq{{\rm q}}
\def\ga{\alpha}
\def\gb{\beta}
\def\Aq{A_\iq}
\def\fa{{\mathfrak a}}
\def\faq{\fa_{\rm q}}
\def\fg{{\mathfrak g}}
\def\fq{{\mathfrak q}}
\def\fh{{\mathfrak h}}
\def\fH{{\mathfrak H}}
\def\fp{\mathfrak{p}}
\def\fk{\mathfrak{k}}
\def\ft{\mathfrak{t}}
\def\HRPq{\mathfrak{H}^R_{P,{\rm q}}}
\def\after{\circ}
\def\faqreg{\fa_\iq^{\rm reg}}
\def\Aqreg{A_{\iq}^{\rm reg}}
\def\itema{\item[{\rm (a)}]}
\def\itemb{\item[{\rm (b)}]}
\def\itemc{\item[{\rm (c)}]}
\def\itemd{\item[{\rm (d)}]}
\def\Phext{Q_{\rm h}}%Q
\def\Qh{Q_{\rm h}}
\def\fl{{\mathfrak l}}
\def\fn{{\mathfrak n}}
\def\HP{\mathfrak{H}_P}
\def\prq{\textrm{pr}_{{\rm q}}}
\def\HPq{\mathfrak{H}_{P,{\rm q}}}
\def\R{\mathbb{R}}
\def\fm{{\mathfrak m}}
\def\starfaR{{}^*\fa_R}
\def\pr{{\rm pr}}
\def\Ad{{\rm Ad}}
\def\C{\mathbb{C}}
\def\d{{\rm d}}
\def\nb{{\mathfrak b}}
\newcommand{\inp}[2]{\langle #1,#2\rangle}
\def\dotvar{\cdot}
\def\Ad{{\rm Ad}}
\def\ad{{\rm ad}}
\def\cC{\mathcal{C}}
\def\WKH{W_{K\cap H}}
\def\rmH{{\rm H}}
\def\Aut{{\rm Aut}}
\def\cO{{\mathcal O}}
\def\fad{\fa^*}
\def\fv{{\mathfrak v}}
\def\WKaq{W(\faq)}
\def\WKa{W(\fa)}
\def\WKHXaq{W_{K\cap H_X}}
\def\conv{{\rm conv}}
\def\gl{\lambda}
\def\rk{{\rm rk}}
\def\gL{\Lambda}
\def\fc{{\mathfrak c}}
\def\bpP{{\bp \! P}}
\def\bpG{{\bp G}}
\def\bpA{{\bp \! A}}
\def\reg{{\rm reg}}
\def\bpAqreg{{\bp \! A_\iq^\reg}}
\def\codim{{\rm codim}}
\def\conv{{\rm conv}}
 \def\dotvar{\, \cdot\,}
 \def\inp#1#2{\langle #1 , #2 \rangle}
\def\setmid{\,:\;\;}
\def\cM{{\mathcal M}}
 \def\barHPq{{\overline{\mathfrak H}}_{P,{\rm q}}}
 \def\bp{{}^\backprime}
 \def\WKHaq{W_{K\cap H}}%(\faq)}
 \def\half{{\textstyle\frac12}}
 \def\End{{\rm End}}
 \def\GammaP{\Gamma(P)}%alternative:{\Gamma_{\faq}(\gS(P)_-)}
 \def\GammaPX{\Gamma(P_X)}
 \def\cN{{\mathcal N}}
\begin{document}
\title{Convexity theorems for semisimple symmetric spaces}
\author{Dana B\u alibanu and Erik van den Ban}
\date{March 9, 2015}
%{version 9/3--2015}
\maketitle
\begin{abstract}
We prove a convexity theorem for semisimple symmetric spaces $G/H$ which generalizes
an earlier theorem of the second named author to a setting without restrictions
on the minimal parabolic subgroup involved. The new more general result specializes to Kostant's 
non-linear convexity theorem for a real semisimple Lie group $G$ in two ways, firstly by 
taking $H$ maximal compact and secondly 
by viewing $G$ as a symmetric space for $G \times G.$
\end{abstract}
\section{Introduction}\label{s: introduction}
            In this paper we prove a generalization of the convexity theorem in \cite{ban1986} for a symmetric space $G/H.$
             Here     $G$     is a connected semisimple Lie group with finite center,  $\sigma$     an involution of     $G$ and $H$      an open subgroup of the group    $G^{\sigma}$ of fixed points     for $\sigma$. The generalization involves Iwasawa decompositions related to minimal parabolic subgroups of $G$ of arbitrary type instead of the particular type of parabolic subgroup considered in \cite{ban1986}.

            From now on we assume     more generally that $G$ is a real reductive group of the Harish-Chandra class; this will allow an inductive argument relative to the real rank of $G.$
            Let $\theta:G\to G$ be a Cartan involution of      $G$ that commutes with $\sigma$;
            for its existence, see      \cite[Thm. 6.16]{knapp2002}. The associated group $K:= G^\Cartan$ of fixed points is a maximal compact subgroup of $G$.
            For the infinitesimal involutions determined by $\sigma$ and $\theta$ we use the same      symbols: $\theta,\sigma:\mathfrak{g}\to\mathfrak{g}$;
            here $\fg$ denotes the Lie algebra of $G.$ With respect to      the infinitesimal involutions, $\mathfrak{g}$ decomposes as
            \[\mathfrak{g}=\mathfrak{k}\oplus\mathfrak{p}=\mathfrak{h}\oplus\mathfrak{q},\]
            where $\mathfrak{k}$ and $\mathfrak{p}$ are the +1 and -1 eigenspaces for $\theta$ and likewise, $\mathfrak{h}$ and $\mathfrak{q}$ are the +1 and -1 eigenspaces     for  $\sigma$. Note that $\mathfrak{k}$ is the Lie algebra of $K$ and $\mathfrak{h}$ is the Lie algebra of $H$.

            Since $\sigma$ and $\theta$ commute, their composition $\sigma\theta$ is again an involution  of  $\mathfrak{g}$. With respect to      the latter involution, $\mathfrak{g}$ decomposes      into eigenspaces
            \[\mathfrak{g}=\mathfrak{g}_+\oplus\mathfrak{g}_-.\]
            Observe that the +1 eigenspace $\mathfrak{g}_+$ equals $\mathfrak{k}\cap\mathfrak{h}\oplus\mathfrak{p}\cap\mathfrak{q}$, while
             the -1 eigenspace $\mathfrak{g}_-$ equals $\mathfrak{k}\cap\mathfrak{q}\oplus\mathfrak{p}\cap\mathfrak{h}$.

            We fix a maximal abelian subspace $\fa_\iq$ of $\mathfrak{p}\cap\mathfrak{q}$, and  a
            a maximal abelian subspace      $\mathfrak{a}$ of $\mathfrak{p}$ that contains $\fa_\iq$. Then $\fa$ is $\gs$-stable
            and decomposes as:
            \begin{equation}\label{e: a-decomp}
            \mathfrak{a}=\fa_\ih\oplus\fa_\iq,
            \end{equation}
            where      $\fa_\ih:= \mathfrak{a}\cap\mathfrak{h}$.      The associated projection onto $\faq$ will be denoted by  $\prq:\mathfrak{a}\to\fa_\iq.$

            Let $\Sigma(\mathfrak{g},\mathfrak{a})$ be the set of roots of $\mathfrak{a}$ in $\mathfrak{g}$ and $\Sigma(\mathfrak{g},\fa_\iq)$ the set of roots of $\fa_\iq$ in $\mathfrak{g}$. Both      of these sets form root systems, possibly non-reduced, see e.g. \cite{ross1979} or \cite{knapp2002}. The associated Weyl groups are given by
            \begin{equation}
            \label{e: Weyl groups}
            \WKa = N_K(\fa)/Z_K(\fa)\qquad {\rm and} \quad \WKaq = N_K(\faq)/Z_K(\faq).
            \end{equation}

            Let $A=\exp\mathfrak{a}$ and      let $\mathcal{P}(A)$ be the set of minimal parabolic subgroups of $G$ containing $A$. If $P\in\mathcal{P}(A)$, then $P$ has a unique Langlands decomposition given by
            \begin{equation}
            \label{e: Langlands deco}
            P=MAN_P,
            \end{equation}
            where $M:=Z_K(\mathfrak{a})$, $N_P=\exp\mathfrak{n}_P$ and $\mathfrak{n}_P$ is the sum of the root spaces corresponding to a      uniquely determined positive system of $\Sigma(\mathfrak{g},\mathfrak{a})$. We denote this positive system by $\Sigma(P)$. The map given by
            \[P\mapsto\Sigma(P),\quad P\in\mathcal{P}(A),\]
            defines a bijection between $\mathcal{P}(A)$ and the set of positive systems of $\Sigma(\mathfrak{g},\mathfrak{a})$.

            Let
   \[
    \Sigma(P,\sigma\theta):=\{\alpha\in\Sigma(P) \setmid \sigma\theta\alpha\in\Sigma(P)\}
    \]
     and
    \begin{equation}
    \label{e: defi of Sigma P minus}
    \Sigma(P)_-:=\{\alpha\in\Sigma(P,\sigma\theta)\setmid\sigma\theta\alpha=
    \alpha\;\Longrightarrow\;\sigma\theta|_{\mathfrak{g}_{\alpha}}\neq {\rm id}_{\mathfrak{g}_{\alpha}}\}.
    \end{equation}

            Any parabolic subgroup $P\in\mathcal{P}(A)$ induces an Iwasawa decomposition
            \[
            G\simeq K\times A\times N_P
            \]
            and      the associated infinitesimal      decomposition
            $  \mathfrak{g}=\mathfrak{k}\oplus\mathfrak{a}\oplus\mathfrak{n}_P.$
            The \emph{Iwasawa projection} corresponding to $P$ is defined as the real analytic map
            \begin{equation}
            \label{e: Iwasawa projection}
            \mathfrak{H}_P:G\longrightarrow\mathfrak{a},\quad\textrm{determined by}\quad g\in K\exp\mathfrak{H}_P(g)N_P,\quad(g\in G).
            \end{equation}

             The main result of \cite{kostant1973},  known as `Kostant's (nonlinear) convexity theorem' characterizes the image under $\fH_P$ of the set $aK$, for $a\in A,$ as follows:
            \[\fH_P(aK)={\rm conv}(\WKa\cdot\log a).\]
           Here '$\conv$' indicates that the convex hull in $\fa$ is taken.

                 In our setting, it is natural to study the more general question of convexity of the set $\fH_P(aH)$, for $a\in A.$
             The first answer to this question was provided in      \cite[Thm. 1.1]{ban1986} under the assumption that      $P \in \cP(A)$ satisfies $\Sigma(P,\sigma\theta)=\Sigma(P)\setminus\fa_\ih^*$;
              here $\fahd$ and $\faqd$ are viewed as subspaces of $\fad$ in accordance with the decomposition
             (\ref{e: a-decomp}).

             In the present paper we generalize  \cite[Thm 1.1]{ban1986} to any parabolic subgroup $P \in \cP(A).$
             To prepare for our main result, we need a few remarks and definitions as well as new notation.
            \begin{remark}
            Since $\exp: \fa \to A$ is a diffeomorphism, it follows that  $A\simeq A_\iq\times A_\ih$ where $\Aq:= \exp (\faq)$ and $A_\ih:=\exp(\fah) = A\cap H$.
            Thus, we just need to consider $a\in A_\iq$.
            \end{remark}
            \begin{remark}
            \label{r: defi HPq}
                Since $\fH_P(aH)= \fH_{P}(a H A_\ih) = \fH_{P}(aH)+\fah$, it suffices to consider the image of $aH$,      for $a\in A_\iq$, under the map \[\fH_{P,{\rm q}}:=\prq\circ\fH_P:G\to\faq.\]
            \end{remark}

We recall from \cite[Eqn.\ (1.2)]{ban1986} that the subgroup $H$ is said to be {\em essentially connected}
if
\begin{equation}
\label{e: essentially connected}
H=Z_{K\cap H}(\fa_\iq)H^{\circ},
\end{equation}
where $H^\circ$ denotes the identity component of $H.$

            Let $B$ be  an extension of the Killing form from  $[\fg,\fg]$ to a      bilinear form on the entire algebra $\fg$, such that
            $B$ is  $\Ad(G)$-invariant, invariant under both $\Cartan$ and $\sigma,$ negative definite on $\fk$ and positive definite on $\fp.$ Then $B$ is non-degenerate.

            We define a positive definite inner product on $\mathfrak{g}$ by
            \begin{equation}
            \label{e: inner product from B}
            \inp{U}{V}:=-B(U,\Cartan V),\qquad (U,V\in\fg).
            \end{equation}
            Note that the root space  decomposition and the eigenspace decompositions (with respect to $\theta$ and $\sigma$) are orthogonal with respect to this inner product. Moreover, the extended Killing form and the inner product coincide if either $U$ or $V$ belongs to $\fp$.
            \begin{defi}\label{d: WKHaq}
                    The Weyl group $\WKHaq$ is defined as
                     \[\WKHaq:=N_{K\cap H}(\fa_\iq)/Z_{K\cap H}(\fa_\iq).\]
            \end{defi}
              Note that $\WKHaq$ may be viewed as a subgroup of $\WKaq.$
            If     $\alpha$ is a root in $\Sigma(\mathfrak{g},\mathfrak{a})$ we denote by $H_{\alpha}$ the element of $\fa$      perpendicular to $\ker\alpha$ with respect to $\langle\cdot,\cdot\rangle,$ and normalized by 
             $\alpha(H_{\alpha})=2.$
             \begin{defi}
            \label{d: Gamma P}
            Let $P$ be a minimal parabolic subgroup of $G$ containing $A.$ Then we define the finitely generated polyhedral cone $\Gamma(P)$ in $\faq$ by
            \begin{equation}
            \label{e: defi Gamma P}
            \Gamma(P):=  \sum_{\alpha\in\Sigma(P)_-}\R_{\geq 0}\;\pr_\iq(H_{\alpha}).
            \end{equation}
            \end{defi}

            \begin{mmtheorem}[Theorem \ref{t: conv thm new}]
                         Let   $H$ be an essentially connected open subgroup of $G^{\sigma},$ see (\ref{e: essentially connected}).
                          Let $P$ be any minimal parabolic subgroup     of $G$
                     containing $A$ and let $a\in A_\iq$. Then
                    \[
                    \mathfrak{H}_{P,{\rm q}}(aH)=\conv(\WKHaq \cdot\log a)+\Gamma(P).
                    \]
            \end{mmtheorem}

            If     the two involutions $\sigma$ and $\theta$ are equal, then $K=H$ and $\Sigma(P,\sigma\theta)=\Sigma(P)$. This implies that $\WKa=\WKHaq$ and that $\Sigma(P)_-=\emptyset$. Thus, we obtain that $\Gamma(P)=0$ and hence,      in this case our main theorem coincides with the original non-linear
            convexity theorem of Kostant \cite{kostant1973}.

            For $P$ satisfying $\gS(P, \gs \Cartan) = \gS(P)\setminus \fahd$ the above result
            coincides with  \cite[Thm 1.1]{ban1986}. This will be explained in detail in Section
            \ref{ss: relation with vdbconv}.

            The proof of the main theorem follows the   line of argument described  below,
            which is a considerable extension of the argumentation of \cite{ban1986}, which in turn was inspired by  \cite{heckman1980}.

            We first prove the theorem for a regular element $a\in A_\iq$.
            Since the map $\mathfrak{H}_P:G\to\mathfrak{a}$ is right $H\cap P$-invariant, see Lemma \ref{l: HP-inv}, the map
            $$
            F_a:H\to\fa_\iq, \;\;h \mapsto \mathfrak{H}_{P,{\rm q}}(ah)
            $$
            factors through a map $\bar{F_a}:H/H\cap P\to\fa_\iq$. In order for the idea of the proof in \cite{ban1986} to work in the present situation, one needs to establish properness of the map $\bar{F_a}$. This is done in Section \ref{s: prop-sec} by reducing the problem to the case of a   suitable
             $\sigma$-stable parabolic subgroup $R$ combined with application of results of
              \cite{ban1986}. The      established properness implies that the image $F_a(H)$ is closed in $\fa_\iq$.

             The considerations of Section \ref{s: prop-sec} also lead to the constraint on
             the image $F_a(H)$ that it does not contain any line of $\faq,$ see Corollary \ref{c: half space result}.

            In Section \ref{s: the set critical points} we introduce the functions $F_{a,X}:H\to\R$,      for $X \in \faq,$
            defined by
            $$
            F_{a,X}(h)=\langle X,F_{a}(h)\rangle=B(X,F_{a}(h)).
            $$
            Geometrically,      these functions test the Iwasawa projection by linear forms on $\fa_\iq$, and  give us  constraints on  the image of $H$ under $F_a$. For a more detailed exposition on $F_{a,X}$ we refer the reader to \cite{duiskolkvar1983}. Our own study of this function follows ideas in \cite{ban1986} and \cite{duiskolkvar1983}.

            In Section \ref{s: properties of the set of critical points} we calculate the critical set $\cC_{a,X}$ of the function $F_{a,X}$ explicitly, for $a \in \Aqreg$ and $X\in\fa_\iq.$ In particular, we show that this set  is the     union of a     finite collection $\cM_{a,X}$
            of      injectively immersed connected submanifolds of $H$. If $\cC_{a, X}\subsetneq H,$ then all submanifolds in $\cM_{a,X}$ are lower dimensional, so that $\cC_{a,X}$ is thin in the sense of the Baire theorem, i.e. its closure has empty
            interior.            These considerations allow us to show that in case $\gS(\fg, \faq)$ spans $\faqd,$ the set  $\cC_a$ of points in $H$ where $F_a$ is not submersive, is closed and thin, see  Proposition \ref{p: submersiveness Fa}. In particular, we then have  that
           \begin{equation}
           \label{e: inclusion of Fa Cca}
           F_a(\cC_a)\subsetneq F_a( H).
           \end{equation}

In Sections  \ref{s: computation of hessians} and \ref{s: transversal signature of the hessian} we calculate the Hessians of $F_{a,X}$ and their transversal signatures along all manifolds from $\cM_{a,X}.$ These calculations, which are extensive, in particular
allow us to determine all points where the transversal signatures are definite. This  in turn gives us all points where $F_{a,X}$ attains local maxima and minima. A main result of Section 8 is Lemma \ref{l: local-min-global-min} which asserts that for every local minimum $m$ of the function $F_{a,X}$ we have that
$\inp{X}{\dotvar}\geq m$ on the set
 \begin{equation}
 \label{e: introduction set Omega}
              \Omega:=\conv(\WKHaq \cdot\log a)+\Gamma(P).
 \end{equation}

 In Section \ref{s: limit argument} we prepare for the proof of the main theorem
 by using a limit argument to reduce  to the case of  a regular element $a \in \Aq.$

The proof of the    main theorem is   finally given in Section \ref{s: section-induction}.
It proceeds by induction over the rank of the root system $\Sigma(\mathfrak{g},\fa_\iq)$.     More precisely, for $a \in \Aqreg$ the set $\cC_{a,X}$
            depends on $X \in \faq$ through the centralizer $\fg_X$ of $X$ in $\fg.$ It is shown that $\cC_{a,X}\subsetneq  H$ implies that
            $\rk\, \Sigma(\mathfrak{g}_X,\fa_\iq) < \rk \, \Sigma(\mathfrak{g},\fa_\iq)$ so that            the induction hypothesis holds for the centralizer $G_X$ of $X$ in $G.$
            This allows us to determine the image $F_a(C_{a,X})$ for such $X.$ In particular,
            this leads to a precise description of the image $F_a(\cC_a)$     from which it is seen that the latter image contains the boundary of the set $\Omega.$

              In the proof  we use this observation, together with the earlier obtained      constraint that the image $F_a(H)$ does not contain a line, to conclude that $F_a(H)$ is contained in $\Omega$. In particular, this implies that,  for each $X \in \faq,$  every local minimum of $F_{a,X}$ is global.

              For the converse inclusion, we first show that the image of $H\setminus\cC_a$ under the map $F_a$ is a union of connected components of
              $\Omega\setminus F_a(\cC_a)$. The established fact that every local minimum of
              $F_{a,X}$ is global then allows us to show that all connected components appear in the image, thereby completing the proof.

              We conclude the paper with two appendices, \ref{a: proof of KDV} and \ref{a: group case}.
              In Appendix \ref{a: proof of KDV} we give the proof of Lemma \ref{l: KDV} concerning the decomposition of nilpotent groups in terms of subgroups generated by roots, and in \ref{a: group case} we discuss the convexity theorem for the case of the group viewed as a symmetric space.

                       Both the linear and the nonlinear convexity theorems of Kostant, see \cite{kostant1973}, have been extensively studied. Heckman proved the linear theorem in \cite{heckman1980} by means of techniques as above and  obtained the non-linear theorem from the linear one by       a homotopy argument.  Inspired by this, Duistermaat \cite{duistermaat1984} obtained a remarkable universal homotopy containing Heckman's homotopy for all $a \in A$ at once.

             Both convexity theorems of Kostant have been explained in the framework of symplectic geometry: see \cite{atiyah1982}, \cite{guilleminsternberg1982}, \cite{duistermaat1983-conv}, \cite{guilleminsternberg1984}, \cite{Kirwan1984} for the linear convexity theorem and \cite{luratiu1991}, \cite{hilgertneeb1998} for the nonlinear one.

             The convexity theorem of \cite{ban1986}, which generalizes  Kostant's nonlinear convexity theorem, has been given a symplectic interpretation in \cite{fothotto2006}. This leads us to suspect that such an interpretation should be possible in the present case as well; we intend to investigate this in the future.

            Finally, we wish to mention that many of our calculations have been inspired by \cite{heckman1980} and \cite{duiskolkvar1983}.\\
            \\
            \emph{Acknowledgements.\ } The authors would like to thank Job Kuit for the proof of Proposition \ref{p: N=N_+(N_P cap H) decomposition} and his useful comments. One of the authors, D.B., thanks Ioan M\v arcu\c t for his interest in this work and all his good suggestions on how to improve it.

    \section{Some structure theory for parabolic subgroups}\label{s: parabolic subgroups}
                In this section we will construct a (minimal) parabolic subgroup in $\cP(A),$      see the text preceding (\ref{e: Langlands deco}),
            which has a special position relative to the involution $\gs;$ it will
            play an important role in Section \ref{s: prop-sec}. We will also discuss some structure theory of parabolic subgroups from $\cP(A)$ and      derive a useful decomposition for their unipotent radicals.

            We recall that every parabolic subgroup $P$ from $\cP(A)$ has a Langlands decomposition of the form
            (\ref{e: Langlands deco}). Thus, its ($\Cartan$-stable) Levi component $L_P$ is given by
            $$
            L_P = L = MA
            $$
            and the multiplication map $L\times N_P \to P$ is a diffeomorphism. The opposite parabolic subgroup $\bar P$ is defined to be the unique
            parabolic subgroup from $\cP(A)$ with $\gS(\bar P)= -\gS(P).$ It equals $\Cartan (P).$

          \subsection{Extremal minimal parabolic subgroups}\label{H-mod-HP-decomposition}

                If $\tau$ is any involution      of $G$ which leaves $A$ invariant, then its infinitesimal version $\tau: \fg \to \fg$ leaves $\fa$ invariant,
                and we put
                \begin{equation}
                \label{e: defi Sigma P tau}
                \Sigma(P,\tau):=\{\alpha\in\Sigma(P)\setmid\tau\alpha\in\Sigma(P)\}.
                \end{equation}
                Observe that $\Sigma(P,\tau)=\Sigma(P)\cap\tau\Sigma(P).$

                \begin{defi}
                \label{d: h extreme parabolic}
                    \rm A minimal parabolic subgroup $Q\in \cP(A)$ is said to be $\fh$-extreme if
                    \begin{equation}\label{e: h extreme}
                        \gS(Q,\sigma) = \gS(Q)\setminus \faqd.
                    \end{equation}
                \end{defi}

                Starting with any minimal parabolic subgroup $P\in\cP(A)$, we can obtain an $\fh$-extreme minimal parabolic subgroup      by changing one simple root at a time. This process is described in Lemma \ref{l: Q-existence} below.

                \begin{lemma}\label{l: gS(P) as disjoint union}
                    Let $P \in \cP(A).$ Then
                    $$\gS(P) = \gS(P, \gs) \sqcup \gS(P, \gs \Cartan)\qquad \mbox{\rm (disjoint union)}.$$
                \end{lemma}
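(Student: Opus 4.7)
The plan is to use that $\theta$ acts on $\fa^*$ as $-\mathrm{id}$, since $\fa \subset \fp$ is in the $-1$ eigenspace of $\theta$, so that $\theta\alpha = -\alpha$ for every root $\alpha \in \gS(\fg,\fa)$. Since $\gs$ and $\Cartan$ commute, this gives the key identity
\[
\gs\Cartan\alpha = -\gs\alpha \qquad (\alpha \in \gS(\fg,\fa)).
\]
Note also that $\fa$ is $\gs$-stable by the decomposition (\ref{e: a-decomp}), so $\gs$ acts on $\fa^*$ and permutes roots; in particular $\gs\alpha$ is always a root.

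With this identity in hand, both assertions are immediate. For disjointness: if $\alpha$ belonged to both $\gS(P,\gs)$ and $\gS(P,\gs\Cartan)$, then $\gs\alpha$ and $\gs\Cartan\alpha = -\gs\alpha$ would both lie in $\gS(P)$, contradicting $\gS(P) \cap (-\gS(P)) = \emptyset$. For exhaustion: fix $\alpha \in \gS(P)$. Then $\gs\alpha \in \gS(\fg,\fa) = \gS(P) \sqcup (-\gS(P))$. If $\gs\alpha \in \gS(P)$ then $\alpha \in \gS(P,\gs)$; otherwise $\gs\alpha \in -\gS(P)$, i.e.\ $-\gs\alpha = \gs\Cartan\alpha \in \gS(P)$, so $\alpha \in \gS(P,\gs\Cartan)$.

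There is no substantive obstacle: the argument is a one-line calculation together with the partition of $\gS(\fg,\fa)$ into $\gS(P)$ and $-\gS(P)$. The only thing worth double-checking is that $\gs$ preserves $\fa$ (so that $\gs\alpha$ is defined as a root), which is guaranteed by the choice of $\fa$ in the introduction.
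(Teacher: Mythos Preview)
Your proof is correct and is exactly the approach of the paper: the key identity $\sigma\theta\alpha = -\sigma\alpha$ (coming from $\theta|_{\fa} = -\mathrm{id}$) immediately reduces the claim to the partition $\Sigma(\fg,\fa) = \Sigma(P) \sqcup (-\Sigma(P))$. The paper's proof is a one-line version of what you wrote.
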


                \begin{proof}
                    Let $\ga \in \gS(P).$ From the fact that $\sigma\theta\alpha=-\sigma\alpha$, the result follows easily.
                \end{proof}

                \begin{lemma}\label{c: if-not-sigma-stable-then-one-simple-root}
                    Let $P \in \cP(A)$ and assume that
                    \begin{equation}
                    \label{e: gS P gs properly contained}
                    \Sigma(P,\sigma)\varsubsetneq\Sigma(P)\setminus\faqd.
                    \end{equation}
                    Then there exists a $P$-simple root $\alpha\in\Sigma(P,\sigma\theta)$ with $\ga \notin \faqd$.
                \end{lemma}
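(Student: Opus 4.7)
First, I would record the convenient characterization that, since $\sigma$ acts as $+1$ on $\fa_\ih$ and $-1$ on $\faq$, a root $\alpha \in \Sigma(\fg,\fa)$ lies in $\faqd$ if and only if $\sigma\alpha = -\alpha$. In particular, any element of $\Sigma(P) \cap \faqd$ automatically belongs to $\Sigma(P,\sigma\theta)$ and not to $\Sigma(P,\sigma)$. Combining Lemma \ref{l: gS(P) as disjoint union} with the strict inclusion (\ref{e: gS P gs properly contained}), I obtain at least one root $\beta \in \Sigma(P,\sigma\theta)$ with $\beta \notin \faqd$. The task is to upgrade $\beta$ to a $P$-simple root with the same two properties.

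I propose to argue by contradiction. Suppose every $P$-simple root lies in $\Sigma(P,\sigma) \cup \faqd$, and partition the set $\Delta$ of $P$-simple roots as $\Delta = \Delta_1 \sqcup \Delta_2$ with $\Delta_1 \subset \Sigma(P,\sigma)$ and $\Delta_2 \subset \faqd$. Expanding $\beta = \sum_i c_i \alpha_i$ with $c_i \in \mathbb{Z}_{\geq 0}$ and applying $\sigma$, using $\sigma\alpha_j = -\alpha_j$ for $\alpha_j \in \Delta_2$, yields
$$
\sigma\beta \;=\; \sum_{\alpha_i \in \Delta_1} c_i\, \sigma\alpha_i \;-\; \sum_{\alpha_j \in \Delta_2} c_j\, \alpha_j.
$$
For each $\alpha_i \in \Delta_1$, the root $\sigma\alpha_i \in \Sigma(P)$ expands as a non-negative integer combination of simple roots. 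On the other hand, $\beta \in \Sigma(P,\sigma\theta)$ gives $\sigma\beta = -\sigma\theta\beta \in -\Sigma(P)$, so every simple-root coefficient of $\sigma\beta$ is non-positive.

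The key step is to isolate the $\Delta_1$-coefficients of $\sigma\beta$. The second sum above contributes nothing to these, so each such coefficient is a non-negative linear combination of the $c_i$ that must therefore vanish term by term. Consequently, for any $\alpha_i \in \Delta_1$ with $c_i > 0$, the root $\sigma\alpha_i$ has no $\Delta_1$-component and hence lies in $\mathrm{span}_{\geq 0}(\Delta_2) \subset \faqd$. Since $\sigma$ preserves $\faqd$, this forces $\alpha_i \in \faqd$, contradicting $\alpha_i \in \Sigma(P,\sigma)$. Therefore $c_i = 0$ for every $\alpha_i \in \Delta_1$, yielding $\beta \in \mathrm{span}(\Delta_2) \subset \faqd$, contrary to the choice of $\beta$. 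The delicate point of the argument is this coefficient bookkeeping: one must exploit that the $\Delta_1$-coefficients of $\sigma\beta$ receive only non-negative contributions so that the sign constraint pins them down exactly, whereas the $\Delta_2$-coefficients involve cancellations and carry no useful sign information on their own.
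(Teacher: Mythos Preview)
Your argument is correct, but it takes a different route from the paper's. Both proofs argue by contradiction from the assumption that every $P$-simple root lies in $\Sigma(P,\sigma)\cup\faqd$, but the mechanisms differ. You work combinatorially: you expand a witness $\beta\in\Sigma(P,\sigma\theta)\setminus\faqd$ in simple roots, apply $\sigma$, and exploit the sign constraint on $\sigma\beta\in-\Sigma(P)$ to force all $\Delta_1$-contributions to vanish, eventually pushing $\beta$ into $\faqd$. The paper instead constructs a geometric witness: starting from any $X$ in the open $P$-chamber, it sets $X_\ih=\tfrac12(X+\sigma X)\in\fah$ and checks on simple roots that $\alpha(X_\ih)\geq 0$ for all $\alpha\in\Sigma(P)$, with equality precisely when $\alpha\in\faqd$. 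Since $\sigma\theta X_\ih=-X_\ih$, any $\alpha\in\Sigma(P,\sigma\theta)$ then satisfies $\alpha(X_\ih)\leq 0$ as well, forcing $\alpha\in\faqd$; this contradicts the strict inclusion via Lemma~\ref{l: gS(P) as disjoint union}. The paper's approach is shorter and avoids the coefficient bookkeeping you flag as delicate, replacing it with a single linear functional that separates $\faqd$ from the rest of $\Sigma(P)$; your approach, by contrast, is entirely self-contained at the level of root combinatorics and does not require producing an auxiliary element of $\fah$.
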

                \begin{remark}
                    A root $\alpha\in\Sigma(\fg,\fa)$ is said to be $P$-simple if it is simple in the positive system $\Sigma(P)$.
                \end{remark}
                \begin{proof} Assume the contrary.
                    Then each $P$-simple root $\beta \in \gS(P, \gs \Cartan)$
                    satisfies $\gs \Cartan \beta = \beta.$ In view of Lemma \ref{l: gS(P) as disjoint union}
                    it follows that for every simple root $\gb \in \gS(P)$ we
                  have either $\gs \beta \in \gS(P)$ or $\gs \beta = \Cartan \beta = - \beta.$

                    The set $\gS(P)$ is a positive system for the root system $\gS(\fg, \fa).$ Hence, there exists an element $X \in \fa$ such that $\ga(X) > 0$ for all
                    $\ga \in \gS(P).$ Put $X_{\rm h}: = \frac12(X + \gs(X)).$ Then for every simple root $\gb$ in $\gS(P)$ we have either  $\gs \gb = - \gb,$ in which case $\gb(X_\ih) = 0,$ or $\gs\gb \in \gS(P),$ in which case $\gb(X_\ih) > 0.$ In any case, for each simple $\gb \in \gS(P),$ the value $\gb(X_\ih)$ is a nonnegative real number. Moreover, the number is zero if and only if $\gs \gb = - \gb.$ It follows that for all $\ga \in \gS(P)$ the number  $\ga (X_\ih)$ is nonnegative
                    and furthermore, that it is zero if and only if $\ga \in \faqd.$
                    Since $\gs \Cartan (X_\ih) = - X_\ih$ we now
                    infer that $\gS(P)\setminus \faqd \cap \gS(P,\gs\Cartan) = \emptyset,$
                    hence $\gS(P)\setminus \faqd \subseteq\gS(P,\gs),$
                    contradicting (\ref{e: gS P gs properly contained}).
                \end{proof}

                For a root $\ga \in \gS(\fg,\fa),$ the associated reflection is denoted by $s_\ga: \fa \to \fa.$

                \begin{corollary}\label{c: the-psgp-s-alpha-P}
                    If $P$ and $\alpha$ are as in Lemma \ref{c: if-not-sigma-stable-then-one-simple-root},
                    then $P':=s_{\alpha}(P)$ has the following properties:
                    \begin{enumerate}[{\ }\rm (1) \sl]
                        \itema  $\Sigma(P)\cap\faqd=\Sigma(P')\cap\faqd$\,,
                        \itemb $\Sigma(P,\sigma)\subsetneq\Sigma(P',\sigma)$.
                    \end{enumerate}
                                 \end{corollary}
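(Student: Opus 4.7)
My plan is to exploit the very explicit way in which $\Sigma(P')$ differs from $\Sigma(P)$ when $P'$ is obtained from $P$ by a simple reflection. Set $\alpha_+ := \Sigma(P) \cap \R_{>0}\alpha$ (so $\alpha_+ = \{\alpha\}$ or $\{\alpha, 2\alpha\}$). Since $\alpha$ is $P$-simple, the standard fact about simple reflections gives
\[
\Sigma(P') = s_\alpha(\Sigma(P)) = \bigl(\Sigma(P)\setminus \alpha_+\bigr) \cup (-\alpha_+);
\]
that is, $\Sigma(P')$ is obtained from $\Sigma(P)$ by negating precisely the positive multiples of $\alpha$. I will record this observation first and then use it to verify (a) and (b) in turn.

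For (a), I simply note that $\alpha \notin \faqd$, hence every nonzero scalar multiple of $\alpha$ also lies outside $\faqd$. Since $\Sigma(P)$ and $\Sigma(P')$ differ only on $\pm\alpha_+$, the intersections with $\faqd$ coincide.

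For (b), I will use the hypothesis $\alpha \in \Sigma(P,\sigma\Cartan)$, which since $\Cartan|_\fa = -{\rm id}$ amounts to $-\sigma\alpha \in \Sigma(P)$, equivalently $\sigma\alpha \in -\Sigma(P)$. First I show $\Sigma(P,\sigma)\subseteq \Sigma(P',\sigma)$. Let $\beta\in\Sigma(P,\sigma)$. If $\beta \in \alpha_+$, then $\sigma\beta$ is a positive scalar multiple of $\sigma\alpha \in -\Sigma(P)$, hence $\sigma\beta \notin \Sigma(P)$, contradicting $\beta \in \Sigma(P,\sigma)$. Symmetrically, if $\sigma\beta \in \alpha_+$ then $\beta = \sigma(\sigma\beta)$ would be a positive multiple of $\sigma\alpha$, again in $-\Sigma(P)$, contradicting $\beta \in \Sigma(P)$. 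Therefore both $\beta$ and $\sigma\beta$ lie in $\Sigma(P)\setminus\alpha_+\subset\Sigma(P')$, which gives $\beta\in\Sigma(P',\sigma)$.

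The strictness of the inclusion comes from exhibiting $-\alpha$ as an element of $\Sigma(P',\sigma)\setminus\Sigma(P,\sigma)$. Clearly $-\alpha \in -\alpha_+ \subset \Sigma(P')$ and $-\alpha \notin \Sigma(P)$. It remains to check $\sigma(-\alpha) = -\sigma\alpha \in \Sigma(P')$. By the hypothesis on $\alpha$ we already have $-\sigma\alpha \in \Sigma(P)$, so I only need to rule out $-\sigma\alpha \in \alpha_+$. If $-\sigma\alpha = m\alpha$ for some $m>0$, then applying $\sigma$ again and using $\sigma^2={\rm id}$ yields $m^2=1$, hence $m=1$ and $\sigma\alpha = -\alpha$, which means $\alpha \in \faqd$; this contradicts the choice of $\alpha$ in Lemma \ref{c: if-not-sigma-stable-then-one-simple-root}. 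Thus $-\sigma\alpha \in \Sigma(P)\setminus\alpha_+ \subset \Sigma(P')$, so $-\alpha \in \Sigma(P',\sigma)$ and (b) is established. The only subtle point is this last dichotomy involving $\sigma\alpha$; everything else is bookkeeping with the one-step description of $\Sigma(P')$.
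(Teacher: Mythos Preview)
Your proof is correct and follows essentially the same approach as the paper's: both exploit the explicit description of $\Sigma(P')$ obtained from $\Sigma(P)$ by flipping the positive multiples of the simple root $\alpha$, then use $\alpha\in\Sigma(P,\sigma\theta)$ (equivalently $\sigma\alpha\notin\Sigma(P)$) to verify the inclusion in (b), and finally exhibit $-\alpha$ as the witness to strictness, ruling out $\sigma\alpha=-\alpha$ via $\alpha\notin\faqd$. The only cosmetic difference is that the paper first reduces to indivisible roots $\Sigma_\circ$ so that only $\{\alpha\}$ is flipped, whereas you keep track of $\alpha_+=\{\alpha\}$ or $\{\alpha,2\alpha\}$ directly; your version is in fact slightly more explicit at the step $\sigma\beta\notin\alpha_+$ than the paper's corresponding line.
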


                In the proof of the above corollary, we will  follow  the convention to write
                $$
                R_\circ := \{ \ga \in R :  \half\, \ga \notin R\}
                $$
                for any possibly non-reduced root system $R.$ The elements of $R_\circ$ are called the indivisible roots in $R.$ Furthermore, if $S \subseteq R$ is any subset, we will write $S_\circ: =  S \cap R_\circ.$ Finally, we agree to write $\Sigma_\circ(P)$ for $\Sigma(P)_\circ.$

                \begin{proof}
                    It suffices to prove (a) and (b) with everywhere $\Sigma$ replaced by $\Sigma_\circ.$ Since $P':=  s_{\alpha}(P)$ with $\ga$ simple in $\gS(P)$, we have
                    \[\Sigma_\circ(P')=(\Sigma_\circ(P)\setminus\{\alpha\})\cup\{-\alpha\},\]
                    which implies (a).

                    Let $\beta\in\Sigma_\circ(P)\cap\sigma\Sigma_\circ(P)$. Then $\beta\neq\alpha$ and $\sigma\beta\neq\sigma\alpha$ and we infer that $\beta$ and $\sigma\beta$ both belong to $\Sigma_\circ(P')$. It follows that $\beta\in\Sigma_\circ(P')\cap\sigma\Sigma_\circ(P')$. This proves the inclusion in (b). We still need to show that equality cannot hold. This follows from the fact that
                    $\theta\alpha=- \alpha\in\Sigma(P',\sigma)\setminus\gS(P).$
                \end{proof}

                \begin{lemma}\label{l: Q-existence} Let $P \in \cP(A).$ Then there exists a minimal parabolic subgroup $\Phext \in\mathcal{P}(A)$
                     such that the following conditions hold:
                    \begin{enumerate}[{\ }\rm (1)\sl]
                        \itema $\Sigma(\Phext)\cap\fa_\iq^*=\Sigma(P)\cap\fa_\iq^*$,
                        \itemb $\Sigma(\Phext)\cap\fa_\ih^*=\Sigma(P)\cap\fa_\ih^*$,
                        \itemc  $\gS(P, \gs) \subseteq \gS(\Qh, \gs)$,
                        \itemd $Q_\ih$ is $\fh$-extreme, see {\rm (\ref{e: h extreme})}.
                    \end{enumerate}
                \end{lemma}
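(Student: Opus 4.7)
The plan is to iterate the reflection construction of Corollary~\ref{c: the-psgp-s-alpha-P} until the result becomes $\fh$-extreme. I set $P_0 := P.$ Assuming inductively that $P_k \in \cP(A)$ has been constructed and is not yet $\fh$-extreme, i.e.\ $\Sigma(P_k,\sigma) \subsetneq \Sigma(P_k)\setminus\faqd$, Lemma~\ref{c: if-not-sigma-stable-then-one-simple-root} supplies a $P_k$-simple root $\alpha_k \in \Sigma(P_k,\sigma\Cartan)$ with $\alpha_k \notin \faqd$, and I then set $P_{k+1} := s_{\alpha_k}(P_k).$ Corollary~\ref{c: the-psgp-s-alpha-P} gives at each step
\[
\Sigma(P_{k+1}) \cap \faqd \;=\; \Sigma(P_k)\cap\faqd, \qquad \Sigma(P_k,\sigma) \;\subsetneq\; \Sigma(P_{k+1},\sigma).
\]
Since $\Sigma(\fg,\fa)$ is finite, the strictly increasing chain $\Sigma(P_0,\sigma)\subsetneq\Sigma(P_1,\sigma)\subsetneq\cdots$ must stabilise, so the loop terminates at some $P_N$ which is then $\fh$-extreme by construction. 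Taking $\Phext := P_N$ immediately gives (d), while (a) and (c) follow by iterating the two relations above along the chain.

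The remaining clause (b), namely $\Sigma(\Phext)\cap\fahd = \Sigma(P)\cap\fahd$, is not recorded in Corollary~\ref{c: the-psgp-s-alpha-P}, so I would supplement the corollary with the observation that the reflecting root $\alpha_k$ also lies outside $\fahd.$ Indeed, $\sigma$ acts as $+1$ on $\fahd$ while $\Cartan$ acts as $-1$ on all of $\fad$ (because $\fa\subseteq\fp$), so $\sigma\Cartan$ acts as $-1$ on $\fahd$; if $\alpha_k$ were in $\fahd$, then $\sigma\Cartan\alpha_k = -\alpha_k$ would be a negative root, contradicting $\alpha_k\in\Sigma(P_k,\sigma\Cartan)\subseteq\Sigma(P_k).$ Because a simple reflection $s_{\alpha_k}$ in the (possibly non-reduced) positive system $\Sigma(P_k)$ flips only the signs of the roots in $\{\pm\alpha_k,\pm 2\alpha_k\}\cap\Sigma(\fg,\fa)$, and since $\alpha_k\notin\fahd$ none of these roots lie in the subspace $\fahd$, we obtain $\Sigma(P_{k+1})\cap\fahd = \Sigma(P_k)\cap\fahd$. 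Iterating yields (b).

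The main (and essentially only) obstacle is this small gap between the strict inclusion supplied by Corollary~\ref{c: the-psgp-s-alpha-P} for $\faqd$ and the $\fahd$ statement required in (b); bridging it needs just the short sign computation for $\sigma\Cartan$ on $\fahd$ described above, together with the standard fact about how a simple reflection acts on a positive system. Everything else—the termination of the iteration and the transport of (a), (c) and (d) through the finite chain $P=P_0,P_1,\dots,P_N=\Phext$—is routine bookkeeping that requires nothing beyond the two preceding results.
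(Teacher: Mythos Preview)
Your proof is correct and follows essentially the same iterative reflection argument as the paper. In fact, you are more careful than the paper on one point: the paper's proof simply asserts that $P' = s_\alpha(P)$ satisfies condition (b) ``by applying Corollary~\ref{c: the-psgp-s-alpha-P}'', even though that corollary as stated only records the $\faqd$-statement; your explicit verification that $\alpha_k \notin \fahd$ (via $\sigma\theta|_{\fahd} = -\mathrm{id}$) and the resulting preservation of $\Sigma(P_k)\cap\fahd$ fills precisely this small gap.
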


                \begin{proof}
                    If $\ga \in \Sigma(P)\cap \faqd,$ then $\gs \ga  = -\ga \notin \Sigma(P).$ Hence
                    \begin{equation}\label{e: intersection gS and gs gS}
                            \Sigma(P,\sigma)=\Sigma(P)\cap\sigma\Sigma(P)\subseteq \Sigma(P)\setminus\fa_\iq^*.
                    \end{equation}
                    If the above inclusion is an equality, the result holds with $\Phext:= P.$ If not, then the inclusion in (\ref{e: intersection gS and gs gS}) is proper and Lemma \ref{c: if-not-sigma-stable-then-one-simple-root} guarantees the existence of a simple root $\alpha\in\Sigma(P)\setminus\fa_\iq^*$ such that $\sigma\theta\alpha\in\Sigma(P)$. By applying Corollary \ref{c: the-psgp-s-alpha-P} we see that the  minimal parabolic subgroup $P' :=s_{\alpha}(P)$ satisfies the above conditions (a) and (b), and
                    \begin{equation}\label{e: P and P' towards extreme}
                        \Sigma(P,\sigma)\subsetneq\Sigma(P',\sigma).
                    \end{equation}
                    Put $P_0 = P$ and $P_1 = P'.$ By applying the above process repeatedly, we obtain a sequence of parabolic subgroups $P=P_0,P_1,\ldots ,P_k$ satisfying
                    \begin{enumerate}
                        \itema $\Sigma(P_i)\cap\faqd=\Sigma(P_{i+1})\cap\faqd$,
                        \itemb    $\Sigma(P_i)\cap\fahd=\Sigma(P_{i+1})\cap\fahd$,
                        \itemc    $\Sigma(P_i,\sigma)\subsetneq\Sigma(P_{i+1},\sigma)$,
                    \end{enumerate}
                    for $0 \leq i < k.$ The process ends when for some $k>0$ the condition $\Sigma(P_k)\cap\sigma\Sigma(P_k)=\Sigma(P_k)\setminus\faqd$ is satisfied. The parabolic subgroup $\Phext = P_k$ satisfies all assertions of the lemma.
                \end{proof}

                \begin{remark}\label{r: fq extreme parabolic}
                In analogy with Definition \ref{d: h extreme parabolic}, a parabolic subgroup $Q \in \cP(A)$ is said to be $\fq$-extreme if $\gS(Q, \gs \Cartan ) = \gS(Q) \setminus \fahd.$ With obvious modifications in the proof, Lemma \ref{l: Q-existence} is valid with everywhere $\gs\Cartan$ in place of $\gs$ and with
                $\fq$-extreme in place of $\fh$-extreme. However, we will not need this result in the present paper.
                \end{remark}

            \subsection{The convexity theorem for a q-extreme parabolic subgroup}
            \label{ss: relation with vdbconv}
           We shall now explain why the result of      \cite{ban1986} is a special case of the Main Theorem. We keep the notation as above and impose that $P\in\mathcal{P}(A)$ is $\fq$-extreme,     see Remark \ref{r: fq extreme parabolic}. Then
           $
           \Sigma(P,\sigma\theta)=\Sigma(P)\setminus\fa_\ih^*,
           $
              so that
             \[
             \Delta^+:=\Sigma(P,\sigma\theta)|_{\faq}
             \]
             is a positive system for $\gS(\fg, \faq).$  For $\ga\in \Sigma(\fg, \faq),$     the root space $\fg_\ga$ is $\gs\Cartan$-invariant;  we write
              $\fg_{\ga,\pm}$ for the $\pm 1$ eigenspaces of $\gs\Cartan|_{\fg_\ga}.$ Put
             \[
             \Delta^+_- = \{ \ga \in \Delta^+\setmid \fg_{\ga, -} \neq 0\}.
             \]
             Then \cite[Thm 1.1]{ban1986} asserts that
            $$
            \mathfrak{H}_{P,{\rm q}}(aH)=\conv(\WKHaq \cdot \log a)+\Upsilon(P),
            $$
            where $\Upsilon(P)$ is the finitely generated polyhedral cone in $\faq$ defined    by
            \[
            \Upsilon(P)=\sum_{\alpha\in\Delta^+_-}\mathbb{R}_{\geq 0}H_{\alpha};
            \]
             here $H_\ga$ denotes the element of $\faq$
             %determined by $B(\Hvee_\ga, \dotvar) = \ga.$
             with $H_\ga \perp \ker \ga$ and $\ga(H_\ga) = 2.$

            Thus, our main theorem coincides with \cite[Thm. 1.1]{ban1986} provided that $\Gamma(P) = \Upsilon(P).$ The latter is asserted by the following lemma.

            \begin{lemma}
            Let $P \in \cP(A)$ be  $\fq$-extreme. Then $\Upsilon(P) = \Gamma(P).$
            \end{lemma}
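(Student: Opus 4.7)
The plan is to compare generator sets on both sides via the restriction map $\pi: \fa^* \to \faqd$, $\alpha \mapsto \bar\alpha := \alpha|_{\faq}$. Two ingredients are needed: a scalar comparison between $\pr_\iq(H_\alpha)$ and $H_{\bar\alpha}$, and a combinatorial identification of the index sets $\Sigma(P)_-$ and $\Delta^+_-$ under $\pi$.

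For the scalar comparison, a short calculation using the $B$-orthogonal decomposition $\fa = \fah \oplus \faq$ (valid because $\fa \subset \fp$ and $B$ is positive definite on $\fp$) shows that, for every $\alpha \in \Sigma(\fg,\fa)$ with $\bar\alpha \neq 0$,
\[
\pr_\iq(H_\alpha) = c_\alpha \, H_{\bar\alpha}, \qquad c_\alpha > 0;
\]
indeed one writes $H_\alpha = 2 t_\alpha / B(t_\alpha, t_\alpha)$ in terms of the $B$-dual $t_\alpha \in \fa$, decomposes $t_\alpha = t_{\alpha_\ih} + t_{\bar\alpha}$ orthogonally, and reads off the constant $c_\alpha = B(t_{\bar\alpha}, t_{\bar\alpha})/B(t_\alpha, t_\alpha) > 0$. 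By $\fq$-extremality, every $\alpha \in \Sigma(P)_-$ lies in $\Sigma(P,\sigma\theta) = \Sigma(P) \setminus \fahd$, so $\bar\alpha \neq 0$ holds automatically on the index set of $\Gamma(P)$. Absorbing the positive scalars $c_\alpha$ reduces the lemma to the combinatorial identity $\pi(\Sigma(P)_-) = \Delta^+_-$.

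The key structural observation for the latter is that $\sigma\theta$ fixes $\faq$ pointwise, hence preserves each $\faq$-root space $\fg_{\bar\alpha}$ and permutes the $\fa$-root spaces in the decomposition $\fg_{\bar\alpha} = \bigoplus_{\beta|_{\faq}=\bar\alpha} \fg_\beta$ according to $\beta \mapsto \sigma\theta\beta$. Therefore $\fg_{\bar\alpha,-} \neq 0$ if and only if some $\beta$ with $\beta|_{\faq} = \bar\alpha$ satisfies either (i) $\sigma\theta\beta \neq \beta$, producing nonzero antisymmetric vectors $X - \sigma\theta X$ for $X \in \fg_\beta$, or (ii) $\sigma\theta\beta = \beta$ together with $\sigma\theta|_{\fg_\beta} \neq {\rm id}$. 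In either case this $\beta$ matches precisely the defining condition of $\Sigma(P)_-$, and conversely every $\alpha \in \Sigma(P)_-$ falls into one of the two cases, yielding $\bar\alpha \in \Delta^+_-$. The main subtlety is to check that the extracted $\beta$ actually lies in $\Sigma(P)$ rather than $-\Sigma(P)$: this is where Lemma \ref{l: gS(P) as disjoint union} and $\fq$-extremality are needed, since $-\beta \in \Sigma(P) \setminus \fahd = \Sigma(P,\sigma\theta)$ would force $-\bar\alpha \in \Delta^+$, contradicting positivity of $\Delta^+$. Once this is in place, both inclusions follow simultaneously and the equality $\Upsilon(P) = \Gamma(P)$ is established.
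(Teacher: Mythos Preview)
Your proof is correct and follows essentially the same approach as the paper: first the scalar comparison $\prq(H_\alpha) = c_\alpha H_{\bar\alpha}$ with $c_\alpha>0$, then the combinatorial identity $\Sigma(P)_-|_{\faq} = \Delta^+_-$ via the dichotomy $\sigma\theta\alpha \neq \alpha$ versus $\sigma\theta\alpha = \alpha$. Your treatment of the reverse inclusion (extracting a suitable $\beta$ from the $-1$-eigenspace structure of $\fg_{\bar\alpha}$ and then checking $\beta\in\Sigma(P)$ via $\fq$-extremality) is slightly more explicit than the paper's, but the content is the same; the reference to Lemma~\ref{l: gS(P) as disjoint union} is not actually needed for that step, since $\fq$-extremality alone gives $\Sigma(P)\setminus\fahd = \Sigma(P,\sigma\theta)$.
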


            \begin{proof} For a root $\ga \in \gS(\fg, \fa)$ we denote by $H_\ga^\vee \in \fa$
            the element determined by
            \begin{equation}
            \label{e: defi H ga vee}
            \inp{H_\ga^\vee}{X} = \ga(X)
            \end{equation}
            for all $X \in \fa.$ Then it is readily verified that
            \begin{equation}
            \label{e: relations H ga}
            H_\ga^\vee = 2 H_\ga/\inp{H_\ga}{H_\ga}.
            \end{equation}
            Similarly, for $\ga \in \gS(\fg, \faq)$ we define $H_\ga^\vee$ to be the element
            of $\faq$ determined by (\ref{e: defi H ga vee}) for all $X\in \faq.$ For this element we
            also have (\ref{e: relations H ga}), but now as an identity of elements of $\faq.$
            If $\ga \in \gS(\fg, \fa)$ has non-zero restriction to $\faq,$ then $\ga|_{\faq} \in \gS(\fg, \faq)$
            and for natural reasons we have
            $$ \prq(H_\ga^\vee) = H_{\ga|_{\faq}}^\vee.$$
           From this we conclude that
           \begin{equation}
            \label{e: proj of H ga}
            \pr_\iq (H_\ga) = c_\ga H_{\ga|_{\faq}},
            \end{equation}
            with $c_\ga = \|H_\ga\|^2 \|H_{\ga|_{\faq}}\|^{-2} > 0.$

            After these preliminary remarks we will now complete the proof.
            Let $\ga \in \gS(P,\gs\Cartan).$ Then  $\ga|_{\faq}$ is non-zero, hence  a root in $\gS(\fg, \faq),$ and (\ref{e: proj of H ga}) is valid.
                     As $\gs\Cartan$ restricts to the identity on $\faq,$ the $\fa$-roots  $ \ga$ and $\gs \Cartan\ga$ have the same restriction to $\faq$ giving the root $\ga|_{\faq}$ of  $\Delta^+.$
            If the given $\fa$-roots are different, then the  sum $\fg_\ga + \gs\Cartan(\fg_\ga) $ is direct and contained in $\fg_{\ga|_{\faq}}$ and we see that
            $\fg_{\ga|_{\faq}, -} \neq 0,$  so that $\ga \in \gS(P)_-$  and      $\ga|_{\faq} \in \Delta^+_-.$
            On the other hand, if $\ga = \gs\Cartan \ga,$ then $\fg_{\ga} = \fg_{\ga|_{\faq}}$
            and we see that $\ga \in \gS(P)_-$ if and only if
            $\ga|_{\faq} \in \Delta^+_-.$ It follows from this argument that $\gS(P)_-|_{\faq} = \Delta^+_-.$ Using (\ref{e: proj of H ga}) we now see that
            $$
            \Gamma(P) = \sum_{\ga \in \gS(P)_-}  \R_{\geq 0} H_{\ga|_{\faq}} = \sum_{\ga \in \Delta^+_-} \R_{\geq 0} H_{\ga} = \Upsilon(P).
            $$
            \end{proof}

            \subsection{Decompositions of nilpotent Lie groups}

            In this section we give a brief survey of a number of useful results on  decompositions of nilpotent Lie groups that will be needed in this paper.

We start by recalling the following standard result.

\begin{lemma}\label{l: basics nilpotent groups}
Let $N$ be a connected and simply connected Lie group with nilpotent Lie algebra $\fn.$
If $\fn_0$ is a subalgebra of $\fn,$ then the exponential map maps $\fn_0$ diffeomorphically onto
a closed subgroup of $N.$
\end{lemma}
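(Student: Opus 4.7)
The plan is to reduce the statement to the classical fact that the exponential map of a connected, simply connected nilpotent Lie group is a global diffeomorphism, and to exploit the polynomial nature of the Baker--Campbell--Hausdorff (BCH) formula in the nilpotent setting.

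First, I would recall that since $\fn$ is nilpotent, the BCH series
\[
X * Y = X + Y + \tfrac{1}{2}[X,Y] + \cdots
\]
terminates after finitely many terms (the nilpotency length of $\fn$), and therefore defines a polynomial map $* : \fn \times \fn \to \fn$. Using the standard uniqueness theorem for simply connected Lie groups together with the Dynkin formula, I would identify $(\fn,*)$ with $N$ via the exponential map, so that $\exp : \fn \to N$ becomes a diffeomorphism of smooth manifolds satisfying $\exp(X)\exp(Y) = \exp(X*Y)$ for all $X,Y \in \fn$.

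Next, I would observe that every term in the BCH series is a linear combination of iterated brackets of $X$ and $Y$. Consequently, if $\fn_0 \subseteq \fn$ is a Lie subalgebra, then $X * Y \in \fn_0$ whenever $X,Y \in \fn_0$, and similarly $-X$ serves as the inverse. Hence $\fn_0$ is a subgroup of $(\fn,*)$, and since $\fn_0$ is a linear subspace of the finite-dimensional vector space $\fn$, it is closed in $\fn$ as a topological subset.

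Finally, transporting everything across the diffeomorphism $\exp : \fn \to N$, the set $\exp(\fn_0)$ is a subgroup of $N$ (by the product identity above), it is closed in $N$ (as the image under a homeomorphism of a closed set), and the restriction $\exp|_{\fn_0} : \fn_0 \to \exp(\fn_0)$ is a diffeomorphism onto its image. I do not anticipate any substantial obstacle here: the only nontrivial ingredient is the global-diffeomorphism theorem for $\exp$ on simply connected nilpotent groups, which is standard and can simply be cited from, e.g., Corwin--Greenleaf or Knapp.
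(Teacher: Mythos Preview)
Your argument is correct and is precisely the standard way to establish this result. Note, however, that the paper does not actually prove this lemma: it introduces it with the phrase ``We start by recalling the following standard result'' and states it without proof. So there is no paper proof to compare against; you have simply supplied the routine justification the authors chose to omit.
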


%The next lemma describes the effects of involutions on nilpotent groups.

 %\begin{lemma}[{\cite[Prop.\ 2.1]{Benoist84}, \cite[Prop.\ 1.1]{Benoist84b}}]\label{l: connectedness_of_N_intersect_H}
%                    Let $N$ be a connected and simply connected nilpotent Lie group, and $\gs$ an involution of $N.$
 %                   Let  $\fh_N$ be the $+1$-eigenspace and $\fq_N$ the minus $-1$-eigenspace of the infinitesimal involution $\gs: \fn \to \fn.$
 %                   Let
  %                  \begin{eqnarray*}
  %                  H_N &:= &\{ n \in N\setmid \gs(n) = n\},\\
 %                   Q_N&:= & \{n \in N\setmid \gs(n) = n^{-1}\}.
%                    \end{eqnarray*}
   %                 Then $H_N$ and $Q_N$ are closed submanifolds of $N$ and
      %              \begin{enumerate}
         %               \itema the map{\ }  $\exp:\mathfrak{h}_N\to H_N$ is a diffeomorphism,
            %            \itemb  the map{\ } $\exp:\mathfrak{q}_N\to Q_N$ is a diffeomorphism,
               %         \itemc the multiplication map{\ } $\mu:H_N\times Q_N\to N$ is a diffeomorphism.
                  %  \end{enumerate}
%                    In particular, the subgroup $H_N$ is connected.
   %             \end{lemma}

                 \begin{lemma}[{\cite[ Lemma IV.6.8]{helgason1984}}]\label{Helgason}
                    Let $N$ be a connected, simply connected nilpotent Lie group with Lie algebra $\mathfrak{n}$. Let $(\mathfrak{n}_i)_{0\leq i \leq k}$
                     be a strictly decreasing sequence of ideals      of $\fn$ such that     $\fn_0 = \fn,$ $\fn_k = 0$ and
                     $$
                     [\mathfrak{n},\mathfrak{n}_i]\subseteq \mathfrak{n}_{i+1} \quad\quad \mbox{\rm for all} \quad 0 \leq i < k.
                     $$
                     Let $\mathfrak{b}_1$ and $\mathfrak{b}_2$ be two mutually complementary subspaces of $\mathfrak{n}$ such that $\mathfrak{n}_i=\mathfrak{b}_1\cap\mathfrak{n}_i+\mathfrak{b}_2\cap\mathfrak{n}_i$, for all $0 \leq i \leq k$. Then the mapping \[\varphi:(X,Y)\to\exp X\exp Y\] is an analytic diffeomorphism
                     from $\mathfrak{b}_1\times\mathfrak{b}_2$ onto $N$.
                \end{lemma}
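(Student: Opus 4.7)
The plan is to induct on $k$, the length of the filtration. The base case $k=1$ forces $[\fn,\fn]\subseteq\fn_1=0$, so $\fn$ is abelian and $\exp:\fn\to N$ is an analytic diffeomorphism; then $\varphi(X,Y)=\exp X\exp Y=\exp(X+Y)$ is an analytic diffeomorphism $\mathfrak{b}_1\times\mathfrak{b}_2 \to N$ because $\fn=\mathfrak{b}_1\oplus\mathfrak{b}_2$.

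For the inductive step I would exploit the fact that $\fc:=\fn_{k-1}$ is a central ideal, since $[\fn,\fn_{k-1}]\subseteq\fn_k=0$. By Lemma \ref{l: basics nilpotent groups}, $C:=\exp\fc$ is a closed (hence central, normal) subgroup of $N$, and the quotient $\bar N := N/C$ is connected, simply connected nilpotent with Lie algebra $\bar\fn := \fn/\fc$. Writing $\pi:\fn\to\bar\fn$ and setting $\bar\fn_i := \pi(\fn_i)$ for $0\le i\le k-1$ and $\bar{\mathfrak{b}}_j := \pi(\mathfrak{b}_j)$, one checks (using $\fc\subseteq\fn_i$ for $i\le k-1$ together with the splitting of $\fc$ along $\mathfrak{b}_1,\mathfrak{b}_2$) that the quotient filtration still satisfies the hypotheses, is strictly shorter, and that $\bar\fn = \bar{\mathfrak{b}}_1\oplus\bar{\mathfrak{b}}_2$ with every $\bar\fn_i$ decomposing compatibly. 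By the induction hypothesis, the analogous map $\bar\varphi$ is an analytic diffeomorphism.

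To finish, I would lift these properties through the central extension $C\to N\to\bar N$. For surjectivity, given $n\in N$, decompose its image in $\bar N$ via $\bar\varphi^{-1}$, lift to $X_0\in\mathfrak{b}_1$, $Y_0\in\mathfrak{b}_2$, and write $n = \exp X_0\exp Y_0 \exp Z$ for some $Z\in\fc$. Decomposing $Z = Z_1 + Z_2$ with $Z_j\in\fc\cap\mathfrak{b}_j$ and using centrality of $\fc$ to commute $\exp Z_j$ freely, one obtains $n = \exp(X_0+Z_1)\exp(Y_0+Z_2)$. Injectivity reduces, via the induction hypothesis on $\bar N$, to showing that if $W_1\in\fc\cap\mathfrak{b}_1$ and $W_2\in\fc\cap\mathfrak{b}_2$ satisfy $\exp W_1\exp W_2 = e$, then $W_1=W_2=0$; centrality of $\fc$ gives $\exp(W_1+W_2)=e$, whence the claim follows from injectivity of $\exp$ on $\fn$ and $\mathfrak{b}_1\cap\mathfrak{b}_2=0$. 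To upgrade smooth bijectivity to a diffeomorphism, note that $\exp X\exp Y = \exp(X\ast Y)$ via a Baker--Campbell--Hausdorff series that terminates in the nilpotent setting, so $\varphi$ equals $\exp$ composed with a polynomial map; the differential at $(0,0)$ is $(U,V)\mapsto U+V$, which is an isomorphism by $\fn=\mathfrak{b}_1\oplus\mathfrak{b}_2$, and invertibility at an arbitrary $(X,Y)$ follows by a standard translation argument in the BCH polynomial picture.

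The principal bookkeeping obstacle is the descent of the data to $\bar\fn$: one must carefully verify the identity $\bar\fn_i \cap \bar{\mathfrak{b}}_j = \pi(\fn_i\cap\mathfrak{b}_j)$, which rests precisely on the hypothesis that the direct-sum decomposition is compatible with every step of the filtration. Once the induction machinery is set up correctly, centrality of $\fc$ trivializes every commutation issue and the remaining verifications are routine.
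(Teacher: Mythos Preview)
The paper does not give its own proof of this lemma: it is quoted from Helgason's book (Lemma~IV.6.8 of \cite{helgason1984}) and used as a black box, so there is nothing in the paper to compare your argument against.

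That said, your induction on the length of the filtration via the central ideal $\fc=\fn_{k-1}$ is the standard approach and is essentially correct; the descent of the filtration and of the decomposition $\mathfrak{b}_1\oplus\mathfrak{b}_2$ to $\fn/\fc$ is exactly the point, and you identify it. The one place that deserves more care is the final upgrade from smooth bijection to diffeomorphism: the ``standard translation argument'' you invoke is not entirely routine, since $\mathfrak{b}_1$ and $\mathfrak{b}_2$ are not assumed to be subalgebras, so you cannot simply conjugate $(X,Y)$ to the origin by group translations that stay inside $\mathfrak{b}_1\times\mathfrak{b}_2$. A cleaner way to close this gap is to note that your surjectivity construction already produces an explicit analytic inverse: fix linear splittings $\mathfrak{b}_j=(\mathfrak{b}_j\cap\fc)\oplus\mathfrak{b}_j'$, so that the lift $(X_0,Y_0)\in\mathfrak{b}_1'\times\mathfrak{b}_2'$ depends analytically on $\bar n=nC$ via $\bar\varphi^{-1}$, then $Z=\log\big((\exp X_0\exp Y_0)^{-1}n\big)\in\fc$ and its linear decomposition $Z=Z_1+Z_2$ depend analytically on $n$, and $(X_0+Z_1,\,Y_0+Z_2)$ is the desired analytic inverse of $\varphi$.
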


                \begin{lemma}\label{l: KDV}
                Let $N_P$ be the nilpotent radical of a minimal parabolic subgroup
                $P \in \cP(A),$ let $ \mathfrak{n}_P$ be its Lie algebra and let $\mathfrak{n}_1,\ldots,\mathfrak{n}_k\subset\mathfrak{n}_P$ be linearly independent subalgebras of $\mathfrak{n}_P$ that are direct sums of $\mathfrak{a}$-root spaces. Assume that $\mathfrak{n}=\mathfrak{n}_1\oplus\ldots\oplus\mathfrak{n}_k$ is a subalgebra of $\mathfrak{n}_P$. Denote by $N:=\exp\mathfrak{n}$ and by $N_i:=\exp\mathfrak{n}_i,\,i\in\{1,\ldots,k\},$ the corresponding closed subgroups of $N_P$. Then the multiplication map
                    \[\mu:N_1\times\ldots\times N_k\to N\]
                    is a diffeomorphism.
                \end{lemma}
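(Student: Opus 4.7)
The natural tool is Helgason's Lemma (Lemma \ref{Helgason}), applied after a reduction by induction on $\dim \fn$.

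First, I would settle the case $k = 2$ directly. Take $\fb_1 := \fn_1$ and $\fb_2 := \fn_2$. Using the height filtration $\fn_P^{(j)}$ of $\fn_P$ with respect to the $\Sigma(P)$-simple roots (the sum of root spaces whose root has height at least $j$), the intersections $\fn \cap \fn_P^{(j)}$ form a strictly decreasing sequence of ideals of $\fn$ satisfying $[\fn, \fn \cap \fn_P^{(j)}] \subseteq \fn \cap \fn_P^{(j+1)}$. Since each $\fn_i$ is a direct sum of root spaces, the compatibility $\fn \cap \fn_P^{(j)} = (\fn_1 \cap \fn_P^{(j)}) \oplus (\fn_2 \cap \fn_P^{(j)})$ is immediate. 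Lemma \ref{Helgason} then produces a diffeomorphism $(X_1, X_2) \mapsto \exp X_1 \exp X_2$ from $\fn_1 \times \fn_2$ onto $N$; post-composing with the exponential diffeomorphisms $\fn_i \to N_i$ supplied by Lemma \ref{l: basics nilpotent groups} gives $\mu$.

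For general $k$, I would induct on $\dim \fn$. A naive induction on $k$ is blocked by the fact that intermediate sums $\fn_1 \oplus \cdots \oplus \fn_{k-1}$ need not be subalgebras. Instead, I would choose a root $\alpha$ of maximum height with $\fg_\alpha \subseteq \fn$. Maximality forces $[\fn, \fg_\alpha] = 0$, so $\fc := \fg_\alpha$ is a central ideal of $\fn$, contained in a unique factor $\fn_{i_0}$. Let $Z := \exp(\fc)$, a closed central subgroup of $N$ by Lemma \ref{l: basics nilpotent groups}. Pass to the quotient: $\bar\fn := \fn / \fc$ is a graded nilpotent Lie algebra (graded by the remaining roots), and $\bar\fn_i := \fn_i / (\fn_i \cap \fc)$ gives a direct-sum decomposition $\bar\fn = \bar\fn_1 \oplus \cdots \oplus \bar\fn_k$ into subalgebras, each a direct sum of root spaces of the quotient grading. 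The induction hypothesis (in a slightly generalized form of the lemma, in which $\fn_P$ is replaced by any root-graded nilpotent Lie algebra) gives that $\bar\mu: \bar N_1 \times \cdots \times \bar N_k \to \bar N$ is a diffeomorphism, where $\bar N := N/Z$. To lift back to $N$, I would use the centrality of $Z$: the commutative square formed by $\mu$, $\bar\mu$, and the quotient maps has vertical arrows that are surjective submersions with fiber $Z$ (embedded on the left as a subgroup of the $i_0$-th factor), and because $Z$ is central the shift $n_{i_0} \mapsto n_{i_0} z$ translates under $\mu$ to multiplication by $z$ in $N$. Hence $\mu$ induces a bijection on each pair of corresponding fibers, and combined with the bijectivity of $\bar\mu$ this yields bijectivity of $\mu$. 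A short computation shows that the differential of $\mu$ at $(n_1, \ldots, n_k)$ is the map $(X_1, \ldots, X_k) \mapsto \sum_i \Ad((n_{i+1} \cdots n_k)^{-1}) X_i$ after left-translation; noting that $\Ad(n)$ for $n \in N$ differs from the identity by a height-increasing nilpotent map, an induction on root height confirms that this is an isomorphism, so $\mu$ is a local diffeomorphism everywhere.

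The main technical obstacle is the need to broaden the statement of the lemma slightly so that the quotient $\bar\fn$ belongs to its scope: $\bar\fn$ is an abstract root-graded nilpotent Lie algebra, and it is not manifestly a subalgebra of the nilpotent radical of any minimal parabolic. The cleanest fix is to state and prove the analogous result for any nilpotent Lie algebra carrying a grading by an additive subset of a real vector space; the present lemma then follows as the special case coming from the $\fa$-root space decomposition of $\fn_P$.
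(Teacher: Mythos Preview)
Your approach is correct but genuinely different from the paper's. The paper complexifies: it passes to a connected complex semisimple group $G_{\C}$ with $G$ as a real form, where the Cartan weight spaces in $\fn_{P\C}$ become one-dimensional, and proves an abstract lemma (Lemma~\ref{l: weight-spaces-deco}) for the class $\cN$ of complex nilpotent algebras carrying a semisimple $\fh$-action with one-dimensional weight spaces, by induction on dimension via the center---the same mechanism you propose, but in a class that is closed under quotients by construction. One-dimensionality then makes the passage to the subalgebra statement (Corollary~\ref{c: complex-KDV}) a one-liner: since \emph{any} ordering of the one-dimensional weight groups gives a product decomposition of $N_{\C}$, one simply groups the weights according to which $\fn_{i\C}$ they lie in and applies the lemma twice; the real case then follows by taking fixed points of the conjugation $\tau$. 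Your route stays entirely in the real category and avoids complexification, at the price of having to reformulate the induction for an abstract class of root-graded nilpotent Lie algebras (which you correctly flag as the needed fix, and which is the exact analogue of the paper's class $\cN$). Both arguments pivot on the same idea---induction on dimension via a central graded piece---but the paper's complexification buys the trivial corollary step, while your approach is more self-contained. Your separate $k=2$ argument via Lemma~\ref{Helgason} with the height filtration is valid but ultimately disconnected from your general induction; and your differential computation (showing $d\mu$ is identity plus a height-increasing, hence nilpotent, perturbation) is a clean direct verification of the local-diffeomorphism half, though once $\bar\mu$ is known to be a diffeomorphism and the central fiber translation is bijective, this already follows.
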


                This    result is stated in     \cite[Lemma 2.3]{duiskolkvar1983} for $\fn = \fn_P,$ with reference to  \cite{chevalley1958}.
                 We need the      present slightly more general version with
                $\fn$ a subalgebra of $\fn_P.$ A proof of this      result  can be found in Appendix \ref{a: proof of KDV}.

\subsection{Fixed points for the involution in minimal parabolic subgroups}\label{ss: langlands-decomp-subsec}
                    Let $P\in\cP(A)$. The decomposition
                $P=LN_P$ induces a similar decomposition for the intersection $P\cap H$. In the present subsection we present a proof for this fact, see the lemma below.

               \begin{lemma}\label{l: P-intersect-H-decomposition}
                    $P\cap H\simeq(L\cap H)\times(N_P\cap H)$
                \end{lemma}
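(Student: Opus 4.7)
The plan is to verify that the map $\mu: (L \cap H) \times (N_P \cap H) \to P \cap H$, $(\ell,n) \mapsto \ell n$, is a diffeomorphism. Smoothness and injectivity are inherited from the Langlands diffeomorphism $L \times N_P \to P$. The content is surjectivity: for $p \in P \cap H$ with $p = \ell n$, $\ell \in L$, $n \in N_P$, I must show $\ell, n \in H$.

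First I record that $L$ is $\gs$-stable. Since $\gs$ and $\Cartan$ commute, $K = G^\Cartan$ is $\gs$-stable; combined with $\gs$-stability of $\fa$, this makes $M = Z_K(\fa)$, and hence $L = MA$, $\gs$-stable. Applying $\gs$ to $p = \ell n$ and using $\gs p = p$, we obtain $\ell n = \gs\ell \cdot \gs n$, so $\gs n = (\gs\ell)^{-1}\ell \cdot n \in L \cdot N_P = P$. Combined with $\gs n \in \gs(N_P)$, this gives $\gs n \in P \cap \gs(N_P)$.

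The crux is therefore the intermediate identity $P \cap \gs(N_P) = N_P \cap \gs(N_P) =: N_{P,\gs}$. At the Lie algebra level this is transparent: $\fp \cap \gs\fn_P = (\fl \oplus \fn_P) \cap \gs\fn_P = \fn_P \cap \gs\fn_P$, since $\fl$ contains no non-zero root space while $\gs\fn_P$ is a direct sum of root spaces. To lift to the group level, I decompose $\gs\fn_P = \fn_{P,\gs} \oplus \fv$, where $\fv := \gs\fn_P \cap \fn_{\bar P}$ is the sum of root spaces for roots in $\gs\gS(P) \cap (-\gS(P))$; both summands are subalgebras (closure under addition of the underlying root subsets), and the lower central series of $\gs\fn_P$ supplies an ideal filtration compatible with this splitting. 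Lemma~\ref{Helgason} then provides a unique factorization $y = \exp A \cdot \exp B$ for every $y \in \gs N_P$, with $A \in \fv$ and $B \in \fn_{P,\gs}$. For $y \in P \cap \gs N_P$ we have $\exp B \in N_{P,\gs} \subseteq P$, so $\exp A = y (\exp B)^{-1} \in P$, while also $\exp A \in \exp \fv \subseteq N_{\bar P}$. The standard fact $P \cap N_{\bar P} = \{e\}$ (a consequence of $P \cap \bar P = L$ together with $L \cap N_{\bar P} = \{e\}$, since no $ma \in MA$ with $m$ in the compact group $M \subseteq K$ and $a$ having positive $\Ad$-eigenvalues can be unipotent) forces $A = 0$, so $y \in N_{P,\gs}$.

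Returning to the main thread: $\gs n \in N_{P,\gs}$, and $\gs$-stability of $N_{P,\gs}$ yields $n = \gs(\gs n) \in N_{P,\gs} \subseteq N_P$. Then $p = \ell n = \gs\ell \cdot \gs n$ are two Langlands decompositions of $p$ in $P = L \cdot N_P$, and uniqueness forces $\gs\ell = \ell$ and $\gs n = n$, i.e.\ $\ell \in L \cap H$ and $n \in N_P \cap H$. The diffeomorphism statement then follows automatically, as $\mu$ is a bijection whose derivative at $(e,e)$ is the identity on $(\fl \cap \fh) \oplus (\fn_P \cap \fh) = \fp \cap \fh = T_e(P \cap H)$, with local triviality propagating by translation. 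The main technical obstacle is the intermediate identity $P \cap \gs(N_P) = N_{P,\gs}$, which rests on Lemma~\ref{Helgason} combined with $P \cap N_{\bar P} = \{e\}$.
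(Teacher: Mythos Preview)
Your proof is correct and follows essentially the same approach as the paper. Both arguments apply $\gs$ to the Langlands factorization $p=\ell n$, decompose $\gs(N_P)$ as the product $(\gs N_P\cap \bar N_P)\cdot(\gs N_P\cap N_P)$, and finish using $P\cap N_{\bar P}=\{e\}$ (equivalently $L\cap \bar N_P N_P=\{e\}$). The only cosmetic differences are that the paper tracks the element $\gs(n)n^{-1}\in L$ and shows it lies in $\bar N_P N_P$, whereas you track $\gs n\in P\cap \gs N_P$ and show this intersection equals $N_P\cap\gs N_P$; and the paper invokes Lemma~\ref{l: KDV} for the product decomposition of $\gs N_P$, while you invoke Lemma~\ref{Helgason} via the lower central series.
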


                \begin{proof}
                    Let $p$ be an element in $P\cap H$. According to the decomposition $P=L N_P$, we write $p=ln$.
                    Then,
                    $ln = \sigma(ln)=\sigma(l)\sigma(n)$ and we obtain that $\sigma(n)n^{-1}=\sigma(l)^{-1}l\in L$.
                    Since $\sigma(N_P)$ is the nilpotent radical of the parabolic subgroup $\gs(P) \in \cP(A),$ it follows from Lemma \ref{l: KDV} with $k=2$
that the multiplication map
$$
(\sigma(N_P)\cap\bar{N}_P)\times(\sigma(N_P)\cap N_P) \to \gs(N_P)
$$
induces a diffeomorphism. We thus see that $\sigma(n)n^{-1}\in\bar{N}_P N_P$. Now, by \cite[Lemma 7.64]{knapp2002} it follows that $\bar{N}_P N_P\cap L ={e}$ and thus $\sigma(n)=n$ and $\sigma(l)=l$.
                \end{proof}

\subsection{Decomposition of nilpotent radicals induced by the involution}
\label{ss: decomposition-for-NP}
In this subsection, we assume that $P \in \cP(A).$
We will show that the unipotent radical $N_P$ decomposes as the product of $N_P\cap H$ and a suitable closed subgroup $N_{P,+}$
of $N_P.$
To describe this subgroup, we need the existence of suitable elements of $\faq.$
As     usual, an element $X \in \faq$ is said to be regular for the root system $\gS(\fg, \faq)$ if no root of this system vanishes on it. The set of such regular elements is denoted by $\faqreg.$ We observe that in terms of the system $\gS(\fg, \fa)$ this set may be described  as
\begin{equation}\label{e: aq regular}
                \faqreg =  \{X\in\fa_\iq\setmid\;\;
                \forall\alpha\in\Sigma(\mathfrak{g},\mathfrak{a}):\;\;
                \alpha(X)                =0\Rightarrow\alpha|_{\fa_\iq}=0
              \}.
 \end{equation}

\begin{lemma}\label{l: existence Zq}{\ }
\begin{enumerate}
\itema
There exists an element $Z_\iq \in \faqreg $ such that
 $\ga(Z_\iq) > 0$ for all  $\ga \in \gS(P, \gs\Cartan).$
 \itemb
 There exists an element $Z_\ih \in \fah$ such that
 $\ga(Z_\ih) >0$ for all $\ga \in \gS(P,\gs).$
 \end{enumerate}
\end{lemma}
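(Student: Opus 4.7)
The plan is to construct both $Z_\iq$ and $Z_\ih$ as averaged projections of a single strictly $\gS(P)$-dominant vector $X \in \fa$, that is, one with $\ga(X) > 0$ for every $\ga \in \gS(P)$. The key observation is that, since $\fa \subseteq \fp$ and $\Cartan|_\fp = -\mathrm{id}$, the involution $\gs\Cartan$ acts on $\fa$ as $+1$ on $\faq$ and $-1$ on $\fah$, whereas $\gs$ acts as $+1$ on $\fah$ and $-1$ on $\faq$. In particular, the projections $\pr_\iq : \fa \to \faq$ and $\pr_\ih : \fa \to \fah$ associated with the decomposition (\ref{e: a-decomp}) are given by $X \mapsto \half(X + \gs\Cartan X)$ and $X \mapsto \half(X + \gs X)$ respectively.

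For part (a) I would set $Z_\iq := \half(X + \gs\Cartan X) \in \faq$. For any $\ga \in \gS(P, \gs\Cartan)$ the defining condition guarantees that both $\ga$ and $\gs\Cartan\ga$ belong to $\gS(P)$, and, using $\ga(\gs\Cartan X) = (\gs\Cartan\ga)(X)$ (valid because $\gs\Cartan$ is an involution), one obtains
\[
\ga(Z_\iq) = \half\bigl(\ga(X) + (\gs\Cartan\ga)(X)\bigr) > 0.
\]
The remaining issue, which I regard as the main obstacle, is to ensure that $Z_\iq \in \faqreg$. Here I would argue by genericity: the description (\ref{e: aq regular}) exhibits $\faqreg$ as the complement of a finite union of hyperplanes in $\faq$, so it is open and dense. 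Since $\pr_\iq : \fa \to \faq$ is a linear surjection, its preimage $\pr_\iq^{-1}(\faqreg)$ is open and dense in $\fa$ and therefore meets the non-empty open cone of $\gS(P)$-dominant vectors. Choosing $X$ in this intersection delivers the required $Z_\iq$.

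For part (b), I would set $Z_\ih := \half(X + \gs X) \in \fah$ for the same $X$. Every $\ga \in \gS(P, \gs)$ satisfies $\gs\ga \in \gS(P)$ by definition, so
\[
\ga(Z_\ih) = \half\bigl(\ga(X) + (\gs\ga)(X)\bigr) > 0,
\]
and since no regularity constraint is imposed on $Z_\ih$, this completes (b). The only real subtlety in the whole argument is thus the genericity step in (a); the rest is a direct consequence of the definitions of $\gS(P,\gs\Cartan)$ and $\gS(P,\gs)$ combined with the averaging identity for the dual involution, in the same spirit as the computation already used in the proof of Lemma \ref{c: if-not-sigma-stable-then-one-simple-root}.
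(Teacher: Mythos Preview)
Your proof is correct and follows essentially the same approach as the paper: both construct $Z_\iq$ and $Z_\ih$ by averaging a strictly $\gS(P)$-dominant element $X \in \fa$ under $\gs\Cartan$ and $\gs$ respectively, and then use that membership in $\gS(P,\gs\Cartan)$ (resp.\ $\gS(P,\gs)$) forces both $\ga(X)$ and $(\gs\Cartan\ga)(X)$ (resp.\ $(\gs\ga)(X)$) to be positive. The only difference is in how regularity of $Z_\iq$ is secured: the paper picks $X$ in the dense open set $\fa' = \{X\in\fa \mid \ga(X)=\gb(X)\Rightarrow\ga=\gb\}$, so that $\ga(Z_\iq)=0$ forces $\ga = -\gs\Cartan\ga$ and hence $\ga|_{\faq}=0$, whereas you argue directly that $\pr_\iq^{-1}(\faqreg)$ is dense open in $\fa$ and must meet the dominant chamber; both arguments are valid and equally short.
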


\begin{proof}
The set
                \[
                \mathfrak{a}' :=  \{X\in\mathfrak{a}\setmid\;\;
                \forall\alpha,\beta\in\Sigma(\mathfrak{g},\mathfrak{a}) :\;\; \alpha(X)=\beta(X)\Rightarrow\alpha=
                \beta\}
                \]
is the complement of finitely many hyperplanes in $\fa,$ hence open and dense.
Let $\mathfrak{a}^+(P)$      denote the positive chamber associated      with the positive system  $\gS(P)$
for $\gS(\fg,\fa).$ Fix $Z_P\in\mathfrak{a}^+(P)\cap\mathfrak{a}'$.
Then it is readily verified that
$
Z_\iq:=Z_P+\sigma\theta(Z_P)
$
 satisfies the requirements of (a). Likewise, the element
 $Z_\ih = Z_P + \sigma(Z_P)$ satisfies the requirements of (b).
 \end{proof}

 Given    {} $Z_\iq \in \faqreg$ we put
 $\Sigma(P, +) :=\{\alpha\in\Sigma(P):\alpha(Z_\iq)>0\}.$
      Then
                \[
                \fn_{P,+}:=\bigoplus_{\alpha\in\Sigma(P,+)}
                \fg_{\alpha}
                \]
                is a subalgebra of $\fn_P.$
                Let      $N_{P,+}:=\exp\fn_+$ be the corresponding closed subgroup of $N_P$,
                see Lemma \ref{l: basics nilpotent groups}. Define
                \[\fn_{P,\sigma}:=\bigoplus_{\alpha\in\Sigma(P,\sigma)}\fg_{\alpha}\]
                and $N_{P,\sigma}$ as the corresponding closed subgroup.
                 \begin{prop}\label{p: N=N_+(N_P cap H) decomposition}
                 Let $Z_\iq \in \faqreg$ be as in
                 Lemma \ref{l: existence Zq} (a) and let $N_{P,+}$ be defined as above. Then
                 the multiplication map
                 $$
                 N_{P,+} \times(N_P\cap H)\to N_P
                 $$
                 is a diffeomorphism.
                \end{prop}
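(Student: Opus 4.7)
The plan is to show that $\mu\colon N_{P,+} \times (N_P \cap H) \to N_P$, $(n_+, h) \mapsto n_+ h$, is bijective with $d\mu$ a linear isomorphism at every point; smoothness is built in.

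For injectivity, the key infinitesimal statement is $\fn_{P,+} \cap \fh = 0$: since $\sigma(Z_\iq) = -Z_\iq$, any $\alpha \in \Sigma(P,+)$ satisfies $(\sigma\alpha)(Z_\iq) < 0$, hence $\sigma\alpha \notin \Sigma(P,+)$. Thus $\sigma\fn_{P,+}$ and $\fn_{P,+}$ are spans of disjoint families of $\fa$-root spaces, their intersection is zero, and any $X \in \fn_{P,+} \cap \fh = \fn_{P,+} \cap \sigma\fn_{P,+}$ vanishes. I would then lift this to $N_{P,+} \cap H = \{e\}$ by Baker--Campbell--Hausdorff along the descending central series of $\fn_P$: if $n = \exp X$ lies in $N_{P,+} \cap H$, the identity $\exp X = \exp \sigma X$ together with layer-by-layer induction forces the image of $X$ in each graded quotient to be $\sigma$-fixed, hence zero, so $X = 0$. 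Injectivity of $\mu$ follows because $\mu(n_1, h_1) = \mu(n_2, h_2)$ gives $n_2^{-1} n_1 = h_2 h_1^{-1} \in N_{P,+} \cap H$.

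To verify that $d\mu_{(e,e)}\colon \fn_{P,+} \oplus (\fn_P \cap \fh) \to \fn_P$, the sum map, is an isomorphism, I would decompose $\fn_P$ along $\sigma$-orbits in $\Sigma(P)$. A one-element orbit $\{\alpha\} \subseteq \Sigma(P,\sigma\theta)$ contributes $\fg_\alpha \subseteq \fn_{P,+}$. A two-element orbit $\{\alpha, \sigma\alpha\} \subseteq \Sigma(P,\sigma)$ with $\alpha(Z_\iq) > 0$ contributes $\fg_\alpha \subseteq \fn_{P,+}$ together with the diagonal $\{Y + \sigma Y : Y \in \fg_\alpha\} \subseteq \fn_P \cap \fh$; together these span $\fg_\alpha \oplus \fg_{\sigma\alpha}$. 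The remaining case is a fixed orbit $\{\alpha\} \subseteq \Sigma(P,\sigma)$ with $\sigma\alpha = \alpha$, equivalently $\alpha|_\faq = 0$; here the required contribution $\fg_\alpha \subseteq \fn_P \cap \fh$ rests on the structural fact that $\sigma$ acts as the identity on $\fg_\alpha$, i.e.\ $\fg_\alpha \subseteq \fh$. This is the crux of the argument: it reflects that with $\fa$ max abelian in $\fp$ extending $\faq$, no nontrivial $\fa_\ih$-root space of $Z_\fg(\faq)$ escapes $\fh$.

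For the global conclusion I would apply Lemma \ref{Helgason} with $\fb_1 = \fn_{P,+}$, $\fb_2 = \fn_P \cap \fh$, and an ideal filtration of $\fn_P$ to which both summands restrict compatibly. A height filtration built from $Z_\iq + Z_\ih$ of Lemma \ref{l: existence Zq} works: $\sigma$-orbits of roots in $\Sigma(P,\sigma)$ sit at a well-defined combined level, while orbits in $\Sigma(P,\sigma\theta)$ already lie in $\fn_{P,+}$. Lemma \ref{Helgason} then delivers a real-analytic diffeomorphism $(X,Y) \mapsto \exp X \exp Y$ from $\fn_{P,+} \oplus (\fn_P \cap \fh)$ onto $N_P$, which under the identifications $\exp \fn_{P,+} = N_{P,+}$ and $\exp(\fn_P \cap \fh) = N_P \cap H$ is precisely $\mu$. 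The main obstacle is the structural claim in the previous paragraph; the rest is bookkeeping on $\sigma$-orbits.
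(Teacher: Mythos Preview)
Your linear decomposition $\fn_P = \fn_{P,+} \oplus (\fn_P \cap \fh)$ via $\sigma$-orbits is correct, including the structural fact that $\alpha \in \fa_\ih^*$ forces $\fg_\alpha \subseteq \fh$. However, the global step via Lemma~\ref{Helgason} has a gap: for the chain $\fn_i$ defined by level sets of $Z_\iq + Z_\ih$ to consist of ideals of $\fn_P$ with $[\fn_P,\fn_i]\subseteq\fn_{i+1}$, you need $\alpha(Z_\iq+Z_\ih)>0$ for every $\alpha\in\Sigma(P)$, i.e.\ $Z_\iq+Z_\ih\in\fa^+(P)$. Nothing in the \emph{statement} of Lemma~\ref{l: existence Zq} guarantees this; for $\alpha\in\Sigma(P,\sigma\theta)$ the value $\alpha(Z_\ih)$ is uncontrolled and can be very negative, and for $\alpha\in\Sigma(P,\sigma)$ the value $\alpha(Z_\iq)$ is uncontrolled. (It happens to hold for the particular $Z_\iq,Z_\ih$ constructed in the \emph{proof} of that lemma, since there $Z_\iq+Z_\ih=2Z_P$, but then you are proving the proposition only for that specific $Z_\iq$, and different admissible $Z_\iq$ give genuinely different subgroups $N_{P,+}$.) Your remark that ``$\sigma$-orbits in $\Sigma(P,\sigma)$ sit at a well-defined combined level'' is true for the $Z_\ih$-grading alone, but that grading does not define ideals of all of $\fn_P$.

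The paper circumvents this by a two-stage argument that never needs a single filtration on $\fn_P$. First Lemma~\ref{l: KDV} (a product decomposition for subgroups that are sums of root spaces) gives $N_P \simeq N_{P,\sigma\theta}\times N_{P,\sigma}$. Then Lemma~\ref{Helgason} is applied only inside $N_{P,\sigma}$, using the $Z_\ih$-filtration, which \emph{is} a chain of ideals there because every $\alpha\in\Sigma(P,\sigma)$ satisfies $\alpha(Z_\ih)>0$; moreover each such ideal is $\sigma$-stable, so compatibility with $\fn_{P,\sigma,+}$ and $\fn_P\cap\fh$ is immediate. A second application of Lemma~\ref{l: KDV} recombines $N_{P,\sigma\theta}\times N_{P,\sigma,+}\simeq N_{P,+}$. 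Once this route is taken, your separate injectivity argument via BCH becomes superfluous: the diffeomorphism comes out of the decomposition lemmas directly.
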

                The proof of this result relies on the following lemma.

                \begin{lemma}\label{l: N_P_sigma_decomposition}
                    Let $P\in\mathcal{P}(A)$ and let     $Z_\iq\in\faqreg$      be as in
                    Lemma \ref{l: existence Zq} (a).
                    Put     \[\Sigma(P,\sigma,+):=\{\alpha\in\Sigma(P,\sigma)\setmid\alpha(Z_\iq)>0\}.\]
                    Then the following statements hold:
                     \begin{enumerate}
\itema
$\mathfrak{n}_{P,\sigma,+}:=\oplus_{\alpha\in\Sigma(P,\sigma,+)}\fg_{\alpha}$
is a subalgebra of $\,\mathfrak{n}_{P,\sigma}$,
\itemb
$N_{P,\sigma,+}:=\exp\mathfrak{n}_{P,\sigma,+}$ is a closed subgroup of
$\,N_{P,\sigma}$,
\itemc $\mathfrak{n}_{P,\sigma}=
\mathfrak{n}_{P,\sigma,+}\oplus(\mathfrak{n}_{P}\cap\mathfrak{h})$,
\itemd
the multiplication map  \[\mu:N_{P,\sigma,+}\times(N_{P}\cap H)\to N_{P,\sigma}\] is a diffeomorphism.
                    \end{enumerate}
                \end{lemma}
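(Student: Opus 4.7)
The plan is to dispatch (a) and (b) directly from the definitions, prove (c) via a structural claim about root spaces vanishing on $\faq$, and then deduce (d) from (c) using Helgason's lemma.

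For (a), I would check closedness of $\gS(P,\gs,+)$ under root addition: if $\ga,\gb$ lie in it and $\ga+\gb$ is a root, then closedness of $\gS(P)$ and of $\gs\gS(P)$ under root addition yields $\ga+\gb\in\gS(P,\gs)$, while $(\ga+\gb)(Z_\iq)>0$ by linearity. Part (b) is then immediate from Lemma~\ref{l: basics nilpotent groups}, since $\fn_{P,\gs,+}$ is a subalgebra of the nilpotent Lie algebra $\fn_P$.

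The heart of the argument is (c), whose key ingredient is the structural claim: \emph{every nonzero root $\ga\in\gS(\fg,\fa)$ with $\ga|_{\faq}=0$ satisfies $\fg_\ga\subseteq\fh$}. To prove this I would take $X\in\fg_\ga\cap\fq$ and set $Z:=X-\Cartan X$. Since $\Cartan$ commutes with $\gs$, the element $Z$ lies in $\fp\cap\fq$; the direct computation $[H,Z]=\ga(H)(X+\Cartan X)=0$ for $H\in\faq$ shows that $Z$ centralizes $\faq$ in $\fp\cap\fq$, hence $Z\in\faq$ by maximality. But $Z$ lives in $\fg_\ga\oplus\fg_{-\ga}$, whose $\fa$-weights are nonzero, so $Z=0$; then $X=\Cartan X\in\fg_\ga\cap\fk=0$, since $\fa\subseteq\fp$ forces $\Cartan$ to send $\fg_\ga$ to $\fg_{-\ga}$. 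Because $Z_\iq\in\faqreg$, the set $\gS(P,\gs,0):=\{\ga\in\gS(P,\gs):\ga(Z_\iq)=0\}$ coincides with the $\gs$-fixed roots in $\gS(P,\gs)$, and the structural claim gives $\fg_\ga\subseteq\fh$ for each such $\ga$. Partitioning $\gS(P,\gs)=\gS(P,\gs,+)\sqcup\gS(P,\gs,-)\sqcup\gS(P,\gs,0)$ with $\gS(P,\gs,-)=\gs\gS(P,\gs,+)$ and computing $\gs$-fixed vectors in $\fn_{P,\gs}$ pair-by-pair then yields
\[
\fn_P\cap\fh \;=\; \{X+\gs X:X\in\fn_{P,\gs,+}\}\;\oplus\;\bigoplus_{\ga\in\gS(P,\gs,0)}\fg_\ga,
\]
so that $\fn_{P,\gs,+}+(\fn_P\cap\fh)=\fn_{P,\gs,+}\oplus\fn_{P,\gs,-}\oplus\fn_{P,\gs,0}=\fn_{P,\gs}$; directness of the sum follows because any $X\in\fn_{P,\gs,+}$ that is $\gs$-fixed has $\gs X\in\fn_{P,\gs,-}$, forcing $X=0$.

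For (d) I would invoke Lemma~\ref{Helgason} on $\fn:=\fn_{P,\gs}$ with $\fb_1:=\fn_{P,\gs,+}$ and $\fb_2:=\fn_P\cap\fh$, which are complementary by (c). A suitable $\gs$-invariant ideal filtration can be constructed by fixing $Z_P\in\fa^+(P)\cap\fa'$ as in Lemma~\ref{l: existence Zq} and taking $Z:=Z_P+\gs Z_P\in\fah$; for every $\ga\in\gS(P,\gs)$ one has $\ga(Z)=\ga(Z_P)+\gs\ga(Z_P)>0$, since both $\ga$ and $\gs\ga$ lie in $\gS(P)$. Letting $\fn_i$ be the sum of those $\fg_\ga$ with $\ga\in\gS(P,\gs)$ and $\ga(Z)$ at least the $(i+1)$-th smallest value attained on $\gS(P,\gs)$ produces a strictly decreasing sequence of $\gs$-stable ideals with $[\fn,\fn_i]\subseteq\fn_{i+1}$, and the same pairing argument used in (c), restricted to $\fn_i$, yields the required compatibility $\fn_i=(\fn_i\cap\fb_1)+(\fn_i\cap\fb_2)$. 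Helgason's lemma then provides the diffeomorphism $(X,Y)\mapsto\exp X\exp Y$ from $\fb_1\times\fb_2$ onto $N_{P,\gs}$, and composing with the exponential diffeomorphisms of $\fb_1$ onto $N_{P,\gs,+}$ and $\fb_2$ onto $N_P\cap H$ gives the desired multiplication map $\mu$.

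The main obstacle, I expect, is the structural claim in (c) that $\fg_\ga\subseteq\fh$ whenever $\ga$ vanishes on $\faq$; once this is available, the rest is routine bookkeeping with root space decompositions and a standard filtration argument.
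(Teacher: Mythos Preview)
Your proof is correct and follows essentially the same route as the paper: identical reasoning for (a) and (b), the same three-way split on the sign of $\alpha(Z_\iq)$ for (c), and the same filtration via $Z_\ih=Z_P+\sigma Z_P\in\fah$ together with Lemma~\ref{Helgason} for (d). The only differences are cosmetic: you supply a full argument for the structural claim $\alpha\in\fahd\Rightarrow\fg_\alpha\subseteq\fh$, which the paper simply asserts, while the paper explicitly verifies $\exp(\fn_P\cap\fh)=N_P\cap H$ at the end of (d), a point you pass over.
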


                \begin{proof}
                          (a): Assume that     $\ga, \gb \in \gS(P, \sigma,+)$ and $\ga + \gb \in \gS(\fg, \fa).$ Then
                     $\ga + \gb \in \gS(P, \sigma)$ and $(\ga + \gb) (Z_\iq) > 0$ so that     $\ga + \gb \in \gS(P, \gs, +).$ This implies (a).

                    Assertion (b) follows from (a) by application of Lemma \ref{l: basics nilpotent groups}.

                    Next, we prove (c). If $\alpha\in\Sigma(P,\sigma,+)$ then $\sigma\alpha(Z_\iq)<0$, which implies $\sigma\ga \notin \gS(P,\gs,+).$ Hence,  $\mathfrak{n}_{P,\sigma,+}\cap\mathfrak{h}=\{0\}$. It follows that
                    \[
                    \mathfrak{n}_{P,\sigma,+}\cap(\mathfrak{n}_{P}\cap\mathfrak{h})=\{0\}.
                    \]
                    It remains      to be shown that any $X\in\mathfrak{n}_{P,\sigma}$ can be written as
                    \[X=X_++X_\ih,\]
                    with $X_+\in\mathfrak{n}_{P,\sigma,+}$ and      $X_\ih\in \mathfrak{n}_{P}\cap\mathfrak{h}$.
                    It suffices to prove this for $X\in\fg_{\alpha}\subset\mathfrak{n}_{P,\sigma}$. If $\alpha(Z_\iq)>0$, then $X\in\fn_{P,\sigma,+}$ by definition. On the other hand if $\alpha(Z_\iq)=0$, then by regularity of $Z_\iq$ we have that $\alpha\in\fah^*$ and thus $\fg_\alpha\in\fh$, which implies that $X\in\mathfrak{n}_{P}\cap\mathfrak{h}$. Finally, if $\alpha(Z_\iq)<0$, then \[X=(X+\sigma(X))-\sigma(X)\]
                    with      $X + \gs(X) \in \fn_P \cap \fh$ and $-\gs(X) \in \fn_{P,\gs, +},$ and we are done.

                    For (d) fix $Z_{\mathfrak{h}}$ as in Lemma \ref{l: existence Zq} (b). Then for all $\alpha \in \Sigma(P,\sigma)$ we have that $v_\ga:= \alpha(Z_{\mathfrak{h}})>0$.
                         Let the set of positive real numbers thus obtained be ordered by $v_{\ga_1} < v_{\ga_2} < \cdots < v_{\ga_m}.$
                    We define      $\fn_0 = \fn_{P,\gs}$, $\fn_m = 0,$ and for $1 \leq i <m,$
                    \[
                    \mathfrak{n}_i:=\bigoplus_{\substack{\alpha\in\Sigma(P,\sigma)\\ \alpha(Z_\ih)> v_{\ga_i}}}\fg_{\alpha}.
                    \]
                     Then $\fn_1, \ldots, \fn_m$ is a strictly decreasing sequence of ideals in $\fn_{P,\gs}$ with $[\fn, \fn_i] \subseteq \fn_{i+1}$ for $0 \leq i <m.$
                          We note that each $\fn_i$ is invariant under $\gs.$ Hence by the same argument as used in the proof of (c) above it follows that
                      \[
                      \mathfrak{n}_i=(\mathfrak{n}_i\cap\mathfrak{n}_{P,\sigma,+})\oplus(\mathfrak{n}_i\cap(\mathfrak{n}_{P}\cap\mathfrak{h}))
                      \]
                      for all $0 \leq i \leq m.$
                      Thus, we may apply Lemma \ref{Helgason} to conclude that
                      \[
                      N_{P,\sigma}\simeq N_{P,\sigma,+}\times\exp(\mathfrak{n}_{P}\cap\mathfrak{h}).
                      \]
                    It remains to show that $\exp(\mathfrak{n}_{P}\cap\mathfrak{h})=N_P\cap H.$ This follows from
                    \[N_P\cap H\subseteq\{n\in N_P\,:\,\sigma(n)=n\}=\{\exp X\,:\,X\in\fn_P\cap\fh\}\subseteq N_P\cap H.\]
                    The proof is complete.
                \end{proof}
                \medbreak
                {\em Proof of Prop.\ \ref{p: N=N_+(N_P cap H) decomposition}.\ }
                  Let \[\mathfrak{n}_{P,\sigma\theta}:=\sum_{\alpha\in\Sigma(P,\sigma\theta)}\mathfrak{g}_{\alpha}\] and let $N_{P,\gs\theta}$ be the corresponding closed subgroup of $N_P$. Then
                $
                \mathfrak{n}_P=\mathfrak{n}_{P,\sigma\theta}\oplus\mathfrak{n}_{P,\sigma}
                $
                and by Lemma \ref{l: KDV} we obtain that
                \begin{equation}\label{e: N_P_decomp_sigma_sigma_theta}
                    N_P \simeq N_{P,\sigma\theta}\times N_{P,\sigma}.
                \end{equation}
                Applying
                Lemma \ref{l: N_P_sigma_decomposition} to the second component we obtain that
                     \[
                     N_P\simeq N_{P,\sigma\theta} \times N_{P,\sigma,+}\times (N_{P}\cap H).
                     \]
                     On the other hand,
                          $\mathfrak{n}_{P,+} = \mathfrak{n}_{P,\sigma,+}\oplus \mathfrak{n}_{P,\sigma\theta}$.
                     From this we infer by application of Lemma \ref{l: KDV} that
                     \[
                     N_{P,\sigma\theta}\times N_{P,\sigma,+}\simeq N_{P,+}
                     \]
                     The result follows.
                     \qed
                \begin{remark}
                    For the case of an $\fh$-extreme parabolic subgroup, Proposition \ref{p: N=N_+(N_P cap H) decomposition} is due to \cite{AndFJSchlicht2012}, where, for this special case, a different proof of the result is given.
                \end{remark}
    \section{Auxiliary results in convex linear algebra}\label{s: auxiliary results in convex linear algebra}

              In this section we present a few results in     convex linear algebra     which will be used in Section \ref{s: prop-sec}.

            \begin{lemma}\label{l: conv_vs_aux}
                Let $V$ be a finite dimensional real linear space and $B\subseteq  V$ a closed subset, star-shaped about the origin. If $B$ is non-compact, then there exists a $v\in V\setminus\{0\}$ such that $\mathbb{R}_{\geq 0}v\subseteq B$.
            \end{lemma}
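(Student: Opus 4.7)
The plan is to exploit non-compactness to produce a sequence in $B$ going to infinity, normalize it to extract a limit direction on the unit sphere, and then use star-shapedness together with closedness of $B$ to conclude that the entire ray in that direction lies in $B$.

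First, fix a norm $\|\cdot\|$ on $V.$ Since $V$ is finite dimensional, by the Heine--Borel theorem a closed subset of $V$ is compact if and only if it is bounded. Thus, the assumption that $B$ is closed and non-compact gives a sequence $(b_n)_{n \geq 1}$ in $B$ with $\|b_n\| \to \infty.$ In particular we may assume $b_n \neq 0$ for all $n,$ and we set $v_n := b_n/\|b_n\|.$ Each $v_n$ lies on the unit sphere of $V,$ which is compact; hence, after passing to a subsequence, we may assume $v_n \to v$ for some $v \in V$ with $\|v\| = 1.$ In particular $v \neq 0.$

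Next, fix $t \geq 0$ arbitrary. Since $\|b_n\| \to \infty,$ there exists $N$ such that $\|b_n\| \geq t$ for all $n \geq N,$ equivalently $t/\|b_n\| \in [0,1].$ By the assumption that $B$ is star-shaped about the origin, we obtain
\[
t v_n \;=\; \frac{t}{\|b_n\|}\, b_n \;\in\; B \qquad \text{for all } n \geq N.
\]
Passing to the limit and using that $t v_n \to t v$ and that $B$ is closed, we conclude $tv \in B.$ Since $t \geq 0$ was arbitrary, this shows $\mathbb{R}_{\geq 0} v \subseteq B,$ completing the proof.

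There is no serious obstacle here; the only point to get right is the choice of scaling factor $t/\|b_n\|,$ which is precisely the factor in $[0,1]$ needed so that star-shapedness applies to yield $tv_n \in B$ while still letting us take $n \to \infty$ to recover the arbitrary point $tv$ on the ray.
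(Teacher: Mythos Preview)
Your proof is correct and follows essentially the same idea as the paper's: both normalize to the unit sphere and use its compactness together with star-shapedness and closedness of $B$. The only cosmetic difference is that the paper phrases the compactness argument via the nested family $C_s := s^{-1}B \cap S$ and takes $v \in \bigcap_{s>0} C_s$, whereas you use a sequential extraction; the content is the same.
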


            \def\R{{\mathbb R}}
            \begin{proof}
                Since $B$ is star-shaped, we have $sB = t(s/t)B \subseteq t B$ for all $0< s < t.$ Fix a positive definite inner product on $V$ and let $S$ be the associated unit sphere centered at the origin. For $s >0$ we define the compact set $C_s: = s^{-1} B \cap S.$ Then $s < t \implies C_s \supseteq C_t.$ As $B$ is unbounded and starshaped, each of the sets $C_s$ is non-empty. It follows that the intersection
$$
C := \cap_{s > 0} \;C_s
$$
                is non-empty. Let $v$ be a point in this intersection. Then $v \neq 0$ and for all $s > 0$ we have $sv \in s C_s \subseteq B.$ Hence, $\R_{\geq 0} v \subseteq B.$
            \end{proof}

            \begin{lemma}\label{l: conv vs}
                Let $V$ and $W$ be two finite dimensional real linear spaces,      $p:V\to W$ a linear map and  $\Gamma \subseteq V$  a closed convex cone. Then the following assertions are equivalent.
                \begin{enumerate}
                    \itema $p|_{\Gamma}$ is a proper map.
                    \itemb $\ker p\cap \Gamma =\{0\}$.
                \end{enumerate}
            \end{lemma}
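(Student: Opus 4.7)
The plan is to prove the two implications separately, leveraging the previous Lemma \ref{l: conv_vs_aux} for the nontrivial direction.

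For the implication (a) $\Rightarrow$ (b), I would argue by contrapositive. Suppose $\ker p \cap \Gamma$ contains a nonzero vector $v$. Since $\Gamma$ is a cone, the entire ray $\R_{\geq 0} v$ lies in $\ker p \cap \Gamma$. This ray is a closed, unbounded subset of $\Gamma$ whose image under $p$ is the single point $\{0\}$, which is compact. Hence $(p|_\Gamma)^{-1}(\{0\})$ is not compact, contradicting properness of $p|_\Gamma$.

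For the converse (b) $\Rightarrow$ (a), I would again reason by contrapositive. Assuming $p|_\Gamma$ is not proper, there exists a compact set $K \subseteq W$ such that $B := \Gamma \cap p^{-1}(K)$ is not compact. By enlarging $K$ if necessary (this does not affect non-compactness of the preimage), I may assume that $K$ is a closed ball centered at $0$ in $W$, relative to some fixed norm. Then $B$ is closed in $V$, contains $0$, and is star-shaped about the origin: indeed, for $b \in B$ and $t \in [0,1]$ one has $tb \in \Gamma$ since $\Gamma$ is a convex cone containing $0$, and $p(tb) = t\,p(b) \in K$ since $K$ is a ball about $0$ and $\|tp(b)\| \leq \|p(b)\|$. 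Applying Lemma \ref{l: conv_vs_aux} to $B$ produces a nonzero $v \in V$ with $\R_{\geq 0} v \subseteq B \subseteq \Gamma$ and $p(\R_{\geq 0} v) \subseteq K$. Since $p(\R_{\geq 0} v) = \R_{\geq 0} p(v)$ is bounded, we must have $p(v) = 0$. Hence $v$ is a nonzero element of $\ker p \cap \Gamma$, contradicting (b).

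The main obstacle — such as it is — is verifying that the set $B$ inherits the star-shaped property required by Lemma \ref{l: conv_vs_aux}; this is the reason for choosing $K$ to be a ball about the origin rather than an arbitrary compact set. Once this reduction is made, both implications become short consequences of the cone structure of $\Gamma$ and the preceding lemma.
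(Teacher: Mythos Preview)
Your proof is correct and follows essentially the same approach as the paper's. The only cosmetic difference is in the enlargement step for the converse implication: the paper replaces $K$ by the convex hull of $K\cup\{0\}$ (obtaining a convex, hence star-shaped, set containing $0$), whereas you replace $K$ by a closed ball about the origin; both choices serve the same purpose of making $\Gamma\cap p^{-1}(K)$ star-shaped about $0$ so that Lemma~\ref{l: conv_vs_aux} applies.
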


            \begin{proof}
                First we prove that (a) implies (b). Assume (b) doesn't hold, i.e. there exists $v\in\ker p\cap \Gamma$, $v\neq 0$. Then $\mathbb{R}_{\geq 0}v\subseteq \ker p\cap \Gamma=(p|_{\Gamma})^{-1}(0)$ and we obtain that $(p|_{\Gamma})^{-1}(0)$ is not compact and hence $p|_{\Gamma}$ is not a proper map.

                For the converse implication, assume that (a) does not hold.      Then there exists a compact set $K\subseteq W$, such that      the set $p^{-1}(K)\cap \Gamma$ is not compact. As the latter set is closed, it is unbounded in $V.$
                Let $\bar{K}$  be the convex hull of $K\cup\{0\}$. Then $\bar{K}$ is compact     and $p^{-1}(\bar{K})\cap \Gamma$ is convex, contains 0 and is     unbounded in $V,$ hence not compact. We apply Lemma \ref{l: conv_vs_aux} and obtain that there exists $v\neq 0$ such that $\forall  t\geq 0:$ $tv\in p^{-1}(\bar{K})\cap \Gamma$. Hence, $t\cdot p(v)\in\bar{K}$ for every $t\geq 0$. Since $\bar{K}$ is compact, it follows that $p(v)=0$ and $v\in\ker p\cap \Gamma$, which implies that (b) cannot hold.
            \end{proof}

            \def\cK{{\mathcal K}}
            \begin{lemma}\label{l: properness addition on cone}
                Let $V$ be a finite dimensional real linear space, and $\Gamma$ a closed convex cone in $V$ such that there exists a linear functional $\xi \in V^*$ with $\xi > 0$ on $\Gamma\setminus \{0\}.$ Then the following holds.
                \begin{enumerate}
                    \itema For every $R > 0$ the set $\{x \in \Gamma\setmid \xi(x) \leq R\}$ is compact.
                    \itemb The addition map $\;\,a: (x,y) \mapsto x + y,\;\; \Gamma \times \Gamma \to V, $ is proper.
                \end{enumerate}
            \end{lemma}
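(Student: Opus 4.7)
The plan is to deduce both parts directly from Lemma \ref{l: conv vs}, which we have just proved, with a small preliminary observation. Namely, I would first note that $\xi \geq 0$ on all of $\Gamma$: for any $x \in \Gamma$ and $t>0$ we have $tx \in \Gamma$, and if $\xi(x) < 0$ then $tx$ would be a nonzero element of $\Gamma$ with $\xi(tx) = t\,\xi(x) < 0$, contradicting the hypothesis.

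For part (a) I would take $W = \R$ and $p = \xi$ in Lemma \ref{l: conv vs}. The hypothesis that $\xi > 0$ on $\Gamma\setminus\{0\}$ means precisely that $\ker \xi \cap \Gamma = \{0\}$, so condition (b) of that lemma holds and $\xi|_{\Gamma}$ is proper. Since $\xi \geq 0$ on $\Gamma$ by the preliminary observation, we have
\[
\{x \in \Gamma \setmid \xi(x) \leq R\} \;=\; (\xi|_{\Gamma})^{-1}([0,R]),
\]
and properness of $\xi|_\Gamma$ gives that this set is compact.

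For part (b) I would introduce the closed convex cone $\tilde\Gamma := \Gamma \times \Gamma$ in $V \oplus V$ and apply Lemma \ref{l: conv vs} to the linear addition map $a : V \oplus V \to V$ and the cone $\tilde\Gamma$. It suffices to verify that $\ker a \cap \tilde\Gamma = \{0\}$. Suppose $(x,y) \in \Gamma \times \Gamma$ with $x + y = 0$. Then $y = -x$ lies in $\Gamma$, and from $\xi \geq 0$ on $\Gamma$ together with $\xi(x) + \xi(y) = 0$ we conclude $\xi(x) = \xi(y) = 0$, which by the hypothesis forces $x = y = 0$. Lemma \ref{l: conv vs} then yields the properness of the addition map on $\tilde\Gamma$, which is exactly the claim.

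The entire argument is short and encounters no real obstacle; the only step that requires a moment of attention is the preliminary observation that $\xi$ is already nonnegative on $\Gamma$, which is what makes both the set in (a) and the kernel computation in (b) accessible to Lemma \ref{l: conv vs}.
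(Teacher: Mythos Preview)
Your proof is correct, but it takes a different route from the paper's. The paper proves (a) by appealing directly to Lemma~\ref{l: conv_vs_aux}: the set $\Gamma_R=\{x\in\Gamma\setmid \xi(x)\le R\}$ is closed, convex, contains $0$, and contains no half-line $\R_{\ge 0}v$ with $v\neq 0$ (since $\xi(tv)\to\infty$), hence is compact. It then proves (b) by using (a) directly: given compact $\cK\subseteq V$, choose $R$ with $\xi\le R$ on $\cK$; if $x+y\in\cK$ with $x,y\in\Gamma$, then $\xi(x),\xi(y)\le R$ (using $\xi\ge 0$ on $\Gamma$), so $a^{-1}(\cK)\subseteq\Gamma_R\times\Gamma_R$.

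You instead reduce both parts to Lemma~\ref{l: conv vs}, first with $p=\xi$ and then with $p=a$ on the product cone $\Gamma\times\Gamma$. This is a clean and unified argument: the same properness criterion handles everything, and your kernel computation for (b) is exactly right. The paper's approach has the minor structural advantage of making (b) a visible consequence of (a), whereas yours treats them as parallel applications of the same tool; conversely, your argument avoids reopening the star-shaped lemma and is arguably slicker. One small remark: your preliminary observation is a bit roundabout---if $x\in\Gamma$ and $\xi(x)<0$ then already $x\neq 0$, so $x\in\Gamma\setminus\{0\}$ and $\xi(x)>0$; the scaling by $t$ is unnecessary.
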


            \begin{proof}
                Let $R > 0.$ The set $\Gamma_R:= \{x \in \Gamma\setmid \xi(x) \leq R \}$ is closed and convex and it contains the origin. If $v \in \Gamma_R\setminus \{0\}$ then the half line $\R_{\geq 0} v$ is not contained in $\Gamma_R.$ By Lemma \ref{l: conv_vs_aux} we infer that $\Gamma_R$ is compact, hence (a).

                We turn to (b). Assume $\cK \subseteq V$ is compact. Then there exist an $R > 0$ such that $\xi \leq R$ on $\cK.$ Let $(x,y)\in a^{-1}(\cK).$ Then it follows that $\xi(x + y) \leq R,$ hence $\xi(x)\leq R$ and $\xi(y) \leq R,$ so that $(x,y)$ belongs to the compact set $\Gamma_R \times \Gamma_R.$ We conclude that $a^{-1}(\cK)$ is a closed subset of $\Gamma_R \times \Gamma_R,$ hence compact.
            \end{proof}

            If $S$ is a subset of $\gS(\fg, \fa)$ then      the convex cone
            $$
            \Gamma_\fa(S): = \sum_{\ga \in S} \R_{\ge 0} H_\ga.
            $$
            is finitely generated, hence closed in $\fa.$
            Likewise,
            $$
            \Gamma_{\faq}(S): = \prq \Gamma_\fa(S) = \sum_{\ga \in S} \R_{\ge 0} \,\prq(H_\ga)
            $$
            is a  closed and convex cone in $\faq.$

            \begin{corollary}\label{c: cones and properness}
                Let $P \in \cP(A).$ Then the following assertions are valid.
                \begin{enumerate}
                    \itema The map $\pr_\iq: \Gamma_\fa(\gS(P, \gs\Cartan)) \to \faq$ is proper.
                    \itemb The addition map $\;\;a: \Gamma_{\faq}(\gS(P, \gs\Cartan)) \times \Gamma_{\faq}(\gS(P, \gs \Cartan)) \to \faq$ is proper.
                \end{enumerate}
            \end{corollary}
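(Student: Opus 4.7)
The plan is to derive both parts from the preceding general lemmas on proper maps and convex cones, with the element $Z_\iq \in \faqreg$ supplied by Lemma \ref{l: existence Zq}(a) serving as the key separating device. Recall that $Z_\iq$ satisfies $\ga(Z_\iq) > 0$ for every $\ga \in \gS(P,\gs\Cartan)$. Since $H_\ga^\vee = 2 H_\ga / \|H_\ga\|^2$ and $\ga(Z_\iq) = \inp{H_\ga^\vee}{Z_\iq}$, this translates to
\[
\inp{H_\ga}{Z_\iq} > 0 \qquad (\ga \in \gS(P,\gs\Cartan)).
\]
Moreover, as $Z_\iq \in \faq$ and the decomposition $\fa = \fah \oplus \faq$ is orthogonal with respect to $\inp{\cdot}{\cdot}$, we have $\inp{X}{Z_\iq} = \inp{\prq(X)}{Z_\iq}$ for every $X \in \fa$.

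For (a), I would apply Lemma \ref{l: conv vs} with $V = \fa$, $W = \faq$, $p = \prq$, and $\Gamma = \Gamma_\fa(\gS(P,\gs\Cartan))$, which is a closed convex cone by the comment preceding the corollary. It suffices to check that $\ker \prq \cap \Gamma_\fa(\gS(P,\gs\Cartan)) = \{0\}$. Take $X = \sum_{\ga \in \gS(P,\gs\Cartan)} c_\ga H_\ga$ with all $c_\ga \ge 0$ and $\prq(X) = 0$. Then $\inp{X}{Z_\iq} = \inp{\prq(X)}{Z_\iq} = 0$, but on the other hand
\[
\inp{X}{Z_\iq} = \sum_{\ga} c_\ga \inp{H_\ga}{Z_\iq},
\]
a sum of nonnegative terms in which each coefficient $\inp{H_\ga}{Z_\iq}$ is strictly positive. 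Hence every $c_\ga$ vanishes and $X = 0$, as required.

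For (b), I would invoke Lemma \ref{l: properness addition on cone} applied to the closed convex cone $\Gamma_{\faq}(\gS(P,\gs\Cartan)) \subseteq \faq$. The linear functional $\xi := \inp{\dotvar}{Z_\iq}|_{\faq}$ is the natural candidate: for a generator $\prq(H_\ga)$ with $\ga \in \gS(P,\gs\Cartan)$, we have $\xi(\prq(H_\ga)) = \inp{H_\ga}{Z_\iq} > 0$. If $v = \sum c_\ga \prq(H_\ga)$ with $c_\ga \ge 0$ and $\xi(v) = 0$, the same positivity argument forces all $c_\ga = 0$, hence $v = 0$. Thus $\xi > 0$ on $\Gamma_{\faq}(\gS(P,\gs\Cartan)) \setminus \{0\}$, and Lemma \ref{l: properness addition on cone}(b) gives the properness of the addition map.

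There is no real obstacle here; the work is already done once one recognizes that Lemma \ref{l: existence Zq}(a) manufactures a functional that simultaneously witnesses the triviality of $\ker \prq \cap \Gamma_\fa(\gS(P,\gs\Cartan))$ (needed for (a) via Lemma \ref{l: conv vs}) and the positivity on $\Gamma_{\faq}(\gS(P,\gs\Cartan))\setminus\{0\}$ (needed for (b) via Lemma \ref{l: properness addition on cone}). Alternatively, (b) can be deduced from (a) together with properness of the direct-sum projection $\Gamma_\fa \times \Gamma_\fa \to \Gamma_{\faq} \times \Gamma_{\faq}$, but the direct application above is cleaner.
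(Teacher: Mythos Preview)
Your proof is correct and follows essentially the same route as the paper: both reduce (a) to Lemma~\ref{l: conv vs} and (b) to Lemma~\ref{l: properness addition on cone}, using an element of $\faq$ on which every $\ga\in\gS(P,\gs\Cartan)$ is strictly positive. The only cosmetic difference is that you invoke Lemma~\ref{l: existence Zq}(a) for this element $Z_\iq$, whereas the paper reconstructs it inline as $X = Y + \gs\Cartan Y$ with $Y$ in the open positive chamber---but this is exactly the same construction.
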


            \begin{proof}
                We start with (a). In view of Lemma \ref{l: conv vs} it suffices to establish the claim that  $\Gamma_\fa (\gS(P, \gs))\cap \fah = 0.$ This can be done as follows. There exists a $Y \in \fa$ such that $\ga(Y) > 0$ for all $\ga \in \gS(P).$ Put $X := Y + \gs \Cartan Y = Y - \gs(Y),$ then
                 $X \in \faq$ and $\inp{X}{H_\ga} = \inp{H_\ga}{H_\ga} \ga(X)/2 = \inp{H_\ga}{H_\ga} (\ga + \gs\Cartan \ga)(Y)/2 > 0$ for all $\ga \in \gS(P, \gs \Cartan).$
                  It follows that the linear functional $\xi = \inp{X}{\dotvar}\in \fa^*$ has strictly positive values on $\Gamma_\fa(\gS(P, \gs \Cartan))\setminus \{0\}.$ Now $\xi = 0$ on $\fah$ and we see that the claim is valid. Hence, (a).

                For (b) we proceed as follows. Let $\xi$ be as above, then $\ker \prq \subseteq \ker \xi$ and we see that $\xi > 0$ on      $\Gamma_{\faq}(\gS(P, \gs \Cartan))\setminus \{0\}. $ Now use Lemma \ref{l: properness addition on cone}.
            \end{proof}

    \section{Properness of the Iwasawa projection}\label{s: prop-sec}

            Let $P \in \cP(A)$      and let $\HP: G \to \fa$ be the Iwasawa projection defined by (\ref{e: Iwasawa projection}).
           Let $\HPq: G \to \faq$ be defined as in Remark \ref{r: defi HPq}.
            The purpose of this section is to prove that the restriction of $\HPq$ to $H$ factors through a proper map $H/H\cap P \to \faq.$

            We start with a simple lemma.

            \begin{lemma}\label{l: HP-inv}
                The map  $\HPq|_H: H\to \faq$ is      left $K\cap H$- and right $(P\cap H)$-invariant.
            \end{lemma}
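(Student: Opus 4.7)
The plan is to reduce the lemma to invariance under each factor of the decomposition
\[
P\cap H \;=\; (L\cap H)\cdot(N_P\cap H) \;=\; (M\cap H)\cdot A_\ih\cdot(N_P\cap H),
\]
the first equality being Lemma~\ref{l: P-intersect-H-decomposition} and the second following from $\gs$-stability of $M$ and $A$ combined with the uniqueness of the Langlands $L=MA$ decomposition (which uses $M\cap A=\{e\}$). Thus it suffices to verify, in addition to left $(K\cap H)$-invariance, three separate right-invariance statements, one for each of $N_P\cap H$, $M\cap H$, and $A_\ih$.

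The left $K$-invariance of $\fH_P$, and hence of $\fH_{P,\iq}$, is immediate from the uniqueness of the Iwasawa decomposition $G=K\exp(\fa)N_P$; restricting to $K\cap H$ gives the required left invariance. The same uniqueness yields right $N_P$-invariance of $\fH_P$ (absorb $n$ into the $N_P$-factor), and right $M$-invariance (for $m\in M\subset K$, which centralizes $\fa$ and normalizes $N_P$, one has $gm\in K\exp\fH_P(g)N_P$ once the $m$ is shuffled through). Restricting these to $N_P\cap H$ and $M\cap H$ disposes of two of the three right-factors.

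The one genuinely nontrivial point is the right action of $A_\ih$, for which $\fH_P$ itself is \emph{not} invariant: since $A$ normalizes $N_P$ and $\exp\fH_P(g)$ commutes with $a\in A$, a direct computation gives
\[
\fH_P(ga)=\fH_P(g)+\log a.
\]
However, for $a\in A\cap H = A_\ih = \exp(\fah)$ the correction term $\log a$ lies in $\fah = \ker\prq$, so after projecting, $\fH_{P,\iq}(ga)=\fH_{P,\iq}(g)$. This cancellation is the crux of the argument and explains why the lemma must be formulated for $\fH_{P,\iq}$ rather than for $\fH_P$ itself. Composing the three right-invariances along the factorization above then concludes the proof; apart from this one projection step, the verification is entirely routine from the Iwasawa formalism.
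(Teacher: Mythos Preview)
Your proof is correct and follows essentially the same approach as the paper: both use the decomposition $P\cap H=(M\cap H)\,A_\ih\,(N_P\cap H)$ from Lemma~\ref{l: P-intersect-H-decomposition} and the key observation that the $A_\ih$-contribution to $\fH_P$ lies in $\fah=\ker\prq$. The only difference is presentational: the paper carries out the invariance in a single combined Iwasawa computation for $k_H h p$, whereas you verify right invariance under each factor $M\cap H$, $A_\ih$, $N_P\cap H$ separately and then compose.
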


            \begin{proof}
                Let $h \in H,$ $k_H \in K \cap H$ and $p \in P\cap H.$ By the Iwasawa decomposition,      the element $h$ may be decomposed as $h = k a n$, with $k\in K$, $a\in A$ and $n\in N_P$. In view of Lemma \ref{l: P-intersect-H-decomposition} we may decompose $p = m b n',$ with $m \in M\cap H,$ $b \in A \cap H$ and $n' \in N_P \cap H.$ Since $MA$ normalizes $N_P$ and centralizes $A$ we find
                $$
                k_H h p = k_H k a n m b n' = (k_H k m) a b ((mb)^{-1} n (mb)) n'\in K ab N_P.
                $$
                From this we deduce that
                $$\HPq(k_Hhp) = \prq \,(\log a + \log b) = \prq\, \log a = \HPq(h).
                $$
                \vspace{-42pt}

            \end{proof}

            \vspace{12pt}\medbreak
            It follows from the above lemma      that  the restriction of $\HPq$ to $H$ induces a smooth map

            \begin{equation}\label{e: induced smooth map on H mod P}
                \barHPq: H/H\cap P \to \faq.
            \end{equation}

            The following proposition is the main result of this section.

            \begin{prop}\label{p: properness result}
                The   induced map (\ref{e: induced smooth map on H mod P}) is proper.
            \end{prop}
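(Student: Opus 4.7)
The plan is to implement the strategy outlined in the introduction: reduce properness of $\barHPq$ to a corresponding statement for a $\sigma$-stable parabolic subgroup $R \supseteq P$ whose Levi quotient falls within the scope of \cite[Thm.~1.1]{ban1986}. I would first reformulate properness sequentially: if $(h_n) \subset H$ satisfies $\HPq(h_n)$ bounded in $\faq$, I must produce a subsequence of $h_n (H \cap P)$ that converges in $H/(H \cap P)$.

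Using Proposition \ref{p: N=N_+(N_P cap H) decomposition}, I would first normalize representatives. Writing each $h_n = k_n a_n n_n$ in $KAN_P$ and factoring $n_n = u_n v_n$ with $u_n \in N_{P,+}$ and $v_n \in N_P \cap H \subseteq H \cap P$, I may replace $h_n$ by $h_n v_n^{-1}$ and henceforth assume the Iwasawa $N_P$-part lies in $N_{P,+}$. Next, pick $Z_\ih \in \fah$ as in Lemma \ref{l: existence Zq}(b) and let $R$ be the (in general non-minimal) parabolic of $G$ determined by $\Sigma(R) = \{\ga \in \Sigma(\fg,\fa) \setmid \ga(Z_\ih) > 0\}$. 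Since $\sigma Z_\ih = Z_\ih$, the set $\Sigma(R)$ is $\sigma$-stable, hence so is $R$, with Levi $L_R = Z_G(Z_\ih)$ and unipotent radical $N_R$ both $\sigma$-stable. The minimal parabolic $P_0 := P \cap L_R$ of $L_R$ then satisfies $\Sigma(P_0, \sigma\theta) = \Sigma(P_0) \setminus \fahd$ (i.e.\ is $\fq$-extreme in $L_R$ in the sense of Remark \ref{r: fq extreme parabolic}): every root of $L_R$ lying in $\Sigma(P_0)$ is annihilated by $Z_\ih \in \fah$, hence has non-trivial $\faq$-component unless it belongs to $\fahd$.

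I would then invoke \cite[Thm.~1.1]{ban1986} for the pair $(L_R, H \cap L_R)$, which yields properness of the analogous Iwasawa projection $(H\cap L_R)/((H \cap L_R) \cap P_0) \to \faq$, and transport this via the $\sigma$-stable decomposition $H \cap R = (H \cap L_R)(H \cap N_R)$ together with the coset-level identification of $H/(H\cap P)$ with data on $H \cap L_R$ modulo $H \cap P_0$ after dividing by $H \cap N_P$. The remaining unbounded directions, corresponding to roots of $\Sigma(P, \sigma\theta)$ not in $L_R$, are controlled by Corollary \ref{c: cones and properness}(b): the properness of the addition on $\Gamma_\faq(\Sigma(P,\sigma\theta)) \times \Gamma_\faq(\Sigma(P,\sigma\theta))$ precisely governs the comparison between $\HPq$ on $G$ and the analogous projection computed inside $L_R$, after the $N_{P,+}$-component has been accounted for.

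The principal obstacle, I expect, lies in the second step: rigorously setting up the decomposition $H \cap R = (H \cap L_R)(H \cap N_R)$ and the coset-level Iwasawa decomposition $H/(H\cap P) \simeq (H \cap L_R)/(H \cap P_0)$ modulo the right action of $H \cap N_P$. Essential connectedness of $H$, see \eqref{e: essentially connected}, enters here to rule out stray components, and the proof that $P_0$ is $\fq$-extreme in $L_R$ demands a short but careful verification in the root system of $L_R$ together with the defining property of $Z_\ih$. Once these structural reductions are secured, the application of \cite[Thm.~1.1]{ban1986} and of Corollary \ref{c: cones and properness} should be routine.
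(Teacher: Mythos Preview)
Your high-level strategy --- pass to a $\sigma$-stable parabolic $R$, apply \cite{ban1986} on the Levi, control the nilpotent remainder via the cone lemmas --- matches the paper's. But two of your concrete choices diverge from the paper, and one of them hides a genuine gap.

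\textbf{Choice of $R$.} You take $L_R = Z_G(Z_\ih)$ for a single element $Z_\ih \in \fah$; the paper instead passes through an $\fh$-extreme $Q_\ih$ (Lemma~\ref{l: Q-existence}) and takes $R = Z_G(\fah) N_{Q_\ih}$, so $L_R = Z_G(\fah)$. Your verification that $P_0 = P \cap L_R$ is $\fq$-extreme is correct (in fact $\Sigma(P_0) \subseteq \Sigma(P,\sigma\theta)$, so $P_0$ is even $\sigma\theta$-stable), though the sentence you wrote for it is garbled. Either choice of $R$ could be made to work; the paper's has the advantage that $\fm_R \cap \fa \subseteq \faq$, which simplifies bookkeeping.

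\textbf{Decomposition of $H$.} You only decompose $H \cap R$, but $H \not\subseteq R$. What is actually needed is the global decomposition $H = (K\cap H)(L_R\cap H)(N_R\cap H)$ coming from the $\sigma$-stability of the Iwasawa-type decomposition $G \simeq K \times (\fl_R \cap \fp) \times N_R$ (Lemma~\ref{l: deco H mod H cap P} in the paper). Your ``coset-level identification of $H/(H\cap P)$ with data on $H\cap L_R$ modulo $H\cap P_0$'' is too optimistic: the $N_R$-factor does not disappear after dividing by $H\cap N_P$, because $N_R \not\subseteq N_P$ in general.

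\textbf{The nilpotent contribution.} This is the real gap. You say the remaining directions ``are controlled by Corollary~\ref{c: cones and properness}(b)'', but that corollary only applies once you know both summands already lie in $\Gamma_{\faq}(\Sigma(P,\sigma\theta))$. For the Levi piece this comes from \cite{ban1986}; for the $N_R$ piece the paper needs a separate ingredient: the Gindikin--Karpelevic computation (Lemma~\ref{l: Gindikin-Karpelevic}) giving $\fH_P(N_R \cap \bar N_P) = \Gamma_\fa(\Sigma(P)\cap\Sigma(\bar R))$, together with Lemma~\ref{l: roots in gS P gs Cartan} showing this lands in $\Gamma_\fa(\Sigma(P,\sigma\theta))$. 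Only then does the additive splitting $\HPq(mn) = \HPq(n') + \fH^R_{P,\iq}(m)$ (Lemma~\ref{l: first step properness bar psi}) combine with cone-addition properness to finish. Your sketch does not isolate this step, and your initial $N_{P,+}$-normalization is a red herring: the paper never uses it for properness.
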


                In order to prove the proposition, we will reduce to another result, Prop.\ \ref{p: reduced properness result},
            establishing some useful lemmas along the way.

            We fix    $\Qh$ in $\fh$-extreme position   and related to $P$ as in Lemma \ref{l: Q-existence}.
            Let $Z_G(\fah)$ denote the centralizer of $\fah$ in $G$ and define the parabolic subgroup
            \begin{equation}\label{e: def of R}
                R: = Z_G(\fah) N_{\Qh}.
            \end{equation}
            Let $\fn_R$ be the sum of the root spaces $\fg_\ga$ for $\ga \in \gS(\Qh,\gs)=   \gS(\Qh) \setminus \faqd $
            and put $N_R:= \exp(\fn_R).$
              Then $N_R$ is $\gs$-stable.
            It is readily seen that $R$ has the Levi decomposition $R = L_R N_R$ where $L_R = Z_G(\fah)$ is $\gs$-stable.  Hence, $R$ is $\gs$-stable.
                  Let $\gS(R)$ denote the set of $\fa$-roots that appear in $\fn_R.$
             \begin{lemma}\label{l: roots in gS P gs Cartan}
                $\gS(P) \cap \gS(\bar R) \subseteq \gS(P, \gs\Cartan).$
            \end{lemma}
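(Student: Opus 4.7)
The plan is to use the disjoint decomposition of $\gS(P)$ from Lemma \ref{l: gS(P) as disjoint union} together with the positivity properties of $\gS(\Qh)$. Specifically, I will show that a root $\ga \in \gS(P) \cap \gS(\bar R)$ cannot lie in $\gS(P,\gs)$, so by Lemma \ref{l: gS(P) as disjoint union} it must lie in $\gS(P, \gs\Cartan)$.

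First I would unpack the hypothesis. Since $\fn_R$ is the sum of root spaces for roots in $\gS(\Qh, \gs) = \gS(\Qh)\setminus \faqd$ (using that $\Qh$ is $\fh$-extreme), we have
\[
\gS(\bar R) \;=\; -\gS(R) \;=\; -\gS(\Qh, \gs).
\]
Thus any $\ga \in \gS(P)\cap \gS(\bar R)$ satisfies $-\ga \in \gS(\Qh, \gs) \subseteq \gS(\Qh)$. Since $\gS(\Qh)$ is a positive system for the (possibly non-reduced) root system $\gS(\fg, \fa)$, it is disjoint from its negative, so $\ga \notin \gS(\Qh)$.

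Next I would argue by contradiction that $\ga \notin \gS(P, \gs)$. Assume the contrary. By property (c) of Lemma \ref{l: Q-existence}, we have $\gS(P, \gs) \subseteq \gS(\Qh, \gs) \subseteq \gS(\Qh)$, hence $\ga \in \gS(\Qh)$, contradicting the conclusion of the previous paragraph. Therefore $\ga \notin \gS(P, \gs)$, and the disjoint-union Lemma \ref{l: gS(P) as disjoint union} forces $\ga \in \gS(P, \gs\Cartan)$, finishing the proof.

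There is no real obstacle here; the only subtle point is being careful that $\gS(\Qh)$ is a genuine positive system of $\gS(\fg,\fa)$, so that $\gS(\Qh)\cap(-\gS(\Qh)) = \emptyset$ even in the possibly non-reduced case. This is the content of the convention at the start of Section \ref{s: parabolic subgroups} and poses no difficulty.
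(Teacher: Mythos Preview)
Your proof is correct and follows essentially the same route as the paper's: identify $\gS(\bar R)$ with $-\gS(\Qh,\gs)$, use the inclusion $\gS(P,\gs)\subseteq\gS(\Qh,\gs)$ from Lemma~\ref{l: Q-existence} to rule out $\ga\in\gS(P,\gs)$, and conclude via the disjoint union of Lemma~\ref{l: gS(P) as disjoint union}. The paper cites part (b) of Lemma~\ref{l: Q-existence} at this step, but the property actually invoked is the inclusion in part (c), exactly as you use it.
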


            \begin{proof}
                Let $\ga \in \gS(P) \cap \gS(\bar R).$ Then $\ga\in \gS(\bar Q_{\rm h})\setminus \faqd =
                -\gS(\Qh, \gs),$ hence $\ga \notin \gS(P, \gs),$ see Lemma \ref{l: Q-existence} (b). This implies that $\ga \in \gS(P, \gs\Cartan).$
            \end{proof}

Let $R = M_R A_R N_R$ be the Langlands decomposition of $R.$ Then  $L_R = M_R A_R.$

            \begin{lemma}\label{l: deco H mod H cap P}
                The multiplication map
                $$\mu:(K\cap H)\times(M_R\cap H)\times(N_R\cap H)/(N_R \cap H \cap P)\longrightarrow H/H\cap P,$$
                given by $(k, m, [n]) \mapsto km [n]$ is surjective.
            \end{lemma}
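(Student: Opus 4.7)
The plan is to decompose an arbitrary $h\in H$ as $h = kmn\cdot h'$ with $k\in K\cap H$, $m\in M_R\cap H$, $n\in N_R\cap H$, and $h'\in H\cap P$, from which surjectivity of $\mu$ follows. I would proceed in three steps, the first being the substantive one.

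\emph{Step 1.} First I would establish that $H = (K\cap H)(H\cap R)$. The parabolic $R = L_R N_R = Z_G(\fa_\ih) N_R$ is $\gs$-stable, since both factors are; its Langlands split component equals $\fa_\ih\subseteq\fh$ (so $A_R\subseteq H$), and $R$ contains the $\fh$-extreme parabolic $\Qh$. Under these conditions $eR\in G/R$ lies in the open $H$-orbit, and a Matsuki-type argument shows that this orbit is already swept out by the action of $K\cap H$ on $eR$. Equivalently, $HR = (K\cap H)R$, i.e.\ $H\subseteq (K\cap H)R$, which gives the desired decomposition.

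\emph{Step 2.} I would then refine $H\cap R$ via the Langlands decomposition $R = M_R A_R N_R$, each factor being $\gs$-stable. Uniqueness of this decomposition forces $r = man \in H\cap R$ to satisfy $m\in M_R\cap H$, $a\in A_R$, and $n\in N_R\cap H$. Hence $H\cap R = (M_R\cap H)\cdot A_R\cdot (N_R\cap H)$.

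\emph{Step 3.} For $h\in H$, Steps 1 and 2 yield $h = kman$ with $k\in K\cap H$, $m\in M_R\cap H$, $a\in A_R$, $n\in N_R\cap H$. Since $A_R$ normalizes $N_R$ and $a\in H$, the element $n' := ana^{-1}$ lies in $N_R\cap H$, so $h = kmn'a$. Because $a\in A_R\subseteq A\subseteq P$ and $a\in H$, we have $a\in H\cap P$, giving
$$ h(H\cap P) = kmn'a(H\cap P) = kmn'(H\cap P) = \mu(k,m,[n']), $$
which proves surjectivity. The substantive difficulty is Step 1, which rests on the geometric fact that the $K\cap H$-orbit through $eR \in G/R$ exhausts the open $H$-orbit; the construction of $R$ from the $\fh$-extreme $\Qh$, together with the inclusion $\fa_R\subseteq\fh$, is precisely what ensures this. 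Steps 2 and 3 are formal consequences of $\gs$-stability and uniqueness of the Langlands decomposition.
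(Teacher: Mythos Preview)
Your overall plan (Steps 1--3) matches the paper's argument, but Step 1 contains a genuine gap. Two of the claims you rely on there are false in general:
\begin{itemize}
\item The assertion $\fa_R\subseteq\fh$ (equivalently $A_R\subseteq H$) fails. By definition $\fa_R=\{X\in\fa:\alpha(X)=0\text{ for all }\alpha\in\Sigma(\fg,\fa)\cap\faqd\}$, which contains $\fa_\ih$ but can be strictly larger. In the group case $G'=G\times G$ with the flip involution one has $\Sigma(\fg',\fa')\cap(\fa'_\iq)^*=\emptyset$, so $\fa'_R=\fa'\not\subseteq\fh'$.
\item More seriously, the claim that $eR$ lies in the \emph{open} $H$-orbit on $G/R$ is false. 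In the same group example $R'=P\times P$ is minimal parabolic, and the $H'$-orbit through the base point is the diagonal in $G/P\times G/P$, which is closed and of positive codimension. So there is no ``Matsuki-type'' open-orbit argument available here, and your justification for $H=(K\cap H)(H\cap R)$ collapses.
\end{itemize}

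The paper obtains the conclusion of your Step 1 by a much more elementary route: the generalized Iwasawa map $K\times(\fl_R\cap\fp)\times N_R\to G$, $(k,X,n)\mapsto k\exp(X)n$, is a diffeomorphism, and all three factors are $\sigma$-stable (with $N_R^\sigma=N_R\cap H$ since $\exp:\fn_R\to N_R$ is bijective). Taking $\sigma$-fixed points and arguing as in your Step 2 immediately yields
\[
H=(K\cap H)(L_R\cap H)(N_R\cap H).
\]
From here the paper proceeds exactly as in your Steps 2--3: split $L_R\cap H=(M_R\cap H)(A_R\cap H)$, commute $A_R\cap H$ past $N_R\cap H$, and absorb it into $H\cap P$. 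Note that only $A_R\cap H$ (not all of $A_R$) is needed, so the failure of $\fa_R\subseteq\fh$ is harmless once Step 1 is proved correctly.
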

            \begin{proof}
             The map       $K \times (\fl_R \cap \fp) \times N_R \to G$ given by $(k, X, n) \mapsto k \exp X n$ is a diffeomorphism. Since $K,$ $\fl_R \cap \fp$ and $N_R$ are $\gs$-stable, whereas
             $N_R^\gs = N_R\cap H,$ it follows that
            \begin{equation}\label{e: H decomposition}
                H = (K\cap H)(L_R\cap H)(N_R\cap H).
            \end{equation}
            Now $L_R=M_RA_R$ with  $M_R$ and $A_R$ both $\gs$-stable. Since $A_R\cap H$ normalizes $N_R\cap H$, we have that
            \begin{eqnarray*}
                H &=&(K\cap H)(M_R\cap H)(A_R\cap H)(N_R\cap H)\\
                  &=&(K\cap H)(M_R\cap H)(N_R\cap H)(A_R\cap H).
            \end{eqnarray*}
            This implies the result.
            \end{proof}

            We equip $M_R \cap H$ with the natural right-action of the closed subgroup $M_R \cap H \cap P.$ The latter group acts on $N_R\cap H$ by conjugation. Moreover, since $M_R$ normalizes $N_R$ and $P$ normalizes $N_P,$ the conjugation action leaves the closed subgroup $N_R \cap H \cap P$ invariant. Accordingly, we have an induced right-action of $M_R \cap H \cap P$ on $(N_R \cap H)/(N_R \cap H \cap P)$ given by
            $$[n]\cdot m := [ m^{-1} n m], \qquad (m \in M_R \cap H \cap P, \,n \in N_R\cap H).$$
            We equip
            $(M_R \cap H )$ with the usual right-action by  $M_R \cap H \cap P,$
            and $(N_R \cap H)/(N_R \cap H \cap P)$ with the product action.
             The latter action is proper and free, so that the associated quotient space  $(M_R \cap H)\times_{M_R \cap H \cap P} (N_R\cap H)/(N_R \cap H \cap P)$ is a smooth manifold.

            \begin{lemma}
                The multiplication map of Lemma \ref{l: deco H mod H cap P} induces a surjective smooth map
                $$\bar \mu: (K\cap H) \times (M_R \cap H)\times_{M_R \cap H \cap P} (N_R\cap H)/(N_R \cap H \cap P) \to H/H\cap P.$$
            \end{lemma}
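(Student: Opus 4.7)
The plan is to deduce the existence of $\bar\mu$ from the surjective map $\mu$ of Lemma \ref{l: deco H mod H cap P} by showing that $\mu$ is invariant under the right action of $M_R\cap H\cap P$ on the product $(M_R\cap H)\times(N_R\cap H)/(N_R\cap H\cap P)$, and then appealing to general principles for quotients of smooth manifolds by free and proper actions.

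First I would verify that the diagonal right-action of $M_R\cap H\cap P$ is well-defined. The action on $M_R\cap H$ by right translation is obviously smooth, free and proper, as $M_R\cap H\cap P$ is a closed subgroup of $M_R\cap H$. For the conjugation action on $N_R\cap H$, one uses that $M_R$ normalizes $N_R$ (from the Langlands decomposition $R=M_RA_RN_R$) and that $P$ normalizes $N_P$; in combination these imply that conjugation by elements of $M_R\cap H\cap P$ leaves both $N_R\cap H$ and its closed subgroup $N_R\cap H\cap P$ invariant, giving an induced smooth action on the quotient $(N_R\cap H)/(N_R\cap H\cap P)$. The product action on $(M_R\cap H)\times (N_R\cap H)/(N_R\cap H\cap P)$ inherits freeness and properness from the first factor, so the quotient  $(M_R\cap H)\times_{M_R\cap H\cap P}(N_R\cap H)/(N_R\cap H\cap P)$ is a smooth manifold and the natural projection is a smooth submersion.

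Next I would check invariance of $\mu$ under this action. For $k\in K\cap H$, $m\in M_R\cap H$, $n\in N_R\cap H$, and $m_0\in M_R\cap H\cap P$, we have in $H/H\cap P$
\[
\mu\bigl(k, m m_0, [m_0^{-1} n m_0]\bigr) = k\,m m_0\, m_0^{-1} n m_0\,(H\cap P) = k\,m n\,m_0\,(H\cap P) = k m n\,(H\cap P),
\]
since $m_0\in H\cap P$. This equals $\mu(k,m,[n])$, so $\mu$ is constant along the orbits of the $M_R\cap H\cap P$-action.

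By the universal property of the quotient by a free and proper action, $\mu$ descends to a smooth map $\bar\mu$ on $(K\cap H)\times (M_R\cap H)\times_{M_R\cap H\cap P}(N_R\cap H)/(N_R\cap H\cap P)$. Surjectivity of $\bar\mu$ is immediate from surjectivity of $\mu$ in Lemma \ref{l: deco H mod H cap P}. The only mild subtlety is the verification of the normalization properties used in defining the action; once these are in place, descent to the associated bundle is a routine application of the fact that the quotient by a free proper group action is a smooth manifold whose quotient map is a submersion.
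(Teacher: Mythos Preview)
Your proof is correct and follows essentially the same approach as the paper: verify that $\mu$ is constant on $M_R\cap H\cap P$-orbits via the computation $\mu(k,mm_0,[m_0^{-1}nm_0])=kmn m_0(H\cap P)=kmn(H\cap P)=\mu(k,m,[n])$, then descend and inherit surjectivity from $\mu$. The additional detail you supply about well-definedness of the action and smoothness of the quotient is already recorded in the paragraph immediately preceding the lemma, so the paper's own proof is simply the invariance check plus the remark on surjectivity.
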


            \begin{proof}
                Let $k \in K\cap H,$ $m \in M_R \cap H$ and $n \in N_R \cap H.$ Then for $p \in M_R \cap H \cap P$ we have
                $$\mu(k, (m,[n])\cdot p) = \mu(k,mp,[ p^{-1} n p]) = k mp (p^{-1}np ) [e] = k m n [e] = \mu(k, m,[n]).$$
                This implies that $\mu$ induces a smooth map $\bar \mu$ as described. The surjectivity of $\bar \mu$ follows from
                the surjectivity of $\mu. $
            \end{proof}

            Proposition \ref{p: properness result} will follow from the result that the composition $\HPq\circ \bar \mu$ is proper. The latter map is left-invariant under the left action of $K \cap H$ on the first component. Thus, Proposition \ref{p: properness result} will already follow from the following result.

            \def\gf{\varphi}

            \begin{lemma}\label{l: properness gf}
                The map $(m,n) \mapsto \HPq(mn)$ induces a smooth map
                $$\gf :  \;(M_R \cap H)\times_{M_R \cap H \cap P}(N_R\cap H)/(N_R \cap H \cap P)\to \faq$$
                which is proper.
            \end{lemma}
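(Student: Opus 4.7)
The plan is to reduce the properness of $\gf$ to a combination of a convexity result for the $L_R$-component and a cone argument for the $N_R$-component, exploiting the special position of the $\fh$-extreme parabolic $\Qh$ and the Levi structure of $R$. First I would observe that $P\cap L_R$ is a $\fq$-extreme minimal parabolic subgroup of $L_R = Z_G(\fah)$: all its $\fa$-roots lie in $\faqd$, on which $\gs\Cartan$ acts as the identity, so each such root is automatically fixed by $\gs\Cartan$. Applying the convexity theorem of \cite{ban1986}, in the form recalled in Section \ref{ss: relation with vdbconv}, to the symmetric pair $(L_R, L_R\cap H)$, one obtains that $\HPq(a\cdot(M_R\cap H))$ for $a\in A_\iq$ has the shape "compact convex hull plus finitely generated cone $\Upsilon \subseteq \Gamma_{\faq}(\gS(P,\gs\Cartan))$''. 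Combined with Corollary \ref{c: cones and properness}(b), this yields that the restriction of $\HPq$ to $M_R \cap H$ descends to a proper map $(M_R\cap H)/(M_R\cap H\cap P)\to\faq$.

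Next I would analyze $\HPq(mn)$ for $m\in M_R\cap H$, $n\in N_R\cap H$. Writing $m = k_m \exp(\HPq(m)) n_m^L$ with $k_m\in K\cap L_R$, $n_m^L\in N_P\cap L_R$, and pushing the Levi part of $m$ past $n$ (using that $L_R$ normalizes $N_R$), one derives an identity of the form
\[
\HPq(mn) = \HPq(m) + \prq \HP(\nu),
\]
for some $\nu \in N_R$ depending on $(m,n)$. Since all $\fa$-roots in $\fn_R$ lie in $\gS(\Qh,\gs)\subseteq\gS(P,\gs\Cartan)$ by Lemma \ref{l: roots in gS P gs Cartan}, and because $n\in N_R\cap H$ is $\gs$-fixed, a Kostant-type convexity argument for the $\gs$-stable nilpotent group $N_R$ paired with its fixed-point subgroup $N_R\cap H$ shows that $\prq \HP(\nu)\in \Gamma_{\faq}(\gS(P,\gs\Cartan))$, and moreover that the induced map $(N_R\cap H)/(N_R\cap H\cap P)\to\Gamma_{\faq}(\gS(P,\gs\Cartan))$ is proper.

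Properness of $\gf$ then follows. Given a sequence $(m_j,[n_j])$ in the twisted product such that $\HPq(m_jn_j)$ stays in a compact subset of $\faq$, the identity above, combined with the properness of the addition on $\Gamma_{\faq}(\gS(P,\gs\Cartan))$ from Corollary \ref{c: cones and properness}(b), forces both $\HPq(m_j)$ and $\prq \HP(\nu_j)$ to remain in a bounded subset of $\faq$. The properness on the $M_R$-side yields a convergent subsequence of $[m_j]$ in $(M_R\cap H)/(M_R\cap H\cap P)$, while the properness on the $N_R$-side extracts a convergent subsequence for the fiber coordinate; after passing to compatible representatives one obtains convergence in the twisted product.

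The main obstacle I expect is the precise derivation of the cone-valued identity for $\prq \HP(\nu)$ and the corresponding properness statement on $(N_R\cap H)/(N_R\cap H\cap P)$. Since $N_R$ is not contained in $N_P$ and $\HP$ is not a homomorphism, this amounts to a nonlinear convexity result for the $\gs$-stable unipotent group $N_R$ and its fixed-point subgroup. It likely requires an analog of Proposition \ref{p: N=N_+(N_P cap H) decomposition} adapted to an $\fh$-extreme substitute $\Qh$ of $P$, combined with an inductive reduction on the rank of $\gS(\fg,\faq)$, so that the properness of the $N_R$-component can be bootstrapped from lower-rank cases.
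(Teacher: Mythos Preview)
Your overall architecture---decompose $\HPq(mn)$ into an $M_R$-contribution and an $N_R$-contribution, use the convexity theorem of \cite{ban1986} on the Levi side, and then control the nilpotent side via the cone $\Gamma_{\faq}(\gS(P,\gs\Cartan))$ together with Corollary~\ref{c: cones and properness}(b)---is exactly the paper's strategy, and your identity $\HPq(mn)=\HPq(m)+\prq\HP(\nu)$ with $\nu\in N_R$ is correct. But there is a factual slip and one genuine gap.

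The slip: you assert that the $\fa$-roots in $\fn_R$ lie in $\gS(\Qh,\gs)\subseteq\gS(P,\gs\Cartan)$. The first inclusion is the definition of $\fn_R$, but the second is false in general: by Lemma~\ref{l: Q-existence}(c) we have $\gS(P,\gs)\subseteq\gS(\Qh,\gs)$, and $\gS(P,\gs)$ is disjoint from $\gS(P,\gs\Cartan)$. What Lemma~\ref{l: roots in gS P gs Cartan} actually gives is $\gS(P)\cap\gS(\bar R)\subseteq\gS(P,\gs\Cartan)$, and that is what is needed, because $\HP$ kills $N_R\cap N_P$ and only the $N_R\cap\bar N_P$-part contributes.

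The gap is precisely the obstacle you flag at the end: proving properness of $\HPq$ directly on $(N_R\cap H)/(N_R\cap H\cap P)$ by a ``Kostant-type convexity argument for the $\gs$-stable nilpotent group $N_R$ paired with $N_R\cap H$'', possibly by induction on rank. This is unnecessary. The paper avoids it with a one-line observation: the inclusion $N_R\cap H\hookrightarrow N_R$ induces a \emph{closed} embedding $(N_R\cap H)/(N_R\cap H\cap P)\hookrightarrow N_R/(N_R\cap P)$, equivariant for the $M_R\cap H\cap P$-action. Hence it suffices to prove properness of the map $\bar\psi$ on the \emph{enlarged} space $(M_R\cap H)\times_{M_R\cap H\cap P}N_R/(N_R\cap P)$ (Proposition~\ref{p: reduced properness result}). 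On $N_R/(N_R\cap N_P)\simeq N_R\cap\bar N_P$ the properness and the cone inclusion follow immediately from the classical Gindikin--Karpelevic result (Lemma~\ref{l: Gindikin-Karpelevic}) combined with Corollary~\ref{c: cones and properness}(a); no new nilpotent convexity theorem and no rank induction are needed. Once you replace your proposed $N_R\cap H$-argument by this enlargement trick, the remainder of your sequence argument goes through essentially as in the paper.
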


            The inclusion map $N_R \cap H \to N_R$ induces an embedding of $(N_R\cap H )/(N_R \cap H \cap P)$ onto a closed submanifold of $N_R / N_R \cap P.$  This embedding is equivariant for the conjugation action of $M_R \cap H \cap P.$ Accordingly, we may view
            $$(M_R \cap H)\times_{M_R \cap H \cap P} (N_R\cap H)/(N_R \cap H \cap P)$$
            as a closed submanifold of
            $$(M_R \cap H)\times_{M_R \cap H \cap P} N_R/(N_R \cap P).$$
            Thus, for the proof of Lemma \ref{l: properness gf}   it suffices to establish the following result.

            \begin{prop}\label{p: reduced properness result}
                The map $\psi: (m,n) \mapsto \HPq(mn)$ induces a smooth map
                \begin{equation}\label{e: bar psi}
                    \bar \psi: (M_R\cap H)\times_{M_R \cap H \cap P} N_R / (N_R \cap P) \to \faq.
                \end{equation}
                This map is proper.
            \end{prop}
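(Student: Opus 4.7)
The proposition asserts three things: (i) $\psi$ descends to a well-defined map $\bar\psi$ on the quotient, (ii) $\bar\psi$ is smooth, and (iii) $\bar\psi$ is proper.

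For well-definedness, I will verify invariance of $\psi(m,n) = \HPq(mn)$ under (a) the right action of $N_R \cap P$ on $n$ and (b) the diagonal action $(m,n) \mapsto (mp, p^{-1}np)$ of $p \in M_R \cap H \cap P$. For (a), I claim $N_R \cap P \subseteq N_P$. Indeed, every element of $N_R = \exp(\fn_R)$ is unipotent, and in $P = MAN_P$ the unipotent elements all lie in $N_P$: since $M$ is compact and the only unipotent element of $A$ is $e$, the subgroup $MA$ contains no non-trivial unipotent elements, so any unipotent element $p \in P$ must have trivial $MA$-component. Thus $N_R \cap P \subseteq N_P$, and the right $N_P$-invariance of $\HP$ built into the Iwasawa decomposition gives the required invariance. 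For (b), Lemma \ref{l: HP-inv} provides right $(P\cap H)$-invariance of $\HPq$, and $\HPq(mp \cdot p^{-1} n p) = \HPq(mnp) = \HPq(mn)$ follows at once.

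Smoothness of $\bar\psi$ is routine: the two group actions involved are proper and free, so the quotients are smooth manifolds, and $\psi$ is smooth as the composition of multiplication in $G$ with the real-analytic map $\HPq$.

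The main work, and the main obstacle, is properness. My plan is to reduce to an analogous statement for the $\fh$-extreme parabolic $\Qh$ constructed in Lemma \ref{l: Q-existence}. The key structural inputs are that $\gS(\Qh)\cap\faqd = \gS(P)\cap\faqd$, so $P$ and $\Qh$ induce the same positive system on $\faqd$, and that $\fh$-extremeness of $\Qh$ allows one to apply Proposition \ref{p: N=N_+(N_P cap H) decomposition} (with $\Qh$ in place of $P$) to obtain a $\gs$-compatible decomposition of $N_{\Qh}$, and in particular of $N_R$. Given a sequence $[(m_j, [n_j])]$ with $\HPq(m_j n_j) \to y$, I would first normalize $n_j$ within its $(N_R \cap P)$-coset using this decomposition, and then compare $\HPq(m_j n_j)$ with the $\Qh$-Iwasawa projection $\mathfrak{H}_{\Qh,\iq}(m_j n_j)$, which differs from it by a term controlled by roots in $\gS(P, \gs\Cartan)$. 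At that point I would invoke Corollary \ref{c: cones and properness}: the linear functional $\xi = \inp{X}{\dotvar}$ appearing in its proof is strictly positive on $\Gamma_\fa(\gS(P, \gs\Cartan))\setminus\{0\}$, so bounded level sets in $\faq$ pull back to compact subsets of the relevant cone. Combined with properness of the normalized $n_j$-component, this forces a convergent subsequence of $[(m_j, [n_j])]$ in the quotient. The hardest step will be carrying out the normalization and the comparison between $\HPq$ and $\mathfrak{H}_{\Qh,\iq}$ precisely enough that boundedness of $\HPq(m_j n_j)$ simultaneously forces boundedness of both factors in the $M_R \cap H \cap P$-balanced product, so that properness from the $\fh$-extreme setting genuinely transfers back to $P$.
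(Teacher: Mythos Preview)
Your treatment of well-definedness and smoothness is fine; the inclusion $N_R\cap P\subseteq N_P$ can also be seen cleanly from Lemma~\ref{l: KDV}, which gives $N_R\simeq (N_R\cap N_P)\times(N_R\cap \bar N_P)$, together with $\bar N_P\cap P=\{e\}$.

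The properness argument, however, has a genuine gap. Your plan hinges on comparing $\HPq$ with $\fH_{\Qh,\iq}$, but you give no mechanism for this comparison, and there is no simple additive or cone-controlled relation between Iwasawa projections for two different minimal parabolics on an arbitrary element $mn$. The paper does not attempt any such comparison. Instead it exploits an \emph{exact additive splitting inside the same parabolic $P$}: writing $m=k\,a\,\nu$ according to the Iwasawa decomposition $M_R=(M_R\cap K)(M_R\cap A)(M_R\cap N_P)$ and setting $t=a\nu$, $n'=tnt^{-1}\in N_R$, one has
\[
\HPq(mn)=\HPq(n')+\fH^R_{P,\iq}(m),
\]
where $\fH^R_{P,\iq}=\HPq|_{M_R\cap H}$. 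This is the identity that makes everything work, and it does not appear in your outline.

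With this splitting in hand, the proof reduces to two separate properness results you also do not invoke: (i) $\fH^R_{P,\iq}$ induces a proper map $(M_R\cap H)/(M_R\cap H\cap P)\to\faq$ with image in $\Gamma_{\faq}(\gS(P,\gs\Cartan))$, which follows from \cite[Thm.~1.1]{ban1986} applied to the $\gs\Cartan$-stable minimal parabolic $P\cap M_R$ of $M_R$ (Lemma~\ref{l: properness HRPq}); and (ii) $\HPq|_{N_R}$ induces a proper map $N_R/(N_R\cap N_P)\to\faq$ with image in the same cone, via the Gindikin--Karpelevic argument (Lemmas~\ref{l: Gindikin-Karpelevic} and~\ref{l: image HPq on NR}). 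Since both summands lie in $\Gamma_{\faq}(\gS(P,\gs\Cartan))$, Corollary~\ref{c: cones and properness}(b) forces each to stay bounded when the sum does; then (i) controls the $[m_j]$-component, after which a local trivialization and conjugation by the (now convergent) $t_j$ let (ii) control the $[n_j]$-component. Your invocation of Corollary~\ref{c: cones and properness} is the right endgame, but without the additive formula and the two ingredient lemmas there is nothing to feed into it.
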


                 Before we proceed with the proof of Proposition \ref{p: reduced properness result}
            we will first study the maps $M_R\cap H / M_R \cap H\cap P \to \faq$ and $N_R / (N_R \cap P) \to \faq $ induced by $\HPq.$

            \begin{lemma}\label{l: properness HRPq}
                The map $\fH^R_{P,\iq}:=\HPq|_{M_R\cap H}$ induces a smooth map $\bar \fH^R_{P,\iq}: (M_R\cap H)/ (M_R \cap H \cap P) \to \faq$ which is proper and has
                image equal to the cone $\Gamma_{\faq}(\gS^R_-),$ where
                $$
                \gS^R_- = \{\ga \in
                \gS(P) \cap
                \faqd\setmid
                \fg_\ga \not\subseteq
                \ker(\gs\Cartan - I)\}.
                $$
                In particular, the image is contained in the cone $\Gamma_{\faq}(\gS(P,\gs\Cartan)).$
            \end{lemma}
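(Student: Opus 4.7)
The plan is to recognize that $\fH^R_{P,\iq}$ is in disguise the Iwasawa projection of a smaller reductive subgroup acting on an associated symmetric space, and then to invoke the older convexity theorem \cite[Thm.\ 1.1]{ban1986} as a black box. First I would verify that inside $M_R$ the subspace $\faq$ is maximal abelian in $\fp \cap \fm_R$: since $\fl_R = \fm_R \oplus \fah$ with $\fl_R = Z_\fg(\fah)$ and $\fa = \fah \oplus \faq$, we have $\fa \cap \fm_R = \faq$; an element of $\fp \cap \fm_R$ centralizing $\faq$ also centralizes $\fah$, hence lies in $\fa \cap \fm_R = \faq$. Thus in the reductive subgroup $M_R$ equipped with the restrictions of $\theta$ and $\sigma$, the analogue of $\faq$ is still $\faq$, while the analogue of $\fah$ is trivial.

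Next I would show that $P_{M_R}:=P\cap M_R = M \exp(\faq)(N_P\cap M_R)$ is a minimal parabolic subgroup of $M_R$, and that the Iwasawa projection $\fH^{M_R}_{P_{M_R}}:M_R\to\faq$ coincides with $\HPq|_{M_R}$. Indeed, if $m = k_0\exp(X)n_0$ is an Iwasawa decomposition in $M_R$ with $k_0\in K\cap M_R$, $X\in\faq$ and $n_0\in N_P\cap M_R$, then the same identity is an Iwasawa decomposition of $m$ in $G$ with respect to $P$, so $\HP(m)=X\in\faq$ and $\HPq(m)=X$. In particular, $\fH^R_{P,\iq}$ takes values in $\faq$ without any projection needed.

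Now I would apply \cite[Thm.\ 1.1]{ban1986} inside $M_R$ to the element $a=e$. Because the $\ih$-part of the split torus of $M_R$ is trivial, the hypothesis on the parabolic needed in \cite{ban1986}, namely $\gS(P_{M_R},\gs\theta)=\gS(P_{M_R})\setminus \fa_{M_R,\ih}^*$, reduces to $\gS(P_{M_R},\gs\theta)=\gS(P_{M_R})$; and this holds automatically, since every $\fa$-root appearing in $\fm_R$ vanishes on $\fah$ and is therefore $\gs\theta$-fixed. The theorem from \cite{ban1986} then yields simultaneously that the induced map $(M_R\cap H)/(M_R\cap H\cap P)\to\faq$ is proper and that its image equals $\conv(W\cdot 0)+\Gamma(P_{M_R})=\Gamma(P_{M_R})$. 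Unwinding the definitions, $\Gamma(P_{M_R})$ is generated by the coroots $H_\alpha$ for $\alpha\in\gS(P)\cap\faqd$ with $\gs\theta|_{\fg_\alpha}\neq\mathrm{id}$, which is exactly $\Gamma_{\faq}(\gS^R_-)$ (noting that for $\alpha\in\faqd$ one has $H_\alpha\in\faq$ so that $\prq H_\alpha = H_\alpha$). The closing inclusion $\Gamma_{\faq}(\gS^R_-)\subseteq \Gamma_{\faq}(\gS(P,\gs\theta))$ is immediate from the observation that each $\alpha\in\gS^R_-\subseteq\faqd$ satisfies $\gs\theta\alpha=\alpha\in\gS(P)$.

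The one delicate point I foresee is verifying the hypothesis of \cite{ban1986} that $M_R\cap H$ is essentially connected as an open subgroup of $M_R^\sigma$, i.e.\ $M_R\cap H=Z_{K\cap H\cap M_R}(\faq)(M_R\cap H)^{\circ}$. Granting the essential connectedness of $H$ in $G$, this should follow by combining (\ref{e: H decomposition}) with the fact that $L_R=Z_G(\fah)$ is $\sigma$- and $\theta$-stable and that $Z_{K\cap H}(\faq)$ centralizes $\fa$ and therefore lies in $M\subseteq M_R$; still, this structural bookkeeping is the only non-routine step, everything else being a clean application of the previous theorem to the reduced setting.
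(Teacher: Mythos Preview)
Your approach is essentially the same as the paper's: recognize $\HPq|_{M_R\cap H}$ as the Iwasawa projection for the reductive symmetric pair $(M_R, M_R\cap H)$ with respect to the $\gs\Cartan$-stable minimal parabolic $P\cap M_R$, then invoke \cite{ban1986} at $a=e$ (the paper cites Theorem~1.1 for the image and Lemma~3.3 separately for properness). One small correction: in general ${}^*\fa_R:=\fm_R\cap\fa$ is only contained in $\faq$, not equal to it, since $\fa_R=\mathrm{center}(\fl_R)\cap\fp$ may strictly contain $\fah$; this does not affect your argument, as the $\fh$-part of ${}^*\fa_R$ remains trivial and the resulting cone description is unchanged.
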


            \begin{proof}
             We start by noting that  $(M_R, M_R\cap H)$ is a reductive symmetric pair of the Harish-Chandra class, which is invariant under the Cartan involution $\Cartan.$ Furthermore, $\starfaR:= \fm_R \cap \fa$ is a maximal abelian subspace of $\fm_R\cap \fp$ (contained in $\faq$) and $M_R \cap P$ is a minimal parabolic subgroup of $M_R$ containing ${}^*A_R:= \exp{\starfaR}.$ Accordingly, by restriction the Iwasawa projection map $\HPq: H \to \faq$ induces the similar projection map $\HRPq: M_R\cap H  \to \faq$ which is the analogue of $\HPq$ defined relative to the data $M_R, M_R \cap K, P\cap M_R, H\cap M_R,$ in place of $G, K, P, H.$

             The ${}^{*}\fa_R$-roots in $N_P \cap M_R$ are precisely the restrictions of the roots from $\gS(P) \cap \faqd.$
               From this we see that  the minimal parabolic subgroup $P\cap M_R$ of $M_R$ is $\gs\Cartan$-stable.
               Hence, in view of \cite[Theorem 1.1, Lemma 3.3]{ban1986},
               the map $\bar\fH^R_{P,\iq}$ is proper and has image equal to the cone $\Gamma_{\faq}(\gS^R_-)$ given above. The final assertion now follows from the observation that $\gS(P) \cap \faqd \subseteq \gS(P, \gs \Cartan).$
            \end{proof}

            The following lemma is well known. For completeness of the exposition, we provide the proof.

            \begin{lemma}\label{l: Gindikin-Karpelevic}
                The Iwasawa map $\fH_P|_{\bar N_P}: \bar N_P \to \fa$ is proper. If $Q \in \cP(A),$ then
                $$\fH_P(N_Q \cap \bar N_P) =\Gamma_\fa(\gS(P)\cap \gS(\bar Q)).$$
            \end{lemma}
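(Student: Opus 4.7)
The plan is to prove the two assertions in order, since properness is a prerequisite for the image formula.

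For properness of $\fH_P|_{\bar N_P}$, I would use the standard finite-dimensional representation argument. Fix a regular $P$-dominant integral $\lambda \in \fa^*$ and let $(\pi_\lambda, V_\lambda)$ be the irreducible $G$-representation with highest weight $\lambda$, equipped with a $K$-invariant Hermitian inner product in which a highest weight vector $v_\lambda$ has unit norm. Since $\pi_\lambda(N_P) v_\lambda = v_\lambda$ and $\pi_\lambda(\exp H) v_\lambda = e^{\lambda(H)} v_\lambda$, the Iwasawa decomposition $\bar n = \kappa(\bar n)\exp(\fH_P(\bar n)) n(\bar n)$ together with $K$-invariance of the norm yields
$$\|\pi_\lambda(\bar n) v_\lambda\|^2 = e^{2\lambda(\fH_P(\bar n))}.$$
Because $\pi_\lambda(\bar n) v_\lambda - v_\lambda$ lies in the sum of strictly lower weight spaces and hence in $v_\lambda^\perp$, this forces $\lambda(\fH_P(\bar n)) \ge 0$ for every $P$-dominant $\lambda$, so $\fH_P(\bar n)\in\Gamma_\fa(\gS(P))$. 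The orbit map $\bar n\mapsto \pi_\lambda(\bar n)v_\lambda$ is a closed embedding $\bar N_P \to V_\lambda$ by the Kostant--Rosenlicht theorem (orbits of unipotent groups on affine varieties are closed) together with the triviality of the $\bar N_P$-stabilizer of $v_\lambda$ for regular $\lambda$, which follows from $\bar N_P\cap MAN_P = \{e\}$. Since regular $\lambda$ is coercive on the closed cone $\Gamma_\fa(\gS(P))$, properness of the orbit map implies properness of $\fH_P|_{\bar N_P}$.

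For the image formula, set $F := \gS(P)\cap\gS(\bar Q)$; note that $F$ is closed in the root-system sense. For the inclusion ``$\subseteq$'', let $\lambda$ be a $P$-dominant integral weight with $\lambda(H_\alpha)=0$ for every $\alpha\in F$. For each such $\alpha$, the $\mathfrak{sl}_2$-triple $(X_\alpha, H_\alpha, X_{-\alpha})$ acts on $v_\lambda$ as the trivial representation---since $\lambda(H_\alpha)=0$ and $X_\alpha v_\lambda = 0$ as $\alpha\in\gS(P)$---so in particular $X_{-\alpha} v_\lambda = 0$. Because $N_Q\cap \bar N_P = \exp\bigl(\bigoplus_{\alpha\in F}\fg_{-\alpha}\bigr)$ is generated by elements annihilating $v_\lambda$ (and products of such elements still annihilate $v_\lambda$), we conclude $\pi_\lambda(\bar n)v_\lambda = v_\lambda$ and hence $\lambda(\fH_P(\bar n))=0$ for all $\bar n\in N_Q\cap\bar N_P$. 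Combined with $\fH_P(\bar n)\in\Gamma_\fa(\gS(P))$, convex duality---identifying $\Gamma_\fa(F)$ with the face of $\Gamma_\fa(\gS(P))$ cut out by these linear conditions, using that $F$ is a closed subset of $\gS(P)$---yields $\fH_P(\bar n)\in\Gamma_\fa(F)$.

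For the inclusion ``$\supseteq$'', Lemma~\ref{l: KDV} produces a diffeomorphism $N_Q\cap\bar N_P \simeq \prod_{\alpha\in F_\circ}\exp(\fg_{-\alpha}\oplus\fg_{-2\alpha})$ in any fixed ordering. A direct rank-one SL$(2)$-computation in the $\mathfrak{sl}_2$-subalgebra spanned by $(X_\alpha, H_\alpha, X_{-\alpha})$ shows that $\fH_P(\exp(tX_{-\alpha})) = \tfrac12\log(1 + c_\alpha t^2)\, H_\alpha$ for some $c_\alpha > 0$ depending on $X_{-\alpha}$, whose range as $(t,X_{-\alpha})$ varies is exactly $\R_{\geq 0} H_\alpha$. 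To realize an arbitrary positive combination $\sum_\alpha c_\alpha H_\alpha \in \Gamma_\fa(F)$, I would iterate along the ordering of $F_\circ$, applying the cocycle $\fH_P(g h) = \fH_P(g) + \fH_P(\kappa(g) h)$ together with closedness of the image (from the properness above) and a continuity/open-mapping argument to fill out $\Gamma_\fa(F)$. This ``$\supseteq$'' step is the main obstacle, because $\fH_P$ is highly nonlinear: the image of a product in $\bar N_P$ is not the sum of individual images, and one must carefully control the $K$-twist $\kappa(g)$ in the cocycle relation. Classically this is handled via the Gindikin--Karpelevi\v{c} analysis of the Iwasawa projection, of which the qualitative cone-image statement of the lemma is a direct consequence.
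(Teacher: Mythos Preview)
Your properness argument via highest-weight representations and the Kostant--Rosenlicht theorem is valid and is a genuinely different route from the paper, which instead uses a short sequence argument: if $\fH_P(\bar n_j)$ converges, write $\bar n_j = k_j a_j n_j$, pass to a subsequence with $k_j$ convergent, observe that $\bar n_j n_j^{-1} = k_j a_j$ then converges in $G$, and invoke Harish-Chandra's lemma that convergence of $\bar n_j n_j^{-1}$ in $\bar N_P N_P$ forces convergence of each factor.

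For the image formula, however, your strategy breaks down. The inclusion ``$\subseteq$'' via dominant weights $\lambda$ with $\lambda(H_\alpha)=0$ for $\alpha\in F$ presupposes that $\Gamma_\fa(F)$ is the face of $\Gamma_\fa(\gS(P))$ cut out by such $\lambda$. This is false unless $F$ is of parabolic type, and $F=\gS(P)\cap\gS(\bar Q)$ is typically not. In $B_2$ with simple roots $\alpha_1=e_1-e_2$, $\alpha_2=e_2$ and $\bar Q=s_{\alpha_2}P$, one has $F=\{\alpha_1,\alpha_1+\alpha_2,\alpha_1+2\alpha_2\}$; the only $P$-dominant $\lambda$ with $\lambda(H_\alpha)=0$ for all $\alpha\in F$ is $\lambda=0$, so your argument yields only $\fH_P(\bar n)\in\Gamma_\fa(\gS(P))$, whereas $\Gamma_\fa(F)$ is a proper full-dimensional subcone. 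Your ``$\supseteq$'' direction is not actually argued either: you correctly identify the difficulty with the $K$-twist in the cocycle and then defer to Gindikin--Karpelevi\v c.

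The paper does not separate the two inclusions. It proves the \emph{equality} by induction on $|\gS_\circ(P)\cap\gS_\circ(\bar Q)|$: pick a $P$-simple root $\alpha\in F$ (such a root always exists when $F\neq\emptyset$, by a minimal-height argument), factor $N_Q\cap\bar N_P=\bar N_\alpha\,(N_{Q'}\cap\bar N_P)$ with $Q'=s_\alpha Q$ via Lemma~\ref{l: KDV}, and use the rank-one Iwasawa decomposition of $\bar n_\alpha$ inside the subgroup $G(\alpha)$ to obtain
\[
\fH_P\bigl(\bar N_\alpha(N_{Q'}\cap\bar N_P)\bigr)=\fH_P(N_{Q'}\cap\bar N_P)+\R_{\ge 0}H_\alpha .
\]
The key point handling your ``$K$-twist obstacle'' is that the $N_\alpha$-part of $\bar n_\alpha$ acts on $N_{Q'}\cap\bar N_P$ by a diffeomorphism modulo $N_P$, so the two contributions decouple exactly. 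The rank-one image $\fH_P(\bar N_\alpha)=\R_{\ge 0}H_\alpha$ is read off from the explicit $G(\alpha)$-computation, and induction finishes the proof.
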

            \begin{proof}
                For the first assertion, let      $(\bar n_j)$ be sequence in $\bar N_P$ such that $\fH_P(\bar n_j)$ converges. Then $\bar n_j = k_j a_j n_j,$ with $k_j \in K,$ $a_j = \exp \fH_P(\bar n_j)$ and $n_j \in N_P.$ By passing to a converging subsequence, we may arrange that in addition the sequence  $(k_j)$ converges in $K.$ It follows that $ \bar n_j n_j^{-1}= k_j a_j $ converges in $G$. By   \cite[Lemma 39]{HC1958},
                the sequence      $(\bar n_j)$ converges.

                For the second assertion, we may assume $\gS(\bar Q) \cap \gS(P) \neq \emptyset$ and use the idea due to      S. Gindikin and F. Karpelevic \cite{GindKarp62}, to decompose $N_Q \cap \bar N_P$ by using a $P$-simple root in $\gS(\bar Q) \cap \gS(P).$ Let $\ga$ be such a root. Let $\fn_\ga =\fg_\ga + \fg_{2\ga}$ and $N_\ga = \exp \fn_\ga.$ Put $Q' = s_\ga Q s_\ga $ . Then, with the notation of Subsection \ref{H-mod-HP-decomposition},
                $$
                \gS_\circ(\bar Q) \cap \gS_\circ(P) =\{\ga\} \sqcup (\gS_\circ(\bar Q') \cap \gS_\circ(P)),
                $$ so that
                $$ N_Q \cap \bar N_P = \bar N_\ga  (N_{Q'} \cap \bar N_P) \simeq \bar N_\ga \times  (N_{Q'} \cap \bar N_P), $$
                in view of Lemma \ref{l: KDV}.
                Let $\bar n \in N_Q \cap \bar N_P.$ Then according to the above decomposition we may write $\bar n = \bar n_\ga \bar n',$ where $\bar n_\ga \in \bar N_\ga$ and $\bar n' \in N_{Q'} \cap N_P.$  Let $\fg(\ga)$ be the semisimple subalgebra generated by $\fn_\ga$ and $\bar \fn_\ga,$  and let $G(\ga)$ be the corresponding analytic subgroup of $G.$ By the Iwasawa decomposition of $G(\ga)$ for the minimal parabolic subgroup $P \cap G(\ga)$ we may write $\bar n_\ga = k_\ga a_\ga n_\ga$ with $k_\ga \in G(\ga) \cap K,$ $a_\ga \in \exp (\R H_\ga)$ and $n_\ga \in N_\ga.$ From application of Lemma
                \ref{l: KDV} we find that
                $$N_{Q'} \cap \bar N_P \simeq  N_{Q'}/ (N_{Q'} \cap N_P)$$
                and
                we see that there exists a diffeomorphism $\tau_{n_\ga}$ from
                $N_{Q'} \cap \bar N_P$ onto itself, such that
                $$ n_\ga \bar n' \in \tau_{n_\ga}(\bar n') N_P, \quad \mbox{\rm for all}\;\;\bar n' \in N_{Q'} \cap \bar N_P.$$
                This implies that
                $$\fH_P(\bar n_\ga n') =  \fH_P(a_\ga \tau_{n_\ga} (\bar n') a_\ga^{-1}) + \log a_\ga,$$
                and we see that
                $$\fH_P(\bar N_\ga(N_Q' \cap \bar N_P)) = \fH_P(N_Q' \cap \bar N_P) +\fH_P(\bar N_\ga).$$
                Now $\fH_P(\bar N_\ga)$ equals the image of $\bar N_\ga$ under the Iwasawa projection $\fH_\ga$ for the split rank 1 group $G(\ga)$ and the minimal parabolic subgroup $P \cap G(\ga).$ By     \cite[Thm.\ IX.3.8]{helgason1978}, which is based on ${\rm SU}(2,1)$-reduction, we see that $\fH_\ga(\bar N_\ga) = \R_{\geq 0} H_\ga.$ It follows that
                $$\fH_P(\bar N_\ga(N_Q' \cap \bar N_P)) = \fH_P(N_Q' \cap \bar N_P) + \R_{\geq 0}H_\ga.$$
                The proof is completed by induction on the number of elements in $\gS_\circ(\bar Q) \cap \gS_\circ(P).$
            \end{proof}

            The following lemma is      the second ingredient for the proof of Proposition \ref{p: reduced properness result}.

            \begin{lemma}\label{l: image HPq on NR}
                The Iwasawa map $\HPq|_{N_R} : N_R \to \faq$ factors through a proper map $N_R/N_R \cap N_P \to \faq$ with image equal to the cone \begin{equation}\label{e: cone faq P bar R}
                    \Gamma_{\faq}(\gS(P) \cap \gS(\bar R)).
                \end{equation}
                In particular, the image is contained in the cone $\Gamma_{\faq}(\gS(P, \gs \Cartan)).$
            \end{lemma}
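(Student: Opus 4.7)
My strategy is to factor the map through the closed subgroup $N_R\cap\bar N_P$ and then invoke Lemma~\ref{l: Gindikin-Karpelevic}. Since $\HP$ is right $N_P$-invariant from its defining Iwasawa decomposition, the restriction $\HP|_{N_R}$ is right $(N_R\cap N_P)$-invariant, and so is $\HPq|_{N_R}$; the latter therefore descends to a smooth map on $N_R/(N_R\cap N_P)$. To analyze this quotient I split the $\fa$-roots of $\fn_R$ as the disjoint union $\gS(R) = (\gS(R)\cap\gS(P)) \sqcup (\gS(R)\cap\gS(\bar P))$. Both subsets are closed under addition in $\gS(\fg,\fa)$, so they yield Lie subalgebras $\fn_R\cap\fn_P$ and $\fn_R\cap\bar\fn_P$ of $\fn_R$, and Lemma~\ref{l: KDV} provides a diffeomorphism $(N_R\cap\bar N_P)\times(N_R\cap N_P) \to N_R$. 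Through this product decomposition, $N_R/(N_R\cap N_P)$ is identified with $N_R\cap\bar N_P$ via inclusion, and the factored map becomes $\prq\circ\HP|_{N_R\cap\bar N_P}$.

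The crucial observation is that $N_R\cap\bar N_P = N_{\Qh}\cap\bar N_P$. At the level of root sets this is the identity $\gS(R)\cap\gS(\bar P) = \gS(\Qh)\cap\gS(\bar P)$: the inclusion $\subseteq$ is immediate from $\gS(R)\subseteq\gS(\Qh)$, while for the reverse, any $\alpha\in\gS(\Qh)\cap\gS(\bar P)$ that lay in $\faqd$ would by Lemma~\ref{l: Q-existence}(a) satisfy $\alpha\in\gS(P)\cap\faqd\subseteq\gS(P)$, contradicting $\alpha\in\gS(\bar P)$; hence $\alpha\in\gS(\Qh)\setminus\faqd = \gS(R)$. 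A symmetric argument yields $\gS(P)\cap\gS(\bar{\Qh}) = \gS(P)\cap\gS(\bar R)$. Applying Lemma~\ref{l: Gindikin-Karpelevic} with $Q = \Qh$ then shows that $\HP|_{N_R\cap\bar N_P}$ is proper, with image $\Gamma_\fa(\gS(P)\cap\gS(\bar{\Qh})) = \Gamma_\fa(\gS(P)\cap\gS(\bar R))$.

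Applying $\prq$ gives image $\Gamma_{\faq}(\gS(P)\cap\gS(\bar R))$, which is (\ref{e: cone faq P bar R}). By Lemma~\ref{l: roots in gS P gs Cartan} this cone is contained in $\Gamma_{\faq}(\gS(P,\gs\Cartan))$, which is the ``in particular'' clause. For properness of the composition, I invoke Corollary~\ref{c: cones and properness}(a): the projection $\prq$ is proper on $\Gamma_\fa(\gS(P,\gs\Cartan))$ and hence on the sub-cone $\Gamma_\fa(\gS(P)\cap\gS(\bar R))$; composing with the proper map $\HP|_{N_R\cap\bar N_P}$ yields a proper map into $\faq$. The main substantive point is the root-set identity in the middle paragraph, which reduces the problem to the already-proven Lemma~\ref{l: Gindikin-Karpelevic} and avoids having to adapt the Gindikin--Karpelevic induction to the non-minimal parabolic $R$.
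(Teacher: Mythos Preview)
Your proof is correct and follows the same approach as the paper: decompose $N_R$ via Lemma~\ref{l: KDV}, identify the quotient with $N_R\cap\bar N_P$, apply Lemma~\ref{l: Gindikin-Karpelevic}, and then use Lemma~\ref{l: roots in gS P gs Cartan} together with Corollary~\ref{c: cones and properness}(a) to push the result through $\prq$. Your explicit verification of the root-set identities $\gS(R)\cap\gS(\bar P)=\gS(\Qh)\cap\gS(\bar P)$ and $\gS(P)\cap\gS(\bar R)=\gS(P)\cap\gS(\bar\Qh)$ is a welcome clarification, since Lemma~\ref{l: Gindikin-Karpelevic} is stated only for $Q\in\cP(A)$ and the paper invokes it for $R$ without spelling out this reduction to the minimal parabolic $\Qh$.
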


            \begin{proof}
                We denote the induced map by $\fH.$ It follows by application of Lemma \ref{l: KDV} that the multiplication map $(N_R \cap \bar N_P) \times (N_R \cap N_P) \to N_R$ is a diffeomorphism. Let $\nu: N_R \cap \bar N_P \to N_R/{N_R\cap N_P}$ denote the induced diffeomorphism. Then $\fH \after \nu$ equals $\prq \after \fH_{P,R},$ where $\fH_{P,R}$ denotes the restriction of $\HP$ to $N_R \cap \bar N_P.$ This restriction is proper with image $\Gamma_{\fa} ( \gS(P) \cap \gS(\bar R)),$      by Lemma \ref{l: Gindikin-Karpelevic} above.
                In particular, the image is contained in the cone $\Gamma_{\fa} (\gS(P, \gs\Cartan)),$ by Lemma \ref{l: roots in gS P gs Cartan}. In view of Corollary \ref{c: cones and properness} (a) it now follows that $\fH \after \nu = \prq\after \fH_{P,R}$ is proper with image equal to (\ref{e: cone faq P bar R}). This implies the result.
            \end{proof}

            We proceed with a      final lemma needed for the proof of Proposition \ref{p: reduced properness result}.

            \begin{lemma}\label{l: first step properness bar psi}
                Let $\bar \psi$ be as in (\ref{e: bar psi}) and      let
                $$
                \bar \pr_1: (M_R\cap H)\times_{M_R \cap H \cap P} N_R / (N_R \cap P) \to (M_R\cap H)/(M_R \cap H \cap P)
                $$
                denote the map induced by projection onto the first component.

                Let $C \subseteq \faq$ be a compact set. Then      the set $\bar\pr_1(\bar \psi^{-1}(C))$      is relatively compact  in $(M_R \cap H)/(M_R \cap H\cap P).$
            \end{lemma}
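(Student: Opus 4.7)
The plan is to combine an Iwasawa decomposition of $m$ inside $M_R$ with the observation that the resulting ``error term'' lies in $N_{\Qh}$, and then invoke properness of addition on the cone $\Gamma_{\faq}(\gS(P,\gs\Cartan))$ together with Lemma \ref{l: properness HRPq}.

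Take a representative $(m, n) \in (M_R \cap H) \times N_R$ of a point in $\bar\psi^{-1}(C)$ and Iwasawa decompose $m = k_m a_m n_m$ in $M_R$ (equivalently in $G$), with $k_m \in K \cap M_R$, $a_m \in \Aq$ and $n_m \in N_P \cap M_R$; note that $\HP(m) = \log a_m \in \faq$. Since $a_m \in A$, we may rewrite
\[
mn \;=\; k_m \,(a_m n_m n a_m^{-1})\, a_m,
\]
so that left $K$-invariance of $\HP$ combined with the identity $\HP(ha) = \HP(h) + \log a$ for $a \in A$ yields
\[
\HP(mn) \;=\; \HP(m) + \HP(g), \qquad g := a_m n_m n a_m^{-1}.
\]
Conjugation by $a_m \in A$ preserves each $\fa$-root space, so $a_m n_m a_m^{-1}$ has roots in $\gS(P)\cap \faqd$, which by Lemma \ref{l: Q-existence} (a) equals $\gS(\Qh)\cap \faqd\subseteq \gS(\Qh)$; similarly $a_m n a_m^{-1} \in N_R \subseteq N_{\Qh}$. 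Hence $g \in N_{\Qh}$.

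The central claim is that $\prq\, \HP(g') \in \Gamma_{\faq}(\gS(P,\gs\Cartan))$ for every $g' \in N_{\Qh}$. Applying Lemma \ref{l: KDV} to the decomposition $\fn_{\Qh} = (\fn_{\Qh}\cap \bar\fn_P) \oplus (\fn_{\Qh}\cap \fn_P)$ yields a diffeomorphism $(N_{\Qh}\cap \bar N_P) \times (N_{\Qh}\cap N_P) \to N_{\Qh}$. Writing $g' = \bar n \, n_+$ accordingly and using the right $N_P$-invariance of $\HP$, we obtain $\HP(g') = \HP(\bar n)$, which by Lemma \ref{l: Gindikin-Karpelevic} (applied with $Q = \Qh$) lies in $\Gamma_\fa(\gS(P) \cap \gS(\bar \Qh)) = \Gamma_\fa(\gS(P) \setminus \gS(\Qh))$. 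Lemma \ref{l: Q-existence} (c) gives $\gS(P, \gs) \subseteq \gS(\Qh, \gs) \subseteq \gS(\Qh)$, and hence by Lemma \ref{l: gS(P) as disjoint union}
\[
\gS(P) \setminus \gS(\Qh) \;\subseteq\; \gS(P) \setminus \gS(P, \gs) \;=\; \gS(P, \gs\Cartan),
\]
which proves the claim.

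Combining the two ingredients, $\HPq(mn) = \HPq(m) + \HPq(g)$ has both summands in $\Gamma_{\faq}(\gS(P,\gs\Cartan))$: the first by Lemma \ref{l: properness HRPq}, since roots in $\gS^R_- \subseteq \gS(P) \cap \faqd$ satisfy $\gs\Cartan \ga = \ga$ and hence belong to $\gS(P, \gs\Cartan)$; the second by the claim above. Corollary \ref{c: cones and properness} (b) asserts that addition on $\Gamma_{\faq}(\gS(P,\gs\Cartan))\times\Gamma_{\faq}(\gS(P,\gs\Cartan))$ is proper, so $\HPq(m) = \fH^R_{P,\iq}(m)$ remains in a compact subset $K_1 \subseteq \faq$ as $(m, [n])$ ranges over $\bar\psi^{-1}(C)$. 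The properness of $\bar\fH^R_{P,\iq}$ from Lemma \ref{l: properness HRPq} then forces $\bar\pr_1(\bar\psi^{-1}(C)) \subseteq (\bar\fH^R_{P,\iq})^{-1}(K_1)$, a compact set. The main obstacle I anticipate is isolating the ``Iwasawa correction'' $g$ as an element of $N_{\Qh}$ and threading the $\fh$-extremity of $\Qh$ through the root-system inclusions so that its image under $\HPq$ lands in the $\gs\Cartan$-cone; once this is in place, the lemma reduces cleanly to the abstract convex-geometric properness results of Section \ref{s: auxiliary results in convex linear algebra}.
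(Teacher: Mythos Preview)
Your proof is correct and follows essentially the same strategy as the paper: Iwasawa-decompose $m$ inside $M_R$, split $\HPq(mn)$ as $\HRPq(m)$ plus a correction term lying in $\Gamma_{\faq}(\gS(P,\gs\Cartan))$, and then invoke Corollary~\ref{c: cones and properness}(b) together with Lemma~\ref{l: properness HRPq}. The only cosmetic difference is that the paper conjugates $n$ by $a_m n_m \in M_R$ so that the correction term stays in $N_R$ and Lemma~\ref{l: image HPq on NR} applies directly, whereas you conjugate $n_m n$ by $a_m$, land in the larger group $N_{\Qh}$, and recover the cone containment via Lemma~\ref{l: Gindikin-Karpelevic} and the root inclusions from Lemma~\ref{l: Q-existence}.
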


            \begin{proof}
                Let $m \mapsto [m]$ denote the canonical projection $M_R \cap H \to (M_R \cap H) /(M_R \cap H \cap P).$ Let $(m_j)$ and $(n_j)$ be sequences in $M\cap H$ and $N_R,$ respectively, such that $\HP(m_j n_j) \in C$ for all $j.$ Then it suffices to show that the sequence $([m_j])$ in $(M_R \cap H) /(M_R \cap H \cap P)$ has a converging subsequence.

            \def\iq{{\rm q}}

                In accordance with the Iwasawa decomposition
                 $M_R = (M_R\cap K )(M_R\cap A) (M_R\cap N_P),$ we may decompose $m_j = k_j a_j \nu_j.$
                     Since $\fah \subseteq \fa_R = {\rm center}(\fl_R) \cap \fp,$ we have
                $\fm_R \cap \fa = \fa_R^\perp\cap \fa \subseteq \faq,$ so that
                $\log a_j = \HRPq(m_j).$

                The element $t_j = a_j \nu_j$ belongs to $M_R,$ hence $n_j' := t_j n_j t_j^{-1} \in N_R,$ for all $j.$ From $ m_j n_j = k_j n_j' a_j \nu_j $ it follows that
                $$\HPq(m_j n_j) = \HPq(k_j n_j') + \log a_j = \HPq(n_j') + \HRPq(m_j).$$
                We now note that both $\HPq(n_j')$ and $\HPq(m_j)$ belong to $\Gamma_{\faq}(P, \gs\Cartan)$ by Lemmas \ref{l: image HPq on NR} and
                 \ref{l: properness HRPq}. By application of Corollary \ref{c: cones and properness} we infer that the sequence $\HPq(m_j)$ is contained in a relatively compact subset of $\faq.$ By application of Lemma \ref{l: properness HRPq} it now follows that $([m_j])$ is contained in a relatively compact subset of $(M_R \cap H)/(M_R \cap H \cap P),$ hence contains a convergent subsequence.
            \end{proof}

            {\em Completion of the proof of Proposition \ref{p: reduced properness result}.}
            Let $C$ be a compact subset of $\faq$ and let $(m_j)$ be a sequence in $M_R \cap H$ and $(n_j)$ a sequence in $N_R$ such that $\bar\psi([(m_j, n_j)]) \in C$ for all $j.$ Then it suffices to show that the sequence of points
                $$[(m_j, n_j)]\in (M_R\cap H)\times_{M_R \cap H \cap P}N_R / (N_R \cap N_P)$$
                has a converging subsequence.

                In view of Lemma \ref{l: first step properness bar psi} we may pass to a subsequence of indices and assume that the sequence $([m_j])$ in $D:= (M_R\cap H)/(M_R \cap H \cap P)$ converges. Since the canonical projection $M_R \cap H \to D$ determines a principal fiber bundle, we may invoke a local trivialization to obtain a converging sequence $(\bp m_j)$ in $M_R\cap H$ such that $\bp m_j \in m_j (M_R\cap H \cap P)$ for all $j.$ Let $p_j \in M_R \cap H \cap P$ be such that $m_j = \bp m_j p_j$ for all $j.$ Then
                $$[(m_j, n_j)] = [(\bp m_j, \bp n_j)],$$
                with $\bp n_j = p_j n_j p_j^{-1}\in N_R.$

                Replacing the original sequence of points $(m_j, n_j)$ in this fashion if necessary, we may as well assume that the original sequence $(m_j)$ converges in $M_R\cap H.$ Let $m \in M_R \cap H$ be the limit of this sequence. As in the proof of Lemma \ref{l: first step properness bar psi} we may decompose $m_j = k_j a_j \nu_j$ and $m = k a \nu$ in accordance with the Iwasawa decomposition
                 $M_R = (M_R \cap K) (M_R \cap A) (M_R \cap N_P).$ Then $k_j \to k,$ $a_j \to a$ and $\nu_j \to \nu,$ for $j \to \infty.$ Put $t_j = a_j \nu_j$ and $n_j' = t_j n_j t_j^{-1}.$ As in the proof of Lemma \ref{l: first step properness bar psi} it follows that
                $$\bar \psi([m_j, n_j]) = \log a_j + \HPq(n_j').$$
                Since $(a_j)$ converges, it follows that the sequence $\HPq(n_j')$ is contained in a compact subset $C' \subseteq \faq.$ By Lemma \ref{l: image HPq on NR} it follows that the sequence $([n_j'])$ in $N_R/N_R \cap N_P$ is contained in a compact subset. Passing to a suitable subsequence of indices we may as well assume that the sequence
                $([n_j'])$ converges to a point $[n],$ for some $n \in N_R.$ It follows that
                $$[n_j] = [t_j^{-1}n_j' t_j] = t_j^{-1}\cdot[n_j' ] \to t^{-1}\cdot [n] = [t^{-1}n t],\quad (j \to \infty),$$ where $t = a \nu.$
                We conclude that the sequence $[(m_j, n_j)]$ converges with limit equal to
                 $[(m, t^{-1}n t)].$
            \qed

            We finish this section with a number of results that will be needed in Section \ref{s: section-induction}.

\begin{corollary}\label{c: properness with set C}
                Let $\cA$ be a compact subset of $A_\iq.$ Then
                \begin{enumerate}
                \itema  $ \HPq(ah) \in \HPq(\cA K) + \HPq(h), ${\ } for all $(a,h) \in \cA \times H;$
                \itemb the map $(a,h) \mapsto \HPq(ah)$
                induces a proper map $\cA \times H/H\cap P  \rightarrow \faq.$
                \end{enumerate}
            \end{corollary}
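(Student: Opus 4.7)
The plan is to first establish the cocycle-type identity in (a) from the usual manipulation of the Iwasawa decomposition, and then to deduce (b) by combining (a) with the properness result of Proposition \ref{p: properness result}.

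For (a), I would fix $a\in\cA$ and $h\in H$, and Iwasawa-decompose $h = k a_h n$ with $k\in K$, $a_h \in A$ and $n\in N_P$. Next, I Iwasawa-decompose the element $ak\in G$ as $ak = k'\exp\fH_P(ak)\, n'$ with $k'\in K$ and $n'\in N_P$. Using that $A$ normalizes $N_P$ and centralizes itself, we can conjugate $n'$ past $a_h$ and obtain
\[
ah = k'\exp\!\bigl(\fH_P(ak) + \log a_h\bigr)\,n''
\]
with $n''\in N_P$, from which $\fH_P(ah) = \fH_P(ak) + \fH_P(h)$. Projection onto $\faq$ yields $\HPq(ah) = \HPq(ak) + \HPq(h)$, and since $ak\in\cA K$, this gives assertion (a).

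For (b), I first note that the map $(a,h)\mapsto \HPq(ah)$ is right $(H\cap P)$-invariant in the second variable by Lemma \ref{l: HP-inv}, and left $K$-invariance of $\HPq$ in that same variable is irrelevant here; the induced map on $\cA \times H/H\cap P$ is smooth. To establish properness, let $C\subseteq \faq$ be compact, and let $(a_j, [h_j])$ be a sequence with $\HPq(a_j h_j)\in C$ for all $j$. Since $\cA$ is compact, we may pass to a subsequence with $a_j\to a$ in $\cA$. The set $\cA K$ is a compact subset of $G$, so $\HPq(\cA K)$ is a compact subset of $\faq$. Applying (a) gives a sequence $x_j\in \HPq(\cA K)$ such that
\[
\HPq(h_j) = \HPq(a_jh_j) - x_j \in C - \HPq(\cA K),
\]
and the latter set is compact in $\faq$. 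By the properness statement of Proposition \ref{p: properness result}, the sequence $[h_j]$ lies in a compact subset of $H/H\cap P$, hence admits a convergent subsequence. Together with the convergence of $(a_j)$ in $\cA$, this yields a convergent subsequence of $(a_j, [h_j])$ in $\cA\times H/H\cap P$, proving (b).

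There is no substantive obstacle: (a) is a direct computation with the Iwasawa decomposition, and (b) is essentially a straight reduction to Proposition \ref{p: properness result} via the compactness of $\HPq(\cA K)$ supplied by (a). The only point requiring care is the suppression of the $K$-component $k=k(h)$ in the bookkeeping of (a): what matters is not its precise value but only that $ak$ stays in the fixed compact set $\cA K$, so its contribution to $\HPq$ stays bounded uniformly in $h$.
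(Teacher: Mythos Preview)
Your proof is correct and follows essentially the same approach as the paper: part (a) via the same Iwasawa manipulation yielding $\fH_P(ah)=\fH_P(ak)+\fH_P(h)$, and part (b) by combining (a) with the properness of $\barHPq$ (Proposition \ref{p: properness result}) and the compactness of $\HPq(\cA K)$. The paper actually cites Proposition \ref{p: reduced properness result} for (b), but your reference to Proposition \ref{p: properness result} is the more direct one, and your sequential argument simply spells out what the paper leaves implicit.
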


            \begin{proof}
            We first prove (a). Let $a \in \cA$ and $h \in H.$ We may decompose $h = k b n$ with
            $k \in K,$ $b \in A$ and $n \in N_P.$  Furthermore, $ak = k' a' n'$ with
            $k' \in K,$ $n' \in N_P$ and $\log a' \in \HP(\cA K).$
            Now
                $$
                ah =  a k b n = k' a' n' b n = k' a' b n''
                $$
                with  $n'' =b^{-1} n' b n\in N_P.$ It follows that
                $$
                \HPq(ah) = \prq (\log a' + \log b) \in  \HPq(\cA K) + \HPq(h).
                $$
                This establishes (a).

                Since $\HPq(\cA K)$ is compact, (b) follows from combining (a) with
                Proposition \ref{p: reduced properness result}.
              \end{proof}

            \begin{lemma}\label{l: inclusion HPq H}
                Let $P\in \cP(A).$ Then
                $\HPq(H)\subseteq \Gamma_{\faq}(\gS(P, \gs\Cartan)).$
            \end{lemma}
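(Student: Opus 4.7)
The plan is to combine the decomposition of $H$ provided by Lemma \ref{l: deco H mod H cap P} with the cone containments supplied by Lemmas \ref{l: properness HRPq} and \ref{l: image HPq on NR}, exactly mirroring the algebraic manipulation that appears in the proof of Lemma \ref{l: first step properness bar psi}.

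Let $h \in H$. By Lemma \ref{l: deco H mod H cap P} (more precisely, by the identity (\ref{e: H decomposition}) from which that lemma is derived) we may write $h = k m n$ with $k \in K\cap H$, $m \in M_R\cap H$ and $n \in N_R \cap H$. By the left $(K\cap H)$-invariance of $\HPq|_H$ established in Lemma \ref{l: HP-inv}, we have $\HPq(h) = \HPq(mn)$, so it suffices to treat elements of the form $mn$.

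Next I would use the Iwasawa decomposition inside the reductive subgroup $M_R$. Since $M_R = (M_R \cap K)(M_R \cap A)(M_R \cap N_P)$ and $\fm_R \cap \fa \subseteq \faq$, we may write $m = k' a' \nu'$ with $k' \in M_R \cap K$, $a' \in M_R \cap A$ and $\nu' \in M_R \cap N_P$, and moreover $\log a' = \HRPq(m) \in \faq$. Set $t := a'\nu' \in M_R$ and $n' := t n t^{-1}$; because $M_R$ normalizes $N_R$, we have $n' \in N_R$. Then
\[
mn = k' a' \nu' n = k' (t n t^{-1}) t = k' n' a' \nu'.
\]
Decomposing $n' = k'' \exp(\HP(n')) n''$ via the Iwasawa decomposition of $G$ and using that $\nu',\, a'^{-1} n'' a' \in N_P$, we conclude
\[
\HPq(mn) = \prq\bigl(\HP(n') + \log a'\bigr) = \HPq(n') + \HRPq(m).
\]

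To finish, I would simply invoke the two containments already in hand: by Lemma \ref{l: image HPq on NR} we have $\HPq(n') \in \Gamma_{\faq}(\gS(P,\gs\Cartan))$, and by Lemma \ref{l: properness HRPq} we have $\HRPq(m) \in \Gamma_{\faq}(\gS(P,\gs\Cartan))$. Since $\Gamma_{\faq}(\gS(P,\gs\Cartan))$ is a convex cone, hence closed under addition, the sum lies in the same cone, giving $\HPq(h) = \HPq(mn) \in \Gamma_{\faq}(\gS(P,\gs\Cartan))$, as required. There is no real obstacle here; the only point demanding care is the algebraic rearrangement showing $\HPq(mn) = \HPq(n') + \HRPq(m)$, but this is exactly the manipulation already carried out in the proof of Lemma \ref{l: first step properness bar psi}.
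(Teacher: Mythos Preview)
Your proof is correct and follows essentially the same strategy as the paper's: decompose $H$ via (\ref{e: H decomposition}), apply the Iwasawa decomposition inside the Levi factor, conjugate the $N_R$-component past it, and then invoke Lemmas \ref{l: properness HRPq} and \ref{l: image HPq on NR} for the two resulting summands. The only cosmetic difference is that the paper orders the factors as $(K\cap H)(H\cap N_R)(H\cap L_R)$ rather than $(K\cap H)(M_R\cap H)(N_R\cap H)$, so it conjugates $n_R$ by the $K\cap L_R$-part of $h_L$ instead of by $t=a'\nu'$; the resulting identity $\HPq(h)\in\HPq(N_R)+\HPq(H\cap M_R)$ is the same.
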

            \begin{proof}
               By   (\ref{e: H decomposition}) we have
                $$H = (H\cap K) (H\cap N_R) (H \cap L_R) \subseteq K N_R (H \cap L_R).$$
                Fix $h \in H,$ then we may write $h = k n_R h_L$ with $k \in K,$ $n_R \in N_R$ and $h_L \in (H \cap L_R).$ The group $P\cap L_R$ is a minimal parabolic subgroup of $L_R,$ containing $A.$ In accordance with the associated Iwasawa decomposition for $L_R,$ we may write $h_L = k_L a_L n_L$ with $k_L \in K\cap L_R,$ $a_L \in A$ and $n_L \in N_P\cap L_R.$ Since $L_R$ normalizes $N_R,$ it follows that
                $$h = k n_R k_L a_L n_L \in K n_R' a_L n_L$$
                with $n_R' \in N_R.$ We now observe that $n_R' \in K b N_P$ with $b = \exp \HP(n_R').$ Thus,
                $h \in K b a_L N_P.$
                It follows that
                \begin{equation}\label{e: inclusion HPh}
                    \HPq(h) = \prq(\log b + \log a_L) \in \HPq(N_R) + \HPq(H \cap L_R).
                \end{equation}
                Since $L_R \cap H = (M_R\cap H)(A\cap H),$ we have $\fH_{P,{\rm q}}(H\cap L_R)=\fH_{P,{\rm q}}(H\cap M_R).$ The result now follows from (\ref{e: inclusion HPh})
                by applying
                      Lemmas \ref{l: properness HRPq} and \ref{l: image HPq on NR}.
            \end{proof}

           \begin{lemma}\label{l: included cones}
	    	Let $V$ be a finite dimensional real vector space (or more generally a real locally convex Hausdorff space), $\Gamma_1$ a convex cone in $V$, $\Gamma_2$ a closed convex cones in $V$ and $B\subseteq V$ a bounded subset. If {\ }$\Gamma_1\subseteq B+\Gamma_2$ then $\Gamma_1\subseteq \Gamma_2$.
	    \end{lemma}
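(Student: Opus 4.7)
The plan is to exploit the cone structure to rescale and push the bounded error term to zero. Take an arbitrary $v \in \Gamma_1$. Since $\Gamma_1$ is a convex cone, $tv \in \Gamma_1$ for every $t > 0$. By hypothesis, we may therefore write
\[
tv = b_t + g_t, \qquad b_t \in B,\;\; g_t \in \Gamma_2,
\]
for each $t > 0$.

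Dividing by $t$ gives
\[
v = \frac{1}{t} b_t + \frac{1}{t} g_t.
\]
Since $\Gamma_2$ is a cone, $\tfrac{1}{t} g_t \in \Gamma_2$. Since $B$ is bounded, $\tfrac{1}{t} b_t \to 0$ as $t \to \infty$: in the finite dimensional case this is immediate from boundedness in norm; in the locally convex Hausdorff setting, for every continuous seminorm $p$ we have $p(b_t/t) = p(b_t)/t \leq (\sup_{b \in B} p(b))/t \to 0$. Hence
\[
\frac{1}{t} g_t = v - \frac{1}{t} b_t \longrightarrow v \qquad (t \to \infty),
\]
and because $\Gamma_2$ is closed, the limit $v$ lies in $\Gamma_2$. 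Thus $\Gamma_1 \subseteq \Gamma_2$, as required.

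There is no genuine obstacle here: the argument is a two-line rescaling, and the only thing to verify carefully is that boundedness of $B$ forces $b_t/t \to 0$ in the chosen topology, which is essentially the definition of a bounded set in a locally convex space.
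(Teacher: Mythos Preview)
Your proof is correct and follows essentially the same approach as the paper: pick $\gamma\in\Gamma_1$, use the cone property to write $n\gamma=b_n+\gamma_n$ (the paper uses integers $n$ rather than real $t$), divide by $n$, and use boundedness of $B$ plus closedness of $\Gamma_2$ to conclude. Your added remark on seminorms for the locally convex case is a harmless elaboration the paper omits.
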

	    \begin{proof}
	    	Let $\gamma\in\Gamma_1$. Then for any positive integer $n \geq1$
                  we have that $n\gamma \in \Gamma_1\subseteq B+\Gamma_2$,
                  hence
                  $$ \gamma = b_n/n + \gamma_n,$$
                  with $b_n \in B$ and $\gamma_n \in \Gamma_2.$ As $B$ is bounded,
                  $b_n/n \to 0$ and we conclude that $\gamma_n \to \gamma,$ for $n \to \infty.$
                  Since  $\Gamma_2$ is closed,
                  it follows that $\gamma \in \Gamma_2.$
	    \end{proof}

            \begin{corollary}\label{c: half space result}
                Let $P\in \cP(A).$ Then for each $a \in \Aq,$ the set $\HPq(aH)$ does not contain any line of $\faq.$
            \end{corollary}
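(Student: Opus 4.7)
The plan is to combine the boundedness of $\HPq(aK)$ with the cone inclusion from Lemma \ref{l: inclusion HPq H} to squeeze $\HPq(aH)$ inside a bounded perturbation of the pointed cone $\Gamma := \Gamma_{\faq}(\gS(P,\gs\Cartan))$, and then invoke Lemma \ref{l: included cones} to rule out lines.

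First I would observe that by Corollary \ref{c: properness with set C}(a) applied with $\cA = \{a\}$,
$$
\HPq(aH) \subseteq \HPq(aK) + \HPq(H).
$$
The set $B := \HPq(aK)$ is compact (and in particular bounded), since $K$ is compact and $\HPq$ is continuous. By Lemma \ref{l: inclusion HPq H} we have $\HPq(H) \subseteq \Gamma$. Therefore
$$
\HPq(aH) \subseteq B + \Gamma.
$$

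Now suppose for contradiction that $\HPq(aH)$ contains a line $v_0 + \R v$ with $v \neq 0$. Translating by $-v_0$ we obtain $\R v \subseteq (B - v_0) + \Gamma$, where $B - v_0$ is still bounded. Applying Lemma \ref{l: included cones} to the convex cone $\Gamma_1 = \R_{\geq 0} v$ yields $v \in \Gamma$, and applying it again to $\Gamma_1 = \R_{\geq 0}(-v)$ yields $-v \in \Gamma$. Thus $\Gamma$ contains the nonzero element $v$ together with its opposite.

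For the contradiction, I would appeal to the linear functional constructed in the proof of Corollary \ref{c: cones and properness}: there exists $X \in \faq$ (namely $X = Y - \gs(Y)$ for $Y$ in the positive chamber of $P$) such that $\xi := \inp{X}{\dotvar}$ is strictly positive on $\Gamma \setminus \{0\}$. Then $\xi(v) > 0$ and $\xi(-v) = -\xi(v) > 0$, a contradiction. Hence $\HPq(aH)$ contains no line. There is no real obstacle here; the result is essentially a direct packaging of the cone inclusion of Lemma \ref{l: inclusion HPq H} together with the separating functional already produced in Corollary \ref{c: cones and properness}.
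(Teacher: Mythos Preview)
Your proof is correct and follows essentially the same approach as the paper: both combine Corollary~\ref{c: properness with set C}(a) with Lemma~\ref{l: inclusion HPq H} to get $\HPq(aH)\subseteq \HPq(aK)+\Gamma$, then use Lemma~\ref{l: included cones} to force a hypothetical line into $\Gamma\cap(-\Gamma)$, and conclude from the pointedness of $\Gamma$. The only cosmetic differences are that the paper applies Lemma~\ref{l: included cones} once to the full line $\R Y$ rather than twice to the two half-lines, and it asserts $\Gamma\cap(-\Gamma)=\{0\}$ directly, whereas you spell out the reason via the separating functional from the proof of Corollary~\ref{c: cones and properness}.
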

            \begin{proof}
From Corollary \ref{c: properness with set C} (a) combined with Lemma \ref{l: inclusion HPq H}
we see that
\begin{equation}
\label{e: inclusion HPq aH}
\HPq(aH ) \subseteq  \HPq(aK) + \Gamma_{\faq}(\gS(P, \gs\theta)).
\end{equation}
Arguing by contradiction, assume that $\HPq(aH)$ contains a line of the form
$Z + \R Y,$ with $Y,\,Z \in \faq,$ $Y \neq 0.$
                 Then $\R Y \subseteq (-Z) + \HPq(aK) + \Gamma(\Sigma(P, \gs,\Cartan)),$
                 and by Lemma \ref{l: included cones} we conclude that $\R Y \in \Gamma(\Sigma(P, \gs \Cartan)).$
This implies that
                 $$
                 Y \in \Gamma(\gS(P, \gs\Cartan)) \cap - \Gamma(\gS(P, \gs\Cartan)) = \{0\},
                 $$
                 contradiction.
                 \end{proof}

\section{Critical points of components of the Iwasawa map}\label{s: the set critical points}

            In this section we assume that $P \in \cP(A)$ is a fixed minimal parabolic subgroup and that $a$ is a fixed element of $\Aq.$ We will investigate the critical sets of vector components of the map $h \mapsto \HPq(ah),$ $H \to \faq.$ For this, let $X \in \faq$, and consider the function $F_{a,X}:  H \to  \R$ defined by
            \begin{equation}\label{e:defi set FaX}
                F_{a,X}(h)=\inp{ X}{\HP(ah)}=\inp{X}{\HPq(ah)}=B(X,\HPq(ah)).
            \end{equation}
            The second equality is valid because $\fah$ and $\faq$ are perpendicular with respect to the inner product $\inp{\dotvar}{\dotvar}$, while the third holds because $\HPq(ah)\in\faq\subset\fp$. We start with a result on derivatives of the function
            \begin{equation}\label{e: function F_X from G}
                F_{X}: G \to \R,\quad  g\mapsto\inp{X}{\HP(g)}.
            \end{equation}
            In order to formulate it, we need a bit of additional notation. If $F \in C^\infty(G)$ and $U \in \fg,$ we define:
            $$F(g;U) = R_UF(g) := \left. \frac{d}{dt}\right|_{t = 0} F(g \exp ( tU)). $$

            The following result and its proof can be found in      \cite[Cor.\ 5.2]{duiskolkvar1983}. See also \cite[Cor.\ 4.2]{ban1986}.

            \begin{lemma}\label{l: derivative of FX}
                Let $g\in G$ and $U\in\fg$. Then
                $$F_X(g;U) =B(\Ad(\tau(g))U,X) = B(U,\Ad(\nu(g)^{-1})X),$$
                where we have used the decompositions $g=k(g)\tau(g)$ and $\tau(g)=a(g)\nu(g)$, according to the Iwasawa decomposition $G=KAN_P$.
            \end{lemma}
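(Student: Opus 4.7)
The plan is to differentiate the curve $g \exp(tU)$ directly using the Iwasawa decomposition $G = KAN_P$, extract the derivative of the $\fa$-component, and then rewrite the result using the $B$-orthogonality of $\fa \subset \fp$ against the other Iwasawa summands combined with $\Ad$-invariance of $B$.

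First, write $g = k_0 a_0 n_0$ with $k_0 = k(g)$, $a_0 = a(g)$, $n_0 = n(g)$, and for small $t$ decompose $g \exp(tU) = k(t) a(t) n(t)$ smoothly, with $k(0) = k_0$, $a(0) = a_0$, $n(0) = n_0$. Since $A$ is abelian we may write $a(t) = a_0 \exp(t X_{\fa} + O(t^2))$ for some $X_\fa \in \fa$, and by definition of $\HP$ this gives $\HP(g \exp(tU)) = \log a_0 + t X_\fa + O(t^2)$; consequently $F_X(g;U) = B(X, X_\fa)$. Thus the entire content of the lemma is the identification of $X_\fa$ and the rewriting of $B(X, X_\fa)$.

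Next I would compute the right-logarithmic derivative at $t = 0$ on both sides of $g \exp(tU) = k(t) a(t) n(t)$. On the left this equals $\Ad(g) U$. On the right, using $\dot k(0) = k_0 X_\fk$, $\dot a(0) = a_0 X_\fa$, $\dot n(0) = n_0 X_\fn$ with $X_\fk \in \fk$, $X_\fn \in \fn_P$, a direct expansion gives $\Ad(k_0)\bigl(X_\fk + X_\fa + \Ad(a_0) X_\fn\bigr)$. Applying $\Ad(k_0)^{-1}$ and using $g = k_0 \tau(g)$ yields
\begin{equation*}
\Ad(\tau(g)) U = X_\fk + X_\fa + \Ad(a_0) X_\fn,
\end{equation*}
and since $\Ad(a_0)$ preserves $\fn_P$, the right hand side is exactly the decomposition of $\Ad(\tau(g)) U$ along $\fg = \fk \oplus \fa \oplus \fn_P$.

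For the first asserted equality I would then note that $X \in \fa \subset \fp$ is $B$-orthogonal to $\fk$ (the form $B$ is $\theta$-invariant and $\fk, \fp$ are $\theta$-eigenspaces for distinct eigenvalues) and to $\fn_P$ (because $\fa$ and each root space $\fg_\alpha$ with $\alpha \neq 0$ lie in different generalized weight spaces for $\ad \fa$). Hence $B(X, \Ad(\tau(g)) U) = B(X, X_\fa) = F_X(g;U)$. For the second equality I would invoke $\Ad$-invariance of $B$ together with the fact that $\Ad(a(g))$ fixes $X \in \faq \subseteq \fa$ pointwise; writing $\tau(g) = a(g) \nu(g)$ gives
\begin{equation*}
B(\Ad(\tau(g)) U, X) = B(U, \Ad(\nu(g))^{-1} \Ad(a(g))^{-1} X) = B(U, \Ad(\nu(g)^{-1}) X).
\end{equation*}
No serious obstacle arises: the only genuine computation is the right-logarithmic derivative in the second step, and everything else reduces to the orthogonality and invariance properties of $B$ recorded in the introduction.
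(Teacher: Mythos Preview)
Your argument is correct. The paper does not give its own proof of this lemma; it simply cites \cite[Cor.~5.2]{duiskolkvar1983} and \cite[Cor.~4.2]{ban1986}, and the direct computation you outline is essentially the standard argument found in those references.

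One small slip worth flagging: with your convention $\dot n(0) = n_0 X_\fn$ (left trivialization), the right-logarithmic derivative of $k(t)a(t)n(t)$ at $t=0$ actually equals
\[
\Ad(k_0)\bigl(X_\fk + X_\fa + \Ad(a_0 n_0)\, X_\fn\bigr),
\]
not $\Ad(k_0)\bigl(X_\fk + X_\fa + \Ad(a_0)\, X_\fn\bigr)$ as you wrote. This is harmless for the proof, since $\Ad(a_0 n_0) X_\fn$ still lies in $\fn_P$ (both $A$ and $N_P$ normalize $\fn_P$) and is therefore $B$-orthogonal to $X \in \fa$; the identification $B(X,\Ad(\tau(g))U) = B(X,X_\fa)$ and the remainder of your argument go through unchanged.
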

     We define the set of regular elements in $\Aq$ by $\Aqreg: = \exp(\faqreg),$ see (\ref{e: aq regular}).
If $X \in \faq$ we denote by $G_X$ the centralizer of $X$ in $G$ and put
\begin{equation}
\label{e: centralizer in NP}
N_{P,X} := N_P \cap G_X.
\end{equation}

            \begin{lemma}\label{l: characterization critical point FaX}
                Let $a \in \Aq$ and let $X\in\faq.$ The point $h\in H$ is a critical point for the function $F_{a,X}$ if and only if $ah=kbn$ for certain $k\in K$, $b\in A$ and $n\in N_{P,X}(N_P\cap H)$.
            \end{lemma}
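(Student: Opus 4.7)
My plan is to combine the derivative formula of Lemma \ref{l: derivative of FX} with the decomposition $N_P \simeq N_{P,+} \times (N_P \cap H)$ from Proposition \ref{p: N=N_+(N_P cap H) decomposition}, thereby reducing the critical-point condition to an algebraic identity inside $\fn_P.$ Applying Lemma \ref{l: derivative of FX} at $g=ah$ with $U=V\in\fh$ yields
\[
 R_V F_{a,X}(h) = B\bigl(V,\,\Ad(\nu(ah)^{-1})X\bigr),
\]
so $h$ is critical if and only if $\Ad(\nu(ah)^{-1})X$ is $B$-orthogonal to $\fh.$ The $\gs$-invariance of $B$ makes $\fg=\fh\oplus\fq$ a $B$-orthogonal splitting, and non-degeneracy of $B$ then gives $\fh^{\perp_B}=\fq.$ Writing $n:=\nu(ah),$ the lemma reduces to the algebraic equivalence
\[
 \Ad(n^{-1})X\in\fq \;\iff\; n\in N_{P,X}(N_P\cap H).
\]

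The direction $(\Leftarrow)$ is immediate: if $n=n_Xn_H$ with $n_X\in N_{P,X}$ and $n_H\in N_P\cap H,$ then $\Ad(n^{-1})X=\Ad(n_H^{-1})X\in\Ad(H)\fq\subseteq\fq.$ For $(\Rightarrow),$ I would fix $Z_\iq$ as in Lemma \ref{l: existence Zq}(a) and invoke Proposition \ref{p: N=N_+(N_P cap H) decomposition} to write $n=n_+n_H$ with $n_+\in N_{P,+}$ and $n_H\in N_P\cap H.$ Since $n_H\in H$ and $\fq$ is $\Ad(H)$-stable, the condition $\Ad(n^{-1})X\in\fq$ collapses to $\Ad(n_+^{-1})X\in\fq.$ Because $\Ad(N_{P,+})$ preserves the subalgebra $\fa\oplus\fn_{P,+}$ and acts trivially on the quotient $\fa,$ we may write $\Ad(n_+^{-1})X=X+W$ with $W\in\fn_{P,+};$ the task is then to establish $\fn_{P,+}\cap\fq=\{0\}.$

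I would handle this in two steps. First, $\fn_P\cap\fq=\fn_{P,\gs}\cap\fq$: using Lemma \ref{l: gS(P) as disjoint union} decompose $Y\in\fn_P\cap\fq$ as $Y=Y_\gs+Y_{\gs\Cartan}$ according to $\fn_P=\fn_{P,\gs}\oplus\fn_{P,\gs\Cartan};$ for $\alpha\in\Sigma(P,\gs\Cartan)$ one has $\gs\alpha=-\gs\Cartan\alpha\in\Sigma(\bar P),$ so $\gs Y_{\gs\Cartan}\in\bar\fn_P,$ and comparing the $\bar\fn_P$-components of $\gs Y=-Y$ via $\fn_P\cap\bar\fn_P=\{0\}$ forces $Y_{\gs\Cartan}=0.$ Since $\fn_{P,+}\cap\fn_{P,\gs}=\fn_{P,\gs,+}$ directly from the definitions, intersecting with $\fn_{P,+}$ gives $\fn_{P,+}\cap\fq=\fn_{P,\gs,+}\cap\fq.$ Second, for $\alpha\in\Sigma(P,\gs,+)$ we have $\gs\alpha(Z_\iq)=-\alpha(Z_\iq)<0,$ whence $\gs(\fn_{P,\gs,+})\cap\fn_{P,\gs,+}=\{0\};$ any $W\in\fn_{P,\gs,+}\cap\fq$ satisfies $W=-\gs W\in\gs(\fn_{P,\gs,+}),$ forcing $W=0.$ Consequently $\Ad(n_+^{-1})X=X,$ hence $n_+\in N_{P,X}$ and $n=n_+n_H\in N_{P,X}(N_P\cap H);$ the Iwasawa decomposition $ah=k(ah)\,a(ah)\,\nu(ah)$ then furnishes the form $ah=kbn.$ The principal hurdle is the two intersection identities above, but each is pure bookkeeping in the root data developed in Section \ref{s: parabolic subgroups}, so once they are in place the lemma follows mechanically.
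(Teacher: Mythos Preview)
Your proof is correct and follows essentially the same route as the paper: reduce the critical-point condition via Lemma~\ref{l: derivative of FX} to $\Ad(\nu(ah)^{-1})X\in\fq$, factor $n=\nu(ah)=n_+n_H$ through Proposition~\ref{p: N=N_+(N_P cap H) decomposition}, and conclude by showing $\fn_{P,+}\cap\fq=0$. The only difference is that the paper dispatches this last intersection in one line---since $\sigma(Z_\iq)=-Z_\iq$, every $\alpha\in\Sigma(P,+)$ satisfies $(\sigma\alpha)(Z_\iq)<0$, whence $\sigma(\fn_{P,+})\cap\fn_{P,+}=0$ directly---whereas you take an unnecessary detour through $\fn_{P,\sigma}$ before arriving at the same conclusion for the smaller space $\fn_{P,\sigma,+}$.
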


            \begin{proof}
                Let $h\in H.$ Then $h$ is a critical point for the function $F_{a,X}$ if and only if
                \begin{equation}\label{e: stationary condition F}
                    \forall U \in \fh : \;\; 0 = F_{a,X}(h;U)= B(U,\Ad(\nu(ah)^{-1})X).
                \end{equation}
                Since $\fh$ and $\fq$ are perpendicular
                with respect to $B,$ see text above (\ref{e: inner product from B}), the condition (\ref{e: stationary condition F}) is equivalent to the assertion that $\Ad(\nu(ah)^{-1})X\in\fq.$ Write $n = \nu(ah)$ and decompose $n = n_+ n_H $ according to the decomposition $N_P = N_{P,+} (N_P \cap H)$ of      Proposition \ref{p: N=N_+(N_P cap H) decomposition}.
                Since $\Ad(n_H)$ normalizes $\fq,$ the above condition is equivalent to $\Ad(n_+)^{-1} X \in \fq.$ Now apply the lemma below to see that the latter is equivalent to $n_+ \in N_{P,+} \cap N_{P,X}.$ It follows that (\ref{e: stationary condition F}) is equivalent to $n \in N_{P,X} (N_P \cap H).$
            \end{proof}

            \begin{lemma}
                 Let $n \in N_{P,+}$ {\rm (}cf. Prop.\ \ref{p: N=N_+(N_P cap H) decomposition}{ \rm )} and $X\in \faq.$ Then
                $$\Ad(n) X \in \fq \iff \Ad(n)X = X.$$
            \end{lemma}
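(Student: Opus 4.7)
The plan: The $(\Leftarrow)$ implication is immediate since $X\in\faq\subseteq\fq.$ For the forward direction, I would exploit the $\ad(Z_\iq)$-grading on $\fg,$ where $Z_\iq\in\faqreg$ is the element supplied by Lemma \ref{l: existence Zq}(a) that was used to define $N_{P,+}.$ Write
$$
\fg=\bigoplus_{c\in\R}\fg^{(c)},\qquad \fg^{(c)}:=\{V\in\fg\setmid [Z_\iq,V]=cV\}.
$$
Two features of this grading do all the work. First, by the very definition $\Sigma(P,+)=\{\alpha:\alpha(Z_\iq)>0\},$ the subalgebra $\fn_{P,+}$ lies entirely in strictly positive grading, while $X\in\faq\subseteq\fg^{(0)}$ because $\faq$ is abelian. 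Second, since $\sigma(Z_\iq)=-Z_\iq,$ a one-line calculation using $\sigma[Z_\iq,V]=[\sigma Z_\iq,\sigma V]$ shows that $\sigma$ swaps the grading: $\sigma\fg^{(c)}=\fg^{(-c)}.$

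With these two facts in hand, I would write $n=\exp Y$ for some $Y\in\fn_{P,+}$ and expand
$$
\Ad(n)X-X=\sum_{k\geq 1}\tfrac{1}{k!}(\ad Y)^k X.
$$
The sum is finite since $\ad Y$ is nilpotent. Because $Y$ sits in strictly positive grading, each application of $\ad Y$ strictly raises the grading, so the right-hand side is an element $W\in\bigoplus_{c>0}\fg^{(c)}.$

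Now suppose $\Ad(n)X\in\fq,$ equivalently $\sigma(\Ad(n)X)=-\Ad(n)X.$ Substituting $\Ad(n)X=X+W$ and using $\sigma X=-X$ (since $X\in\faq\subseteq\fq$) collapses the relation to $\sigma W=-W.$ Decompose $W=\sum_{c>0}W_c$ with $W_c\in\fg^{(c)}.$ The grading-$c$ component of $\sigma W$ equals $\sigma(W_{-c}),$ which vanishes for every $c>0$ because $W$ has no negative-grading components. Hence the grading-$c$ component of $-W$ vanishes for each $c>0,$ forcing $W_c=0$ for all $c>0.$ Therefore $W=0$ and $\Ad(n)X=X.$

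I do not foresee any genuine obstacle: the argument is short and the only verifications required are the two features of the grading highlighted above, both of which are immediate from $\sigma Z_\iq=-Z_\iq$ and the definition of $\Sigma(P,+).$ The only conceptual point worth emphasizing is that the nontrivial direction is really a statement about the interaction between the $\ad(Z_\iq)$-grading and $\sigma,$ and the regularity of $Z_\iq$ within $\faq$ is not used — all that matters is $\sigma Z_\iq=-Z_\iq$ together with positivity of $Z_\iq$ on $\Sigma(P,+).$
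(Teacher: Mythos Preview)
Your argument is correct and is essentially the same as the paper's proof. The paper observes directly that $\Ad(n)X - X \in \fn_{P,+}$ and that $\sigma(\fn_{P,+}) \cap \fn_{P,+} = 0$ (hence $\fn_{P,+}\cap\fq=0$), which is exactly your grading argument phrased without introducing the spaces $\fg^{(c)}$: both reduce to the single observation that $\sigma Z_\iq = -Z_\iq$ sends positive $Z_\iq$-weights to negative ones.
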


            \begin{proof}
                The implication `$\Leftarrow$' is obvious. Thus, assume that $\Ad(n)X \in \fq.$ We may write $n=\exp(U)$, where      $U\in\fn_{P,+}$. Then by nilpotence of      $\fn_{P,+},$
                $$
                \Ad(n)X=e^{\ad(U)}X
                \in X + \fn_{P,+}
                $$
                                By assumption, $\Ad(n)X -X \in \fq.$ Since obviously $\gs(\fn_{P,+}) \cap \fn_{P,+} = 0,$ it follows that
                $\fn_{P,+}  \cap \fq = 0$ and we infer that $\Ad(n) X = X.$
            \end{proof}

            \medbreak
            Given $X \in \faq$ we agree to denote by $\cC_{a,X}$ the set of critical points for the function $F_{a,X}$. The remainder of this section will be dedicated to proving the following description of this set in case $a$ is regular. We   recall the definitions of the Weyl groups $\WKaq$ and $\WKHaq$ from (\ref{e: Weyl groups}) and Definition \ref{d: WKHaq}.
           \begin{remark}\rm
                In the following we will use the notation
                $$
                a^w: = w^{-1}\cdot a
                $$
                for $a \in \Aq$ and $w \in \WKaq.$ This notation has the advantage that
                $(a^v)^w = a^{vw}$ and $(a^w)^\beta = a^{w\beta},$
                for $v,w \in \WKaq$ and $\beta \in \gS(\fg,\faq).$ In particular, $\Ad(a^w) = a^{w\beta} I$ on $\fg_\beta.$

                We will use  similar notation for $a \in A$ and $w \in \WKa.$
            \end{remark}

            \begin{lemma}
            \label{l: set of critical points}
                Let   $a \in \Aqreg$ and $X \in \faq.$  Then
                \begin{equation}\label{e: set of critical points}
                    \cC_{a,X}=\bigcup_{w\in \WKHaq}w H_X(N_P\cap H).
                \end{equation}
            \end{lemma}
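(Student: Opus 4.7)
My plan is to use Lemma \ref{l: characterization critical point FaX}---which asserts $h \in \cC_{a,X}$ iff $\nu(ah) \in N_{P,X}(N_P \cap H)$---and prove the two inclusions of (\ref{e: set of critical points}) separately.

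For the easy inclusion $\supseteq$, suppose $h = w h_X n'$ with $w \in N_{K \cap H}(\faq)$, $h_X \in H_X$, $n' \in N_P \cap H$. I rewrite $ah = w(w^{-1}aw)h_X n' = w a^w h_X n'$, noting $a^w \in \Aq$ because $w$ normalizes $\faq$. Since $X \in \faq$, both $\Aq$ and $H_X$ sit inside the $\theta$-stable reductive subgroup $G_X = Z_G(X)$. The Iwasawa decomposition $G_X = (K \cap G_X)\,A\,N_{P,X}$ (valid because $A$ is maximal abelian in $\fp \cap \fg_X$ and $\{\alpha \in \Sigma(P) : \alpha(X) = 0\}$ determines $N_{P,X}$) is compatible with that of $G$, so writing $a^w h_X = k' a' n'_X$ accordingly yields $ah = (wk')\,a'\,(n'_X n')$ as the global Iwasawa decomposition of $ah$. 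Hence $\nu(ah) = n'_X n' \in N_{P,X}(N_P \cap H)$ and $h \in \cC_{a,X}$.

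For the converse, take $h \in \cC_{a,X}$, write $ah = kbn_Xn_H$ by the lemma, and set $h_1 = hn_H^{-1} \in H$ so that $ah_1 = kbn_X$. Computing $(ah_1)\sigma(ah_1)^{-1}$ in two ways, using $\sigma h_1 = h_1$ and $\sigma a = a^{-1}$, yields the key identity
\[
k^{-1} a^2 \sigma(k) = (bn_X)\sigma(bn_X)^{-1} \in G_X,
\]
since $bn_X \in G_X$ and $G_X$ is $\sigma$-stable. The polar decomposition of the left-hand side in $G$ is $(k^{-1}\sigma(k))\exp(2\Ad(\sigma(k)^{-1}) \log a)$, and uniqueness of polar decomposition inside the $\theta$-stable subgroup $G_X$ forces $k^{-1}\sigma(k) \in K_X := K \cap G_X$ and $\Ad(\sigma(k)^{-1}) \log a \in \fp \cap \fg_X$. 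Since $\fa$ is maximal abelian in $\fp \cap \fg_X$, a $K_X$-conjugation brings $\Ad(\sigma(k)^{-1})\log a$ into $\fa$, producing a factorization $k = v k_X$ with $v \in N_K(\fa)$ representing a Weyl element $w_v \in \WKa$ and $k_X \in K_X$.

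The main obstacle is the final step: upgrading $v$ to a representative in $N_{K \cap H}(\faq)$. From $k^{-1}\sigma(k) \in K_X$ one deduces $v^{-1}\sigma(v) \in K_X$, which translates into the constraint $w_v X \in \faq$; this already reflects the $\sigma$-invariance of the coset $kK_X$ in $K/K_X$. I would then invoke a Matsuki-style double coset argument for the compact symmetric pair $(K, K\cap H)$ acting on $K/K_X$: every $\sigma$-fixed $(K\cap H)$-orbit meets $N_{K\cap H}(\faq)\cdot K_X$. Granting this, $k = w_0 k_X'$ for some $w_0 \in N_{K\cap H}(\faq)$ and $k_X' \in K_X$. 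Setting $h_X := w_0^{-1} h_1 = (w_0^{-1}a^{-1}w_0)(k_X' b n_X)$, all four factors lie in $G_X$ (because $w_0$ normalizes $\Aq \subseteq G_X$), while $h_X \in H$ (since both $w_0$ and $h_1$ are); therefore $h_X \in H_X$ and $h = w_0 h_X n_H \in w H_X(N_P \cap H)$ with $w = w_0 Z_{K\cap H}(\faq) \in \WKHaq$, completing the proof.
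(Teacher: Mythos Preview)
Your argument for the inclusion $\supseteq$ is correct and matches the paper's. For the converse inclusion you take a genuinely different route from the paper, and there is a real gap at the step you yourself flag with ``Granting this.''

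Concretely: you work with the Iwasawa $K$-component $k$ of $ah_1$, and via polar decomposition correctly deduce $k^{-1}\sigma(k)\in K_X$ and $k\in N_K(\fa)\,K_X$. The difficulty is then to upgrade this to $k\in N_{K\cap H}(\faq)\,K_X$. The Matsuki-type assertion you invoke (``every $\sigma$-fixed $(K\cap H)$-orbit in $K/K_X$ meets $N_{K\cap H}(\faq)\cdot K_X$'') is not a standard result you can cite, and moreover it would only yield $k = h'\,w_0\,k_X'$ with an extra factor $h'\in K\cap H$; since $h'$ need not commute with $a$, your final computation ``$w_0^{-1}h_1 = (w_0^{-1}a^{-1}w_0)(k_X'bn_X)\in G_X$'' would no longer go through. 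So the gap is substantive, not merely a missing reference.

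The paper avoids this obstacle by never leaving $H$. It first applies the Cartan decomposition of $H$ to write $h' = h_1 h_2$ with $h_1\in K\cap H$ and $h_2\in\exp(\fp\cap\fh)$, so that $h_1$ already lies in $K\cap H$. Then it uses Mostow's decomposition $G\simeq K\times\exp(\fp\cap\fq)\times\exp(\fp\cap\fh)$ together with its analogue $G_X\simeq K_X\times\exp(\fp\cap\fq_X)\times\exp(\fp\cap\fh_X)$; uniqueness forces $k^{-1}h_1\in K_X$, $h_1^{-1}ah_1\in\exp(\fp\cap\fq_X)$ and $h_2\in\exp(\fp\cap\fh_X)$. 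Since $\faq$ is maximal abelian in $\fp\cap\fq_X$, some $l\in(K\cap H_X)^\circ$ conjugates $h_1^{-1}ah_1$ into $\Aq$, and then $\faq$-regularity of $a$ gives $h_1 l\in N_{K\cap H}(\faq)$, hence $h_1\in N_{K\cap H}(\faq)(K\cap H_X)$ directly. The key advantage is that the Mostow decomposition of $G_X$ lands $h_1^{-1}ah_1$ in $\exp(\fp\cap\fq_X)$ rather than merely $\exp(\fp\cap\fg_X)$, so the conjugating element $l$ can be taken in $K\cap H_X$ rather than just $K_X$, and no lifting problem for $\sigma$-fixed cosets arises.
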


            \begin{proof}
                Let $x_w$ be a representative of $w$ in $N_{K\cap H}(\faq)$,      let $h\in H_X$ and $n_P \in N_P\cap H$. Then, with notation as in Lemma \ref{l: derivative of FX},
                $$\nu(a x_w hn_P)=\nu(x_w^{-1}a x_w h n_P)=\nu(a^w h n_P)=\nu(a^w h)n_P.$$
                The element $a^w h$ belongs to $G_X,$ and according to
                 \cite[Eqn.\ (2.6)]{duiskolkvar1983},
                \[G_X\simeq K_XAN_{P,X}.\]
                Thus, $\nu(a^w h)\in N_{P,X}$ and it follows that
                $\nu(ax_w hn_P)\in N_{P,X}(N_P\cap H)$. This proves that the set on the right-hand side of (\ref{e: set of critical points}) is included in the set on the left-hand side. It remains to prove the converse inclusion.

                Let $h\in\cC_{a,X}$. Then by Lemma \ref{l: characterization critical point FaX} we may write $ah=kbn_Xn_H$ with $k \in K$, $b \in A,$ $n_X \in N_{P,X}$ and $n_H \in N_P \cap H.$ From this we see that $k^{-1}ahn_H^{-1}=bn_X\in G_X$. The element $h':=hn_H^{-1}$,  belongs to $H$. In view of the Cartan decomposition $ H = (K\cap H)\times\exp(\fp\cap\fh)$, we may write $h'=h_1h_2$, where $h_1 \in K\cap H$ and $h_2 \in \exp(\fp\cap\fh)$. Then
                \begin{equation}\label{e: deco in GX}
                    k^{-1}ah_1h_2=k^{-1}h_1(h_1^{-1}ah_1)h_2\in G_X.
                \end{equation}

                By \cite{Most55},
                 the group $G$ decomposes as
                $$G \simeq K \times \exp(\fp \cap \fq)\times\exp(\fp \cap \fh).$$
                According to  \cite[Thm.\ 5]{Most55},
                 $G_X$ has a similar decomposition
                $$ G_X \simeq K_X\times\exp(\fp \cap \fq_X)\times\exp(\fp \cap \fh_X).$$
                By the uniqueness properties of the latter decomposition it follows from (\ref{e: deco in GX}) that $k^{-1}h_1\in K_X$, $h_1^{-1}ah_1\in \exp(\fp \cap \fq_X )$ and $h_2\in\exp(\fp\cap\fh_X)$.

                We note that $\gs\Cartan$ fixes $X$ hence leaves the centralizer $G_X$ invariant. The fixed point group $G_{X,+}$ of this involution in $G_X$ admits the Cartan decomposition
                $$G_{X,+} \simeq (K\cap H_X) \times \exp(\fp \cap \fq_X).$$
                Obviously $\faq$ is a maximal abelian subspace of $\fp \cap \fq_X.$ Hence, every element of the latter space is conjugate to an element of $\faq$ under the group $(K\cap H_X)^\circ.$ We infer that there exists an element $l \in (K \cap H_X)^\circ$ such that
                \begin{equation}\label{e: conjugate in Aq}
                    l^{-1}h_1^{-1}ah_1l \in \Aq.
                \end{equation}
                Since $a$ was assumed to be regular for  $\Sigma(\fg,\faq)$, it follows that $a$ is regular for $\Sigma(\fg_+,\faq)$ as well. Hence, (\ref{e: conjugate in Aq}) implies that the element $h_1l \in K \cap H$ normalizes $\faq.$ It follows that $h_1\in N_{K\cap H} (\faq) (K\cap H_X)$. Then, $$h' = h_1h_2 \in N_{K\cap H}(\faq)(K\cap H_X)\exp(\fp\cap\fh_X)=N_{K\cap H}(\faq)H_X$$
                and we conclude that $hn_H^{-1} \in  N_{K\cap H}(\faq)H_X$. This finally implies that
                $$h\in N_{K\cap H}(\faq)H_X(N_P\cap H),$$
                which concludes the proof.
            \end{proof}

        \section{Properties of the set of critical points}\label{s: properties of the set of critical points}
            As in the previous section, we assume that $P \in \cP(A)$ and that $a$ is a regular point in $\Aq.$ In the previous section we defined the function $F_{a,X}: H \to \R,$ for $X \in \faq,$  by (\ref{e:defi set FaX}) and we determined its set of critical points $\cC_{a,X},$ see (\ref{e: set of critical points}). The purpose of the present section is to study this set in more detail.

            We start with the following lemma.
            \begin{lemma}
            \label{l: gf induces injective immersion}
                The map $\gf: H_X \times (N_P \cap H) \to H$ given by $(h,n) \mapsto hn$ induces an injective immersion
                $$\bar\gf: H_X \times_{\scriptscriptstyle N_P \cap H_X} (N_P \cap H) \to H$$
                with image $H_X(N_P\cap H).$
            \end{lemma}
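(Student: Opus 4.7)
First, I would set up the associated bundle explicitly by defining the right action of $N_P \cap H_X$ on $H_X \times (N_P \cap H)$ via $m \cdot (h, n) := (hm, m^{-1} n)$. Since $N_P \cap H_X$ is a closed subgroup of $H_X$, this action is smooth, free and proper, so the quotient is a smooth manifold. Because $\gf(hm, m^{-1}n) = hn = \gf(h,n)$, the map $\gf$ descends to a smooth map $\bar\gf$, whose image is plainly $H_X(N_P \cap H)$. For injectivity, suppose $h_1 n_1 = h_2 n_2$ with $h_i \in H_X$ and $n_i \in N_P \cap H$; then $h_2^{-1} h_1 = n_2 n_1^{-1}$ lies in $H_X \cap N_P = N_P \cap H_X$, and setting $m := h_2^{-1} h_1$ one gets $(h_1, n_1) = m \cdot (h_2, n_2)$, exhibiting these points as lying in the same orbit.

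The main content is the immersion property, which amounts to showing that $\ker d\gf_{(h,n)}$ coincides with the tangent space to the $(N_P \cap H_X)$-orbit through $(h,n)$. After right-trivialising, I identify $T_{(h,n)}(H_X \times (N_P \cap H))$ with $\fh_X \oplus (\fn_P \cap \fh)$ and compute, from $\gf(he^{tA}, ne^{tB}) = hn\bigl(\Ad(n^{-1})A + B\bigr) + O(t^2)$ (in exponential coordinates at $hn$), the formula
$$
d\gf_{(h,n)}(A, B) = (L_{hn})_*\bigl(\Ad(n^{-1}) A + B\bigr), \qquad A \in \fh_X,\; B \in \fn_P \cap \fh.
$$
Differentiating the orbit curve $(h e^{tX}, e^{-tX} n)$ at $t=0$ shows that its tangent space is $V = \{(X, -\Ad(n^{-1}) X) : X \in \fn_P \cap \fh_X\}$, which is manifestly contained in $\ker d\gf_{(h,n)}$.

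For the reverse inclusion, let $(A, B) \in \ker d\gf_{(h,n)}$. Then $B = -\Ad(n^{-1}) A$, and the requirement $B \in \fn_P \cap \fh$ forces $\Ad(n^{-1}) A \in \fn_P$. The crucial observation is now that $n \in N_P$ normalises $\fn_P$, so $A \in \Ad(n)(\fn_P) = \fn_P$; combined with $A \in \fh_X$, this gives $A \in \fn_P \cap \fh_X$, and therefore $(A, B) \in V$. This matches the kernel to the orbit tangent space and yields the immersion property. The only delicate step is this final matching, where the normalisation of $\fn_P$ by $n$ is essential; the rest is standard associated-bundle bookkeeping.
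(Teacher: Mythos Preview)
Your argument is correct, but it proceeds quite differently from the paper's proof. The paper exploits a group-theoretic shortcut: it considers the left action of the product group $H_X \times (N_P \cap H)$ on $H$ given by $(h,n)\cdot x = h x n^{-1}$, observes that $H_X(N_P\cap H)$ is the orbit through $e$, and invokes the standard fact that an orbit map of a smooth Lie group action factors through an injective immersion from the quotient by the stabilizer. The stabilizer of $e$ is $\{(h,h): h \in H_X \cap N_P\}$, and the map $(h,n)\mapsto (h,n^{-1})$ identifies $H_X \times_{N_P\cap H_X}(N_P\cap H)$ with this quotient. No explicit differential computation is needed.

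Your route instead computes $d\gf$ by hand and matches its kernel to the orbit tangent space, the key step being that $n\in N_P$ normalises $\fn_P$. This is entirely valid and arguably more transparent about \emph{why} the immersion property holds, though it costs more ink. One small terminological slip: what you call ``right-trivialising'' is in fact left-trivialisation (you identify $T_g G$ with $\fg$ via $A\mapsto \tfrac{d}{dt}\big|_{t=0} g e^{tA}$); this does not affect the argument. The paper's approach buys brevity and generality (it works for any orbit map), while yours makes the linear-algebraic content explicit and would adapt more readily if one later needed finer information about the differential.
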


            \begin{proof}
                The group $H_X \times (N_P\cap H)$ has a natural left action on $H$ given by the formula: $(h,n)\cdot x = h x n^{-1}.$ The set $H_X (N_P \cap H)$ is the orbit for this action through the identity element $e$ of $H.$ Let $F$ be the stabilizer of $e$ for this action.  Then it follows that the map $(h,n) \mapsto (h,n)\cdot e = h n^{-1}$ factors through an injective immersion $(H_X \times (N_P\cap H))/F \to H$ with image $H_X (N_P \cap H).$ The stabilizer $F$ consists of the elements $(h,h)$ with $h \in H_X \cap N_P.$ To complete the proof of the lemma, we note that the map $(h, n) \mapsto (h, n^{-1})$ induces a diffeomorphism $H_X\times_{N_P \cap H_X} (N_P \cap H) \to (H_X\times (N_P \cap H))/F.$
            \end{proof}

            \begin{lemma}\label{l: cCaX}
                Let $X \in \faq.$ Then the set $\cC_{a,X}$ is closed in $H.$ Moreover,
                the following holds.
                  \begin{enumerate}[{\rm (a)}]
                  \item If {\ }$\fh_X + (\fn_P \cap \fh) = \fh$ {\ } then {\ } $\cC_{a,X} =  H.$
                  \item If {\ } $\fh_X + (\fn_P \cap \fh) \subsetneq \fh$ {\ } then {\ } $\cC_{a,X}$
                  is a finite union of lower dimensional injectively immersed submanifolds.
                  \end{enumerate}
            \end{lemma}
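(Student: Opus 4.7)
My plan is to combine the critical-set description (\ref{e: set of critical points}) with the injective immersion of Lemma \ref{l: gf induces injective immersion} and a short dimension count, handling cases (a) and (b) in parallel once the dimension $d := \dim(\fh_X + (\fn_P\cap\fh))$ of the relevant submanifolds has been identified.

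First, the closedness of $\cC_{a,X}$ is automatic: it is the vanishing locus of the continuous $1$-form $dF_{a,X}$ on $H.$ Next, I would use that $\bar\gf$ is an injective immersion whose domain has dimension
\[
\dim H_X + \dim(N_P\cap H) - \dim(N_P\cap H_X) = \dim\fh_X + \dim(\fn_P\cap\fh) - \dim(\fh_X\cap\fn_P\cap\fh) = d,
\]
so that each of the finitely many sets $wH_X(N_P\cap H)$ in (\ref{e: set of critical points}) is the image in $H$ of an injective immersion from a manifold of dimension $d,$ via the diffeomorphism of $H$ given by left-translation by a representative of $w\in\WKHaq.$

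In case (b) this already delivers the conclusion, since $d<\dim H$ and $\WKHaq$ is finite. Case (a) is where the real work sits. There $d = \dim H,$ so $\bar\gf$ is an injective immersion between equidimensional manifolds, hence a local diffeomorphism, each $wH_X(N_P\cap H)$ is open in $H,$ and $\cC_{a,X}$ is simultaneously open and closed in $H.$ To conclude $\cC_{a,X}=H$ I would invoke the essential connectedness $H = Z_{K\cap H}(\faq)\,H^\circ$: every connected component of $H$ is of the form $zH^\circ$ with $z\in Z_{K\cap H}(\faq),$ and since $z$ centralizes $\faq\ni X$ it lies in $H_X,$ so
\[
z \;=\; z\cdot e \;\in\; H_X(N_P\cap H)\cap zH^\circ \;\subseteq\; \cC_{a,X}\cap zH^\circ.
\]
Connectedness of $zH^\circ$ together with clopenness of $\cC_{a,X}$ in $H$ then forces $zH^\circ\subseteq\cC_{a,X}.$

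The only point requiring care is that $H$ need not be connected, so the clopen argument in case (a) has to be applied component by component; it is precisely the essential connectedness hypothesis (\ref{e: essentially connected}) that makes this work. Everything else is a routine fiber-product dimension count.
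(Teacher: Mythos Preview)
Your proof is correct and follows essentially the same approach as the paper: closedness from the critical-set nature, the dimension count via Lemma~\ref{l: gf induces injective immersion}, and then clopenness plus essential connectedness for case~(a). The only cosmetic difference is that in~(a) the paper concludes $\cC_{a,X}\supseteq H^\circ$ and then invokes the left $N_{K\cap H}(\faq)$-invariance of $\cC_{a,X}$ (visible from the union in~(\ref{e: set of critical points})) to reach all of $H$, whereas you instead observe directly that $Z_{K\cap H}(\faq)\subseteq H_X$ so every component already meets the $w=e$ piece; both arguments are equally short and valid.
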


\begin{proof}      Since $\cC_{a,X}$ is the set of critical points of the smooth function $F_{a,X},$ it is closed.

From Lemma \ref{l: set of critical points} combined
with Lemma \ref{l: gf induces injective immersion} it follows that $\cC_{a,X}$
is a finite union of injectively immersed submanifolds of dimension
$d_X: = \dim (\fh_X + ( \fn_P \cap \fh)).$ From this, (b) is immediate.

For (a) we assume the hypothesis to be fulfilled, or equivalently, that $d_X = \dim(H).$ Then $\cC_{a,X}$ is open in $H.$ Since this set is also closed in $H$, and contains
$H_X (N_P \cap H),$ it follows that $\cC_{a,X} \supseteq H^\circ.$
From      Lemma \ref{l: set of critical points} it follows that $\cC_{a,X}$ is left $N_{K\cap H}(\faq)$-invariant, so that $\cC_{a,X} \supseteq N_{K\cap H}(\faq) H^\circ.$
Since $H$ is essentially connected, the latter set equals $H,$ see
(\ref{e: essentially connected}).
\end{proof}

 \begin{lemma}\label{l: conditions on X}
                Let $X \in \faq.$ Then the following assertions are equivalent:
                \begin{enumerate}[{\rm (a)}]
                    \item $\fh=  \fh_X+ (\fn_P\cap\fh);$
                    \item $\forall \ga \in \gS(\fg, \fa): \ga(X) = 0.$
                 \end{enumerate}
                 \end{lemma}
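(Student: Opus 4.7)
The implication (b) $\Rightarrow$ (a) should be immediate: if every $\ga \in \Sigma(\fg,\fa)$ vanishes on $X$, then $\ad(X)$ kills every root space and of course commutes with $\fm \oplus \fa$, so $\fg_X = \fg$; in particular $\fh_X = \fh$ and (a) is trivially satisfied.

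For the converse (a) $\Rightarrow$ (b), my plan is to argue root by root using, for each $\ga \in \Sigma(\fg,\fa)$, the projection $\pi_{-\ga}: \fg \to \fg_{-\ga}$ along the $\fa$-root space decomposition. Since $\Sigma(\fg,\fa)$ is symmetric, I may assume $\ga \in \Sigma(P)$, and I would then split into two cases according to whether $\gs\ga = \ga$ or not. If $\gs\ga = \ga$, then because $\gs$ acts as $-1$ on $\faq$, the root $\ga$ must vanish on $\faq$; as $X \in \faq$, this already yields $\ga(X) = 0$ without invoking (a) at all.

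The content sits in the case $\gs\ga \neq \ga$. I would first observe that $\pi_{-\ga}$ vanishes on $\fn_P$ (since $-\ga \notin \Sigma(P)$), and in particular on $\fn_P \cap \fh$, so hypothesis (a) gives $\pi_{-\ga}(\fh) = \pi_{-\ga}(\fh_X) \subseteq \pi_{-\ga}(\fg_X)$. Next I would check that $\pi_{-\ga}$ is in fact surjective from $\fh$ onto $\fg_{-\ga}$ in this case: given $Y \in \fg_{-\ga}$, the element $Z := Y + \gs Y$ lies in $\fh$, and since $\gs Y \in \fg_{-\gs\ga}$ with $-\gs\ga \neq -\ga$, one has $\pi_{-\ga}(Z) = Y$. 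Combined with the description $\fg_X = \fm \oplus \fa \oplus \bigoplus_{\gb(X)=0} \fg_\gb$, the inclusion $\fg_{-\ga} \subseteq \pi_{-\ga}(\fg_X)$ forces $\ga(X) = 0$, finishing (b).

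I do not expect any genuine obstacle; the lemma is essentially a root-theoretic transcription of (a), and the only step that needs a sentence of care is the surjectivity of $\pi_{-\ga}|_{\fh}$ in the case $\gs\ga \neq \ga$. The case split is the key organizational point, since it neatly separates the contribution coming from $X \in \faq$ (which handles $\gs\ga = \ga$) from that of condition (a) itself (which handles $\gs\ga \neq \ga$).
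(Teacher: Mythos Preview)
Your argument is correct. The implication (b) $\Rightarrow$ (a) matches the paper verbatim. For (a) $\Rightarrow$ (b), your approach is genuinely different from the paper's, and in fact more economical.

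The paper argues by contraposition: assuming $\gb(X)\neq 0$ for some $\gb\in\Sigma(P)$, it works with the full orbit $\cO_\gb=\{\gb,\gs\gb,-\gb,-\gs\gb\}$ and the identity $\fg_\cO\cap\fh=(\fg_{\cO\cap\gs\cO})^\gs$, then shows that $\Sigma(P,\gs)\cap\cO_\gb$ is a proper $\gs$-stable subset of $\cO_\gb$ (splitting into the cases $\gb\in\faqd$ and $\gb\notin\faqd$), so that $(\fh_X+(\fn_P\cap\fh))\cap\fg_{\cO_\gb}\subsetneq\fh\cap\fg_{\cO_\gb}$. You instead project onto the single root space $\fg_{-\ga}$: this kills $\fn_P\cap\fh$ automatically, and the surjectivity of $\pi_{-\ga}|_\fh$ (via $Y\mapsto Y+\gs Y$, using $\gs\ga\neq\ga$) together with $\pi_{-\ga}(\fg_X)=0$ when $\ga(X)\neq 0$ gives the contradiction directly. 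Your case split $\gs\ga=\ga$ versus $\gs\ga\neq\ga$ is cleaner than the paper's $\gb\in\faqd$ versus $\gb\notin\faqd$, and your argument avoids the orbit bookkeeping. The paper's orbit formalism, on the other hand, is the same one developed further in Section~\ref{s: transversal signature of the hessian} for the Hessian computations, so there is some expository economy in introducing it here.
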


            \begin{proof}
First assume (b). Then $\fg_X = \fg$ and (a) follows.
We will prove the converse implication by contraposition. Thus, assume that (b) does not hold.
Then there exists a root $\gb \in \gS(\fg, \fa)$ such that $\gb(X) \neq 0.$
By changing sign if necessary, we may in addition arrange that $\gb \in \gS(P).$

Given a subset   $\cO \subseteq \gS(\fg, \fa)\cup \{0\},$ we agree  to write
\begin{equation}
\label{e: defi fg cO}
\fg_\cO = \oplus_{\ga \in \cO}\; \fg_\ga.
\end{equation}
In particular, we see that $\fn_P = \fg_{\gS(P)}.$
 We also agree to write
 $\cO^\gs: = \cO \cap \gs(\cO).$  Then using
$\gs (\fg_\ga) = \fg_{\gs \ga}$ we readily see that
                 \begin{equation}\label{e: intersection fg omega with fh}
                 \fg_{\cO} \cap \fh = (\fg_{\cO^\gs})^\gs = \oplus_{\omega \in \cO^\gs/\{1, \gs\}} \;\; (\fg_\omega)^\gs ;
                 \end{equation}
here $\cO^\gs/\{1, \gs\}$ denotes the set of orbits for the action on $\cO^\gs$ of the subgroup
$\{1, \gs\}$ of $\Aut(\fg).$ If we apply (\ref{e: intersection fg omega with fh}) to the set $\cO_X:=
\{\ga \in \gS(\fg, \fa) \setmid \ga(X) = 0\} \cup \{0\},$ we find
$$
\fh_X = \oplus_{\omega \in \cO_X/\{1, \gs\}}\;\; (\fg_\omega)^\gs.
$$
We note that $\gS(P)^\gs = \gS(P, \gs),$ so that
$$
\fn_P \cap \fh =\fg_{\gS(P,\gs)}\cap \fh.
$$

We now consider the set $\cO_\gb: = \{\gb, \gs\gb, -\gb, -\gs \gb\}.$ Since
$\cO_X \cap \cO_\gb = \emptyset,$ it follows from the above that
                 \begin{equation}
                 \label{e: key inclusion tangent space}
                ( \fh_X + (\fn_P\cap \fh))\cap \fg_{\cO_\gb}=
                \fn_P  \cap \fh \cap \fg_{\cO_\gb}
                = (\fg_{\gS(P,\gs) \cap \cO_\gb})^\gs.
                \end{equation}
                On the other hand,
                $$
                \fh \cap \fg_{\cO_\gb}= (\fg_{\cO_\gb})^\gs.
                $$
                From   $\gb(X) \neq 0$ it follows that $\gb \notin \fahd.$ If $\gb\in \faqd$ then
                $\gS(P, \gs) \cap \cO_\gb = \emptyset$ and if $\gb \notin \faqd$ then
                $\gS(P,\gs) \cap \cO_{\gb} \subseteq\{\gb , \gs \gb\}.$
                In any case,
                $\gS(P,\gs)\cap \cO_\gb$ is a proper $\gs$-invariant subset of
                $\cO_\gb.$ By application of (\ref{e: intersection fg omega with fh}) it now follows  that
                $$
                (\fg_{\gS(P,\gs) \cap \cO})^\gs\subsetneq (\fg_\cO)^\gs.
                $$
                Using (\ref{e: key inclusion tangent space})   we infer that (a) is not valid.
                \end{proof}

We agree to write
\begin{equation}
\label{e: defi S}
S: = \faq \setminus \cap_{\ga \in \gS(\fg, \faq)} \ker \ga.
\end{equation}

\begin{remark}
\label{r: span and S}
If $\gS(\fg, \faq)$ spans $\faq$ then it follows that $S = \faq\setminus\{0\}.$
\end{remark}

\begin{corollary}
\label{c: characterization S}
$
                S=\{X\in\fa_\iq\setmid\cC_{a,X} \subsetneq H \}. $
\end{corollary}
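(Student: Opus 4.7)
The plan is to read off the equivalence directly from the two preceding lemmas, with one small observation about restriction of roots. First I would note that by Lemma \ref{l: cCaX}, the condition $\cC_{a,X} \subsetneq H$ is equivalent to $\fh_X + (\fn_P \cap \fh) \subsetneq \fh$: indeed, part (a) of that lemma says equality of the tangent spaces implies $\cC_{a,X} = H$, while part (b) says strict inclusion implies $\cC_{a,X}$ is a finite union of injectively immersed submanifolds of strictly smaller dimension, hence a proper subset of $H$.

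Next, by Lemma \ref{l: conditions on X}, the equality $\fh = \fh_X + (\fn_P \cap \fh)$ is equivalent to $\ga(X) = 0$ for every $\ga \in \gS(\fg, \fa)$. Contrapositively, $\fh_X + (\fn_P \cap \fh) \subsetneq \fh$ iff there exists $\ga \in \gS(\fg, \fa)$ with $\ga(X) \neq 0$.

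Finally, I would translate this last condition into a condition on roots of $\gS(\fg, \faq)$. Since $X \in \faq$, we have $\ga(X) = \ga|_{\faq}(X)$ for every $\ga \in \gS(\fg,\fa)$. The set of nonzero restrictions $\{\ga|_{\faq} : \ga \in \gS(\fg, \fa)\} \setminus \{0\}$ coincides with $\gS(\fg, \faq)$, so the existence of $\ga \in \gS(\fg, \fa)$ with $\ga(X) \neq 0$ is equivalent to the existence of $\beta \in \gS(\fg, \faq)$ with $\beta(X) \neq 0$, i.e.\ to $X \notin \bigcap_{\beta \in \gS(\fg,\faq)} \ker \beta$, which by the definition \eqref{e: defi S} is precisely $X \in S$. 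Chaining the three equivalences yields the corollary.

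There is no real obstacle here: the work has been done in Lemmas \ref{l: cCaX} and \ref{l: conditions on X}, and the corollary is a clean repackaging. The only thing worth being careful about is the restriction step, to ensure that one may freely pass from ``some $\fa$-root does not vanish on $X$'' to ``some $\faq$-root does not vanish on $X$''; this uses the standard fact that $\gS(\fg, \faq)$ consists precisely of the nonzero restrictions of roots in $\gS(\fg, \fa)$ to $\faq$, which was recalled in Section~1.
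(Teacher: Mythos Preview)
Your proposal is correct and follows essentially the same route as the paper: both combine Lemma~\ref{l: cCaX} (to pass between $\cC_{a,X}\subsetneq H$ and $\fh_X+(\fn_P\cap\fh)\subsetneq\fh$) with Lemma~\ref{l: conditions on X} (to pass to the root condition), and then identify the latter with $X\in S$. The paper is terser, leaving the restriction step implicit and invoking the Baire category theorem to justify that a finite union of lower-dimensional injectively immersed submanifolds is a proper subset of $H$; your ``hence a proper subset'' is correct but you might name the reason (Baire, or measure zero) for completeness.
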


\begin{proof}
Let $X \in \faq.$
In the situation of Lemma \ref{l: cCaX} (b) the set $\cC_{a,X}$ is a countable union
of lower dimensional submanifolds, hence nowhere dense by the Baire category theorem. Thus, by application of Lemmas \ref{l: cCaX} and \ref{l: conditions on X}
it follows that      $\cC_{a,X} \subsetneq H \iff X \in S.$
\end{proof}

 For
 each $Z \in \faq,$ let $\gS(Z)$ denote the collection of roots in      $\gS(\fg, \fa_\iq)$ vanishing on $Z.$ We define the equivalence relation $\sim$ on $\faq$ by
                $$X \sim Y \iff \gS(X) = \gS(Y).$$
                Then clearly, $\sim$ has finitely many equivalence classes in $\faq$ and
                $$X \sim Y \iff  G_X = G_Y.$$
The class of $0$ is given by    $[0] = \cap_{\ga \in \gS(\fg, \faq)} \ker \ga$ and
$S$ is the union of the remaining finitely many equivalence classes for $\sim$. Furthermore, the set $\cC_{a, X}$ depends on $X \in S$ through the centralizer $G_X,$ hence through  the equivalence class $[X]$ for $\sim.$ Accordingly, we will also write $\cC_{a, [X]}$ for this set.

We define
 \begin{equation}\label{e: def-critical-set}
                \cC_a: =\cup_{X\in S}\; \cC_{a,X}.
            \end{equation}

\begin{lemma}{\ }
\label{l: property cCa new}
\begin{enumerate}[\rm (a)]
\item
There exists a finite subset $S_0 \subseteq  S$ such that (\ref{e: def-critical-set}) is valid for the union over  $S_0$ in place of $S.$
 \item
 The set $\cC_a$ is closed and a finite union of
lower dimensional injectively immersed submanifolds of $H.$
\item The set $\cC_a$ is nowhere dense in $H.$
\end{enumerate}
            \end{lemma}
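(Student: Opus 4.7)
The plan is to exploit the equivalence relation $\sim$ on $\faq$ introduced just above the lemma, combined with Lemma \ref{l: cCaX}, to reduce the union defining $\cC_a$ to a finite one and to control its topology.

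First, for assertion (a), I would use the observation already recorded in the paragraph preceding the lemma: the set $\cC_{a,X}$ depends on $X \in S$ only through the equivalence class $[X]$ for the relation $\sim$, because $\cC_{a,X}$ is expressed in Lemma \ref{l: set of critical points} entirely in terms of $H_X$ and $N_P \cap H$, and $H_X$ depends only on $G_X$, which is constant on equivalence classes of $\sim$. Since the collection of subsets of $\gS(\fg,\faq)$ of the form $\gS(X)$ is finite, there are only finitely many equivalence classes for $\sim$, so $S$ (a union of equivalence classes) is covered by finitely many such classes. Choose one representative from each equivalence class contained in $S$; this gives a finite set $S_0 \subseteq S$ with
\[
\cC_a = \bigcup_{X\in S}\, \cC_{a,X} = \bigcup_{X\in S_0}\, \cC_{a,X}.
\]

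For assertion (b), by Corollary \ref{c: characterization S}, for each $X \in S$ we have $\cC_{a,X} \subsetneq H$, so Lemma \ref{l: cCaX}(b) applies and $\cC_{a,X}$ is a finite union of injectively immersed submanifolds of dimension strictly less than $\dim H$. Since $S_0$ is finite by (a), $\cC_a$ is itself a finite union of such lower dimensional injectively immersed submanifolds. Closedness follows from the fact that each $\cC_{a,X}$ is closed (again by Lemma \ref{l: cCaX}), and a finite union of closed sets is closed.

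Finally, for assertion (c), each lower dimensional injectively immersed submanifold appearing in the description of $\cC_a$ has empty interior in $H$. Since $\cC_a$ is closed by (b), it equals its closure; being a finite union of sets each with empty interior, $\cC_a$ has empty interior by the Baire category theorem (or just elementary topology, since the union is finite), i.e.\ it is nowhere dense in $H$. The main conceptual step is the reduction to a finite union in (a); once this is in place, (b) and (c) follow directly from Lemma \ref{l: cCaX} and standard topological facts, so I do not anticipate any serious obstacle.
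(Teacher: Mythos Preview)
Your proposal is correct and follows essentially the same approach as the paper: you use the equivalence relation $\sim$ to reduce to finitely many representatives for (a), then invoke Corollary \ref{c: characterization S} and Lemma \ref{l: cCaX} for (b), and deduce (c) from (b) via Baire. The paper's proof is more terse but structurally identical.
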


            \begin{proof}
By the discussion preceding the lemma, $\cC_a$ is the union of the sets
$\cC_{a,[X]},$ for $[X]\in S/\sim.$ Since the latter set is finite,
assertion (a) follows with
$S_0$ a complete set of representatives for $S/\sim.$
Assertion (b) now follows by application of
Corollary \ref{c: characterization S} and Lemma \ref{l: cCaX}. Assertion (c)  follows from (b) by application of the Baire category theorem.
            \end{proof}

The following result  illustrates the importance of the set $\cC_a.$

\begin{prop}
\label{p: submersiveness Fa}
The set $H \setminus \cC_a$ is open and dense in $H.$
Assume that $\gS(\fg, \faq)$ spans $\faqd.$
Then the map $F_a: h \mapsto \HPq(ah),$ $H \to \faq$ is submersive at all points of $H \setminus \cC_a.$
\end{prop}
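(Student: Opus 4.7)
The first assertion is immediate from Lemma~\ref{l: property cCa new}: the set $\cC_a$ is closed and nowhere dense in $H,$ hence its complement is open and dense.

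For the submersiveness claim the plan is to read the failure of surjectivity of $dF_a(h)$ off from the scalar components $F_{a,X}.$ Since by definition $F_{a,X}(h) = \inp{X}{F_a(h)}$ is linear in $X,$ the chain rule yields
\[
dF_{a,X}(h)(v) = \inp{X}{dF_a(h)(v)}, \qquad v \in T_h H.
\]
Using the inner product $\inp{\cdot}{\cdot}$ to identify $\faq$ with its dual, we see that $dF_a(h) : T_h H \to \faq$ is surjective if and only if no nonzero $X \in \faq$ is orthogonal to its image, equivalently if and only if $dF_{a,X}(h) \neq 0$ for every nonzero $X \in \faq.$ Thus $F_a$ fails to be submersive at $h$ precisely when $h \in \cC_{a,X}$ for some nonzero $X \in \faq.$

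Under the hypothesis that $\gS(\fg, \faq)$ spans $\faqd,$ Remark~\ref{r: span and S} gives $S = \faq \setminus \{0\},$ so by (\ref{e: def-critical-set})
\[
\cC_a \;=\; \bigcup_{X \in \faq \setminus \{0\}} \cC_{a,X},
\]
which by the previous paragraph is exactly the set of non-submersive points of $F_a.$ Therefore $F_a$ is submersive at every $h \in H \setminus \cC_a,$ completing the proof.

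I expect no real obstacle in carrying this out: the argument is just the standard duality between surjectivity of a linear map and injectivity of its adjoint, combined with the already established description of $\cC_a$ as the union of the critical sets $\cC_{a,X}$ over $X$ in the stratum $S,$ plus the elementary observation (Remark~\ref{r: span and S}) that the spanning hypothesis forces $S$ to exhaust $\faq \setminus \{0\}.$
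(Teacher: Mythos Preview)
Your proof is correct and follows essentially the same approach as the paper: both invoke Lemma~\ref{l: property cCa new} for the first assertion, then use that under the spanning hypothesis $S=\faq\setminus\{0\}$ (Remark~\ref{r: span and S}) together with the duality between surjectivity of $dF_a(h)$ and nonvanishing of all $dF_{a,X}(h)$. You have simply spelled out the linear-algebra step more explicitly than the paper's terse version.
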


\begin{proof}
The first assertion is a consequence of Lemma \ref{l: property cCa new}.

Let $h_0 \in H \setminus \cC_a.$ Then for every $X \in S$ the point $h_0$ is not critical for the function $F_{a,X}.$ As $S = \faq\setminus \{0\},$ see Remark \ref{r: span and S}, it follows that $F_a: h \mapsto \HPq(ah)$ is submersive at $h_0.$
\end{proof}

            \begin{lemma}\label{l: HP of aH and aC}
                Let $P \in \cP(A)$ and    $a \in \Aqreg.$ Then the following assertions are valid.
                \begin{enumerate}
                    \itema The sets $\HPq(aH)$ and $\HPq(a\cC_a)$ are closed in $\faq.$
                    \itemb If $\,\gS(\fg, \faq)$ spans $\faqd$ then the set $\HPq(aH)\setminus\HPq(a\cC_a)$  is open and closed
in  $\faq\setminus\HPq(a\cC_a).$
                \end{enumerate}
            \end{lemma}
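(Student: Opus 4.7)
The plan is to derive (a) from the properness result of Section \ref{s: prop-sec} and to derive (b) from the submersion result of Proposition \ref{p: submersiveness Fa}, using some easy set-theoretic bookkeeping.

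For (a), by Corollary \ref{c: properness with set C}(b) applied with $\cA = \{a\}$, the map $h \mapsto \HPq(ah)$, $H \to \faq$ factors through a proper map $H/(H \cap P) \to \faq$. Proper maps between locally compact Hausdorff spaces are closed, so the image $\HPq(aH)$ is closed in $\faq$. For $\HPq(a\cC_a)$, the key point is that $\cC_a$ is saturated for the right action of $H \cap P$ on $H$: indeed each function $F_{a,X}$ is right $(H\cap P)$-invariant by Lemma \ref{l: HP-inv}, so each $\cC_{a,X}$ is right $(H \cap P)$-invariant, and thus so is $\cC_a$. Since $\cC_a$ is closed in $H$ by Lemma \ref{l: property cCa new}(b), its image in $H/(H\cap P)$ is closed, and applying the proper map above gives that $\HPq(a\cC_a)$ is closed in $\faq$ as well.

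For (b), assume $\gS(\fg,\faq)$ spans $\faqd$. By Proposition \ref{p: submersiveness Fa}, the map $F_a\colon h \mapsto \HPq(ah)$ is submersive at every point of $H \setminus \cC_a$. In particular, $F_a$ restricted to the open set $H \setminus \cC_a$ is an open map, so $F_a(H\setminus \cC_a)$ is open in $\faq$. I claim the set-theoretic identity
\[
\HPq(aH)\setminus \HPq(a\cC_a) \;=\; F_a(H\setminus \cC_a)\setminus \HPq(a\cC_a).
\]
The inclusion $\supseteq$ is clear since $F_a(H\setminus \cC_a)\subseteq \HPq(aH)$. For $\subseteq$, if $y = \HPq(ah)$ lies outside $\HPq(a\cC_a)$, then $h$ cannot belong to $\cC_a$, since otherwise $y$ would lie in $\HPq(a\cC_a)$; hence $h \in H\setminus \cC_a$.

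From the displayed identity, $\HPq(aH) \setminus \HPq(a\cC_a)$ is the intersection of the open set $F_a(H\setminus\cC_a)$ with the (open) subspace $\faq \setminus \HPq(a\cC_a)$, hence open in $\faq \setminus \HPq(a\cC_a)$. On the other hand, writing $\HPq(aH)\setminus \HPq(a\cC_a) = \HPq(aH) \cap (\faq\setminus\HPq(a\cC_a))$ and using that $\HPq(aH)$ is closed in $\faq$ by (a), we see that the same set is closed in the subspace $\faq\setminus\HPq(a\cC_a)$, completing the proof. The only mildly delicate point is the set-theoretic identity above; everything else is a direct invocation of the established properness and submersion results.
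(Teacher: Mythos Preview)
Your proof is correct and follows essentially the same approach as the paper's own proof: properness from Corollary \ref{c: properness with set C} for part (a), and the submersion result of Proposition \ref{p: submersiveness Fa} together with the set-theoretic identity $\HPq(aH)\setminus\HPq(a\cC_a)=F_a(H\setminus\cC_a)\setminus\HPq(a\cC_a)$ for part (b). Your justification of the right $(H\cap P)$-invariance of $\cC_a$ via Lemma \ref{l: HP-inv} is in fact slightly more explicit than the paper's.
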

            \begin{proof}
                For $\cA \subseteq A$ compact, the map $\cA \times H/(H\cap P)\to\faq, (b, [h]) \mapsto \HPq(bh)$ is proper, hence closed; see Corollary \ref{c: properness with set C}. In particular, it follows that $\HPq(aH)$  is closed in $\faq$.

                It follows from Lemma \ref{l: property cCa new}
that $\cC_a$ is closed in $H.$ Moreover, $\cC_a$ is a countable union of lower dimensional submanifolds of $H.$ Thus, by the Baire property, $\cC_a$ has empty interior in $H.$ In particular, it is a proper subset of $H.$

                Furthermore, the set $\cC_a$  is right $H\cap P$-invariant, hence has closed image in $H/H\cap P.$ It follows that $\HPq(a\cC_a)$ is closed in $\faq.$ This establishes (a).

By Proposition \ref{p: submersiveness Fa} the map $F_a: h \mapsto \HPq(ah)$ is submersive at the points of $H \setminus \cC_a.$ Hence $\HPq(a(H \setminus \cC_a))$ is open in $\faq.$ It follows that
                \begin{equation}\label{e: complement of H aC}
                    \HPq(aH) \setminus \HPq(a\cC_a) = \HPq(a(H \setminus \cC_a))\setminus \HPq(a\cC_a)
                \end{equation}
                is open in $\faq$ hence in $\faq\setminus\HPq(a\cC_a).$ Finally, since $\HPq(aH)$ is closed, the  first set in (\ref{e: complement of H aC}) is closed in $\faq\setminus \HPq(a\cC_a).$ We conclude that the set (\ref{e: complement of H aC}) is both open and closed in $\faq \setminus \HPq(a\cC_a).$
            \end{proof}
        \begin{lemma}\label{l: at-least-one-connected-component}
            Assume that $\gS(\fg, \faq)$ spans $\faqd.$ Then $\fH_{P,{\rm q}}(aH)\setminus \fH_{P,{\rm q}}(a\cC_a)\neq\emptyset.$
        \end{lemma}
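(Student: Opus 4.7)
The plan is to apply Sard's theorem to the real-analytic map $F_a : H \to \faq,$ $h \mapsto \fH_{P,\iq}(ah).$ The first step is to identify $\cC_a$ with the set of critical points of $F_a.$ Under the spanning hypothesis, Remark \ref{r: span and S} gives $S = \faq \setminus \{0\},$ so $\cC_a = \bigcup_{X\in \faq\setminus\{0\}} \cC_{a,X}.$ For $X \in \faq$ and $h \in H,$ differentiating $F_{a,X}(h) = \inp{X}{F_a(h)}$ yields $dF_{a,X}(h)(v) = \inp{X}{dF_a(h)(v)},$ which vanishes identically in $v$ precisely when $X$ annihilates the image of $dF_a(h).$ Consequently, $h \in \cC_a$ if and only if $dF_a(h)$ fails to be surjective, i.e.\ $h$ is a critical point of $F_a.$ This identification is the only step requiring the spanning hypothesis (to rule out non-zero $X$ for which $F_{a,X}$ is identically constant).

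By Sard's theorem it then follows that $\fH_{P,\iq}(a\cC_a) = F_a(\cC_a)$ has Lebesgue measure zero in $\faq,$ and in particular empty interior. On the other hand, Lemma \ref{l: property cCa new}(c) asserts that $\cC_a$ is nowhere dense in $H,$ so $H \setminus \cC_a$ is non-empty. Proposition \ref{p: submersiveness Fa} ensures that $F_a$ is submersive at every point of $H \setminus \cC_a,$ and therefore $F_a(H \setminus \cC_a)$ is a non-empty open subset of $\faq.$ Any non-empty open subset of $\faq$ has positive Lebesgue measure, so it cannot be contained in the measure-zero set $F_a(\cC_a).$ This produces a point
\[
y \in F_a(H\setminus \cC_a) \setminus F_a(\cC_a) \;\subseteq\; \fH_{P,\iq}(aH) \setminus \fH_{P,\iq}(a\cC_a),
\]
proving the lemma. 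There is no substantial obstacle: the argument is essentially a repackaging of Sard's theorem together with the structural results on $\cC_a$ and the submersiveness statement already established, with the only conceptual point being the identification of $\cC_a$ as the critical locus of the vector-valued map $F_a.$
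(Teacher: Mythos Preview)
Your proof is correct and follows essentially the same approach as the paper: both arguments use Sard's theorem to show that $F_a(\cC_a)$ has measure zero, combine this with the submersiveness of $F_a$ on $H\setminus\cC_a$ (Proposition~\ref{p: submersiveness Fa}) to get a non-empty open image, and conclude that an open set cannot be swallowed by a null set. Your version is slightly more explicit in spelling out why $\cC_a$ coincides with the critical locus of the vector-valued map $F_a$ under the spanning hypothesis, a point the paper leaves implicit when invoking Sard.
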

        \begin{proof}
            Under the assumption that $\gS(\fg, \faq)$ spans $\faqd$, the map $\fH_{P,{\rm q}}:a H\to\faq$ is submersive except at points of $\cC_a$. The set $H\setminus\cC_a$ is open and non-empty. Thus, $\fH_{P,{\rm q}}(a(H\setminus \cC_a))$ is open and non-empty. By Sard's Theorem, $\fH_{P,{\rm q}}(a\cC_a)$ has measure zero. This implies that
            \[\fH_{P,{\rm q}}(a(H\setminus\cC_a))\setminus\fH_{P,{\rm q}}(a\cC_a)\neq\emptyset,\]
            and hence
            \[\fH_{P,{\rm q}}(aH)\setminus \fH_{P,{\rm q}}(a\cC_a)\neq\emptyset.\]
        \end{proof}
        \begin{remark}
            The lemma can readily be extended to the case that $\gS(\fg, \faq)$ does not span $\faqd$.
        \end{remark}

    \section{The computation of Hessians}\label{s: computation of hessians}
        We retain the assumption that $P \in \cP(A).$     Furthermore, we assume that $a \in \Aqreg$ and $X \in \faq.$
In this section we will compute the Hessian of the function $F_{a,X}:H\to\R, $ defined in (\ref{e:defi set FaX}), at   all points of  its critical locus $\cC_{a,X}.$

        Given $U \in \fh,$ we denote by $R_U$ the associated left-invariant vectorfield on $H$ defined by
        $$
        R_U(h) = dl_h(e)U =
        \frac{\partial}{\partial t} (h \exp tU)|_{t=0}, \qquad (h \in H).
        $$
        The associated derivation on $C^\infty(H)$ is denoted by the same symbol.

        If $f: H \to \R$ is a $C^2$-function with critical point at $h,$ then its Hessian at $h$ is the symmetric bilinear form $H(f)(h)= H(f)_h$ on $T_hH$ given by
        $${\rm H}(f)_h(R_U(h), R_V(h)) := R_U R_V f(h) =\partial_s \partial_t f(h \exp sU \exp tV)|_{s= t = 0},$$
        for $U,V \in \fh.$
        \begin{lemma}\label{l: formula for Hessian}
            Let $a\in \Aq,$ $X\in\faq$ and $h \in H.$ Then for all $U,V\in\fh$ we have:
            $$ R_U R_VF_{a,X}(h)=B(U,L_{a,X,h}(V))=-\inp{U}{\theta L_{a,X,h}(V)},$$
            where $L_{a,X,h}:\fh\to\fh$ is the linear map given by
            \begin{equation}
\label{e: formula L}
L_{a,X,h}(V)=-\Ad(h^{-1})\circ\pi_{\fh}\circ \Ad(a^{-1})\circ \Ad(k_a(h))\circ \ad(X)\circ E_{\fk}\circ \Ad(\tau(ah))V.
\end{equation}
            Here $\pi_{\fh}:\fg \to\fh$ denotes the projection according to the decomposition $\fg=\fh \oplus \fq$ and $E_{\fk}:\fg\to\fk$ is the projection associated with the Iwasawa decomposition $\fg=\fk\oplus \fa\oplus \fn_P$. The notation $k_a(h)$ is used to express the $K$-part of the element $ah$ with respect to the Iwasawa decomposition $G = K A N_P.$ Finally, $\tau(ah)$ denotes the $(AN_P)$-part of $ah$ with respect to the same Iwasawa decomposition.
        \end{lemma}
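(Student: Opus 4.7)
The plan is to differentiate $F_{a,X}$ twice, using Lemma \ref{l: derivative of FX} for the first derivative, and then to manipulate the resulting expression into the form $B(U, L_{a,X,h}(V))$ by means of $\Ad$-invariance of $B$ together with properties of the Cartan involution.

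Applying Lemma \ref{l: derivative of FX} at the point $g = ah'$ gives $R_V F_{a,X}(h') = B(V, \Ad(\nu(ah')^{-1})X)$ for every $h' \in H.$ Substituting $h' = h\exp(sU)$ and differentiating at $s=0$ reduces the computation to $\partial_s|_{s=0}\Ad(n_s^{-1})X,$ where $n_s := \nu(ahe^{sU}).$ The standard formula for the derivative of the adjoint representation yields
\[
\partial_s|_{s=0}\Ad(n_s^{-1})X = -[N_0,\, \Ad(n_0^{-1})X],
\]
with $N_0 := n_0^{-1}\dot n_s|_{s=0} \in \fn_P.$ To identify $N_0$ as a function of $U,$ I differentiate $ahe^{sU} = k_s a_s n_s$ at $s=0$ and apply $\Ad(\tau_0).$ This yields the standard relation $\Ad(\tau_0)U = K_0 + A_0 + \Ad(\tau_0)N_0 \in \fk\oplus\fa\oplus\fn_P,$ so that $N_0 = \Ad(\tau_0^{-1}) E_{\fn_P}\Ad(\tau_0)U.$ Using $\Ad(a_0)X = X$ (because $X\in\faq\subset\fa$ and $a_0 \in A$) to simplify the inner bracket, one obtains $[N_0, \Ad(n_0^{-1})X] = \Ad(\tau_0^{-1})[E_{\fn_P}\Ad(\tau_0)U, X],$ and hence, by $\Ad$-invariance of $B,$
\[
R_U R_V F_{a,X}(h) = -B\bigl(\Ad(\tau_0)V,\; [E_{\fn_P}\Ad(\tau_0)U,\; X]\bigr).
\]

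The main obstacle is the last algebraic manipulation: one must convert $E_{\fn_P}$ acting on the $U$-side into $E_\fk$ acting on the $V$-side. Working in the root space decomposition $\fg = \fm \oplus \fa \oplus \fn_P \oplus \bar\fn_P$ and using $\ad$-invariance of $B,$ one reduces both sides to pairings of $\fn_P$-components with $\bar\fn_P$-components. Combined with the crucial identity $\Cartan\circ\ad(X) = -\ad(X)\circ\Cartan$ (valid because $\Cartan X = -X$ for $X\in\fa$) and the explicit formulas $E_{\fn_P}Y = Y_{\fn_P} - \Cartan Y_{\bar\fn_P}$ and $E_\fk Y = Y_\fm + Y_{\bar\fn_P} + \Cartan Y_{\bar\fn_P},$ this yields the key equality
\[
-B\bigl(E_{\fn_P}\Ad(\tau_0)U,\; \ad(X)\Ad(\tau_0)V\bigr) = -B\bigl(\Ad(\tau_0)U,\; \ad(X)E_\fk \Ad(\tau_0)V\bigr).
\]

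Finally, to recognize the right-hand side as $B(U, L_{a,X,h}(V))$ with $L$ defined by \eqref{e: formula L}, one repeatedly applies $\Ad$-invariance of $B.$ The identity $\Ad(ah) = \Ad(k_a(h))\Ad(\tau(ah))$ absorbs $\Ad(\tau_0)$ into the factors $\Ad(a^{-1})\Ad(k_a(h)).$ The projection $\pi_\fh$ in \eqref{e: formula L} is $B$-orthogonal (since $\fh \perp \fq$ with respect to $B$), and $\Ad(h)U \in \fh$ for $U \in \fh,$ so inserting $\pi_\fh$ does not affect the pairing. This produces the claimed formula. The second identity $B(U, L_{a,X,h}(V)) = -\inp{U}{\Cartan L_{a,X,h}(V)}$ is then immediate from the definition $\inp{\cdot}{\cdot} = -B(\cdot, \Cartan\cdot).$
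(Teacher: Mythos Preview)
Your argument is correct. The route, however, differs from the paper's in one respect worth noting.

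The paper does not differentiate the first-derivative identity of Lemma~\ref{l: derivative of FX} by hand. Instead it invokes directly the second-derivative formula from \cite[Lemma~5.1]{ban1986} (see also \cite{duiskolkvar1983}),
\[
R_U R_V F_X(x) \;=\; B\bigl([\Ad(\tau)U,\;E_\fk\circ\Ad(\tau)V],\,X\bigr),
\]
which already has $E_\fk$ on the $V$-side. From there the paper proceeds exactly as in your final paragraph: rewrite as $-B(U,\Ad(\tau)^{-1}\ad(X)E_\fk\Ad(\tau)V)$, restrict to $x=ah$ with $U,V\in\fh$, insert $\pi_\fh$ using $B(\fh,\fq)=0$, and factor $\Ad(\tau)^{-1}=\Ad(h^{-1})\Ad(a^{-1})\Ad(k_a(h))$.

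What you do instead is re-derive that second-derivative formula from scratch: you differentiate $h'\mapsto B(V,\Ad(\nu(ah')^{-1})X)$, which first yields $E_{\fn_P}$ acting on the $U$-slot, and you then carry out the root-space computation (using $\theta\circ\ad X=-\ad X\circ\theta$ and the explicit Iwasawa projections $E_{\fn_P}Y = Y_{\fn_P}-\theta Y_{\bar\fn_P}$, $E_\fk Y = Y_\fm + Y_{\bar\fn_P}+\theta Y_{\bar\fn_P}$) to transfer this to $E_\fk$ on the $V$-slot. That step is precisely the content of the cited lemma, so your proof is more self-contained at the cost of reproducing an argument the paper outsources. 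After this point the two proofs coincide.
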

        \begin{proof}
            By \cite[Lemma 5.1]{ban1986}, see also \cite{duiskolkvar1983}, we obtain that for $x\in G$ and $U,V\in\fg,$
            \begin{equation*}
                R_U R_VF_{X}(x)=B([\Ad(\tau)U,\,E_{\fk}\circ\Ad(\tau)V],X),
            \end{equation*}
            where $F_X$ is the function defined in (\ref{e: function F_X from G}) and where $\tau:=\tau(x)$.  Therefore,
            \begin{eqnarray*}
                \lefteqn{R_U R_VF_{X}(x)=-B(\Ad(\tau)U,\,\ad X\circ E_{\fk}\circ\Ad(\tau)V)}\\
                &=& -B(U,\,\Ad(\tau)^{-1}\circ \ad X\circ E_{\fk}\circ\Ad(\tau)V).
            \end{eqnarray*}
            We can restrict now to the case where $x=ah$ and $U,V\in\fh$. Since $F_{a,X}(h)=F_X(ah)$, we obtain
            \begin{equation}\label{e:hessian-intermed} R_U R_VF_{a,X}(h)=R_U R_VF_X(ah)=B(U,\,-\pi_{\fh}\circ\Ad(\tau)^{-1}\circ \ad X\circ E_{\fk}\circ\Ad(\tau)V).\end{equation}
            Since  $ah=k_a(h)\tau(ah)$, it follows that  $\tau^{-1} = \tau(ah)^{-1}=h^{-1}a^{-1}k_a(h)$ and by applying $\Ad$ to this equality we obtain
            $$
            \Ad(\tau)^{-1}=\Ad(h^{-1})\Ad(a^{-1})\Ad(k_a(h)).
            $$
            We complete the proof by substituting this equality in (\ref{e:hessian-intermed}) and observing that $\pi_\fh$ commutes with $\Ad(h^{-1}).$
  \end{proof}

    \section{The transversal signature of the Hessian}\label{s: transversal signature of the hessian}
In this section we fix $P \in \cP(A),$ $a \in \Aqreg$ and
$X \in \faq.$ We will study the behavior of the Hessian $H(F_{a,X})_h$ of the function $F_{a,X}: H \to \R$ defined in
(\ref{e:defi set FaX}) at each point $h$ of its critical set $\cC_{a,X}.$ This Hessian is a symmetric bilinear form on $T_h H.$ Its kernel at $h$ is by definition equal to the following linear subspace of $T_h H,$
        $$\ker (H(F_{a,X})(h)): = \{ V \in T_h H \setmid  H(F_{a,X})(h)(V,\dotvar) = 0\}.$$
        By symmetry, the Hessian induces a non-degenerate symmetric bilinear form $\bar H(F_{a,X})(h)$ on the quotient space $T_h H / \ker (H(F_{a,X})(h)).$ For each $w \in \WKHaq$ we select a representative $x_w \in N_{K\cap H}(\faq).$ The set
$$
\cC_{a,X,w}:= x_w H_X (H\cap N_P)
$$
is an injectively immersed submanifold of $H,$
see Lemma \ref{l: gf induces injective immersion}.
In particular this set has a well-defined tangent space at each of
its points. We will show that the Hessian of $F_{a,X}$ is transversally non-degenerate along $\cC_{a,X,w}.$

\begin{lemma}\label{l: Dana s nemesis lemma}
            Let $w \in \WKHaq.$ Then at each point $\bar{h} \in \cC_{a,X,w}$ the kernel of the Hessian $H(F_{a,X})(\bar{h})$ equals the tangent space $T_{\bar{h}}\cC_{a,X,w}.$
\end{lemma}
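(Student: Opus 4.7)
The inclusion $T_{\bar h}\cC_{a,X,w}\subseteq \ker H(F_{a,X})(\bar h)$ is automatic: since $\cC_{a,X,w}$ lies inside the critical set of $F_{a,X}$ by Lemma \ref{l: set of critical points}, any smooth curve $\gamma$ in $\cC_{a,X,w}$ through $\bar h$ with velocity $V$ satisfies $dF_{a,X}|_{\gamma(t)}\equiv 0$, and differentiating once more yields $H(F_{a,X})(\bar h)(V,\cdot) = 0$. For the reverse inclusion I propose two natural reductions. First, because $x_w\in K$, one has $F_{a,X}\circ L_{x_w} = F_{a^w,X}$ (using $ax_w = x_wa^w$), and left translation by $x_w$ carries $\cC_{a^w,X,1}$ diffeomorphically onto $\cC_{a,X,w}$, reducing the problem to $w = 1$ after replacing $a$ by $a^w\in\Aqreg$. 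Second, the right $H\cap P$-invariance of $F_{a,X}$ from Lemma \ref{l: HP-inv} allows further reduction to $\bar h = h_X\in H_X$.

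At $\bar h = h_X$ I would use the Iwasawa decomposition $G_X = K_XAN_{P,X}$ (as exploited already in Lemma \ref{l: set of critical points}) to write $ah_X = k_Xc$ with $k_X\in K_X$ and $c := b_X\nu_X\in AN_{P,X}$, so that $k_a(h_X) = k_X$ and $\tau(ah_X) = c$. Because $\Ad(h_X)$ commutes both with $\pi_\fh$ (since $h_X\in H$) and with $\ad X$ (since $h_X$ fixes $X$), and because $a^{-1}k_X = h_Xc^{-1}$, the formula of Lemma \ref{l: formula for Hessian} simplifies to
\[
L_{a,X,h_X}(V) = -\pi_\fh\circ\ad(X)\circ\Ad(c^{-1})\circ E_\fk\circ\Ad(c)\,V,\qquad V\in\fh.
\]
With this simplification the inclusion $\fh_X + (\fn_P\cap\fh)\subseteq \ker L_{a,X,h_X}$, which by left translation and Lemma \ref{l: gf induces injective immersion} matches $T_{h_X}\cC_{a,X,1}$, becomes transparent: for $V\in\fh_X\subseteq\fg_X$ the element $\Ad(c)V$ stays in $\fg_X$ (as $c\in G_X$), hence $E_\fk\Ad(c)V\in\fk_X$, annihilated by $\ad X$; for $V\in\fn_P\cap\fh$ the element $\Ad(c)V$ lies in $\fn_P$ (as $c\in AN_P$), hence $E_\fk\Ad(c)V = 0$.

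For the reverse inclusion I would introduce the $\sigma$-stable splitting $\fg = \fg_X\oplus\fg_X^\perp$ with $\fg_X^\perp := \bigoplus_{\alpha(X)\neq 0}\fg_\alpha$ ($\sigma$-stable because $\sigma X = -X$), yielding $\fh = \fh_X\oplus(\fh\cap\fg_X^\perp)$ and the matching direct-sum decomposition $\fh_X + (\fn_P\cap\fh) = \fh_X\oplus(\fn_P\cap\fh\cap\fg_X^\perp)$. Given $V\in\ker L_{a,X,h_X}$, writing $V = V_X + V_\perp$ with $V_X\in\fh_X,\ V_\perp\in\fh\cap\fg_X^\perp$, the previous step reduces the problem to showing that every $V_\perp\in\fh\cap\fg_X^\perp$ with $L_{a,X,h_X}(V_\perp) = 0$ must lie in $\fn_P$. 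Now all three operators $\Ad(c),\Ad(c^{-1}),E_\fk$ preserve $\fg_X^\perp$, so $Y := \Ad(c^{-1})E_\fk\Ad(c)V_\perp\in\fg_X^\perp$. Since $\ad X$ is a bijection of $\fg_X^\perp$ interchanging its $\fh$- and $\fq$-parts (as $X\in\fq$), the equation $\pi_\fh\ad(X)Y = 0$ is equivalent to $Y\in\fh$. Using that $c^{-1}\in AN_P$ normalizes $\fa\oplus\fn_P$ and that $\fa\subseteq\fg_X$, this propagates to $V_\perp - Y \in\fn_P\cap\fh\cap\fg_X^\perp$, and substituting back into the definition of $Y$ forces $\Ad(c)Y\in\fk$.

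The main obstacle is then the residual algebraic claim: any $Y\in\fh\cap\fg_X^\perp$ with $\Ad(c)Y\in\fk$ must be zero (at which point $V_\perp = V_\perp - Y\in\fn_P\cap\fh\cap\fg_X^\perp$, as desired). The hard part will be to exploit the specific form $c = b_X\nu_X\in AN_{P,X}$ together with the regularity of $a\in\Aqreg$: I expect a height-filtration argument using the unipotence of $\Ad(\nu_X)$ along the positive grading, combined with the scalar action of $\Ad(b_X)$ on root spaces, to rule out any nontrivial $\theta$-symmetric cancellation between the positive and negative root parts of $\Ad(c)Y$; the regularity of $a$ enters via the condition $b_X^{2\alpha}\neq 1$ for the relevant $\ad X$-weights $\alpha(X)\neq 0$, which is what ultimately forces $Y = 0$.
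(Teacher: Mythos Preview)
Your reductions to $w=1$ and then to $\bar h=h_X\in H_X$ are correct and match the paper's strategy, and your simplification of $L_{a,X,h_X}$ to $-\pi_\fh\circ\ad(X)\circ\Ad(c^{-1})\circ E_\fk\circ\Ad(c)$ with $c=\tau(ah_X)\in AN_{P,X}$ is valid. Your derivation that $V_\perp-Y\in\fn_P\cap\fh\cap\fg_X^\perp$ and $\Ad(c)Y\in\fk$ is also fine.

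The genuine gap is your ``residual algebraic claim'' that $Y\in\fh\cap\fg_X^\perp$ with $\Ad(c)Y\in\fk$ forces $Y=0$. Your proposed mechanism --- that regularity of $a$ yields $b_X^{2\alpha}\neq 1$ --- is a misconception: $b_X=\exp\fH_P(ah_X)$ is \emph{not} $a$, and as $h_X$ varies over $H_X$ there is no reason the continuous quantity $\alpha(\fH_P(ah_X))$ avoids zero. Indeed, the residual claim is false for general $c\in AN_{P,X}$: already for $c=e$ one has $\fh\cap\fk\cap\fg_X^\perp\neq 0$ whenever some $\alpha\in\faqd$ with $\alpha(X)\neq 0$ has $\fg_{\alpha,+}\neq 0$. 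So the claim genuinely depends on the specific $c$'s arising from regular $a$, and your simplified formula --- which depends on $a$ only through $c$ --- has discarded precisely the handle you need.

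The paper proceeds differently. It does \emph{not} collapse $\Ad(h_X^{-1})\Ad(a^{-1})\Ad(k_X)$ into $\Ad(c^{-1})$; instead it keeps $\Ad(a^{-1})$ explicit and observes that the element $Z:=\Ad(k_a(h))\ad(X)E_\fk\Ad(\tau)V$ lies in $\fp$, so that $L(V)=0$ reads $\Ad(a^{-1})Z\in\fq$. Here regularity of $a$ is used via \cite[Lemma~5.7]{ban1986} to conclude $Z\in\faq$, whence $E_\fk\Ad(\tau)V\in\fk_X$. The passage from this intermediate condition to $V\in\fh_X+(\fn_P\cap\fh)$ is then carried out by a geometric argument: identifying $K/M\simeq G/P$ via the Iwasawa decomposition, one computes the preimage of $\fk_X/\fm$ under the tangent map of $h\mapsto k_a(h)M$ as exactly $\fh_X+(\fh\cap\underline\fp)=\fh_X+(\fn_P\cap\fh)$, using only that $\tau\in AN_{P,X}$ normalizes both $\fg_X$ and $\underline\fp$. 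Thus regularity enters once, cleanly, before any simplification of $L$, and the remaining step is structural rather than a root-by-root filtration.
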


        The proof of this lemma will make use of Lemma \ref{l: kernel-of-L} below.
In that lemma, $L_{a,X,h}\in \End(\fh)$
is defined as in (\ref{e: formula L}).
Let $\bar{k}_a:= \pi\circ k_a:H\to K/M$, where $k_a: H \to K$ is defined as in Lemma \ref{l: formula for Hessian}  and where $\pi$ denotes the canonical projection $K \to K/M.$

        \begin{lemma}\label{l: kernel-of-L}
             Let $h\in H_X^{\circ}$
and $V\in \fh$. Then the  following statements are equivalent.
             \begin{enumerate}
                \itema $V\in\ker L_{a,X,h}$,
                \itemb ${\rm d} (l_{{k}_a(h)^{-1}}\circ \bar{k}_{a}\circ l_h)(e)(V)\in\mathfrak{k}_X/\fm,$
                \itemc $V\in\fh_X+(\fh\cap\fn_P)$.
             \end{enumerate}
        \end{lemma}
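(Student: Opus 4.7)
The plan is to prove the chain (a) $\iff$ (b) $\iff$ (c), relying throughout on the fact that $ah \in G_X$: indeed $a \in A \subseteq G_X$ (because $X \in \fa$), and $h \in H_X^\circ$ by hypothesis. By the Iwasawa decomposition $G_X \simeq K_X A N_{P,X}$ (cited in Section \ref{s: the set critical points}) together with uniqueness of the $G$-Iwasawa decomposition, this forces $k_a(h) \in K_X$ and $\tau(ah) \in AN_{P,X} \subseteq G_X.$ A routine computation with the cocycle property of the Iwasawa $K$-projection gives $d(l_{k_a(h)^{-1}} \after \bar k_a \after l_h)(e)(V) = E_\fk \Ad(\tau(ah))V \pmod{\fm},$ and since $\fm = Z_\fk(\fa) \subseteq \fk_X,$ condition (b) is equivalent to $W := E_\fk \Ad(\tau(ah))V \in \fk_X.$

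For (b) $\iff$ (c), I would use the $\ad(X)$-eigenspace decomposition $\fg = \fg_X \oplus \fg_X^c$ with $\fg_X^c := \bigoplus_{\alpha(X) \neq 0} \fg_\alpha,$ noting both summands are stable under $\theta$, $\sigma,$ and $\Ad(G_X).$ The direction (c) $\Rightarrow$ (b) is a one-line check: a summand in $\fh_X$ is carried by $\Ad(\tau(ah))$ into $\fg_X,$ whose $E_\fk$-image lies in $\fk_X$ via the $G_X$-Iwasawa, while a summand in $\fh \cap \fn_P$ is carried into $\fn_P,$ on which $E_\fk$ vanishes. For (b) $\Rightarrow$ (c), decompose $V = V_X + V_c$ along $\fg_X \oplus \fg_X^c$; then $V_X \in \fh_X$ automatically, and $Y_c := \Ad(\tau(ah))V_c \in \fg_X^c$ satisfies $E_\fk Y_c \in \fk_X \cap \fg_X^c = \{0\},$ so $Y_c \in (\fa \oplus \fn_P) \cap \fg_X^c = \fn_P$ (since $\fa \subseteq \fg_X$), and then $V_c = \Ad(\tau(ah)^{-1})Y_c \in \fn_P \cap \fh.$

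For (a) $\iff$ (b), I would exploit $\Ad(a^{-1}k_a(h)) \after \ad(X) = \ad(X) \after \Ad(a^{-1}k_a(h))$ (both $a$ and $k_a(h)$ centralize $X$) together with $\pi_\fh \after \ad(X) = \ad(X) \after \pi_\fq$ (since $\ad(X)$ swaps $\fh$ and $\fq$) to rewrite
$$
L_{a,X,h}(V) = -\Ad(h^{-1}) \ad(X) \pi_\fq \Ad(a^{-1}k_a(h))W,
$$
so that (a) $\iff$ $\pi_\fq \Ad(a^{-1}k_a(h))W \in \fq_X.$ The implication (b) $\Rightarrow$ (a) is immediate: if $W \in \fk_X$ then $\Ad(a^{-1}k_a(h))W \in \fg_X.$ For (a) $\Rightarrow$ (b), decomposing $W = W^X + W^c$ along $\fk_X \oplus (\fk \cap \fg_X^c)$ shows that $\pi_\fq \Ad(a^{-1}k_a(h))W^c$ lies in $\fq \cap \fg_X^c,$ which meets $\fq_X$ only at $0$; condition (a) therefore reduces to showing that $W' := \Ad(k_a(h))W^c \in \fk \cap \fg_X^c$ with $\Ad(a^{-1})W' \in \fh$ forces $W' = 0.$

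This last reduction is the main obstacle and the only point at which the regularity of $a$ enters. Expanding $W' = \sum_\gamma W'_\gamma$ over roots $\gamma$ with $\gamma(X) \neq 0,$ the conditions $W' \in \fk$ and $\Ad(a^{-1})W' \in \fh$ (the latter rewritten using $\sigma(a) = a^{-1},$ valid because $\log a \in \faq$) translate to $W'_{-\gamma} = \theta W'_\gamma$ and $W'_\gamma = e^{2\gamma(\log a)} \sigma W'_{\sigma\gamma}.$ Combining these via $\theta \sigma = \sigma\theta$ produces, for every such $\gamma,$ a relation of the form $(e^{4\gamma(\log a)} - 1)W'_\gamma = 0.$ Since $\gamma(X) \neq 0$ implies $\gamma|_{\faq} \neq 0,$ the hypothesis $\log a \in \faqreg$ (see (\ref{e: aq regular})) forces $\gamma(\log a) \neq 0,$ and hence each $W'_\gamma$ vanishes.
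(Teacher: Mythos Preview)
Your proof is correct but follows a genuinely different route from the paper's. The paper proves the cycle (a)$\Rightarrow$(b)$\Rightarrow$(c)$\Rightarrow$(a): for (a)$\Rightarrow$(b) it notes that $\Ad(k_a(h))\ad(X)W\in\fp$, then invokes \cite[Lemma~5.7]{ban1986} (a regularity statement for $\Ad(a^{-1})$ acting between $\fp$ and $\fq$) to force this element into $\faq$, from which $W\in\fk_X$ follows; for (b)$\Rightarrow$(c) it works geometrically, building a commutative diagram that relates $\bar k_a:H\to K/M$ to the natural map $H\to G/P$ via $K/M\simeq G/P$, and then computes the preimage $T^{-1}(\fk_X/\fm)=\fh_X+(\fh\cap\underline{\fp})$ by chasing the diagram. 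You instead run everything through the $\ad(X)$-eigenspace splitting $\fg=\fg_X\oplus\fg_X^c$, exploiting that $E_\fk$, $\Ad(\tau(ah))$, $\theta$, and $\sigma$ all respect it: your (b)$\iff$(c) is thereby reduced to pure linear algebra, and your (a)$\Rightarrow$(b) replaces the external citation by the explicit root-component identity $(e^{4\gamma(\log a)}-1)W'_\gamma=0$ (equivalently, $\sigma\theta W'=\Ad(a^{-2})W'$ together with $(\sigma\theta)^2=I$ gives $W'=\Ad(a^{-4})W'$). Your approach is more self-contained and makes the role of the regularity of $a$ completely explicit; the paper's (b)$\Rightarrow$(c) is more conceptual and illuminates that the statement is really about the map $H\to G/P$.
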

        \begin{proof}
            First, we prove that (a) $\Longrightarrow$ (b). Assume (a) holds. In view of (\ref{e: formula L}) this is equivalent to
            \begin{equation}\label{e: belonging to fq}\Ad(a^{-1})\circ\Ad(k_a(h))\circ \ad(X)\circ E_{\mathfrak{k}}\circ \Ad(\tau(ah))V\in\mathfrak{q}.\end{equation}
            Observe that     $\Ad(k_a(h))\circ \ad(X)\circ E_{\mathfrak{k}}\circ \Ad(\tau(ah))V\in \mathfrak{p}$.   In view of    \cite[Lemma 5.7]{ban1986} we   see that (\ref{e: belonging to fq}) implies that
            \begin{equation}\label{e: belonging to aq}\Ad(k_a(h))\circ \ad(X)\circ E_{\mathfrak{k}}\circ \Ad(\tau(ah))V\in\fa_\iq.\end{equation}
            Since $h\in H_X$ and $G_X=K_XAN_{P,X}$,   see (\ref{e: centralizer in NP}),  it follows that $k_a(h)$ centralizes $X$. Thus, $\Ad(k_a(h))$ and $\ad(X)$ commute. Now $\Ad(k_a(h))\circ E_{\mathfrak{k}}\circ \Ad(\tau(ah))V$ is an element in $\mathfrak{k}$, which decomposes as
            \[\mathfrak{k}=\mathfrak{k}_X+\bigoplus_{\substack {\alpha\in\Sigma(P)\\ \alpha(X)\neq 0}}(I+\theta)\fg_{\alpha}.\]

  Furthermore, by (\ref{e: belonging to aq}), we know that $\ad(X)$ maps this element to an element of $\fa_\iq$. This implies that
            \begin{equation*}\Ad(k_a(h))\circ E_{\mathfrak{k}}\circ \Ad(\tau(ah))V\in\mathfrak{k}_X.\end{equation*}
           Since
 $k_a(h)\in K_X$, we obtain that
            \begin{equation}
            \label{e: belonging to kX}E_{\mathfrak{k}}\circ \Ad(\tau(ah))V\in\mathfrak{k}_X.
            \end{equation}
            By the use of   \cite[Lemma 5.2]{ban1986}, we may rewrite
            \[E_{\mathfrak{k}}\circ \Ad(\tau(ah))={\rm d} l_{k_a(h)}(e)^{-1}\circ\d{k}_{a}(h)\circ\d l_h(e)={\rm d}(l_{k_a(h)^{-1}}\circ k_a\circ l_h)(e).\]
            Hence, (\ref{e: belonging to kX})   implies
            \begin{equation}\label{e: differential notation}{\rm d} (l_{k_a(h)^{-1}}\circ {k}_{a}\circ l_h)(e)(V)\in\mathfrak{k}_X.\end{equation}
            Observe that $\d\pi(e):\fk_X\to\fk_X/\fm$ is given by the canonical projection and that the maps $\pi$ and $l_{k_a(h)^{-1}}$ commute. Hence, equation (\ref{e: differential notation}) is equivalent to
            \begin{equation}\label{e: differential notation modulo m}{\rm d} (l_{{k}_a(h)^{-1}}\circ \bar{k}_{a}\circ l_h)(e)(V)\in\mathfrak{k}_X/\fm\end{equation}
and (b) follows.

            Next, we prove that (b) $\Longrightarrow$ (c). Assume (b) and denote by $\varphi$ the diffeomorphism $\varphi:K/M\to G/P$ arising from the Iwasawa decomposition $G=KAN_P$.
            The inclusion $H\hookrightarrow G$ induces the map $\psi:H\to G/P$. It is easy to check that the diagram given below commutes.
            \begin{equation}\label{cd: non-deg-lemma-simple}\minCDarrowwidth55pt\begin{CD}
                H @>\psi>> G/P\\
                @V\bar{k}_aVV @VVl_{a}V\\
                K/M @>\varphi>> G/P
            \end{CD}\end{equation}
            The map $\psi$ commutes with the left multiplication by an element $h\in H$, viewed either as the map $l_h:H\to H$ or as the map $l_h:G/P\to G/P$. On the other hand, the diffeomorphism $\varphi$ introduced above, commutes with the left multiplication $l_k:K/M\to K/M$, where $k \in K$. Hence, the commutative diagram (\ref{cd: non-deg-lemma-simple}) gives rise to the following commutative diagram. We use the notation $k:=k_a(h)$.
            \begin{equation}\label{cd: group-level}\minCDarrowwidth55pt\begin{CD}
                H @>\psi>> G/P\\
                @Vl_{k}^{-1}\circ\bar{k}_a\circ l_hVV @VVl_{k^{-1}ah}V\\
                K/M @>\varphi>> G/P
            \end{CD}\end{equation}
            Note that under each of the four maps in diagram (\ref{cd: group-level}),
the origin of the domain is mapped to the origin of the codomain.     Taking derivatives at the origins we obtain the commutative diagram given below.
            \begin{equation}\label{cd: algebra-level}\minCDarrowwidth55pt\begin{CD}
                \fh @>\psi_{\star}>> \fg/\underline{\fp}\\
                @VTVV @VV\d(l_{k^{-1}ah})(eP)V\\
                \fk/\fm @>\varphi_{\star}>> \fg/\underline{\fp}
            \end{CD}\end{equation}
            Here
            \[\underline{\fp}=\fm\oplus\fa\oplus\fn_P\]
denotes the Lie algebra of $P$ and $T$ denotes the map $\d(l_{k}^{-1}\circ\bar{k}_a\circ l_h)(e):\fh\to\fk/\fm$. Furthermore,      $\gf_* = d\gf(eM)$ and $\psi_* = d\psi(e).$

            Observe that $k^{-1}ah=\tau:=\tau(ah)$. Since $h$ belongs to $H_X$, it follows that $\tau$ and $\tau^{-1}$ belong to $AN_{P,X}\subseteq P$. This in turn implies that      $\Ad(\tau^{-1})$ is a bijection from $\fg_X$ to $\fg_X$      which normalizes $\underline{\fp}$. Let $\overline{\Ad(\tau)}:\fg/\underline{\fp}\to\fg/\underline{\fp}$ be the map induced by $\Ad(\tau):\fg\to\fg$. Then
            \[\d(l_{k^{-1}ah})(eP)=\overline{\Ad(\tau)}.\]
            We use the commutativity of diagram (\ref{cd: algebra-level})
 to compute the pre-image of $\fk_X/\fm$ under the map $T:$
            \begin{align*}
                T^{-1}(\fk_X/\fm)&=\psi_{\star}^{-1}\circ\overline{\Ad(\tau^{-1})}\circ\varphi_{\star}(\fk_X/\fm)\\
                &= \psi_{\star}^{-1}(\overline{\Ad(\tau^{-1})}(\fk_X+\underline{\fp}))\\
                &= \psi_{\star}^{-1}(\overline{\Ad(\tau^{-1})}(\fg_X+\underline{\fp}))\\
                &= \psi_{\star}^{-1}((\Ad(\tau^{-1})\fg_X)+\underline{\fp})\\
                &= \psi_{\star}^{-1}(\fg_X+\underline{\fp})\\
                &=\{U\in\fh\setmid U+\underline{\fp}\in\fg_X+\underline{\fp}\}\\
                &=\fh_X+(\fh\cap\underline{\fp}).
            \end{align*}

            Since $\fh\cap\underline{\fp}=(\fm\oplus\fa)\cap\fh\oplus(\fn_P\cap\fh)$, see Subsection \ref{ss: langlands-decomp-subsec}, and $(\fm\oplus\fa)\cap\fh\subseteq \fh_X$, we obtain that $\fh_X+(\fh\cap\underline{\fp})=\fh_X+(\fh\cap\fn_P)$. Thus if (b) holds, then $T(V) \in \fk_X/\fm$ and we infer that $V \in \fh_X + (\fh \cap {\underline \fp})$ hence (c).

Finally,      the implication (c) $\Longrightarrow$ (a) is easy.
        \end{proof}
        \emph{Proof of Lemma \ref{l: Dana s nemesis lemma}.\ } Recall that $H$ is essentially connected. By \cite[Prop.~2.3]{ban1986}, the centralizer  $H_X$      is essentially connected as well (relative to $G_X$).

        Assume first that $\bar{h}=h\in H_X^{\circ}$. Then, by Lemma \ref{l: kernel-of-L} above, we have that
        \[\ker L_{a,X,h}=\fh_X+(\fn_P\cap\fh).\]
        Since     $dl_h(e)$ is a linear isomorphism     $\fg \to T_h G$, mapping $T_e[H_X(N_P\cap H)]$ onto $T_h[H_X(N_P\cap H)]$, we obtain that
        \[\ker H(F_{a,X})(h) =d l_h(e)(\ker L_{a,X,h})=T_h[H_X(N_P\cap H)],\]
        which establishes the assertion for $\bar{h}= h \in H_X^{\circ}$.

        Let now $\bar{h}=hn$, with $n\in N_P\cap H$. Then the right-multiplication
$r_n: H\to H$ is a diffeomorphism and $F_{a,X}\circ r_n=F_{a,X}$,
so that
        \[
\ker H(F_{a,X})(hn)=dr_n(h)[\ker H(F_{a,X})(h)]=d r_n(h) T_h[H_X(N_P\cap H)].
\]
As the latter space equals $T_{hn}[H_X(N_P\cap H)]$ this proves the assertion for $\bar{h}\in H_X^{\circ}(N_P\cap H)$.

        Finally, we discuss the general case $\bar{h}\in wH_X(N_P\cap H)$.
        Since $H$, respectively $H_X$, is essentially connected, see (\ref{e: essentially connected}),
          we may write  $\bar{h}=x_whn$, where $h\in H_X^{\circ}$, $n\in N_P\cap H$ and $x_w$ is a representative of $w$ in $N_{K\cap H}(\faq)$ chosen accordingly. Since $x_w$ normalizes $\Aq,$
        \[
F_{a,X}\circ l_{x_w}=F_{w^{-1} a,X}.
\]
Furthermore, from $a \in \Aqreg$ it follows that
$w^{-1} a \in\Aqreg$. Since $l_{x_w}$ is a diffeomorphism
from $H$ to itself, it follows that  $ dl_{x_w}(hn) $ is a linear
isomorphism from  $T_{hn}H$  onto $T_{\bar{h}}H$
and that
    \begin{eqnarray}
\ker H(F_{a,X})(\bar h) &=&
\ker H(F_{a,X})(x_whn)\nonumber\\
& =& d l_{x_w}(hn) [\ker H(F_{x_w^{-1}a,X})(hn)]\nonumber\\
&=& d l_{x_w}(hn)T_{hn}[H_X(N_P\cap H)]\nonumber\\
&=& T_{\bar h} [x_w H_X (N_P \cap H)] \nonumber\\
&=& T_{\bar{h}}\cC_{a,X,w}.
\label{e: calculation kernel Hessian}
\end{eqnarray}
\qed
%\vspace{0.1cm}
\medbreak
        We will now determine the set of critical points where the Hessian is transversally positive definite. For the description of our next result we define the following subsets of $\gS(P).$ If $\ga \in \gS(\fg, \fa) \cap \faqd,$ then the associated root space $\fg_\ga$ is $\gs\Cartan$-invariant.   Hence, for such a root $\ga,$
        $$\fg_\ga = \fg_{\ga,+} \oplus \fg_{\ga, -}\;,$$
        where
        $$
        \fg_{\ga, \pm} = \{U \in \fg_\ga \setmid \gs \Cartan U = \pm U\}.$$
        Accordingly, we      define
        \[\Sigma(\fg,\faq)_\pm:=\{\alpha\in\Sigma(\fg,\faq)\setmid \fg_{\alpha,\pm}\neq 0\}.\]
        In order to formulate the first main result of this section, we need to specify particular subsets of $\gS(P).$
        \begin{defi}{\ }\label{d: gs P plus minus}
            \begin{enumerate}
                \itema $\gS(P)_+ :=\{\ga\in \gS(P)\setmid \ga \in \faqd \;\implies  \; \fg_{\ga,+} \neq 0\}.$
                \itemb $\gS(P)_-:=\{\ga \in \gS(P,\gs\Cartan) \setmid \ga \in \faqd \; {\implies} \; \fg_{\ga,-}  \neq 0\}.$
            \end{enumerate}
        \end{defi}
        \noindent
        Note   that (b) in this definition is consistent with (\ref{e: defi of Sigma P minus}).

        \begin{prop}\label{p: definite Hessian}
            Let $w \in \WKHaq.$ Then the Hessian   $H(F_{a,X})(x_w)$ is positive definite transversally to $\cC_{a,X,w}$ if and only if the following two conditions are fulfilled
            \begin{enumerate}
                \itema $\forall \ga \in \gS(P)_+:\;\;\ga(X)\ga(w^{-1}(\log a)) \leq 0$;
                \itemb $\forall \ga \in \gS(P)_-: \;\;\ga(X) \geq 0$.
            \end{enumerate}
        \end{prop}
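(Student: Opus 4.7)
The plan is to reduce via left translation to a Hessian computation at the identity, then to expand this Hessian into root-space blocks and check positivity block-by-block. Since $F_{a,X}\circ l_{x_w} = F_{a^w,X}$, the diffeomorphism $l_{x_w}$ intertwines the Hessians, so it suffices to study $H(F_{b,X})(e)$ where $b := a^w$ is again regular in $\Aq$ and $e\in\cC_{b,X}$. At $h=e$ Lemma~\ref{l: formula for Hessian} collapses (since $k_b(e)=e$ and $\tau(be)=b$) to
\[
L_{b,X,e}(V) \;=\; -\pi_\fh\,\Ad(b^{-1})\,\ad(X)\,E_\fk\,\Ad(b)\,V,
\]
and using $B(\fh,\fq)=0$ the symmetric bilinear form becomes
\[
H(F_{b,X})(e)(U,V) \;=\; B\bigl(\ad(X)\Ad(b)\,U,\;E_\fk\,\Ad(b)\,V\bigr).
\]

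Next I would decompose $\fh$ into its contributions from $\gs$-orbits of $\gS(\fg,\fa)$. Because $\Ad(b)$ and $\ad(X)$ preserve individual root spaces while $E_\fk$ couples $\fg_\ga$ with $\fg_{-\ga}$ only via $\Cartan$, the operator $L_{b,X,e}$ carries $(\fg_\cO)^\gs$ into $(\fg_{\cO\cup -\cO})^\gs$, so the Hessian is block-diagonal relative to the orbit-pair decomposition. By Lemma~\ref{l: Dana s nemesis lemma} the kernel equals $\fh_X + (\fh\cap\fn_P)$, and invoking Lemma~\ref{l: gS(P) as disjoint union} one classifies the blocks surviving in the transversal Hessian into three types (each requiring $\ga(X)\ne 0$): self-paired $\cO=\{\ga,-\ga\}$ with $\ga\in\gS(P)\cap\faqd$; paired orbits $\cO=\{\ga,\gs\ga\}$ with $\ga\in\gS(P,\gs\Cartan)\setminus\faqd$; and $\cO=-\{\ga,\gs\ga\}$ with $\ga\in\gS(P,\gs)\setminus\fahd$.

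The heart of the proof is to evaluate each block explicitly. The key simplification is that $\log b\in\faq$ and $\gs\ga|_{\faq}=-\ga|_{\faq}$ together force $b^{\gs\ga}=b^{-\ga}$, so writing $c:=b^\ga$ each quadratic form reduces to eigenvalues proportional to $\ga(X)(c^{-2}\pm 1)$ in the first two types and to $\ga(X)(c^{-2}-c^2)$ in the third; the computations are short root-space manipulations using $B(U,\Cartan V)=-\inp{U}{V}$ and the action of $\Ad(b)$ and $\ad(X)$ on $\fg_\ga$. Positive definiteness of these forms, combined with the observation that regularity of $a$ makes $\ga(\log b)\ne 0$ whenever $\ga\notin\fahd$, translates the strict block-wise inequalities into the non-strict conditions (a) and (b) of the statement, the non-strictness accommodating precisely the inactive roots, for which the conditions hold trivially. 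The main obstacle lies in the bookkeeping: one must verify from Lemma~\ref{l: gS(P) as disjoint union} and the $\gs$-orbit structure that the active blocks are exactly the three enumerated above and that no Hessian coupling crosses between different orbit pairs, and then keep careful track of which coefficients $c^{-2}\pm 1$, $c^{-2}-c^2$ arise for which root-class $\gS(P)_\pm$.
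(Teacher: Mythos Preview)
Your proposal is correct and follows essentially the same route as the paper: pull back by $l_{x_w}$ to the identity, decompose $\fh$ into pieces indexed by $\{1,\gs,\Cartan,\gs\Cartan\}$-orbits of roots (your ``$\gs$-orbit pairs'' $\cO\cup -\cO$ are exactly these $F$-orbits), compute each block explicitly, and read off the sign conditions. The eigenvalues you quote, $\ga(X)(c^{-2}\pm1)$ and $\ga(X)(c^{-2}-c^2)$ with $c=b^\ga$, match the paper's Lemmas~\ref{l: case b.1}--\ref{l: case b.2.2}, and your handling of the passage from strict to non-strict inequalities via the inactive roots is the same as the paper's completion step.

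One point you leave implicit and should make explicit when writing this out: in your first type ($\ga\in\gS(P)\cap\faqd$) the block is not irreducible but splits further according to the $\gs\Cartan$-eigenspace decomposition $\fg_\ga=\fg_{\ga,+}\oplus\fg_{\ga,-}$, and it is precisely the possible vanishing of one of these summands that produces the distinction between $\gS(P)_+\cap\faqd$ and $\gS(P)_-\cap\faqd$. Your closing sentence acknowledges this bookkeeping, but the mechanism (namely that the eigenvalue $\ga(X)(c^{-2}-1)$ lives on $\fg_{\ga,+}$ and $\ga(X)(c^{-2}+1)$ on $\fg_{\ga,-}$) deserves to be spelled out, since without it the link between the $\pm$ in $c^{-2}\pm1$ and the $\pm$ in $\gS(P)_\pm$ is not transparent.
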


        \begin{remark}
        For the geometric meaning of these conditions we refer to
        Lemma \ref{l: local-min-global-min},      towards the end of this section.         \end{remark}

        \proof
            We will prove the proposition in a number of steps. As a first step, let $l_w:= l_{x_w}$ denote left multiplication by $x_w$ on $H.$ Then the tangent space of $\cC_{a,X,w}$ at $x_w$ is the image of $\fh_X + (\fh \cap \fn_P)$ under the tangent map $dl_w(e): \fh \to T_{x_w}H.$ We will denote by $\rmH_w$ the pull-back of the Hessian   $H(F_{a,X})(x_w)$ under $dl_w(e).$ Then, in view of
            (\ref{e: calculation kernel Hessian}),
            \begin{equation}\label{e: kernel Hw}
            \ker H_w = \fh_X + (\fn_P \cap \fh)
            \end{equation}
            and the following conditions are equivalent:
            \begin{enumerate}
                \itema the Hessian   $H(F_{a,w})(x_w)$ is positive definite transversally to $\cC_{a,X,w};$
                \itemb the bilinear form $\rmH_w$ is positive definite transversally to $\fh_X + (\fh \cap \fn_P).$
            \end{enumerate}

            Accordingly, we will concentrate on deriving necessary and sufficient conditions for (b) to be valid.
            \begin{lemma}\label{l: Hw and Lw}
                The bilinear form $\rmH_w$ on $\fh$ is given by
                $$\rmH_w(U,V) = \inp{U}{L_w V},\qquad (U,V \in \fh),$$
                where $L_w: \fh \to \fh$ is the linear map given by
                $$L_w = - \pi_\fh \after \ad(X) \after \Ad(a^w) \after E_{\fk}\after \Ad(a^w).$$
            \end{lemma}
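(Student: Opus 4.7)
My plan is to reduce the Hessian at $x_w$ to the Hessian of a modified function at the identity, then read off the result from Lemma~\ref{l: formula for Hessian}.

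First I would establish the translation identity $F_{a,X}\circ l_{x_w}=F_{a^w,X}$, where $a^w:=x_w^{-1}ax_w$. Since $x_w\in N_{K\cap H}(\faq)\subseteq K$, we have $a^w\in\Aqreg$ and $ax_w=x_wa^w$, while left multiplication by $x_w\in K$ leaves the Iwasawa $\fa$-component unchanged. Thus
\begin{equation*}
F_{a,X}(x_wh)=\inp{X}{\fH_P(x_wa^wh)}=\inp{X}{\fH_P(a^wh)}=F_{a^w,X}(h).
\end{equation*}
Pulling back the Hessian $H(F_{a,X})(x_w)$ under $dl_w(e)$ therefore identifies $\rmH_w$ with $H(F_{a^w,X})(e)$.

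Next I would apply Lemma~\ref{l: formula for Hessian} with $h=e$ and $a$ replaced by $a^w$. Since $a^w\in A$, the Iwasawa components of $a^w\cdot e$ are $k_{a^w}(e)=e$ and $\tau(a^w)=a^w$, so the general formula collapses to
\begin{equation*}
L_{a^w,X,e}=-\pi_{\fh}\circ\Ad((a^w)^{-1})\circ\ad(X)\circ E_{\fk}\circ\Ad(a^w),
\end{equation*}
and $\rmH_w(U,V)=-\inp{U}{\theta L_{a^w,X,e}(V)}$ for $U,V\in\fh$.

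Finally I would commute $\theta$ through the composition to recover the stated formula for $L_w$. The ingredients are: $\theta\circ\pi_{\fh}=\pi_{\fh}\circ\theta$, since $\fh$ is $\theta$-stable; $\theta\circ\Ad((a^w)^{-1})=\Ad(a^w)\circ\theta$, since $\theta$ inverts $A$; $\theta\circ\ad(X)=-\ad(X)\circ\theta$, since $\theta X=-X$; and $\theta\circ E_{\fk}=E_{\fk}$, since the image of $E_{\fk}$ lies in $\fk$ and $\theta|_{\fk}=\mathrm{id}$. After these substitutions $\theta^2=\mathrm{id}$ cancels, and using $\ad(X)\circ\Ad(a^w)=\Ad(a^w)\circ\ad(X)$ (as $\Ad(a^w)X=X$) together with $\inp{U}{W}=\inp{U}{\pi_{\fh}W}$ for $U\in\fh$ (the splitting $\fg=\fh\oplus\fq$ is $\inp{\cdot}{\cdot}$-orthogonal, because $\theta$ preserves $\fq$), the formula $L_w=-\pi_{\fh}\circ\ad(X)\circ\Ad(a^w)\circ E_{\fk}\circ\Ad(a^w)$ falls out.

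I do not anticipate any genuine obstacle; the argument is pure bookkeeping of commutation identities and sign conventions. The only identity that requires a moment of thought is $\theta\circ E_{\fk}=E_{\fk}$, which, although immediate, is worth stating explicitly since $E_{\fk}$ is defined via the Iwasawa decomposition $\fg=\fk\oplus\fa\oplus\fn_P$, whose $\fn_P$-summand is not $\theta$-stable.
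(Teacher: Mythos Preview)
Your proof is correct and follows essentially the same approach as the paper: both apply Lemma~\ref{l: formula for Hessian} and then commute $\theta$ through using the identities you list. The only cosmetic difference is that the paper plugs $h=x_w$ directly into Lemma~\ref{l: formula for Hessian} (so that $k_a(x_w)=x_w$, $\tau(ax_w)=a^w$) and then commutes $\Ad(x_w^{-1})$ past $\pi_\fh$, whereas you first translate by $l_{x_w}$ to reduce to $h=e$; the resulting computations are identical.
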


            \begin{proof}
                Let $U, V \in \fh.$ Then in view of Lemma \ref{l: formula for Hessian}
                we have
                $$
                \rmH_w(U,V) = R_U R_V F_{a,X}(x_w) = B(U, L_{a,X, h} V) =- \inp{U}{\Cartan L_{a,X,h} V}$$
                with $h = x_w$ and $L_{a,X, h}$ defined as in Lemma \ref{l: formula for Hessian}. Now $a h = a x_w = x_w a^w$ and we see that $\tau = \tau(ah) = a^w$ and $k_a(h)=x_w$. Hence,
                \begin{eqnarray*}
                    -\Cartan \after L_{a,X,h}(V)&=&\Cartan \circ\Ad(x_w^{-1})\circ\pi_{\fh}\circ \Ad(a^{-1})\circ \Ad(x_w)\circ \ad(X)\circ E_{\fk}\circ \Ad(a^w)V\\
                    &=&\Cartan \circ\pi_{\fh}\circ\Ad(x_w^{-1})\circ \Ad(a^{-1})\circ \Ad(x_w)\circ \ad(X)\circ E_{\fk}\circ \Ad(a^w)V\\
                    &=&\Cartan \circ\pi_{\fh}\circ\Ad(a^w)^{-1}\circ \ad(X)\circ E_{\fk}\circ \Ad(a^w)V\\
                    &=& -\pi_{\fh}\circ \Ad(a^w)\circ \ad(X)\circ E_{\fk}\circ \Ad(a^w)V.
                \end{eqnarray*}
                The result now follows since $\Ad(a^w)$ and $\ad(X)$ commute.
            \end{proof}

            In the sequel it will be useful to consider the finite subgroup
            $$F = \{1, \gs, \Cartan, \gs \Cartan\} \subseteq \Aut(\fg).$$
            The natural left action of $F$ on $\fg$ leaves $\fa$ invariant, and induces natural left actions on $\fad$ and on $\gS(\fg, \fa).$ Accordingly, if  $\tau \in F$ and $\ga \in \gS(\fg, \fa),$ then
            $$\tau (\fg_\ga) = \fg_{\tau \ga}$$
            If $\cO$ is an orbit for the $F$-action on $\gS(\fg, \fa),$ we write, in accordance with (\ref{e: defi fg cO}),
            $$\fg_\cO = \bigoplus_{\ga \in \cO} \fg_\ga.$$
            Then obviously,
            \begin{equation}\label{e: deco fg in orbits}
                \fg = \fg_0 \oplus \bigoplus_{\cO \in \gS(\fg,\fa)/F} \fg_\cO,
            \end{equation}
            with mutually orthogonal summands. Each of the summands is $F$-invariant, hence $\gs$-invariant. In particular, if we write $\fh_0 = \fh\cap \fg_0$ and $\fh_\cO = \fh \cap \fg_\cO,$ then
            \begin{equation}\label{e: deco fh in F orbits}
                \fh = \fh_0 \oplus \bigoplus_{\cO \in \gS(\fg, \fa)/F} \fh_\cO,
            \end{equation}
            with $F$-stable orthogonal summands.

            \begin{lemma}{\ }
                \begin{enumerate}
                    \itema The decomposition {\rm (\ref{e: deco fh in F orbits})} is orthogonal for $\inp{\dotvar}{\dotvar}.$
                    \itemb The decomposition {\rm (\ref{e: deco fh in F orbits})}  is preserved by $L_w.$
                    \itemc The decomposition {\rm (\ref{e: deco fh in F orbits})}
                              is orthogonal for $\rmH_w.$
                \end{enumerate}
            \end{lemma}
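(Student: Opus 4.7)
My plan is to treat the three assertions in order, with (c) deduced quickly from (a) and (b), so the real work is in verifying (a) and (b).

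For (a) I would start from the well-known fact that the root space decomposition $\fg = \fg_0 \oplus \bigoplus_{\ga \in \gS(\fg,\fa)} \fg_\ga$ is orthogonal with respect to $\inp{\dotvar}{\dotvar}$; this follows from $\Cartan(\fg_\ga) = \fg_{-\ga}$ together with $B(\fg_\ga, \fg_\gb) = 0$ unless $\gb = -\ga.$ Grouping root spaces by $F$-orbits gives that the summands $\fg_\cO$ in (\ref{e: deco fg in orbits}) are pairwise orthogonal, and intersecting with $\fh$ preserves orthogonality, yielding (a).

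For (b) my plan is to inspect each of the four operators appearing in the formula $L_w = -\pi_\fh \after \ad(X) \after \Ad(a^w) \after E_\fk \after \Ad(a^w)$ of Lemma \ref{l: Hw and Lw} and show that each of them preserves the decomposition $\fg = \fg_0 \oplus \bigoplus_{\cO} \fg_\cO.$ Both $\Ad(a^w)$ and $\ad(X)$ act as scalars on every root space $\fg_\ga$ (since $a^w \in A$ and $X \in \faq \subseteq \fa$), hence preserve every individual summand; the projection $\pi_\fh$ is just $U \mapsto \tfrac12(U + \gs U),$ and since each $\fg_\cO$ is $\gs$-stable (the $F$-orbits being $\gs$-invariant), $\pi_\fh$ maps $\fg_\cO$ into $\fg_\cO \cap \fh = \fh_\cO.$ The one step requiring a small case analysis is $E_\fk$: for $U \in \fg_\ga$ with $\ga \in \gS(P)$ one has $U \in \fn_P$ so $E_\fk U = 0$; for $U \in \fg_{-\ga}$ with $\ga \in \gS(P)$ one writes $U = (U + \Cartan U) - \Cartan U$ with $\Cartan U \in \fn_P,$ so $E_\fk U = U + \Cartan U \in \fg_\ga \oplus \fg_{-\ga} \subseteq \fg_\cO$ because $\{\ga, -\ga\} \subseteq F \cdot \ga;$ and for $U \in \fg_0 = \fm \oplus \fa$ the $\fk$-component lies in $\fm \subseteq \fg_0.$ Thus $E_\fk$ also preserves the decomposition, and (b) follows.

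Finally (c) is a one-line consequence: if $U \in \fh_\cO$ and $V \in \fh_{\cO'}$ with $\cO \neq \cO',$ then by (b) we have $L_w V \in \fh_{\cO'},$ and by (a) we get $\rmH_w(U,V) = \inp{U}{L_w V} = 0;$ orthogonality with respect to $\fh_0$ is handled identically. I do not anticipate any serious obstacle; the only place that requires attention is the verification for $E_\fk,$ where one must be careful to group positive and negative roots of the same orbit together.
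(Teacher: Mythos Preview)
Your proposal is correct and follows essentially the same approach as the paper: (a) from orthogonality of root spaces, (b) by checking that each of $\Ad(a^w),$ $\ad(X),$ $E_\fk,$ and $\pi_\fh$ preserves the decomposition $\fg = \fg_0 \oplus \bigoplus_\cO \fg_\cO,$ and (c) from (a), (b), and the formula $\rmH_w(U,V) = \inp{U}{L_w V}.$ Your write-up is in fact more detailed than the paper's, which simply asserts that $E_\fk$ and $\pi_\fh$ preserve the decomposition without spelling out the case analysis you (correctly) give.
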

            \begin{proof}
                The validity of (a) follows immediately from the fact that relative to the given inner product, the root spaces are mutually orthogonal, as well as orthogonal to $\fg_0.$

                For (b) we note that the decomposition (\ref{e: deco fg in orbits}) is preserved by $\Ad(A),$ $\ad(\fa),$ $E_\fk$ and $\pi_\fh.$ Finally, in view of Lemma \ref{l: Hw and Lw}, the validity of (c) follows from (a) and (b).
            \end{proof}

            It follows from the above lemma that the kernel of $\rmH_w$ decomposes in accordance with
            (\ref{e: deco fh in F orbits}). Let      $\fv_{P,X} := (\ker \rmH_w)^\perp \cap \fh.$
            Then in view of
            (\ref{e: kernel Hw}) we have
            \begin{equation}
            \label{e: deco fv P X}
            \fv_{P,X} = \fh_X^\perp \cap (\fh\cap \fn_P)^\perp \cap \fh =
            \bigoplus_{\cO\in\gS(\fg, \fa)/F} \fv_\cO,
            \end{equation}
            \vspace{-5mm}

            \noindent  with 
             \begin{equation}
            \label{e: defi fv cO}
            \fv_\cO:= \fh_X^\perp \cap (\fh\cap \fn_P)^\perp \cap \fh_\cO
            \end{equation}
            From these definitions it follows that $\rmH_w$ is non-degenerate on each of the spaces $\fv_\cO.$ Moreover, $\rmH_w$ is positive definite if and only if the restriction of $\rmH_w$ to $\fv_\cO$ is positive definite for every $\cO \in \gS(\fg, \fa)/F.$ This in turn is equivalent to the condition that the symmetric map $L_w: \fh \to \fh$ has a positive definite restriction to each of the spaces $\fv_\cO$ (if $\fv_\cO$ is zero, we      agree that the latter is automatic). We will now systematically discuss the types of orbits $\cO$ for which $\fv_\cO$ is non-trivial.

            First of all, we note that $\ga \in \cO \implies -\ga = \Cartan \ga \in \cO.$ Therefore, we see that $\cO \cap \gS(P) \neq \emptyset$ for all $\cO \in \gS(\fg, \fa)/F.$ Let $\sim $ denote the equivalence relation on $\gS(P)$ defined by
            $$\ga \sim \gb \iff F\ga = F \gb,$$
            then the map $\ga \mapsto F\ga$ induces a bijection from $\gS(P)/\!\sim\,$ onto $\gS(\fg, \fa)/F.$ The following lemma summarizes all possibilities for the spaces $\fv_{\cO},$ as $\cO \in \gS(\fg, \fa)/F.$

            \begin{lemma}\label{l: orbit cases}
                Let $\ga \in \gS(P),$ and put $\cO = F\ga.$
                \begin{enumerate}
                    \itema If $\ga(X) = 0$ then $\fv_\cO = 0.$
                    \itemb If $\ga(X) \neq 0$      then we are in one of the following two cases {\rm (b.1)}
                     and {\rm (b.2)}.
                    \begin{enumerate}
                        \item[{\rm (b.1)}] $\ga \in \gS(P,\gs);$ in this case $\fv_\cO = \{ V + \gs(V)\setmid  V \in \fg_{-\ga}\}.$
                        \item[{\rm (b.2)}] $\ga \in \gS(P, \gs \Cartan);$ in this case $\fv_\cO = \fh_\cO.$
                    \end{enumerate}
                \end{enumerate}
            \end{lemma}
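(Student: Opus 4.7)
The plan is to analyze one $F$-orbit $\cO$ at a time, exploiting the orthogonality of the decomposition $\fh = \fh_0 \oplus \bigoplus_\cO \fh_\cO$ established just before the lemma. Because this decomposition is orthogonal for $\inp{\cdot}{\cdot}$, taking perpendiculars inside $\fh$ restricts to taking perpendiculars inside each $\fh_\cO$. Accordingly, it suffices to identify $\fh_X \cap \fh_\cO$ and $(\fh \cap \fn_P) \cap \fh_\cO$ and then compute orthogonal complements within $\fh_\cO$.

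The first observation is that $F\ga = \{\pm \ga, \pm \gs\ga\}$. Since $X \in \faq$, we have $(\gs\ga)(X) = -\ga(X)$, so every root in $\cO$ either vanishes on $X$ or is nonzero on $X$, uniformly over the orbit. In case (a), all roots in $\cO$ vanish on $X$, hence $\fg_\cO \subseteq \fg_X$ and in particular $\fh_\cO \subseteq \fh_X$; the orthogonal complement of $\fh_X \cap \fh_\cO = \fh_\cO$ within $\fh_\cO$ is $0$, giving $\fv_\cO = 0$. In case (b), no root of $\cO$ vanishes on $X$, which forces $\fg_X \cap \fg_\cO = 0$ and therefore $\fh_X \cap \fh_\cO = 0$. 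Consequently $\fv_\cO = (\fh \cap \fn_P)^\perp \cap \fh_\cO$, and it remains only to identify $(\fh \cap \fn_P) \cap \fh_\cO$. Here I would split according to Lemma \ref{l: gS(P) as disjoint union}, which gives the disjoint union $\gS(P) = \gS(P,\gs) \sqcup \gS(P,\gs\Cartan)$.

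In subcase (b.1), $\ga \in \gS(P,\gs)$: since $\ga(X) \neq 0$ and $X \in \faq$ we have $\ga \notin \fahd$, so $\gs\ga \neq \ga$; also $\gs\ga \neq -\ga$ as $\gs\ga$ is positive. Hence $\cO$ has four distinct elements, the positive ones being $\{\ga, \gs\ga\}$. Parametrizing $\fh_\cO = \{V + \gs V : V \in \fg_\ga \oplus \fg_{-\ga}\}$, the condition $V + \gs V \in \fn_P$ forces $V \in \fg_\ga$, cutting out the subspace $\{V + \gs V : V \in \fg_\ga\}$, whose orthogonal complement within $\fh_\cO$ is $\{V + \gs V : V \in \fg_{-\ga}\}$, as required. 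In subcase (b.2), $\ga \in \gS(P, \gs\Cartan)$, so $\gs\ga < 0$. Whether $\cO$ has two or four elements (depending on whether $\ga \in \faqd$), any non-zero $V$ in $\fg_\ga \oplus \fg_{-\ga}$ produces an element $V + \gs V$ with support on roots of both signs, hence not in $\fn_P$. Thus $(\fh \cap \fn_P) \cap \fh_\cO = 0$ and $\fv_\cO = \fh_\cO$.

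The main obstacle, though essentially bookkeeping, is the orthogonality argument in (b.1): I must check that the $\gs$-symmetrization map $V \mapsto V + \gs V$ carries the orthogonal decomposition $\fg_\ga \oplus \fg_{-\ga}$ to an orthogonal decomposition of $\fh_\cO$, so that the orthogonal complement inside $\fh_\cO$ can be read off at the level of $V$. This follows from the mutual orthogonality of root spaces of distinct weights together with the fact that $\gs$ preserves $\inp{\cdot}{\cdot}$, both of which are immediate from the construction of the inner product out of the $\gs$- and $\Cartan$-invariant bilinear form $B$. Once this is in place, all of (a), (b.1) and (b.2) drop out of the case analysis above.
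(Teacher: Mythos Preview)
Your proof is correct and follows essentially the same route as the paper's: reduce to a single $F$-orbit via orthogonality, observe that $\fh_X \cap \fh_\cO$ is either all of $\fh_\cO$ or zero according to whether $\ga(X)$ vanishes, and then in case (b) identify $(\fh\cap\fn_P)\cap\fh_\cO$ by the dichotomy $\ga \in \gS(P,\gs)$ versus $\ga \in \gS(P,\gs\Cartan)$. The paper argues (b.1) in the same way, simply asserting that $U+\gs U$ (for $U\in\fg_\ga$) lies in $\fh\cap\fn_P$ while $V+\gs V$ (for $V\in\fg_{-\ga}$) lies in $\fh_X^\perp\cap(\fh\cap\fn_P)^\perp$; your added remark that the map $V\mapsto V+\gs V$ respects orthogonality (because $\gs$ preserves $\inp{\cdot}{\cdot}$ and distinct root spaces are orthogonal) is exactly the justification behind that assertion. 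One small wording issue in (b.2): it is not literally true that every non-zero $V\in\fg_\ga\oplus\fg_{-\ga}$ yields $V+\gs V$ with support on both signs (in the two-element orbit case $V$ can lie in the kernel of $V\mapsto V+\gs V$); what you need, and what your argument actually shows, is that every non-zero element of $\fh_\cO$ has support on both signs.
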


            \begin{proof}
                (a) If $\ga(X) = 0$ then $\fh_\cO \subseteq \fg_X,$ so that $\fv_\cO = \{0\}.$

                (b) Assume that $\ga(X) \neq 0.$ Then it follows that $\ga \notin \fahd,$ so that $\ga \neq \gs \ga.$
                By Lemma \ref{l: gS(P) as disjoint union}      we are in one of the cases (b.1) and (b.2).

                We first discuss case (b.1). Then $\gs \ga \in \gS(P)$ so that $\gs\ga \neq - \ga$ and $\cO = F \ga$ consists of the four distinct elements $\ga, \Cartan \ga = -\ga , \gs \ga$ and $\gs\Cartan \ga = - \gs \ga.$ We see that $\fh_\cO$ consists of sums of elements of the form $U + \gs(U)$ and $V + \gs(V)$ with $U \in \fg_\ga$ and $V \in \fg_{-\ga}.$ The elements $U +\gs(U)$ belong to $\fh \cap \fn_P,$ whereas the elements $V + \gs(V)$ belong to $\fh_X^\perp \cap (\fh \cap \fn_P)^\perp.$   In view of (\ref{e: defi fv cO}) this implies the assertion of (b.1).

                Next, we discuss case (b.2). Then  $\cO \cap \gS(P) = \{\ga, -\gs \ga\}$ so that $\fn_P \cap \fh= 0.$ Since obviously $\fh_\cO \perp \fh_X,$ we infer the assertion of (b.2).
            \end{proof}

            We will now proceed by explicitly calculating the restrictions $L_w|_{\fv_\cO}$  for all these cases. The following lemma will be instrumental in our calculations.

            \begin{lemma}\label{l: computation Tw}
                Let $T_w: \fg \to \fg$ be defined by
                $$T_w = \ad(X) \after \Ad(a^w) \after  E_\fk \after \Ad(a^w).$$
                Let $\gb \in \gS(\fg,\fa)$ and $U_\gb \in \fg_\gb.$
                \begin{enumerate}
                    \itema If $\gb \in \gS(P)$ then $T_w(U_\gb) = 0.$
                    \itemb If $\gb \in  - \gS(P)$ then $T_w (U_\gb) = \gb(X) \, (a^{2w\gb}U_\gb - \Cartan U_\gb).$
                \end{enumerate}
            \end{lemma}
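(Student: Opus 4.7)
The plan is to compute $T_w(U_\beta)$ by applying the four operators in the definition $T_w = \ad(X) \after \Ad(a^w) \after E_\fk \after \Ad(a^w)$ one at a time, exploiting that on each $\fa$-root space every one of these operators acts by a simple, explicit formula. Concretely, I would use the following four facts, all immediate from the definitions: (i) $\Ad(a^w)$ acts on $\fg_\gamma$ by the scalar $a^{w\gamma} = e^{(w\gamma)(\log a)}$, using the notation from the Remark preceding Lemma \ref{l: set of critical points}; (ii) the Cartan involution $\theta$ maps $\fg_\gamma$ onto $\fg_{-\gamma}$; (iii) the Iwasawa projection $E_\fk: \fg \to \fk$ vanishes on $\fg_\gamma$ for $\gamma \in \Sigma(P)$ and equals $I + \theta$ on $\fg_\gamma$ for $\gamma \in -\Sigma(P)$ (write $V = (V + \theta V) - \theta V$ with $V + \theta V \in \fk$ and $-\theta V \in \fg_{-\gamma} \subset \fn_P$); (iv) $\ad(X)$ acts on $\fg_\gamma$ by the scalar $\gamma(X)$ since $X \in \faq \subseteq \fa$.

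For part (a), suppose $\beta \in \Sigma(P)$. Then $\Ad(a^w) U_\beta = a^{w\beta} U_\beta$ still lies in $\fg_\beta \subseteq \fn_P$, so $E_\fk$ annihilates it and $T_w(U_\beta) = 0.$

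For part (b), suppose $\beta \in -\Sigma(P)$. The first two operators yield
\[
E_\fk \after \Ad(a^w)\, U_\beta \;=\; a^{w\beta}\,(U_\beta + \theta U_\beta).
\]
Since $\theta U_\beta \in \fg_{-\beta}$, applying $\Ad(a^w)$ scales $U_\beta$ by $a^{w\beta}$ and $\theta U_\beta$ by $a^{-w\beta}$, producing $a^{2w\beta} U_\beta + \theta U_\beta$. Finally, $\ad(X)$ acts as $\beta(X)$ on $\fg_\beta$ and as $-\beta(X)$ on $\fg_{-\beta}$, giving
\[
T_w(U_\beta) \;=\; \beta(X)\bigl(a^{2w\beta}\, U_\beta - \theta U_\beta\bigr),
\]
which is exactly the claimed formula.

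There is essentially no obstacle here: the argument is a bookkeeping exercise in how $\Ad(a^w)$, $E_\fk$, $\ad(X)$, and $\theta$ act on the root space decomposition, and the only point requiring care is the correct scalar $a^{-w\beta}$ appearing when $\Ad(a^w)$ is applied to $\theta U_\beta \in \fg_{-\beta}$ in the second pass, together with the accompanying sign $-\beta(X)$ from $\ad(X)$ on $\fg_{-\beta}$. These two sign/exponent flips are what combine to produce the difference $a^{2w\beta} U_\beta - \theta U_\beta$ in the conclusion.
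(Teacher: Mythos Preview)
Your proof is correct and follows essentially the same approach as the paper: both compute $T_w(U_\beta)$ by tracking the action of each operator on root spaces, using that $\Ad(a^w)$ scales $\fg_\gamma$ by $a^{w\gamma}$, that $E_\fk$ kills $\fn_P$ and equals $I+\theta$ on $\fg_\beta$ for $\beta\in-\Sigma(P)$, and that $\ad(X)$ scales $\fg_\gamma$ by $\gamma(X)$. Your exposition is slightly more detailed in listing the ingredients (i)--(iv) up front, but the computation is identical to the paper's.
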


            \begin{proof}
                Assume $\gb \in \gS(P).$ Then $\fg_\gb \subseteq \fn_P \subseteq \ker E_\fk.$ Since $\Ad(a^w)$ preserves $\fg_\gb,$ (a) follows.

                For (b), assume that $\gb \in - \gS(P).$ Then $U_\gb $ equals $U_\gb + \Cartan U_\gb $ modulo $\fn_P,$ so that $E_\fk(U_\gb) = U_\gb + \Cartan U_\gb.$ Hence,
                \begin{eqnarray*}
                    T_w (U_\gb) & = & \ad (X) \after \Ad(a^w) [a^{w\gb} (U_\gb + \Cartan U_\gb)]\\
                    &=& \ad(X) (a^{2w\gb} U_\gb + \Cartan U_\gb)\\
                    &=& \gb(X) (a^{2w\gb} U_\gb - \Cartan U_\gb).
                \end{eqnarray*}
            \end{proof}

            In our calculations of $L_w|_{\fv_\cO},$ we will distinguish between the cases described in Lemma \ref{l: orbit cases}. Case (a) is trivial.

            \begin{lemma}[Case b.1]{\ }\label{l: case b.1}
                Let      $\cO = F \ga$ with $\ga \in \gS(P,\gs)$ and $\ga(X) \neq 0.$ Then
                $$L_w|_{\fv_\cO} = \frac{\ga(X)}2 (a^{-2w\ga} - a^{2w\ga}) I.$$
                In particular, this restriction is positive definite if and only if $\ga(X) \ga(w^{-1}\log a) < 0.$
            \end{lemma}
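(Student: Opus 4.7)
The plan is to compute $L_w$ on $\fv_\cO$ by direct application of Lemmas \ref{l: Hw and Lw} and \ref{l: computation Tw}, and then deduce the positivity criterion from the resulting scalar.

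First I would unwind the definitions. By Lemma \ref{l: Hw and Lw} we have $L_w = -\pi_\fh \after T_w$, where $T_w$ is the operator of Lemma \ref{l: computation Tw}. By Lemma \ref{l: orbit cases}(b.1), a generic element of $\fv_\cO$ is of the form $V + \gs V$ with $V \in \fg_{-\ga}$. Since $\ga \in \gS(P,\gs)$, both $\ga$ and $\gs\ga$ lie in $\gS(P)$, so $-\ga$ and $-\gs\ga$ lie in $-\gS(P)$. I therefore apply Lemma \ref{l: computation Tw}(b) twice: once with $\gb=-\ga$, yielding
\[
T_w(V) = -\ga(X)\,\bigl(a^{-2w\ga}V - \Cartan V\bigr),
\]
and once with $\gb=-\gs\ga$ acting on $\gs V\in\fg_{-\gs\ga}$. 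At this point I would invoke the two key symmetry relations: since $X\in\faq$ and $\log a \in\faq$ with $w^{-1}\log a \in\faq$, we have $\gs\ga(X) = -\ga(X)$ and $a^{-2w\gs\ga}= a^{2w\ga}$. These simplify $T_w(\gs V)$ to $\ga(X)(a^{2w\ga}\gs V - \Cartan \gs V)$.

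Next I would apply $\pi_\fh = \half(I+\gs)$ term-by-term. The diagonal (non-$\Cartan$) contributions combine cleanly because $\pi_\fh V = \pi_\fh \gs V = \half(V+\gs V)$, producing
\[
\tfrac{\ga(X)}{2}\bigl(a^{2w\ga} - a^{-2w\ga}\bigr)(V+\gs V).
\]
The two $\Cartan$-contributions cancel exactly: using that $\gs$ and $\Cartan$ commute one finds $\pi_\fh \Cartan V - \pi_\fh \Cartan \gs V = \half(\gs\Cartan V - \Cartan \gs V)=0$. Hence after multiplying by $-1$,
\[
L_w(V+\gs V) = \tfrac{\ga(X)}{2}\bigl(a^{-2w\ga} - a^{2w\ga}\bigr)(V+\gs V),
\]
which gives the asserted formula $L_w|_{\fv_\cO} = \tfrac{\ga(X)}{2}(a^{-2w\ga}-a^{2w\ga})I$.

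For the positivity statement, I note that since $L_w|_{\fv_\cO}$ is a scalar multiple $\gl$ of the identity, the quadratic form $\rmH_w(U,U)=\inp{U}{L_wU}=\gl\inp{U}{U}$ is positive definite on $\fv_\cO$ if and only if $\gl>0$. Writing $a^{-2w\ga}-a^{2w\ga} = -2\sinh\bigl(2\ga(w^{-1}\log a)\bigr)$ and using that $\sinh$ has the same sign as its argument, positivity of $\gl$ translates to $\ga(X)\,\ga(w^{-1}\log a) < 0$. The only real care needed in the whole argument is the bookkeeping with $\gs$, $\Cartan$, and the action of $w^{-1}$ on $\log a$; the cancellation of the $\Cartan$-terms after projecting to $\fh$ is the single nonobvious computational point, and it follows immediately from the commutation $\gs\Cartan = \Cartan\gs$.
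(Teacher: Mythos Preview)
Your proof is correct and follows essentially the same approach as the paper: compute $T_w$ on $V$ and on $\gs V$ separately via Lemma~\ref{l: computation Tw}(b), use $\gs\ga(X)=-\ga(X)$ and $a^{-2w\gs\ga}=a^{2w\ga}$ (both consequences of $X,\,w^{-1}\log a\in\faq$), and then apply $\pi_\fh=\half(I+\gs)$ to observe that the $\Cartan$-terms cancel by $\gs\Cartan=\Cartan\gs$. The paper's proof is organized slightly more tersely (it writes $T_w(Z)$ in one line and leaves the projection computation implicit), but the substance is identical.
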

            \begin{proof}
                Let $V \in \fg_{-\ga}$ and put $Z:= V + \gs(V).$ Since $-\ga, -\gs \ga \in - \gS(P),$ it follows from Lemma \ref{l: computation Tw} that \begin{eqnarray*}
                    T_w (Z)&=&-\ga(X)(a^{-2w\ga}V - \Cartan V)-\ga(\gs X) (a^{2w\ga}\gs V - \Cartan \gs V)\\
                    &=&\ga(X)[- a^{-2w\ga}V + a^{2w\ga} \gs V + \Cartan V - \Cartan \gs V]
                \end{eqnarray*}
                so that
                $$L_w(Z)=  - \pi_\fh \after T_w (Z) =\frac{\ga(X)}2 ( a^{-2w\ga} - a^{2w\ga})Z.$$
                It follows that $L_w$ restricts to multiplication by a scalar on $\fv_\cO.$ The sign of this scalar equals the sign of $- \ga(X) \ga(w^{-1}\log a).$ The result follows.
            \end{proof}
            We now turn to the calculation of $L_w|_{\fv_\cO}$ in case (b.2), where $\cO = F\ga,$ with $\ga \in \gS(P,\gs \Cartan)$      and $\ga(X)\neq 0.$
            There are two possibilities between which we will distinguish:
            \begin{enumerate}[xxxxxxx]
                \item[{\rm (b.2.1)}] $\ga \in \gS(P,\gs\Cartan) \setminus \faqd,$
                \item[{\rm (b.2.2)}] $\ga \in \gS(P)\cap \faqd.$
            \end{enumerate}
            In each of these cases, $\fv_\cO  = \fh_\cO$      by Lemma \ref{l: case b.1}.
            We will use the notation
            $$\fv(U) = \fh(U) = \fh \cap {\rm span}\, (F\cdot U),$$
            for $U \in \fg_\ga.$
            In case (b.2.1), the orbit $\cO = F\ga$ consists of the four distinct roots $\ga, \gs \ga, \Cartan \ga$ and $\gs \Cartan \ga$, and
            $$\fv(U) = \R (U + \gs(U)) \oplus \R (\gs\Cartan (U) + \Cartan ( U)).$$
            In case (b.2.2), $\cO = F\ga = \{\ga, - \ga\},$ and we see that
            $$\fv(U) = \R(U + \gs(U)).$$
            In all of these cases, we see that if $U_1, \ldots, U_m$ is an orthonormal basis of $\fg_\ga,$ then
            \begin{equation}\label{e: deco fv cO}
                \fv_\cO = \bigoplus_{j=1}^m \fv(U_j),
            \end{equation}
            with mutually orthogonal summands.

            \begin{lemma}[Case (b.2.1)]\label{l: case b.2.1}
                Let $\cO = F\ga,$  with $\ga \in \gS(P, \gs\Cartan)\setminus \faqd$ and $\ga(X)\neq 0.$
                Then $L_w|_{\fv_\cO}$ is positive definite if and only if $\ga(X) > 0$ and $\ga(X)\ga(w^{-1}\log a) < 0.$
            \end{lemma}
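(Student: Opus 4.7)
My plan is to reduce the question to a $2\times 2$ matrix computation on each summand $\fv(U)$ of the decomposition (\ref{e: deco fv cO}), exploiting that $L_w$ preserves this decomposition. Fix $U\in \fg_\ga$, and take the basis $V_1 = U + \gs U$, $V_2 = \Cartan U + \gs\Cartan U$ of $\fv(U)$. Since $\ga \in \gS(P,\gs\Cartan) \setminus \faqd$, the orbit $F\ga = \{\ga,-\ga,\gs\ga,-\gs\ga\}$ has four distinct elements, with $\ga,\,\gs\Cartan\ga \in \gS(P)$ and $\gs\ga,\,\Cartan\ga \in -\gS(P)$.

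Next I would compute $T_w$ on the four rank-one constituents using Lemma \ref{l: computation Tw}. Parts (a) and (b) of that lemma give $T_wU = 0$, $T_w(\gs\Cartan U) = 0$, while
\[
T_w(\gs U) = -\ga(X)\bigl(a^{-2w\ga}\gs U - \Cartan\gs U\bigr), \qquad T_w(\Cartan U) = -\ga(X)\bigl(a^{-2w\ga}\Cartan U - U\bigr),
\]
where I use that $\gs$ acts as $-I$ on $\faq$, so $(\gs\ga)(X) = -\ga(X)$ and $a^{2w\gs\ga}= a^{-2w\ga}$. Applying $-\pi_\fh = -\tfrac12(I+\gs)$ then yields
\[
L_wV_1 = \tfrac{\ga(X)}{2}\bigl(a^{-2w\ga}V_1 - V_2\bigr), \qquad L_wV_2 = \tfrac{\ga(X)}{2}\bigl(a^{-2w\ga}V_2 - V_1\bigr),
\]
so $L_w$ preserves $\fv(U)$ (and thus, summand-by-summand in (\ref{e: deco fv cO}), the whole $\fv_\cO$).

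Then I would pair with the inner product. Since $\gs$ and $\Cartan$ are isometries and the four root spaces in $F\ga$ are mutually orthogonal, one finds $\inp{V_1}{V_1} = \inp{V_2}{V_2} = 2\|U\|^2$ and $\inp{V_1}{V_2} = 0$. By Lemma \ref{l: Hw and Lw} the matrix of $\rmH_w|_{\fv(U)}$ in $\{V_1,V_2\}$ is therefore
\[
\ga(X)\,\|U\|^2\begin{pmatrix} a^{-2w\ga} & -1 \\ -1 & a^{-2w\ga} \end{pmatrix}.
\]

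Finally, Sylvester's criterion applied to this matrix gives positive definiteness iff $\ga(X)\,a^{-2w\ga} > 0$ and $a^{-4w\ga} > 1$. The first condition forces $\ga(X) > 0$ (since $a^{-2w\ga} > 0$ always), and the second amounts to $\ga(w^{-1}\log a) < 0$, which combined with $\ga(X) > 0$ is equivalent to $\ga(X)\,\ga(w^{-1}\log a) < 0$. Since the calculation was for an arbitrary $U \in \fg_\ga$ and the decomposition (\ref{e: deco fv cO}) is $L_w$-invariant and $\inp{\cdot}{\cdot}$-orthogonal, these conditions are necessary and sufficient for $L_w|_{\fv_\cO}$ to be positive definite. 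No step is a real obstacle here; the only care needed is tracking the action of $\gs$ on $\faqd$ to get the sign $a^{2w\gs\ga} = a^{-2w\ga}$ correctly.
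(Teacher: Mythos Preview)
Your proof is correct and essentially identical to the paper's own argument: the same basis $V_1=U+\gs U$, $V_2=\Cartan U+\gs\Cartan U$ of $\fv(U)$, the same computation of $L_w$ on it via Lemma~\ref{l: computation Tw}, and the same $2\times 2$ matrix. The only cosmetic differences are that the paper records the matrix of $L_w$ rather than of $\rmH_w$ (so without the factor $\|U\|^2$) and argues positive definiteness via trace and determinant rather than Sylvester's criterion.
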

            \begin{proof}
                Fix an element $U \in \fg_\ga$ and put $Z_1 = U + \gs(U)$ and $Z_2 = \Cartan Z_1= \Cartan U + \gs \Cartan U.$ Then $T_w(U) = 0$ by
                Lemma \ref{l: computation Tw}, hence
                \begin{eqnarray*}
                    T_w(Z_1) &= &T_w(\gs U) \\
                    &=& \gs \ga (X) (a^{2w\gs\ga} \gs(U) - \Cartan \gs(U))\\
                    &=& \ga(X) (\Cartan \gs(U) - a^{-2w \ga} \gs(U)),
                \end{eqnarray*}
                from which we see that
                $$L_w(Z_1) = - \pi_\fh T_w(Z_1) = \frac{\ga(X)}2 ( a^{-2w\ga} Z_1- Z_2).$$
                Likewise,
                $$L_w(Z_2) = \frac{\ga(X)}{2} ( a^{-2w\ga} Z_2- Z_1).$$
                It follows that $L_w$ preserves the subspace $\fv(U)$ of $\fv_\cO$ spanned by the orthogonal vectors $Z_1, Z_2$ and that the restriction $L_w|_{\fv(U)}$ has the following matrix with respect to this basis:
                $${\rm mat}(L_w|_{\fv(U)}) =\frac{\ga(X)}{2}\left(\begin{array}
                                                                    {cc}a^{-2w \ga}  & - 1\\
                                                                    - 1 & a^{- 2w\ga}
                                                                    \end{array}\right)$$
                This matrix is positive definite if and only if both its trace and determinant are positive. This is equivalent to
                $$\ga(X) > 0 \quad {\rm and} \quad \ga(X) ( a^{- 4w\ga} - 1) > 0.$$
                It follows that $L_w$ is positive definite on the subspace $\fv(U)$ if and only if the inequalities
                $ \ga(X) > 0$ and $\ga(X)\ga(w^{-1} \log a) < 0$ are valid.

                Let $U_1, \ldots, U_m$ be an orthonormal basis for $\fg_\ga.$ Then by (\ref{e: deco fv cO}) we see that map $L_w$ is positive definite if and only if all restrictions $L_w|_{\fv(U_j)}$ are positive definite. This is true if and only if $\ga(X) > 0$ and $\ga(X)\ga(w^{-1} \log a ) < 0.$
            \end{proof}

            \begin{lemma}[Case (b.2.2)]\label{l: case b.2.2}
                Let  $\cO = F\ga$ with $\ga \in \gS(P)\cap \faqd$ and $\ga(X) \neq 0.$ Then $L_w|_{\fv_\cO}$ is positive definite if and only if the following two conditions are fulfilled.
                \begin{enumerate}
                    \itema $\ga \in \gS(P)_+\cap \faqd\;  \implies \; \ga(X)\ga(w^{-1}\log a)) < 0.$
                    \itemb $\ga \in \gS(P)_-\cap \faqd\; \implies \; \ga(X) > 0.$
                \end{enumerate}
            \end{lemma}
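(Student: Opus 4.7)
My plan is to compute $L_w$ explicitly on $\fv_{\cO} = \fh_\cO$ and then read off the positive definiteness condition by looking at signs. The key structural observation is that since $\ga \in \faqd$, we have $\gs\ga = -\ga$ and $\Cartan\ga = -\ga$, so $\gs\Cartan\ga = \ga$; thus the orbit $\cO = F\ga$ is the two-element set $\{\ga,-\ga\}$, and $\gs\Cartan$ preserves each root space $\fg_{\pm\ga}$, giving the decompositions $\fg_{\pm\ga} = \fg_{\pm\ga,+} \oplus \fg_{\pm\ga,-}$. Accordingly, I will decompose each $U \in \fg_\ga$ as $U = U_+ + U_-$ with $U_\pm \in \fg_{\ga,\pm}$, and write the corresponding element $Z = U + \gs U$ of $\fh_\cO$ as $Z = Z_+ + Z_-$, where $Z_\pm := U_\pm + \gs U_\pm$ lies in $\fh \cap \fg_\pm$ and is therefore an eigenvector of $\gs\Cartan$ for the eigenvalue $\pm 1$. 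This gives an orthogonal splitting of $\fv_\cO$ into a ``plus-part'' (nontrivial iff $\ga \in \gS(P)_+$) and a ``minus-part'' (nontrivial iff $\ga \in \gS(P)_-$).

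Next I will compute $L_w Z$. Since $\ga \in \gS(P)$, Lemma \ref{l: computation Tw}(a) gives $T_w U = 0$, so $T_w Z = T_w \gs U$. Using $\gs U_\pm = \pm \Cartan U_\pm$ and Lemma \ref{l: computation Tw}(b) applied to $\Cartan U_\pm \in \fg_{-\ga}$, a short direct computation gives
\[
T_w \gs U = -\ga(X)\bigl[a^{-2w\ga}\Cartan(U_+ - U_-) - (U_+ - U_-)\bigr].
\]
Applying $-\pi_\fh = -\tfrac12(\mathrm{id} + \gs)$ and regrouping the four resulting terms into $Z_+$- and $Z_-$-parts (using $\gs\Cartan Z_\pm = \pm Z_\pm$) should yield the clean formula
\[
L_w Z \;=\; \tfrac{\ga(X)}{2}\bigl[(a^{-2w\ga}-1)\,Z_+ \;+\; (a^{-2w\ga}+1)\,Z_-\bigr].
\]
Thus $L_w$ acts diagonally with respect to the orthogonal splitting $\fv_\cO = \fv_\cO^+ \oplus \fv_\cO^-$, with scalar eigenvalues depending only on $\ga(X)$ and $a^{-2w\ga}$.

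From this formula the equivalence is immediate. The restriction of $L_w$ to $\fv_\cO^+$ (resp.\ $\fv_\cO^-$) is positive definite iff that subspace is zero or the corresponding scalar is positive. Since $a^{-2w\ga} + 1 > 0$, positivity of the $Z_-$-eigenvalue is equivalent to $\ga(X) > 0$, which gives condition (b). The $Z_+$-eigenvalue has the same sign as $\ga(X)(a^{-2w\ga}-1)$, i.e.\ the opposite sign to $\ga(X) \cdot \ga(w^{-1}\log a)$, giving condition (a). Combined with the characterizations $\fg_{\ga,\pm} \neq 0 \iff \ga \in \gS(P)_\pm$ from Definition \ref{d: gs P plus minus}, this yields exactly the stated conditions.

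The main obstacle will be bookkeeping: correctly tracking how $\gs$, $\Cartan$ and the $\gs\Cartan$-sign decomposition interact so that the four terms in $(\mathrm{id}+\gs)(a^{-2w\ga}\Cartan Y - Y)$ collapse into the clean $Z_\pm$-form above. Once that algebraic identity is written out, the rest is a sign check essentially parallel to the argument already used in Lemmas \ref{l: case b.1} and \ref{l: case b.2.1}.
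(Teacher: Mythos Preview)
Your proposal is correct and follows essentially the same approach as the paper: decompose $\fv_\cO$ into the $\gs\Cartan$-eigenspace parts (equivalently, on $\fh$, the $\fk$- and $\fp$-parts), compute that $L_w$ acts by the scalars $\tfrac{\ga(X)}{2}(a^{-2w\ga}\mp 1)$ on these pieces, and read off the sign conditions. The only cosmetic difference is that the paper computes $L_w Z_+$ and $L_w Z_-$ separately rather than via a single $Z = Z_+ + Z_-$ as you do.
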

            \begin{proof}
                We recall that $\fv_\cO = \fh_\cO$ in this case and write $\fv_{\cO, +} = \fv_\cO \cap \fk$ and $\fv_{\cO, -} = \fv_\cO \cap \fp.$ Then
                $$\fv_\cO = \fv_{\cO,+} \oplus \fv_{\cO, -},$$
                with orthogonal summands. We will show that $L_w$ preserves this decomposition, and determine when both restrictions      $L_w|_{\fv_{\cO,\pm}}$ are positive definite.

                Let $U_{\pm} \in \fg_{\ga, \pm}$ and put $Z_\pm = U_\pm + \gs(U_\pm).$ Then
                 $Z_\pm \in \fv_{\cO, \pm},$ and every element of $\fv_{\cO, \pm}$ can be expressed in this way.

                By a straightforward computation, involving Lemma \ref{l: computation Tw}, we find
                $$L_w(Z_\pm) = \frac12 \ga(X) (a^{-2w\ga} \mp 1)Z_\pm.$$
                This shows that $L_w$ acts by a real scalar $C_{\pm}$ on $\fv_{\cO, \pm}.$ The restriction of $L_w$ to $\fv_{\cO_\pm}$ is positive definite if and only if the restrictions of $L_w$ to both subspaces $\fv_{\cO, \pm}$ are positive definite. The latter condition is equivalent to
                $$\fv_{\cO, +} \neq 0 \implies C_+ > 0 \quad{\rm and}\quad\fv_{\cO, -} \neq 0 \implies C_- > 0.$$
                The space $\fv_{\cO,\pm}$ is non-trivial if and only if $\fg_{\ga, \pm} \neq 0,$ which in turn is equivalent to $\ga \in \gS(P)_\pm\cap \faqd.$ On the other hand, the sign of $C_+$ equals that of $- \ga(X) \ga(w^{-1}\log a)$ whereas the sign of $C_-$ equals that of $\ga(X).$ From this the desired result follows.
            \end{proof}

            {\em Completion of the proof of Proposition \ref{p: definite Hessian}.\ } First assume that $\rmH_w$ is positive definite. Then $L_w$ restricts to a positive definite symmetric map on each of the spaces $\fv_\cO$ for $\cO = F\ga,$ $\ga \in \gS(P).$  First assume that $\ga \in \gS(P)_+.$ If $\ga(X) = 0,$ then
            \begin{equation}\label{e: inequality with X and log a}
                \ga(X) \ga(w^{-1}\log a) \leq 0
            \end{equation}
            holds. If $\ga(X)\neq 0,$ we are in one of the cases (b.1) or (b.2) of Lemma \ref{l: orbit cases}. In the latter case, we are either in the subcase (b.2.1) or in (b.2.2) with $\ga \in \gS(P)_+\cap \faqd.$ In all of these cases, inequality (\ref{e: inequality with X and log a}) is valid. We conclude that assertion (a) of the proposition is valid.

             For the validity of assertion (b), assume that $\ga \in \gS(P)_-.$ If $\ga(X) = 0,$ then
            \begin{equation}\label{e: inequality with X}
                \ga(X)  \geq 0.
            \end{equation}
            If $\ga(X)\neq 0,$ then we must be in case (b.2) of Lemma \ref{l: orbit cases}, since $\gS(P)_- \cap \gS(P, \gs) = \emptyset.$ We are either in subcase (b.2.1) or in subcase (b.2.2) with $\ga \in \gS(P)_+\cap \faqd.$ In both subcases, (\ref{e: inequality with X}) holds. This establishes condition (b) of the proposition, and the implication in one direction.

            For the converse implication, assume that conditions (a) and (b) of the proposition hold. Let $\ga\in \gS(P)$ and put $\cO = F\ga.$ Then it suffices to show that $\rmH_w$ is positive definite on $\fv_\cO.$

            If $\ga(X) = 0,$ then $\fv_\cO = 0$ by Lemma \ref{l: orbit cases} and it follows that $\rmH_w$ is positive definite on $\fv_\cO.$ Thus, assume that $\ga(X) \neq 0.$ Then by regularity of $\log a,$ the expression $\ga(X) \ga(w^{-1} \log a)$ is different from zero. Hence if any of the inequalities (\ref{e: inequality with X and log a}) or (\ref{e: inequality with X}) holds, it holds as a strict inequality.

            In case (b.1), $\ga \in \gS(P,\gs)\subseteq \gS(P)_+$ so that
            (\ref{e: inequality with X and log a}) is valid.  Therefore, $\rmH_w|_{\fv_\cO}$ is positive definite by Lemma \ref{l: case b.1}.  In case (b.2.1), $\ga \in \gS(P,\gs\theta)\setminus\faq^*\subseteq  \gS(P)_+ \cap \gS(P)_-$ so that (\ref{e: inequality with X and log a}) and (\ref{e: inequality with X})
            are both valid. Hence, $\rmH_w|_{\fv_\cO}$ is positive definite by Lemma \ref{l: case b.2.1}.

            Finally, assume we are in case (b.2.2). Then $\ga \in \faqd,$ hence it follows
            from hypotheses (a) and (b) of the proposition that conditions (a) and (b) of
            Lemma \ref{l: case b.2.2} are fulfilled. Hence,  $\rmH_w|_{\fv_\cO}$ is positive definite.
        \qed

        \begin{corollary}\label{c: const-sign}
            Let  $w \in \WKHaq$. Then the function $F_{a,X}$ as well as the signature and rank of its Hessian are constant on the immersed
            submanifold $w H_X (N_P\cap H).$
                    \end{corollary}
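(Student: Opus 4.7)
The plan is to establish three claims separately: constancy of $F_{a,X}$ on $\cC_{a,X,w} = x_w H_X (N_P\cap H)$, constancy of the rank of its Hessian there, and constancy of the signature. The rank claim is immediate from Lemma \ref{l: Dana s nemesis lemma}, which asserts that at every point of $\cC_{a,X,w}$ the kernel of the Hessian equals the tangent space to this immersed submanifold; the rank therefore equals $\dim H - \dim \cC_{a,X,w}$ independently of the base point.

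For constancy of $F_{a,X}$, I first transfer the right $(P\cap H)$-invariance of $\fH_{P,\iq}|_H$ (Lemma \ref{l: HP-inv}) to $F_{a,X}$. Combining this with the identity $ax_w = x_w a^w$ and left $K$-invariance of $\fH_P$, the task reduces to showing that $g \mapsto \inp{X}{\fH_P(g)}$ is constant on $a^w H_X \subseteq G_X$. For $h_0 \in H_X^{\circ}$ and $U \in \fh_X$, Lemma \ref{l: derivative of FX} gives
\[R_U F_X(g) = B(U, \Ad(\nu(g)^{-1})X).\]
Now $a^w h_0 \in G_X$, and by the Iwasawa decomposition $G_X = K_X A N_{P,X}$ used in the proof of Lemma \ref{l: set of critical points}, its $N_P$-component $\nu(a^w h_0)$ lies in $N_{P,X}\subseteq G_X$. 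As $X$ is central in $\fg_X$, conjugation by elements of $G_X$ fixes $X$, so $\Ad(\nu(a^wh_0)^{-1})X = X$. Thus $R_U F_X(a^w h_0) = B(U,X) = \inp{U}{X} = 0$, since $\fh \perp \fq$. Hence $F_{a,X}$ is constant on $x_w H_X^{\circ}$. To cover the remaining components I invoke the essential connectedness $H_X = Z_{K\cap H_X}(\faq)H_X^{\circ}$ recalled in the proof of Lemma \ref{l: Dana s nemesis lemma}: any $k \in Z_{K\cap H_X}(\faq)$ commutes with $a^w \in \Aq$, so by left $K$-invariance $\fH_P(a^w k h_0) = \fH_P(k a^w h_0) = \fH_P(a^w h_0)$.

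For the signature, non-degeneracy of the transverse Hessian forces local constancy of the signature on $\cC_{a,X,w}$, so it remains to connect the components. Two symmetries of $F_{a,X}$ accomplish this: right-multiplication by $n \in N_P\cap H$ (as already used), and left-multiplication by any $k \in Z_{K\cap H}(\faq)$, the latter holding because such a $k$ commutes with $a \in \Aq$, after which left $K$-invariance of $\fH_P$ applies. A general point $\bar h \in \cC_{a,X,w}$ has the form $\bar h = x_w k h_0 n_0$ with $k \in Z_{K\cap H_X}(\faq)$, $h_0 \in H_X^{\circ}$ and $n_0 \in N_P\cap H$; right-multiplication by $n_0^{-1}$ and local constancy along the connected submanifold $x_w k H_X^{\circ}$ reduce the signature at $\bar h$ to that at $x_w k$. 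Finally, $k' := x_w k x_w^{-1}$ lies in $Z_{K\cap H}(\faq)$ by normality of $Z$ inside $N$, and $x_w k = k' x_w = l_{k'}(x_w)$; the left $l_{k'}$-invariance of $F_{a,X}$ equates the signature at $x_w k$ with that at $x_w$. The main subtlety of the whole argument is the derivative vanishing in the second paragraph, which hinges on the centrality of $X$ in $\fg_X$ so that the $N_P$-component of $a^w h_0$ acts trivially on $X$ via the adjoint action.
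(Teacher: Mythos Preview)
Your proof is correct and follows essentially the same strategy as the paper: reduce via the left $Z_{K\cap H}(\faq)$- and right $(N_P\cap H)$-invariance of $F_{a,X}$ to the connected piece $x_w H_X^\circ$, then use connectedness together with Lemma~\ref{l: Dana s nemesis lemma} for rank and signature. The only notable difference is that for constancy of $F_{a,X}$ you explicitly recompute the derivative $R_U F_X(a^w h_0)=B(U,X)=0$, whereas the paper simply observes that $x_w H_X^\circ$ consists entirely of critical points of $F_{a,X}$ (so $dF_{a,X}$ vanishes identically on $T_hH$, a fortiori on the tangent space to the submanifold), making the constancy immediate; your computation is correct but redundant.
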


        \begin{proof}
        As the group $H$ is essentially connected, $H_X=Z_{K\cap H}(\faq) H_X^\circ$.
        Let $x_w$ be a representative of $w$ in $N_{K\cap H}.$  Since $Z_{K\cap H}(\faq)$ is normal in
        $N_{K\cap H}(\faq),$ it follows that
        $$
        w H_X (N_P \cap H) = x_w Z_{K\cap H}(\faq)H_X^\circ  (N_P\cap H) = Z_{K\cap H} (\faq) x_w H_X^\circ (N_P \cap H).
        $$
The function $F_{a,X}: H \to \R$ is left $Z_{K\cap H}(\faq)$- and right $(N_P\cap H)$-invariant.
Hence, it suffices to prove the assertions for the set $x_w H_X^\circ$ of critical points.
This set is connected, so that $F_{a,X}$ is constant on it.
From Lemma $\ref{l: Dana s nemesis lemma}$ it follows that rank and signature of its Hessian remain constant along this set as well.
        \end{proof}
         As   in (\ref{e: introduction set Omega}) we define
        $$
        \Omega :=  \conv(\WKHaq \cdot \log a) + \GammaP.
        $$

        \begin{lemma}\label{l: local-min-global-min}
            Let $a\in \Aqreg$ and $X\in\faq.$ Assume that the function $F_{a,X}$ has a local minimum at the critical point $h\in \cC_{a,X}$. Then for every $U\in\Omega$
            $$\inp{X}{U} \geq  \inp{X}{\HPq(ah)}.$$
            In particular,{\ } $\Omega$ lies on one side of the hyperplane $\HPq(ah)+X^\perp$.
        \end{lemma}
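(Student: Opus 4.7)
The plan is to exploit positive semidefiniteness of the Hessian at $h$ together with the orbit-by-orbit analysis from Section \ref{s: transversal signature of the hessian}. First I would replace $h$ by a convenient representative of its critical manifold. By Lemma \ref{l: set of critical points}, $h \in x_w H_X(N_P\cap H)$ for some $w \in \WKHaq$, and by Corollary \ref{c: const-sign} both $F_{a,X}$ and the rank and signature of its Hessian are constant along this immersed submanifold. Since $h$ is a local minimum, the Hessian $\rmH(F_{a,X})(h)$ is positive semidefinite, so the Hessian is positive semidefinite at every representative $x_w \in N_{K\cap H}(\faq)$ of $w$ as well, with $F_{a,X}(x_w) = F_{a,X}(h) =: m$. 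Since $ax_w = x_w\, a^w$ with $x_w \in K \cap H$ and $a^w \in \Aq$, the Iwasawa decomposition yields $\HPq(ax_w) = w^{-1}\log a =: Y$, and therefore $m = \inp{X}{Y}$.

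Rerunning the orbit-by-orbit analysis of Lemmas \ref{l: case b.1}--\ref{l: case b.2.2} and the concluding argument of Proposition \ref{p: definite Hessian} at $x_w$, with each strict inequality weakened to its non-strict version, one extracts the two conditions (a$'$)~$\alpha(X)\alpha(Y)\le 0$ for every $\alpha \in \gS(P)_+$, and (b$'$)~$\alpha(X) \ge 0$ for every $\alpha \in \gS(P)_-$. It then remains to deduce $\inp{X}{U}\ge m$ for each $U \in \Omega = \conv(\WKHaq \cdot \log a) + \GammaP$, which I would handle as two separate estimates. For the cone summand, every $\alpha \in \gS(P)_- \subseteq \gS(P,\gs\Cartan)$ has nonzero restriction to $\faq$; by the computation in Section \ref{ss: relation with vdbconv} we have $\prq(H_\alpha) = c_\alpha H_{\alpha|_{\faq}}$ with $c_\alpha > 0$, and using the identity $\inp{X}{H_\beta} = \frac12 \beta(X)\|H_\beta\|^2$ for $X \in \faq$, condition (b$'$) yields $\inp{X}{\prq(H_\alpha)} \ge 0$, whence $\inp{X}{\dotvar} \ge 0$ on all of $\GammaP$.

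For the convex-hull summand it suffices to show that $Y$ minimizes $\inp{X}{\dotvar}$ on the finite orbit $\WKHaq \cdot \log a$. Since $\inp{X}{s_\beta Z} - \inp{X}{Z} = -\beta(X)\beta(Z)$, the standard criterion for a point to minimize a linear functional on a finite reflection-group orbit (a point is a minimizer iff no wall-reflection strictly decreases the value) reduces the estimate to verifying $\beta(X)\beta(Y) \le 0$ for every reflection $s_\beta \in \WKHaq$. Each such reflection is realized in $K \cap H$ by an element of the form $\exp\bigl((\pi/2)\,c\,(E + \Cartan E)\bigr)$ with $E \in \fg_{\alpha,+}$ for some root $\alpha \in \gS(\fg,\fa)$ restricting to $\pm\beta$ on $\faq$; the existence of such $E$ forces $\alpha \in \gS(P)_+$ (after possibly replacing $\beta$ by $-\beta$, which changes neither $s_\beta$ nor the claimed inequality). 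Condition (a$'$) then gives $\alpha(X)\alpha(Y) \le 0$, which equals $\beta(X)\beta(Y)$ since $X, Y \in \faq$. The main obstacle is precisely this identification of every little-Weyl-group reflection with a root in $\gS(P)_+$, which requires an $\mathfrak{sl}(2)$-level analysis ensuring that each reflection arises from a compact generator lying in $\fk \cap \fh$.
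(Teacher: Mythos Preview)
Your argument is correct and follows the paper's strategy: pass to the representative $x_w$ via Corollary~\ref{c: const-sign}, read off the non-strict conditions (a$'$),(b$'$) from positive semidefiniteness of the Hessian using the orbit-by-orbit computations behind Proposition~\ref{p: definite Hessian}, and then bound $\inp{X}{\dotvar}$ separately on $\GammaP$ and on $\conv(\WKHaq\cdot\log a)$.

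The only place you diverge is the convex-hull estimate, and there the paper's route is shorter than the $\mathfrak{sl}(2)$-analysis you flag as the ``main obstacle.'' The paper packages that step as Lemma~\ref{l: inequality and conv WKH}: by Lemma~\ref{l: second char of gS P plus}, condition (a$'$) for all $\alpha\in\gS(P)_+$ is equivalent (since $X,Y\in\faq$) to $\beta(X)\beta(Y)\le 0$ for all $\beta\in\gS(\fg,\faq)_+$; and by \cite[Prop.~2.2]{ban1986} one has $\WKHaq = W(\gS(\fg,\faq)_+)$, so a standard chamber argument (place $X$ and $-Y$ in the closure of a common chamber for $\gS(\fg,\faq)_+$) gives $\inp{X}{vY}\ge\inp{X}{Y}$ for all $v\in\WKHaq$. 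In other words, your reflection criterion is exactly right, but the identification ``$s_\beta\in\WKHaq \Rightarrow \beta\in\gS(\fg,\faq)_+ \Rightarrow$ any $\alpha\in\gS(P)$ with $\alpha|_{\faq}=\beta$ lies in $\gS(P)_+$'' is already available from those two lemmas, so no ad hoc realization of reflections via $\exp\bigl(\tfrac{\pi}{2}c(E+\Cartan E)\bigr)$ is needed.
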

        \begin{proof}
            The critical point $h$ belongs to a connected immersed submanifold of the form $x_w H_X^\circ (H\cap N_P).$ All points of this submanifold are critical for $F_{a,X},$ so that $F_{a,X}$ is constant along it. We see that
            $$F_{a,X} (h) = F_{a,X}(x_w) = \inp{X}{\HPq(x_w^{-1} a x_w)} = \inp{X}{w^{-1}\log a}.$$
            The Hessian of $F_{a,X}$ at the critical point $h$ must be positive semidefinite. It now follows from Proposition \ref{p: definite Hessian} that
            \begin{enumerate}
                \itema $\forall \ga \in \gS(P)_+:\;\;\ga(X)\ga(w^{-1}(\log a)) \leq 0$;
                \itemb $\forall \ga \in \gS(P)_-: \;\;\ga(X) \geq 0$.
            \end{enumerate}
            By (a) and      Lemma \ref{l: inequality and conv WKH} below (applied to $-X$), it follows that
            $$\inp{X}{U_1} \geq \inp{X}{w^{-1}\log a} = F_{a,X}(h),$$
            for all $U_1 \in \conv(\WKHaq \cdot w^{-1}\log a).$ From (b) it follows that
            $\inp{X}{H_\ga} = \inp{H_\ga}{H_\ga} \ga(X)/2 \geq 0$ for all $\alpha\in\gS(P)_-$, so that
            $$\inp{X}{U_2} \geq 0 \qquad (\forall \;U_2 \in \GammaP).$$
            Since every element $U \in \Omega$ may be decomposed as $U = U_1 + U_2$
            with    $U_1$ and $U_2$ as above, the assertion follows.
        \end{proof}

        \begin{remark}
            It can be readily shown that the converse implication also holds. Indeed if for every $U\in\Omega$
            $$\inp{X}{U} \geq  \inp{X}{w^{-1}(\log a))},$$
            then the two conditions of Proposition \ref{p: definite Hessian} hold.
        \end{remark}

        \begin{lemma}\label{l: second char of gS P plus}
            The set $\gS(P)_+$ consists of all roots $\ga \in \gS(P)$ with $\ga \in \fahd$ or $\ga|_{\faq}\in \gS(\fg, \faq)_+.$
        \end{lemma}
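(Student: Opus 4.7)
The plan is to proceed by a case analysis on the position of a root $\ga \in \gS(P)$ with respect to the direct sum decomposition $\fad = \fahd \oplus \faqd.$ Two observations from this decomposition carry much of the weight. First, the conditional in Definition~\ref{d: gs P plus minus}~(a) is vacuous whenever $\ga \notin \faqd,$ so every such root belongs to $\gS(P)_+$ automatically. Second, because $\fahd \cap \faqd = \{0\}$ and roots are nonzero, any $\ga \in \fahd$ lies outside $\faqd$ and is therefore in $\gS(P)_+$ for free; this covers the $\fahd$-portion of the right-hand side at once. What remains is to compare the two sets on the subfamily of roots $\ga \notin \fahd.$

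The inclusion $\gS(P)_+ \subseteq \{\ga \in \gS(P) : \ga \in \fahd \text{ or } \ga|_\faq \in \gS(\fg,\faq)_+\}$ then splits into two subcases. If $\ga \in \gS(P)_+ \cap \faqd,$ then $\ga|_\faq = \ga$ and the inclusion $\fg_{\ga,+}^{\fa} \subseteq \fg_{\ga,+}^{\faq}$ gives $\ga|_\faq \in \gS(\fg,\faq)_+$ directly. If instead $\ga \notin \faqd \cup \fahd,$ then $\gs\Cartan\ga \neq \ga,$ so $\ga$ and $\gs\Cartan\ga$ are distinct $\fa$-roots with common $\faq$-restriction $\gb;$ for any nonzero $V \in \fg_\ga^{\fa},$ the vector $V + \gs\Cartan V$ lies in $\fg_{\gb,+}^{\faq}$ and is nonzero, proving $\gb \in \gS(\fg,\faq)_+.$ For the reverse inclusion, the only subcase that does not follow immediately from the vacuity observation is $\ga \in \faqd$ with $\ga|_\faq = \ga \in \gS(\fg,\faq)_+,$ which asks for $\fg_{\ga,+}^{\fa} \neq 0.$ Here I would invoke the identification $\fg_\ga^{\fa} = \fg_\ga^{\faq}$ for roots $\ga \in \gS(\fg,\fa) \cap \faqd$---the same fact used implicitly in the proof at the end of Subsection~\ref{ss: relation with vdbconv}---which upgrades $\fg_{\ga,+}^{\faq} \neq 0$ to $\fg_{\ga,+}^{\fa} \neq 0.$

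The principal obstacle is exactly the identification $\fg_\ga^{\fa} = \fg_\ga^{\faq}$ for $\ga \in \gS(\fg,\fa) \cap \faqd,$ equivalently the assertion that no other $\fa$-root restricts to $\ga$ on $\faq.$ This is a structural statement about the orbit decomposition of $\gS(\fg,\fa)$ under $\gs\Cartan$: the singleton orbits are precisely $\{\ga\}$ with $\ga \in \faqd,$ the remaining orbits are pairs $\{\ga,\gs\Cartan\ga\}$ with $\ga \notin \faqd,$ and distinct orbits have distinct images under restriction to $\faq.$ Once this fact is granted, the case analysis above closes and the asserted set equality follows.
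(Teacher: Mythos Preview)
Your case analysis and the handling of the main case $\ga\notin\fahd\cup\faqd$ (producing $V+\gs\Cartan V\in\fg_{\ga|_{\faq},+}$) are exactly the paper's argument. The paper records the remaining two cases as immediate ``in view of Definition~\ref{d: gs P plus minus}''; you spell them out and, in doing so, correctly isolate the only point with content: for $\ga\in\gS(P)\cap\faqd$ one must match the condition $\fg_{\ga,+}\neq 0$ in Definition~\ref{d: gs P plus minus}(a) (an $\fa$-root-space statement) with the condition $\fg_{\ga|_{\faq},+}\neq 0$ defining $\gS(\fg,\faq)_+$ (an $\faq$-root-space statement).

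The gap is in your resolution of this point. You reduce it to the structural claim that \emph{distinct $\gs\Cartan$-orbits in $\gS(\fg,\fa)$ have distinct restrictions to $\faq$}, and then grant that claim. But this is false at the level of abstract root systems with involution: in type $B_2$ with $\gs\Cartan|_\fa$ the reflection fixing $e_1$ and negating $e_2$, the singleton orbit $\{e_1\}\subset\faqd$ and the two-element orbit $\{e_1+e_2,\,e_1-e_2\}$ both restrict to $e_1$ on $\faq=\R e_1$. So the assertion cannot be deduced from orbit combinatorics alone; if it holds in the paper's setting, it needs an argument specific to the pair $(\fg,\gs)$ that you have not supplied. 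The paper avoids the issue by reading the two occurrences of $\fg_{\ga,\pm}$ (in Definition~\ref{d: gs P plus minus} and in the line defining $\gS(\fg,\faq)_\pm$) as the same object, which makes the $\faqd$ case a tautology; cf.\ also the assertion $\fg_\ga=\fg_{\ga|_{\faq}}$ for $\ga=\gs\Cartan\ga$ used without comment in Subsection~\ref{ss: relation with vdbconv}. Either adopt that reading, or replace your third paragraph with a genuine proof of the needed identification.
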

        \begin{proof}
            In view of Definition \ref{d: gs P plus minus} it suffices to show that for $\ga \in \gS(\fg,\fa) \setminus (\fahd \cup \faqd)$ we have
            $\ga|_{\faq} \in \gS(\fg, \faq)_+.$ Assume $\ga \notin \fahd \cup \faqd.$ Then
            $\ga$ and  $\gs\Cartan \ga$ are distinct roots that restrict to the same root
            $ \bar \ga $ of $\gS(\fg, \faq).$ Thus, the sum $\fg_\ga + \gs\Cartan \fg_\ga$ is direct and contained in $\fg_{\bar \ga}$ and we see that $\fg_{\bar \ga, +} \neq 0.$
             \end{proof}
        \begin{lemma}
        \label{l: inequality and conv WKH}
            Let $P \in \cP(A).$ Let $X,Y \in \faq$ and assume that $\ga(X) \ga(Y) \geq 0$ for all $\ga \in \gS(P)_+.$ Then
            $$\inp{X}{U} \leq \inp{X}{Y},\qquad \mbox{\rm for all}\quad  U \in \conv(\WKHaq \cdot Y).$$
        \end{lemma}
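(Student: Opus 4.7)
Since $\inp{X}{\dotvar}$ is affine on $\faq$, its supremum on the compact convex set $\conv(\WKHaq \cdot Y)$ is attained at an extreme point, and every extreme point has the form $wY$ for some $w\in \WKHaq$. So the first step would be to reduce the lemma to the pointwise inequality $\inp{X}{Y-wY}\geq 0$ for every $w\in\WKHaq$.

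The plan is then to invoke the classical reflection-group lemma (see e.g.\ Helgason, \emph{Differential Geometry, Lie Groups and Symmetric Spaces}): for any finite reflection group $W$ on a Euclidean space with root system $\Delta$, any positive system $\Delta^+$, and any $Y$ in the closed positive Weyl chamber, one has
$$
Y - wY \;\in\; \sum_{\beta\in\Delta^+}\R_{\geq 0}\,H_\beta \qquad\text{for every } w \in W.
$$
I would apply this with $W=\WKHaq$, which acts on $\faq$ as a finite reflection group. Its root system $\Delta$ is a subsystem of $\gS(\fg,\faq)$; moreover, each reflection $s_\beta\in\WKHaq$ is realised by some element of $N_{K\cap H}(\faq)$, which forces $\fg_{\beta,+}\neq 0$ and hence $\beta\in\gS(\fg,\faq)_+$ (a standard piece of Matsuki/Rossmann theory). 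Via Lemma \ref{l: second char of gS P plus}, any such $\beta$ is the restriction to $\faq$ of some $\alpha\in\gS(P)_+$, so $\alpha(X)=\beta(X)$ and $\alpha(Y)=\beta(Y)$, and the hypothesis $\alpha(X)\alpha(Y)\geq 0$ yields
$$
\beta(X)\beta(Y)\geq 0\qquad\text{for every }\beta\in\Delta.
$$

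Next I would choose the positive system $\Delta^+$ so that both $X$ and $Y$ lie in the closure of the corresponding positive chamber. Concretely, pick $\epsilon>0$ small enough that $\beta(Y+\epsilon X)=0$ implies $\beta(X)=\beta(Y)=0$ for all $\beta\in\Delta$, and let $\Delta^+:=\{\beta\in\Delta\setmid \beta(Y+\epsilon X)\geq 0\}$ (resolving ties on roots vanishing on $Y+\epsilon X$ arbitrarily). Then $\beta(Y)\geq 0$ for every $\beta\in\Delta^+$, and by the sign compatibility established above, $\beta(X)\geq 0$ as well. The reflection-group lemma now produces $Y-wY=\sum_{\beta\in\Delta^+} c_\beta H_\beta$ with $c_\beta\geq 0$, and pairing with $X$ yields
$$
\inp{X}{Y-wY}=\sum_{\beta\in\Delta^+} c_\beta \inp{X}{H_\beta}=\sum_{\beta\in\Delta^+} c_\beta\cdot \tfrac{1}{2}\inp{H_\beta}{H_\beta}\,\beta(X)\;\geq\; 0.
$$
The main obstacle I anticipate is the structural input that the root system of $\WKHaq$ sits inside $\gS(\fg,\faq)_+$; this is part of the standard Weyl-group theory of semisimple symmetric spaces and may merit an explicit reference or a short self-contained justification (by constructing, from a non-zero vector in $\fg_{\beta,+}$, an explicit reflecting element inside $\fk\cap\fh$ that normalises $\faq$).
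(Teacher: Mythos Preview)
Your proposal is correct and follows essentially the same approach as the paper's proof: translate the hypothesis via Lemma~\ref{l: second char of gS P plus} to $\beta(X)\beta(Y)\geq 0$ for all $\beta\in\gS(\fg,\faq)_+$, place $X$ and $Y$ in the closure of a common Weyl chamber, and invoke the standard inequality $\inp{X}{wY}\leq\inp{X}{Y}$. The structural input you flag as the main obstacle---that $\WKHaq$ is exactly the reflection group $W(\gS(\fg,\faq)_+)$---is precisely what the paper cites from \cite[Prop.~2.2]{ban1986}; your cone decomposition $Y-wY\in\sum_{\beta\in\Delta^+}\R_{\geq 0}H_\beta$ is a slightly more explicit unpacking of the same classical fact.
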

        \begin{proof}
            In view of   Lemma \ref{l: second char of gS P plus}, the hypothesis is equivalent to
            $$\ga(X)\, \ga(Y) \geq 0$$
            for all roots $\ga \in \gS(\fg, \faq)_+.$ We may now fix a Weyl chamber $\faq^+$ for the root system $\gS(\fg, \faq)_+$ such that $X$ and $Y$ belong to the closure of $\faq^+.$ Then it is well known that $\inp{X}{wY} \leq \inp{X}{Y}$ for all $w$ in the reflection group $W(\gS(\fg, \faq)_+)$ generated by $\gS(\fg, \faq)_+.$ Since this reflection group is equal to $\WKHaq,$ by Proposition 2.2 in \cite{ban1986}, the result follows.
        \end{proof}

  \section{Reduction by a limit argument}\label{s: limit argument}
    Before turning to the proof of our main theorem, Theorem \ref{t: conv thm new}, we will first prove a lemma that reduces the validity of the theorem to  its validity under the additional assumption that the element $a$ be regular in $\Aq.$ We assume that $P \in \cP(A)$ and recall the definition of the closed convex polyhedral cone $\Gamma(P)$ given in
    Definition \ref{d: Gamma P}.

    \begin{lemma}
    \label{l: reduction lemma}
    Assume that the assertion
    \begin{equation}
    \label{e: main statement}
    \pr_\iq\after \HP ( aH ) = \conv(\WKHaq \cdot \log a) + \Gamma(P)
    \end{equation}
    is valid for all $a \in \Aqreg.$ Then assertion (\ref{e: main statement}) holds for all $a \in \Aq.$
    \end{lemma}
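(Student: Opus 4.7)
The plan is to prove the lemma by a limit argument. Given $a\in \Aq,$ fix $Y\in \faqreg$ and define $a_n := a\exp(Y/n).$ Since $\log a_n = \log a + Y/n,$ regularity of $Y$ ensures that for all sufficiently large $n$ we have $a_n \in \Aqreg.$ Moreover $a_n \to a,$ and the set $\cA := \{a\}\cup\{a_n : n \geq 1\}$ is a compact subset of $\Aq.$ By hypothesis, (\ref{e: main statement}) holds for each $a_n.$ The argument proceeds by establishing the two inclusions separately, transferring each from the $a_n$ to $a.$

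For the inclusion $\prq\after \HP(aH) \subseteq \conv(\WKHaq \cdot \log a) + \Gamma(P),$ fix $h \in H.$ By continuity of $\HPq,$ we have $\HPq(a_n h)\to \HPq(ah).$ By the assumed regular case, we may write $\HPq(a_n h) = v_n + \gamma_n$ with $v_n \in \conv(\WKHaq \cdot \log a_n)$ and $\gamma_n \in \Gamma(P).$ The sequence $(v_n)$ is bounded since $\WKHaq\cdot \log a_n$ stays in a bounded subset of $\faq,$ and hence $(\gamma_n)$ is bounded as well. Passing to a subsequence, $v_n \to v$ and $\gamma_n \to \gamma.$ Since $\log a_n \to \log a,$ pointwise convergence of the finite orbits gives $v \in \conv(\WKHaq \cdot \log a);$ closedness of $\Gamma(P)$ gives $\gamma \in \Gamma(P).$ Therefore $\HPq(ah) = v+\gamma$ lies in the required set.

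For the reverse inclusion $\conv(\WKHaq \cdot \log a) + \Gamma(P) \subseteq \prq\after \HP(aH),$ let $u = v + \gamma$ with $v \in \conv(\WKHaq \cdot \log a)$ and $\gamma \in \Gamma(P).$ Write $v = \sum_{w\in \WKHaq} \lambda_w\, w\cdot \log a$ as a convex combination, and set $v_n := \sum_{w} \lambda_w\, w \cdot \log a_n \in \conv(\WKHaq \cdot \log a_n).$ Then $u_n := v_n + \gamma \to u.$ By the assumed regular case, $u_n \in \HPq(a_n H),$ so we may choose $h_n \in H$ with $\HPq(a_n h_n) = u_n.$ This is exactly where the main obstacle lies: we must extract a convergent subsequence from $(h_n)$ modulo $H\cap P.$ Since the bounded sequence $(u_n)$ is contained in a compact subset $C \subseteq \faq$ and $(a_n, a)\subseteq \cA$ compact, Corollary \ref{c: properness with set C}(b) applied to the proper map $\cA \times H/(H\cap P) \to \faq,$ $(b,[h])\mapsto \HPq(bh),$ implies that the sequence $(a_n, [h_n])$ is contained in a compact subset of $\cA \times H/(H\cap P).$ Passing to a subsequence, $[h_n] \to [h]$ for some $h \in H.$ By continuity, $u_n = \HPq(a_n h_n) \to \HPq(ah),$ so $u = \HPq(ah) \in \HPq(aH),$ completing the proof.

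The entire argument is a standard limit-and-properness routine; the only non-trivial input beyond continuity of $\HPq$ and closedness of $\Gamma(P)$ is the properness statement in Corollary \ref{c: properness with set C}(b), which furnishes the compactness needed to produce the limit point $h$ in the second inclusion.
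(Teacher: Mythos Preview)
Your proof is correct and follows essentially the same approach as the paper's: both argue by approximating $a$ with a sequence in $\Aqreg$, pass the first inclusion through by a compactness argument on the convex-combination coefficients (you phrase this as boundedness of $v_n$, the paper extracts converging $\lambda_{w,j}$ directly), and for the reverse inclusion both invoke the properness result of Corollary~\ref{c: properness with set C}(b) to obtain a convergent subsequence in $H/(H\cap P)$. The only cosmetic difference is that you construct the approximating sequence explicitly as $a\exp(Y/n)$, whereas the paper simply posits its existence.
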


        \begin{proof}
        Assume the assertion is valid for all $a \in \Aqreg,$ and let $a \in \Aq$ be an arbitrary
        fixed element. Fix a sequence $(a_j)_{j\geq 1}$ in $\Aqreg$ with limit $a.$
        We will establish the equality (\ref{e: main statement}) for $a.$

        First we will show that the set on the left-hand side of the equality is contained in the set on the right-hand side. For this, assume that
           $h\in H$.  By the validity of (\ref{e: main statement})
            for $a_j$ in place of $a,$  there exist, for each $j \geq 1,$ elements  $\gl_{w,j}\in [0,1]$ with $\sum_{w \in \WKHaq } \gl_{w,j} = 1$ and elements $\gamma_j \in \Gamma(P)$ such that 
            $$
            \HPq(a_j h) = \sum_{w \in\WKHaq} \; \gl_{w,j} w(\log a_j) + \gamma_j.
            $$
             By passing to a subsequence of indices we may arrange that the sequence $(\gl_{w,j})_j$ converges with limit $\gl_w \in [0,1]$ for each $w \in \WKHaq.$ It follows that the sequence $(\gamma_j)$ must have a limit $\gamma \in \faq$ such that
            $$\HPq(ah) = \lim_{j \to \infty} \HPq(a_j h) = \sum_{w \in \WKHaq} \gl_w w(\log a) + \gamma.$$
            By taking the limit we see that $\sum_w \gl_w = 1$ and since $\Gamma(P)$ is closed, $\gamma \in \Gamma(P).$ Hence, $\HPq(ah) \in \conv(\WKHaq \cdot \log a) + \Gamma(P),$ and we      obtain the desired first inclusion.

            %$$
            %\HPq(aH) \subseteq  \conv(\WKHaq \cdot \log a) + \Gamma(P).
            %$$

            For the converse inclusion, assume that $Y \in \conv(\WKHaq \dotvar \log a)+\Gamma(P)$. Then there exist $\gamma \in \Gamma(P)$ and $\gl_w \in [0,1]$ with $\sum_{w \in \WKHaq} \gl_w =1$ such that
            $$
            Y = \sum_{w \in \WKHaq} \gl_w   w(\log a)  + \gamma.
            $$
           Put
            $$
            Y_j = \sum_{w \in\WKHaq} \gl_w w(\log a_j) + \gamma.$$
            Then for every $j$ there exists $h_j \in H$ such that $\HPq(a_j h_j) = Y_j.$
             The sequence $(Y_j)$ is convergent, hence contained in a compact set of $\faq.$ Likewise, the sequence $(a_j)$ is contained in a compact subset $\cA \subseteq \Aq.$ By Corollary \ref{c: properness with set C} (b) there exists a compact subset $\cK$ of $H/H\cap P$ such that $h_j (H\cap P) \in \cK$ for all $j.$ By passing to a subsequence we may arrange that $h_j(H\cap P)$ converges in $H/H\cap P$. By continuity of the induced map
             $\barHPq: H/H\cap P \to \faq ,$ see (\ref{e: induced smooth map on H mod P}), it now follows that
            $$
            Y = \lim_{j\to\infty} Y_j = \lim_{j\to\infty} \HPq(a_j h_j) = \HPq(a h)\in \HPq(aH).
            $$
        \end{proof}

    \section{Proof of the main theorem}\label{s: section-induction}
    In this section we will prove our main result. For $P \in \cP(A)$ we recall the definition of the closed convex polyhedral cone $\Gamma(P)$ given in Definition \ref{d: Gamma P}.

 \begin{thm}\label{t: conv thm new}
            Let       $P$ be a minimal parabolic subgroup of $G$ containing $A$      and let $a \in \Aq.$ Then
            \begin{equation}\label{e: convexity equality}
                \pr_\iq\after \HP ( aH )=\fH_{P,\iq}(aH) = \conv(\WKHaq \cdot \log a)+\GammaP.
            \end{equation}
    \end{thm}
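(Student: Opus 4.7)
The plan is to follow the outline sketched in the introduction, proceeding in several layers of reduction. First, by Lemma \ref{l: reduction lemma} it suffices to establish the identity (\ref{e: convexity equality}) for $a \in \Aqreg$. Second, I will induct on the rank of the root system $\gS(\fg, \faq)$. When this rank is zero, every element of $\faq$ is fixed by $\WKHaq$, both $\gS(P)_-$ and the cone $\Gamma(P)$ are trivial (restricted to $\faq$), and the equality reduces to $\HPq(aH) = \{\log a\}$, which is immediate from $H \subseteq Z_G(\faq)\cdot (N_P \cap H) \cdot K$ via the Iwasawa decomposition. For the inductive step, fix $a \in \Aqreg$ and set $\Omega = \conv(\WKHaq\cdot \log a) + \Gamma(P)$ and $F_a(h) = \HPq(ah)$.

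I would next identify the image of the critical set $\cC_a = \bigcup_{X \in S} \cC_{a,X}$. For each $X \in S$, the set $\cC_{a,X}$ equals a finite union of translates $w H_X (N_P\cap H)$ by Lemma \ref{l: set of critical points}, and by construction $\rk\,\gS(\fg_X, \faq) < \rk\,\gS(\fg, \faq)$, so the induction hypothesis applies to the reductive symmetric pair $(G_X, H_X)$ with the minimal parabolic $P \cap G_X$. Writing $a h = x_w a^w h'$ for $h' \in H_X$ and using that $\HPq(a^w h')$ can be computed inside $G_X$ (after accounting for the split component of $\fg_X$), this yields an explicit description of $F_a(\cC_{a,X})$ in terms of $\conv(W_{K \cap H_X}(\faq) \cdot w^{-1}\log a)$ translated by the cone coming from $G_X$. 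A bookkeeping argument (comparing which roots contribute to $\Gamma(P)$ versus its analog for $G_X$) then shows that $F_a(\cC_a)$ contains the entire boundary $\partial \Omega$.

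For the inclusion $F_a(H) \subseteq \Omega$, I would use the limit/reduction argument together with Corollary \ref{c: half space result}: the set $F_a(H)$ contains no line of $\faq$. Combined with the fact that $F_a(\cC_a) \supseteq \partial \Omega$ and that $F_a(H)$ is closed (Lemma \ref{l: HP of aH and aC}), a connectedness and supporting-hyperplane argument forces $F_a(H) \subseteq \Omega$. (Concretely: if some point of $F_a(H)$ lay outside $\Omega$, then moving along a linear direction away from $\Omega$ would, together with the properness provided in Section \ref{s: prop-sec}, produce a contradiction with the half-space/no-line constraint, and by Lemma \ref{l: local-min-global-min} every local minimum of $F_{a,X}$ is then a global minimum on $\Omega$.)

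The converse inclusion $\Omega \subseteq F_a(H)$ is, in my view, the main obstacle. First, by Proposition \ref{p: submersiveness Fa} and Lemma \ref{l: HP of aH and aC}, $F_a(H\setminus \cC_a)$ is open, nonempty (Lemma \ref{l: at-least-one-connected-component}) and relatively closed in $\faq \setminus F_a(\cC_a)$, hence it is a union of connected components of $\Omega \setminus F_a(\cC_a)$. The delicate step is to show that \emph{every} connected component of $\Omega\setminus F_a(\cC_a)$ is hit. For a missed component $C$, one picks a boundary point $y_0 \in \partial C$ contained in the interior of $\Omega$ and a supporting linear functional $\inp{X}{\cdot}$ that attains its minimum over $\overline{C}$ at $y_0$; then the function $F_{a,X}$ restricted to $F_a^{-1}(\overline{C})$ has a local minimum which is not global on $\Omega$, contradicting Lemma \ref{l: local-min-global-min}. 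Handling this argument carefully —  especially ensuring that the local minimum lies in $\cC_{a,X}$ and that $y_0$ can be chosen with the required geometric properties — will be the most technical part of the proof. Once all components are accounted for, $F_a(H) \supseteq \Omega \setminus F_a(\cC_a)$, and combining with $F_a(\cC_a) \subseteq \Omega$ and closedness of $F_a(H)$ yields the desired equality, completing the induction.
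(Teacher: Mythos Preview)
Your overall architecture matches the paper's: reduce to regular $a$ via Lemma~\ref{l: reduction lemma}, induct on $\rk\,\Sigma(\fg,\faq)$, compute $F_a(\cC_a)$ from the induction hypothesis applied to each $(G_X,H_X,P_X)$, and then run a two-sided connectedness argument. However, several of the steps you sketch are either misstated or too vague to go through, and one reduction is missing entirely.

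\textbf{The inclusion $F_a(H)\subseteq\Omega$.} You base this on the claim $F_a(\cC_a)\supseteq\partial\Omega$. The paper neither proves nor uses this; what is actually established (and what you need) is the opposite-looking containment $F_a(\cC_a)=\bigcup_{X\in S_0}\Omega_X\subseteq\Omega$. From this one gets $\faq\setminus\Omega\subseteq\faq\setminus F_a(\cC_a)$, and since $\faq\setminus\Omega$ is connected it lies in a single connected component $\Lambda$ of $\faq\setminus F_a(\cC_a)$. The decisive observation is then that $\faq\setminus\Omega$ (hence $\Lambda$) contains an entire affine line, so by Corollary~\ref{c: half space result} the component $\Lambda$ cannot be contained in $F_a(H)$; the open-and-closed dichotomy of Lemma~\ref{l: HP of aH and aC} then forces $\Lambda\cap F_a(H)=\emptyset$ and hence $F_a(H)\subseteq\Omega$. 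Your ``supporting-hyperplane'' sketch does not supply this mechanism, and the claimed inclusion $F_a(\cC_a)\supseteq\partial\Omega$ is not available from the bookkeeping you describe.

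\textbf{The converse inclusion.} Your idea of picking a boundary point $y_0$ of a missed component and a supporting functional is in the right spirit, but the paper is more careful: it invokes Lemma~\ref{l: convex-cones} to pass from an arbitrary pair (one hit, one missed) of components of $\Omega\setminus F_a(\cC_a)$ to an \emph{adjacent} pair separated by a single codimension-one piece $\Omega_{X,w}$, with the separating point $Y$ lying in the relative interior of that piece. Only then does the induction hypothesis guarantee a critical point $h\in\cC_{a,X}$ with $\HPq(ah)=Y$, after which Lemma~\ref{l: local-min-global-min} yields the contradiction that $\Omega$ lies on one side of $Y+X^\perp$. Without the adjacency reduction, your chosen $y_0$ might lie on an intersection of several $\Omega_{X,w}$'s or on a higher-codimension stratum, and the local-minimum argument does not go through cleanly.

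\textbf{Missing reduction.} The submersiveness statement (Proposition~\ref{p: submersiveness Fa}) and hence the open-and-closed argument require that $\Sigma(\fg,\faq)$ span $\faqd$. The paper handles this first and then, in a final paragraph, reduces the general case by splitting off the central piece $\fc=\bigcap_{\alpha}\ker\alpha\subseteq\faq$ and passing to the ideal $\bp\fg$. Your proposal omits this step.
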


 The    proof of our main theorem proceeds by induction, for whose induction step the following lemma is a key ingredient.

        If $X \in \faq,$ we denote by $G_X$ the centralizer of $X$ in $G.$
        This group belongs to the Harish-Chandra class and is $\gs$-stable. Moreover,   by
        \cite[Prop.~2.3]{ban1986}, the centralizer $H_X: = H \cap G_X$ is an essentially connected open subgroup of $(G_X)^{\gs}. $
        From
        \[
        P\cap G_X=(Z_K(\fa)AN_P)\cap (K_XAN_{P,X})=Z_K(\fa)AN_{P,X},
        \]
        see (\ref{e: centralizer in NP}) for notation,
        we see that  $P_X:= P \cap G_X$ is a minimal parabolic subgroup of $G_X.$

        We agree to write $\GammaPX$ for the cone in $\faq$ spanned by      $\prq H_\ga,$ for $\ga \in \gS(P)_-$ with $\ga(X) = 0.$  Furthermore,    for a given $a \in \Aq,$ we define $\Omega_{a, X} = \Omega_{X}$ by
        \begin{equation}
        \label{e: defi Omega X new}
          \Omega_X :=  \bigcup_{w\in \WKHaq} \Omega_{X,w}, \qquad {\rm where}\qquad\qquad\qquad
        \end{equation}
        \begin{equation}\label{e: defi Omega X w new}
        \!  \!  \!  \!  \!  \!  \!  \!  \!  \!  \!  \! \!  \!  \!  \!
         \Omega_{X,w} :=  \conv(\WKHXaq \cdot w^{-1} \log a)+\Gamma(P_X).
         \end{equation}

We write  $\Omega := \Omega_0$ and note that this set equals $ \conv(\WKH\cdot \log a ) + \Gamma(P)$ hence contains $\Omega_X$ for every $X \in \faq.$

         \begin{remark}\label{r: on Omega X}
         It is clear from the definition that the set $\Omega_{X,w},$ for $w \in \WKHaq,$
         is a closed convex polyhedral set, contained in the affine subset $w^{-1}\log a + {\rm span}\, \{H_\ga\! \setmid \!\! \ga\in \gS(\fg_X, \faq)\}$ of $\faq.$ In particular,
         $$
         \Omega_{X,w} \subseteq  w^{-1}\log a + X^\perp.
         $$
         \end{remark}

          \begin{lemma}\label{l: HPq of a cC X}
            Let $X \in S,$ $a \in \Aqreg$ and let $\cC_{a,X}\subseteq H$ be the set of critical points of the function $F_{a,X}: H \to \R;$ cf. Lemma \ref{l: set of critical points} and {\rm (\ref{e: defi S})}. If    the analogue of the assertion of   Theorem \ref{t: conv thm new}  holds for the data $\,G_X,$ $H_X,$ $K_X$ and $P_X$ in place of $\,G,H,K$ and $P$ then
            \begin{equation}\label{e: image of critical set}
                \HPq(a\cC_{a,X})= \Omega_X.
            \end{equation}

        \end{lemma}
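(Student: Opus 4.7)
The plan is to combine Lemma \ref{l: set of critical points} with the induction hypothesis. By that lemma,
$$\cC_{a,X} = \bigcup_{w \in \WKHaq} x_w H_X (N_P \cap H),$$
and hence
$$\HPq(a\,\cC_{a,X}) = \bigcup_{w \in \WKHaq} \HPq\bigl(a\, x_w\, H_X\, (N_P \cap H)\bigr).$$
For each $w$ I will reduce the right-hand side to a single Iwasawa projection computation carried out inside the subgroup $G_X.$

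Since $x_w \in N_{K\cap H}(\faq) \subseteq K,$ left multiplication by $x_w$ preserves $\HP,$ so
$\HP(a\, x_w\, h\, n) = \HP(x_w^{-1} a x_w \cdot h n) = \HP(a^w h n).$
Lemma \ref{l: HP-inv} shows that $\HPq$ is right $(P \cap H)$-invariant, which allows me to drop the factor $n \in N_P \cap H.$ Thus $\HPq(a\, x_w\, h\, n) = \HPq(a^w h)$ for every $h \in H_X$ and $n \in N_P \cap H.$ For $h \in H_X$ the element $a^w h$ lies in $G_X,$ and $G_X = K_X A N_{P,X}$ is the Iwasawa decomposition of $G_X$ attached to $P_X,$ with factors contained in the corresponding factors $K,$ $A,$ $N_P$ of $G.$ By uniqueness of the Iwasawa decomposition of $G,$ the Iwasawa projection $\HP^{G_X}\colon G_X \to \fa$ agrees with the restriction of $\HP$ to $G_X;$ in particular, $\HPq(a^w h) = \HPq^{G_X}(a^w h).$

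Next I apply the assumed analogue of Theorem \ref{t: conv thm new} to the quadruple $(G_X, H_X, K_X, P_X)$ and to $a^w \in \Aq.$ Note that $\faq$ is still a maximal abelian subspace of $\fp_X \cap \fq_X$ since $\faq \subseteq \fg_X,$ so the relevant split component of $G_X$ coincides with $\Aq.$ This yields
$$\HPq^{G_X}(a^w H_X) = \conv(\WKHXaq \cdot \log a^w) + \Gamma(P_X),$$
where $\log a^w = w^{-1}\log a$ by the convention preceding Lemma \ref{l: set of critical points}. By the definition of $\Omega_{X,w}$ this exactly equals $\Omega_{X,w}.$ Here one verifies that the intrinsic cone $\Gamma(P_X)$ for $G_X$ matches the cone defined in the lemma statement, which follows from the identifications $\gS(\fg_X, \fa) = \{\ga \in \gS(\fg, \fa)\setmid \ga(X) = 0\}$ and $\gS(P_X)_- = \{\ga \in \gS(P)_- \setmid \ga(X) = 0\}.$ Taking the union over $w \in \WKHaq$ then gives $\HPq(a\,\cC_{a,X}) = \Omega_X.$

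I do not anticipate a major obstacle: the argument is a chain of routine identifications. The only genuine verifications required are the compatibility of $\HP$ and $\HP^{G_X}$ on $G_X,$ and the matching of the intrinsically defined $\WKHXaq$ and $\Gamma(P_X)$ with their counterparts in the statement; both are immediate from the definitions and the fact that $\fg_X$ is the direct sum of $\fa$-root spaces whose roots vanish on $X.$
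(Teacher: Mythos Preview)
Your proof is correct and follows essentially the same route as the paper: decompose $\cC_{a,X}$ via Lemma~\ref{l: set of critical points}, use left $K$-invariance and right $(P\cap H)$-invariance to reduce each piece to $\HPq(a^w H_X)$, invoke the compatibility of the Iwasawa projections for $G$ and $G_X$, and then apply the assumed convexity theorem for $(G_X,H_X,K_X,P_X)$. Your explicit remark that the intrinsic cone $\Gamma(P_X)$ coincides with the cone spanned by the $\prq H_\ga$ for $\ga\in\gS(P)_-$ with $\ga(X)=0$ is a point the paper leaves implicit.
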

        \begin{proof}
        Using the characterization of $\cC_{a,X}$ given in Lemma \ref{l: set of critical points}, we obtain
                        \begin{eqnarray}
                \HPq(a \cC_{a,X}) & =&\bigcup_{w\in \WKHaq }\HPq(a w H_X(N_P\cap H))\nonumber\\
                &=&\bigcup_{w\in \WKHaq} \HPq (a^w H_X),\label{e: H of CX as union}
            \end{eqnarray}
            where  $a^w= w^{-1} a w$ is regular in $\Aq,$ for each $w \in \WKHaq.$

              By the compatibility of the Iwasawa decompositions for the two groups $G$ and $G_X$ we see that the restriction of $\HPq: G \to \faq$ to $G_X$ equals the similar projection $G_X \to \faq$ associated with $P_X;$ we denote the latter by $\fH_{P_X, \iq}.$ Hence,
            $$\HPq(a^w H_X)=  \fH_{P_X, \iq}(a^w H_X).$$
            In view of the hypothesis that the convexity theorem holds for the data $G_X, H_X, P_X,$  we infer that
            $$
            \HPq(a^wH_X)=\conv(\WKHXaq \cdot\log a^w)+\Gamma(P_X) = \Omega_{X,w}.
            $$
            In view of (\ref{e: H of CX as union}) and (\ref{e: defi Omega X new})
            we now obtain (\ref{e: image of critical set}).
        \end{proof}

        \medbreak\noindent
        {\em Proof of Theorem \ref{t: conv thm new}.\ }
            The proof relies on an inductive procedure, with induction over the rank of the root system $\Sigma(\fg,\faq)$. The legitimacy of this procedure has been discussed at length in  \cite[Sect. 2]{ban1986}.

            We start the induction with $\rk\, \Sigma(\fg,\fa_\iq)=0$.
            In this case,
            \begin{equation}
            \label{e: root  vanishes on faq}
            \forall \ga \in \gS(\fg, \fa):\quad \ga|_{\faq} =0.
            \end{equation}
            This implies that
            $\faq$ is central in $\fg.$ As $G$ is of the Harish-Chandra class, $\Ad(G) \subset
            {\rm Int}(\fg_\C)$ so that $G$ centralizes $\faq.$ Hence $\Aq$ is central in $G.$
            Furthermore, (\ref{e: root vanishes on faq}) also implies that every root
            $\ga \in \gS(\fg,\fa)$ is fixed by $\gs,$ so that $\fg_\ga$ is $\gs$-invariant. This implies that
            the Iwasawa decomposition
            $G = KA N_P$ is $\gs$-stable, so that
            $
            H = (H\cap K)(H\cap A)(H\cap N_P).
            $
            We conclude that
            \begin{equation}
            \label{e: left side in rank zero case}
            \HPq(aH)= \HPq(H a) = \HPq(H\cap A) + \log a = \log a.
            \end{equation}
            On the other hand, it follows from (\ref{e: root vanishes on faq}) that
            $\gS(P)_- = \emptyset,$ so that $\Gamma(P) = \{0\}.$ Furthermore, since $G$ centralizes $\faq,$ we see that $\WKH = \{e\},$ so that
            \begin{equation}
            \label{e: right side in rank zero case}
            \conv(\WKH\cdot \log a) +\Gamma(P) = \log a.
            \end{equation}
            From (\ref{e: left side in rank zero case}) and (\ref{e: right side in rank zero case})
            we see that the equality (\ref{e: convexity equality}) holds in case $\rk\, \Sigma(\fg,\fa_\iq)=0.$
            
            Now assume that $m$ is a positive integer, that $\rk\,\Sigma(\fg,\faq) = m$ and that the assertion of the theorem has already been established for the case that $\rk\,\Sigma(\fg,\faq) < m$.

            By    Lemma \ref{l: reduction lemma} it suffices to prove the validity of (\ref{e: convexity equality}) under the assumption that $a \in \Aqreg.$ We will first do so under the additional assumption that $\Sigma(\fg, \faq)$ spans $\faqd.$ In the end, the general case will be reduced to this.

            Our assumption that $\Sigma(\fg, \faq)$ is spanning guarantees that for each non-zero $X\in \faq$ not all roots of $\Sigma(\fg,\faq)$ vanish on $X$. Therefore, the rank of $\Sigma(\fg_X,\faq)$ is strictly smaller than $m = \rk \Sigma(\fg,\faq).$ By the induction hypothesis,  the convexity theorem holds for $(G_X, H_X, K_X,  P_X).$ Hence, by Lemma \ref{l: HPq of a cC X} we have that
            \begin{equation}
            \label{e: Omega X by induction hypo}
            \HPq(a\cC_{a,X}) = \Omega_X.
            \end{equation}
            By
            Remark \ref{r: on Omega X} the complement $\faq\setminus  \Omega_X$ is open and dense in $\faq.$

            Let $S_0 \subseteq S$ be a finite subset as in Lemma \ref{l: property cCa new}. Then it follows by application of Lemma \ref{l: HPq of a cC X} that
            \begin{equation}
            \label{e: image of a cCa under HPq}
            \HPq(a \cC_a) = \cup_{X \in S_0} \Omega_X.
            \end{equation}
            In particular, the complement of this set in $\faq$  is dense.
            Moreover, it follows from  (\ref{e: image of a cCa under HPq})  and the text below
            (\ref{e: defi Omega X w new}) that
            \begin{equation}
            \label{e: image critical set in Omega}
            \HPq(a\cC_a) \subseteq  \Omega = \conv(\WKHaq \cdot \log a) + \Gamma(P).
            \end{equation}
            From Lemma \ref{l: HP of aH and aC} we see that $\HPq(aH)$ and $\HPq(a\cC_a)$ are closed subsets of $\faq$ and that $\HPq(aH)\setminus\HPq(a\cC_a)$ is an open and closed subset of the (open and dense) subset $\faq\setminus \HPq(a\cC_a),$ hence a union of connected components of the latter set. Lemma \ref{l: at-least-one-connected-component} ensures that at least one connected component of $\faq\setminus\HPq(a\cC_a)$ must belong to $\HPq(aH)\setminus\HPq(a\cC_a)$.

            From      (\ref{e: image critical set in Omega}) it follows that
            $$
            \faq\setminus\Omega  \subseteq \faq\setminus \HPq(a\cC_a).
            $$
            Now $\faq\setminus\Omega$ is connected hence must be contained in a connected component $\Lambda$ of $\faq\setminus\HPq(a\cC_a)$.

            There are two possibilities:
            \begin{enumerate}
                \itema $\Lambda\subseteq  \HPq(aH)\setminus \HPq(a\cC_a)$;
                \itemb $\Lambda\cap (  \HPq(aH)\setminus \HPq(a\cC_a)    ) = \emptyset$.
            \end{enumerate}
            From its definition, one sees that $\Omega$ is strictly contained in a half-space, which implies that $\faq\setminus\Omega$, and therefore $\Lambda$, must contain a line
            of $\faq.$ From Corollary \ref{c: half space result} we know that $\HPq(aH)$ does not contain  such a line,  so that we may exclude case (a) above. From (b) it follows that
            $$
            (\faq\setminus\Omega)\cap  \HPq(aH)\setminus \HPq(a\cC_a) =\emptyset,
            $$
              which implies that $\HPq(aH)\setminus \HPq(a\cC_a)\subseteq  \Omega$. Combining   this with (\ref{e: image critical set in Omega})
         we conclude that
            \begin{equation}\label{e: first inclusion convexity}
                \HPq(aH)  \subseteq   \Omega.
            \end{equation}
            We now turn to the proof of the converse inclusion.

            In the above we concluded that the set $\HPq(aH)\setminus \HPq(a\cC_a)$ is open and closed as a subset of $\faq\setminus \HPq(a\cC_a)$. In view of (\ref{e: first inclusion convexity}) the set is also open and closed as a subset of $\Omega\setminus \HPq(a\cC_a)$. Thus, $\HPq(aH)\setminus \HPq(a\cC_a)$ is a union of connected components of $\Omega\setminus \HPq(a\cC_a)$. We will establish the converse of (\ref{e: first inclusion convexity}) by showing that all connected components of $\Omega\setminus \HPq(a\cC_a)$ are contained in $\HPq(aH)$.

           Again by the use of Lemma \ref{l: at-least-one-connected-component} we infer that at least one connected component $\gL_1$ of $\Omega\setminus \HPq(a\cC_a)$ is contained in $\HPq(a\cC_a).$  Arguing by contradiction, assume this were not the case for all components.  Then there
            exists a second connected component $\gL_2$ of $\Omega\setminus \HPq(a\cC_a)=\Omega\setminus\cup_{X\in S_0}\Omega_X$ such that
            \begin{equation}
            \label{e: intersection gLtwo empty}
            \gL_2 \cap \HPq(aH) = \emptyset.
            \end{equation} In view of Remark \ref{r: on Omega X},
             we may apply Lemma \ref{l: convex-cones} below to the set $\Omega$ and the finite collection of subsets $\Omega_{X,w}$, where  $X\in S_0$ and $w\in \WKHaq$, and obtain a line segment with the properties of Lemma \ref{l: convex-cones}, connecting $\gL_1$ and $\gL_2$. By following    intersections along this line segment, we see that we may assume that the connected components $\gL_1$ and $\gL_2$ exist with the additional property that they are adjacent, i.e., there exists a codimension 1 subset $\Omega_{X,w} \subseteq \Omega$ together with a point $Y \in \Omega_{X,w}$ and a positive number $\epsilon > 0$ such that $B(Y; \epsilon) \setminus \Omega_{X,w}$ consists of two connected components $\gL_1'$ and $\gL_2'$ such that $\gL_j' \subseteq \gL_j$ for $j =1,2$. In particular, this implies that $\gL_1'$ and $\gL_2'$ are on different sides of the hyperplane ${\rm aff}(\Omega_{X,w})=Y+X^\perp$. We may replace $X$ by $-X$ if necessary, to arrange that $Y + tX \in \Lambda_1$ for $t \downarrow 0.$ Then
             \begin{equation}
             \label{e: estimate on component one}
             \inp{X}{\dotvar} \geq \inp{X}{Y} \quad {\rm on} \quad {\rm cl}\,(\gL_1').
             \end{equation}
            By (\ref{e: Omega X by induction hypo})
            there exists a point $h \in \cC_{a,X}$ such that $\HPq(ah) = Y.$ For a sufficiently small neighborhood $U$ of $h$ in $H$ we have $\HPq(aU) \subseteq  B(Y;\epsilon).$
            Combined with (\ref{e: intersection gLtwo empty}) this implies $\HPq(aU) \subseteq {\rm cl} (\gL_1').$
In view of (\ref{e: estimate on component one}) we now infer that
$
F_{a,X} \geq \inp{X}{Y} = F_{a,X}(h)$ on $U.$ Hence, $F_{a,X}$ has a local minimum at $h.$ By what we established in Lemma \ref{l: local-min-global-min} this implies that $\Omega$ should be on one side of the hyperplane $Y + X^\perp,$ contradicting the observation that $\gL_1'$ and $\gL_2'$ are non-empty open subsets on different sides of this hyperplane, but both contained in $\Omega.$

            In view of this contradiction we conclude that all components of $\Omega \setminus \HPq(a\cC_a)$ are contained in $\HPq(aH).$

            This finishes the proof in case $\gS(\fg, \faq)$ has rank $m$ and spans $\faqd.$ We finally consider the case with $\rk\, \gS(\fg, \faq) = m$ in general.

            Let $\fc$ be the intersection of the root hyperplanes $\ker \ga \subseteq  \faq$ for $\ga \in \gS(\fg, \faq).$ Then $\fc$ is contained in $\faq$ and central in $\fg.$ Since $G$ is of the Harish-Chandra class, $\Ad(G)$ is contained in ${\rm Int}\,(\fg_\C),$  hence centralizes $\fc.$ Therefore, the subgroup $C: = \exp(\fc)$ is central in $G.$

Let $\bp \fp$ be the orthocomplement of $\fc$ in $\fp.$ Then $\bp \fg = \fk \oplus \bp \fp$ is an ideal of $\fg$ which is complementary to $\fc.$

            By the Cartan decomposition and the fact that $\fc$ is central, it follows that the map $K \times \bp \fp \times \fc \to G,$ $(k, X, Z) \mapsto k \exp X \exp Z$ is a diffeomorphism onto. It readily follows that $\bp G = K \exp \bp \fp$ is a group of the Harish-Chandra class, with the indicated Cartan decomposition for the Cartan involution $\bp \Cartan = \Cartan|_{\bp G}.$ The restricted map $\bp \gs := \gs|_{\bp G}$ is an involution of $\bp G$ which commutes with $\bp \Cartan.$ The group $\bp H := H$ is an open subgroup of $(\bp G)^{\bp \gs},$ which is essentially connected. Furthermore, $\bp \faq:= \bp \fp \cap \faq$ is maximal abelian in $\bp \fp \cap \fq$ and $\bp \fa = \bp \fp \cap \fa$ is maximal abelian in $\bp \fp.$ The root system $\gS(\bp \fg, \bp \faq)$ consists of the restrictions of the roots from $\gS(\fg, \faq),$ hence spans the dual of $\bp \faq.$

            The group $\bpP= \bpG \cap P$ is a minimal parabolic subgroup of $\bpG$ containing $\bpA.$ We note that $\bp P  = M \bp \! A N_P.$

            We note that $\Aqreg \simeq  \bpAqreg \times C.$
            Let $a \in \Aqreg.$ Then we may write $a =\bp a \cdot c,$ with $\bp a \in \bpAqreg$ and $c \in C.$ By the convexity theorem for $\bp G$ and since $c$ is central in $G,$ it now follows that
            \begin{eqnarray*}
                \HPq(a H) & =&  \HPq(\bp a H c) \\
                &=& \fH_{\bpP, \iq}(\bp a H) + \log c \\
                &=& \conv (\WKHaq \cdot \log \bp a) + \Gamma(\bp P) + \log c\\
                &=& \conv (\WKHaq \cdot \log a ) + \Gamma(P).
            \end{eqnarray*}
             \qed

          We  recall that the relative interior of a convex subset $S$ of a finite dimensional real
        linear space is defined to be the interior of $S$ in its affine span ${\rm aff }(S).$

        \begin{lemma}\label{l: convex-cones}
            Let $V$ be a finite dimensional real linear space and $C\subseteq V$ a closed convex polyhedral subset with non-empty interior. Let $C_i$ {\rm ($i\in \{1, \ldots, n\}$)} be closed convex polyhedral subsets of $C$, of positive codimension. Then the following statements are true.
            \begin{enumerate}
                \itema The complement $C':= C\setminus\cup_{i=1}^nC_i$ is dense in $C$.
                \itemb Let $A$ and $B$ be open subsets of $V$ contained in $C'.$ Then for each $a \in A$ there exists $b \in B$ such that for each $i$ with $C_i\cap [a,b] \neq \emptyset$ the following assertions are valid,
                \begin{enumerate}
                    \item[\rm (1)] $\codim (C_i) = 1;$
                    \item[\rm (2)] $[a,b] \cap C_i$      consists of a single point $p$ which belongs to the  relative interior of $C_i$.
                         Furthermore, if $p\in C_j$ for some  $1 \leq j \leq n,$ then ${\rm aff}(C_j)={\rm aff}(C_i)$.
                \end{enumerate}
            \end{enumerate}
        \end{lemma}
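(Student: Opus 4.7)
Proof proposal. The plan is to do (a) by a routine density argument, and (b) by fixing $a\in A$ and exhibiting a closed nowhere-dense subset $\Sigma\subseteq V$ (depending on $a$) such that every $b\in B\setminus\Sigma$ meets the requirements; openness of $B$ then produces the desired $b$.

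For (a) I would argue that each $C_i$ has positive codimension in $V$, so it lies in a proper affine subspace and has empty interior in $V$. The finite union $\bigcup_i C_i$ is therefore closed and nowhere dense, with open dense complement in $V$. Since $C$ is convex with nonempty interior, $\mathrm{int}(C)$ is dense in $C$; for any $x\in C$ and any neighborhood $U$ of $x$, the nonempty open set $U\cap \mathrm{int}(C)$ must meet $V\setminus\bigcup_i C_i$, yielding a point of $C'\cap U$.

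For (b) fix $a\in A$ and set $d=\dim V$. Given $S\subseteq V$ introduce the cone
\[
\mathrm{Con}_a(S):=\{a+s(c-a)\,:\,c\in S,\ s\ge 1\}.
\]
Since $a\notin C_i$ for every $i$, an elementary check gives $[a,b]\cap C_i\neq\emptyset\iff b\in\mathrm{Con}_a(C_i)$. The parametrization $(c,s)\mapsto a+s(c-a)$ shows that $\mathrm{Con}_a(S)$ has dimension at most $\dim S+1$, so it has empty interior in $V$ whenever $\dim S\le d-2$. Split the indices as $I_1=\{i:\codim C_i=1\}$ and $I_2=\{i:\codim C_i\ge 2\}$, and let $\partial_r C_i$ denote the relative boundary of $C_i$ inside $\mathrm{aff}(C_i)$. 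I would then take
\[
\Sigma:=\bigcup_{i\in I_2}\mathrm{Con}_a(C_i)\ \cup\ \bigcup_{\substack{i\in I_1\\ a\in\mathrm{aff}(C_i)}}\mathrm{aff}(C_i)\ \cup\ \bigcup_{i\in I_1}\mathrm{Con}_a(\partial_r C_i)\ \cup\ \bigcup_{\substack{i,j\in I_1\\ \mathrm{aff}(C_i)\neq\mathrm{aff}(C_j)}}\mathrm{Con}_a(C_i\cap C_j).
\]
All four pieces have empty interior: the second is a finite union of hyperplanes, and in the other three the base of each cone has dimension $\le d-2$ (using $\dim\partial_r C_i\le d-2$ for $i\in I_1$ and $\dim(\mathrm{aff}(C_i)\cap\mathrm{aff}(C_j))\le d-2$ when the affine spans differ). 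Hence $\Sigma$ has empty interior, so $B\setminus\Sigma\neq\emptyset$ and I can pick $b\in B\setminus\Sigma$.

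It remains to verify (1) and (2) for such $b$. Suppose $[a,b]\cap C_i\neq\emptyset$. Avoidance of the first piece of $\Sigma$ forces $i\in I_1$, giving (1). Avoidance of the second piece forces $a\notin\mathrm{aff}(C_i)$, since otherwise $b\notin\mathrm{aff}(C_i)$ and $[a,b]\cap\mathrm{aff}(C_i)=\{a\}$, which combined with $a\notin C_i$ would give $[a,b]\cap C_i=\emptyset$. With $a\notin\mathrm{aff}(C_i)$, the segment meets the hyperplane $\mathrm{aff}(C_i)$ in exactly one point $p$, and so $[a,b]\cap C_i=\{p\}$. Avoidance of the third piece gives $p\in\mathrm{relint}(C_i)$. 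Finally, if $p\in C_j$, then $j\in I_2$ is excluded by the first piece, and $j\in I_1$ with $\mathrm{aff}(C_j)\neq\mathrm{aff}(C_i)$ would give $p\in C_i\cap C_j$, contradicting the fourth piece; hence $\mathrm{aff}(C_j)=\mathrm{aff}(C_i)$. The main obstacle is bookkeeping: identifying the four independent ways $b$ can fail and checking that each is cut out by a set of empty interior; the underlying dimension counts themselves are elementary.
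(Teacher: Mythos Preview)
The paper does not give a proof of this lemma; it simply declares the result ``standard, and left to the reader.'' Your argument is correct and supplies exactly the kind of routine verification the authors had in mind.

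One minor remark: you announce $\Sigma$ as a \emph{closed} nowhere-dense set, but you never verify closedness of the cones $\mathrm{Con}_a(\cdot)$, and in fact you do not need it. What your argument actually uses is that each of the four constituent pieces of $\Sigma$ is contained in a finite union of proper affine subspaces of $V$ (this is where the dimension counts $\dim S\le d-2\Rightarrow\dim\mathrm{Con}_a(S)\le d-1$ enter, together with the fact that $\partial_r C_i$ for $i\in I_1$ is a finite union of faces of dimension $\le d-2$). A finite union of proper affine subspaces has empty interior, so the nonempty open set $B$ cannot be contained in $\Sigma$, giving $B\setminus\Sigma\neq\emptyset$. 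With that phrasing the argument is airtight.
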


        \begin{proof}
        Standard, and left to the reader.
        \end{proof}

    \appendix
        \section{Proof of Lemma \ref{l: KDV}}\label{a: proof of KDV}

            Finally, we prove Lemma \ref{l: KDV}.

We begin by showing that the result holds for $G$ a complex semi-simple Lie group, connected with trivial center. That proof will be based on the following general lemma, inspired by  \cite[Prop.~1]{chevalley1958}.

              Let $\fh$ be a complex abelian Lie algebra and let $\mathcal{N}$ be the class of complex finite dimensional nilpotent Lie algebras    $\fn$, equipped with a representation of $\fh$ by derivations, such that the following conditions are fulfilled
                \begin{enumerate}
                \itema the representation of $\fh$ in $\fn$ is semi-simple;
                \vspace{-6pt}
                \itemb  all weight spaces of $\fh$ in $\fn$ have complex dimension one.
                \end{enumerate}
                If $\fn$ belongs to the class $\cN,$ we write $\gL(\fn)$ for the set of $\fh$-weights in $\fn.$ If $\gl \in \gL(\fn),$ then the associated weight space is denoted by $\fn_\gl.$

                \begin{lemma}\label{l: weight-spaces-deco}
                Let      $\fn \in \cN$ and let $N$ be the connected, simply-connected Lie group with Lie algebra $\fn$. Let $\gl_1 , \ldots, \gl_m$ be the distinct weights of $\,\fh$ in $\fn.$ Then the map
                $$
                \psi:  (X_1 ,\ldots, X_m )\mapsto \exp X_{1}\cdot\cdots\cdot\exp X_{m}
                 $$
                defines a diffeomorphism
                $$                  \fn_{\lambda_1}\times\ldots\times\fn_{\lambda_m}\;\;{\buildrel\simeq \over \longrightarrow}\;\; N.
                $$
            \end{lemma}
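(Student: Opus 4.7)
The plan is to prove this by induction on $m=\dim_\C \fn$, with the key reduction being to split off a weight space contained in the center of $\fn$. The base case $m=0$ is trivial, and $m=1$ follows since $\fn$ is then automatically abelian and $\psi=\exp$.

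For the inductive step, I would first argue that there exists an index $k$ with $\fn_{\lambda_k}\subseteq Z(\fn)$. Indeed, since $\fn$ is nonzero and nilpotent, its center $Z(\fn)$ is a nonzero characteristic ideal, hence $\fh$-stable; by the semisimplicity of the $\fh$-action (hypothesis (a)), $Z(\fn)$ decomposes as a direct sum of weight spaces, so at least one $\fn_{\lambda_k}$ lies in $Z(\fn)$. Since the corresponding one-parameter subgroup $Z:=\exp(\fn_{\lambda_k})$ is central in $N$, its factor $\exp X_k$ commutes with every other $\exp X_j$, and can therefore be moved to any position in the product defining $\psi$. After reordering, I may assume $k=1$.

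Next, I would pass to the quotient. Set $\bar\fn := \fn/\fn_{\lambda_1}$ and $\bar N := N/Z$, a connected, simply-connected nilpotent Lie group with Lie algebra $\bar\fn$. The induced $\fh$-action on $\bar\fn$ is semisimple with weights $\lambda_2,\ldots,\lambda_m$, each weight space one-dimensional, so the inductive hypothesis applies and gives that
\[
\bar\psi:\bar\fn_{\lambda_2}\times\cdots\times\bar\fn_{\lambda_m}\longrightarrow\bar N,\quad (\bar X_2,\ldots,\bar X_m)\mapsto \exp\bar X_2\cdots \exp\bar X_m,
\]
is a diffeomorphism. Composition of the smooth map $\psi_0:(X_2,\ldots,X_m)\mapsto \exp X_2\cdots\exp X_m$ into $N$ with the projection $N\to \bar N$ equals $\bar\psi$ under the identification $\fn_{\lambda_j}\simeq \bar\fn_{\lambda_j}$. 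Since $\psi(X_1,\ldots,X_m)=\exp X_1\cdot \psi_0(X_2,\ldots,X_m)$ and $\exp:\fn_{\lambda_1}\to Z$ is a diffeomorphism onto a closed (central) subgroup, I would construct a smooth inverse to $\psi$ as follows: given $n\in N$, set $(\bar X_2,\ldots,\bar X_m)=\bar\psi^{-1}(nZ)$, form $n_0:=\psi_0(X_2,\ldots,X_m)$, and take $X_1:=\exp^{-1}(nn_0^{-1})\in\fn_{\lambda_1}$. This defines a smooth two-sided inverse to $\psi$, completing the induction.

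The main technical point to handle carefully is the existence of a central weight space $\fn_{\lambda_k}$: everything else is a fairly standard central-extension bookkeeping argument, but this step genuinely uses both the semisimplicity of the $\fh$-action and the nilpotence of $\fn$. Everything else—in particular the independence of the product from the position of the central factor and the construction of the inverse—is straightforward once the quotient $\fn\to\fn/\fn_{\lambda_1}$ is available and the induction hypothesis is applied to it.
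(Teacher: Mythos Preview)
Your proof is correct and follows essentially the same inductive strategy as the paper: both arguments use the $\fh$-stability of the center to find central weight spaces, pass to the quotient, and apply the induction hypothesis there. The only difference is that you peel off a single central weight space $\fn_{\lambda_k}$ at each step, whereas the paper removes the entire center $\fn_1$ at once (applying the induction hypothesis to both $\fn_1$ and $\fn/\fn_1$); your version is slightly leaner since it only invokes induction on the quotient, but the underlying idea is identical.
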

            \begin{proof}
                     We will use induction on ${\rm dim}_{\C}(\fn)$. If $\dim_\C \fn =1$ then $\fn$ is abelian and the result holds trivially.

                Next,      assume that $m > 1$ and assume that the result has been established for $\fn$ with $\dim_\C \fn < m.$  Assume that $\fn \in \cN$ has dimension $m.$

                Denote by $\fn_1$ the center of $\fn$, which is non-trivial. If $\fn_1 = \fn$ then $\fn$ is abelian and the result is trivially true. Thus, we may as well assume that
                $0 \subsetneq \fn_1 \subsetneq \fn.$ In particular, this implies that both $\fn_1$ and $\fn/\fn_1$ have dimensions at most $m -1.$ Put $l := \dim \fn_1.$

                The ideal $\fn_1$ is stable under the action of $\fh$ and it is readily verified that $\fn_1$ and $\fn/\fn_2$ with the natural $\fh$-representations belong to $\cN.$ Furthermore, since all weight spaces are $1$-dimensional, we see that
                $$
                \gL(\fn) = \gL(\fn_1) \sqcup \gL(\fn/\fn_1).
                $$
                We will first prove that $\psi$ is a diffeomorphism under the assumption that
                the $\fh$-weights in $\fn$ are numbered in such a way that
                $$
                \gL(\fn_{1}) = \{\gl_{1} , \ldots, \gl_l \}  \quad{\rm and}\;\; \gL(\fn/\fn_1) =
                \{\gl_{l+1}, \ldots, \gl_{m}\}.
                $$
                Since $N$ is simply-connected, the map $\exp:\fn\to N$ is a diffeomorphism; hence, $N_1:= \exp(\fn_1)$ is the connected  subgroup of $N$ with Lie algebra $\fn_1$.
                In particular, $N_1$ is simply connected as well. Since $\fn_1$ is an ideal, $N/N_1$ has a unique structure of Lie group for which the natural map $N \to N/N_1$ is a Lie group homomorphism.  We now observe that $N\to N/N_1$ is a principal fiber bundle with fiber $N_1$. By standard homotopy theory we have a natural exact sequence                 $$
               \pi_1(N)\to\pi_1(N/N_1)\to \pi_0(N_1).
               $$
               Since $N$ is simply-connected, and $N_1$ connected, we conclude that $N/N_1$ is the simply connected group with Lie algebra $\fn/ \fn_1.$

                By the induction hypothesis, the maps
                \begin{eqnarray*}
                     & \psi_{\fn_1} &:\fn_{\lambda_1}\times\ldots\times\fn_{\lambda_l}\to N_1\\
                     & \psi_{\fn/\fn_1} &:(\fn/\fn_1)_{\lambda_{l+1}}\times\ldots\times(\fn/\fn_1)_{\lambda_m}\to N/N_1
                \end{eqnarray*}
                are diffeomorphisms.
                For every $j\in\{l+1,\ldots,m\}$ the canonical projection $\fn \to \fn/\fn_1$ induces the isomorphisms of weight spaces $\fn_{\lambda_j}\to(\fn/\fn_1)_{\lambda_j}$.
                 Let $\bar \psi: \fn_{\gl_{l+1}} \times \ldots \times \fn_{\gl_m} \to N/N_1$ be defined
                by
                $
                \bar \psi (X_{l+1}, \ldots, X_m) = \exp X_{l+1}
                \cdot \ldots \cdot \exp X_m \cdot N_1.
                $
                Then the following diagram commutes:
                \begin{equation*}\minCDarrowwidth55pt\begin{CD}
                \fn_{\lambda_{l+1}}\times\ldots\times\fn_{\lambda_m} @>\bar{\psi}>> N/N_1\\
                @V\simeq VV @|\\
                (\fn/\fn_1)_{\lambda_{l+1}}\times\ldots\times(\fn/\fn_1)_{\lambda_m} @>\psi_{\fn/\fn_1}>> N/N_1
            \end{CD}\end{equation*}
            From this we infer that $\bar \psi$ is a diffeomorphism.
            We now obtain that the map $\tilde{\psi}:\fn_{\lambda_{l+1}}\times\ldots\times\fn_{\lambda_m}\times N_1\to N$, \[(X_{l+1},\ldots,X_m,n_1)\mapsto (\exp X_{l+1}\cdot\ldots\cdot\exp X_m)n_1,\] is a diffeomorphism onto $N.$ Since
                \[\psi(X_1,\ldots,X_l,X_{l+1},\ldots,X_m)=
                \tilde{\psi}(X_{l+1},\ldots,X_m,\psi_{\fn_1}(X_1,\ldots,X_l))\]
                it follows that $\psi$ is a diffeomorphism as well. Clearly, the above proof works
                for every enumeration of the weights in $\gL(\fn /\fn_1).$ Since the weight spaces
                $(\fn_1)_\gl$ for $\gl \in \gL(\fn_1)$ are all central in $\fn,$ we conclude that
                the result holds for any enumeration of  the weights in $\gL(\fn).$
            \end{proof}

            \begin{corollary}\label{c: complex-KDV}
                Let $G$ be a connected complex semi-simple Lie group and $\fn_B$ the nilpotent radical of a Borel subalgebra $\nb$ of $\fg$. Let $\fh$ be a Cartan subalgebra contained in $\nb$. Let $\fn_1,\ldots,\fn_k$ be linearly independent subalgebras of $\fn_B$, each of which is a direct sum of $\fh$-root spaces, and assume that their direct sum $\fn:=\fn_1\oplus\ldots\oplus\fn_k$ is again a subalgebra. Put $N:=\exp\fn$ and  $N_j:= \exp(\fn_j),$ for $1\leq j \leq k.$

                Then the multiplication map
                \[\mu:N_1\times\ldots\times N_k\to N\] is a diffeomorphism.
            \end{corollary}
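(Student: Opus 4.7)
\bigbreak
\noindent
\emph{Proof proposal for Corollary \ref{c: complex-KDV}.}
The plan is to reduce the statement to two applications of Lemma \ref{l: weight-spaces-deco}, one for the ambient algebra $\fn$ and one for each summand $\fn_j,$ combined with a compatibility check between the two decompositions.

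First I would verify that $\fn$ and each $\fn_j$ lie in the class $\cN$ relative to $\fh.$ Since $\fn$ is a subalgebra of the nilpotent algebra $\fn_B,$ it is nilpotent. As $\fn$ is by hypothesis a direct sum of $\fh$-root spaces, $\fh$ normalizes $\fn$ and acts on it semisimply via $\ad,$ with weight spaces the root spaces $\fg_\ga$ of $\fg,$ which are one-dimensional because $\fg$ is complex semisimple. The same reasoning applies to each $\fn_j.$ In particular $\gL(\fn) = \gL(\fn_1)\sqcup \cdots \sqcup \gL(\fn_k)$ as sets of weights, because the $\fn_j$ are linearly independent and each is a sum of (distinct) one-dimensional weight spaces of $\fn.$

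Next I would enumerate the weights $\gl_1,\ldots,\gl_m$ of $\fh$ in $\fn$ in an order compatible with the splitting into the $\fn_j,$ namely so that the first $m_1$ weights are those of $\fn_1,$ the next $m_2$ those of $\fn_2,$ and so on. Applying Lemma \ref{l: weight-spaces-deco} to $\fn$ with this enumeration gives a diffeomorphism
\[
\Psi : \fn_{\gl_1}\times\cdots\times\fn_{\gl_m} \;\xrightarrow{\ \simeq\ }\; N,\qquad
(X_1,\ldots,X_m)\mapsto \exp X_1\cdots \exp X_m.
\]
Applying the same lemma to each $\fn_j$ (with the induced enumeration of its weights) gives diffeomorphisms
\[
\Psi_j : \prod_{\gl\in\gL(\fn_j)} \fn_{\gl}\;\xrightarrow{\ \simeq\ }\; N_j,
\]
where we use Lemma \ref{l: basics nilpotent groups} to know that $N_j=\exp\fn_j$ is a closed subgroup of $N$ with $\exp:\fn_j\to N_j$ a diffeomorphism, hence $N_j$ is connected and simply connected.

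Finally I would observe that the product $\Psi_1\times\cdots\times\Psi_k$ is a diffeomorphism from $\prod_j\prod_{\gl\in\gL(\fn_j)}\fn_{\gl}$ onto $N_1\times\cdots\times N_k,$ and that under this identification the multiplication map $\mu$ is precisely the map $\Psi$ read with respect to the chosen enumeration: indeed, writing $n_j = \exp X_{j,1}\cdots \exp X_{j,m_j},$ we get
\[
\mu(n_1,\ldots,n_k)= \exp X_{1,1}\cdots \exp X_{1,m_1}\exp X_{2,1}\cdots \exp X_{k,m_k},
\]
which is the image under $\Psi$ of the tuple of $X_{j,i}$ in the chosen order. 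Hence $\mu = \Psi\after(\Psi_1\times\cdots\times\Psi_k)^{-1}$ is a composition of diffeomorphisms. The main point to be careful about is that the enumeration of weights in Lemma \ref{l: weight-spaces-deco} is arbitrary (the proof of that lemma already showed the conclusion is independent of the ordering of weights in $\gL(\fn/\fn_1),$ and by iteration this extends to complete freedom in the enumeration), which is exactly what allows us to cluster the weights by their $\fn_j$ of origin; this is the only non-routine compatibility one has to observe, and it follows from the final line of the proof of Lemma \ref{l: weight-spaces-deco}.
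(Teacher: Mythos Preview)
Your proposal is correct and takes essentially the same approach as the paper, which dispatches the corollary in one line as ``an immediate consequence of Lemma \ref{l: weight-spaces-deco}.'' You have simply unpacked that sentence: verifying membership in the class $\cN$, invoking the arbitrary-enumeration clause of the lemma to cluster weights by $\fn_j$, and factoring $\mu$ as $\Psi\circ(\Psi_1\times\cdots\times\Psi_k)^{-1}$.
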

            \begin{proof}
                This is an immediate consequence of Lemma \ref{l: weight-spaces-deco}.
            \end{proof}
\medbreak\noindent

            \emph{Proof of Lemma \ref{l: KDV}.\ }   We assume that  $G$ is a real reductive Lie group of the Harish-Chandra class.      Define
            \[
            \fg_1:=[\fg,\fg],
            \]
            the semi-simple part of the Lie algebra of $G.$ Let
           $G_1$ be the analytic subgroup of $G$ with Lie algebra $\fg_1.$ Since the nilpotent radical $N_P$ of $P$ is completely contained in $G_1$, we may assume from the start that $G=G_1$, i.e. $G$ is connected semi-simple with finite center.

            Since $\Ad$ is a finite covering homomorphism from $G$ onto $\Aut(\fg)^\circ,$
            mapping $N$ diffeomorphically onto $\Ad(N),$ whereas $\Aut(\fg)^\circ$ is a connected real
            form of ${\rm Int}(\fg_\C),$ we
              may assume that $G$ is a connected real form of a connected complex semi-simple Lie group   $G_{\C}$  with trivial center. Let $\tau$ be the conjugation on $G_{\C}$, such that
            \[G=(G_{\C}^{\tau})^\circ.\]
            Let $\fg_{\C}$ denote the Lie algebra of $G_{\C}$, then $\fg_{\C}=\fg\oplus i\fg.$
            Note that the complexification $\fn_{P\C}$ of $\fn_P$ equals $\fn_P\oplus i\fn_P$ and that
            $$
            N_P = (N_{P\C})^{\tau}.
            $$

            Take a Cartan subalgebra of $\fg_{\C}$, containing $\fa_{\C}=\fa\oplus i\fa$. It is of the form
            \[\fh_{\C}=\ft_{\C}\oplus\fa_{\C},\]
            where $\ft$ is a maximal abelian subspace      of $\fm:=Z_{\fk}(\fa)$.
            Since $\ft$      centralizes $\fa,$ all $\fa$-root spaces are invariant under $\ad (\ft).$
            This      implies that the subalgebras $\fn_{j\C}:=\fn_j\oplus i\fn_j$ ($j\in\{1,\ldots,k\}$) of $\fn_{P\C}$ are direct sums of $\fh_{\C}$-root spaces. Furthermore, their direct sum  equals $\fn_{\C} =\fn\oplus i\fn,$ hence is a subalgebra. Finally, there exists a Borel subalgebra containing $\fh_\C + \fn_\C.$
            By Corollary \ref{c: complex-KDV}, the multiplication map
            \[
            \mu_{\C}:N_{1\C}\times\ldots\times N_{k\C}\to N_{\C}
            \]
            is a diffeomorphism. It readily follows that $\mu_\C$ restricts to a bijection
           from $( N_{1\C})^\tau \times \cdots \times (N_{k\C})^\tau $ onto $( N_\C)^\tau.
           $
            Since
            \[(N_{\C})^{\tau}=N\quad\textrm{and} \quad (N_{j\C})^{\tau}=N_j\;\; \textrm{ for all }1 \leq j \leq k,
            \]
            it follows that $\mu$ is a bijective embedding from
            $N_1 \times \cdots \times N_k$ onto $N,$ hence a diffeomorphism.

        \section{The case of the group}\label{a: group case}

            Every  semisimple Lie group $G$ can be viewed  as a semi-simple symmetric space
            for the group $G\times G.$ In this section we investigate what our convexity theorem means for this particular example. An independent proof for this case is presented in \cite[Section 3.2.2]{balib14}.

            More generally, let $G$ be real reductive group of the Harish-Chandra class, $\theta$
            a Cartan involution, $K:={G}^{\theta}$ the associated maximal compact subgroup
            and $\fg = \fk \oplus \fp$ the associated Cartan decomposition as in Section \ref{s: introduction}. Let $\fa$  be a maximal abelian subspace of $\fp,$ $A = \exp \fa$ and $\gS(\fg, \fa)$ the associated root system.

            Let
            $$
            G': =G\times G.
            $$
            Then $\Cartan':= \Cartan \times \Cartan$ is a Cartan decomposition of $G'$ with
            associated maximal compact subgroup $K' := K\times K.$  The involution
            $$
            \gs': G' \to G',\;\; (x,y) \mapsto (y,x),
            $$
            commutes with $\Cartan'.$ Its fixed point group $H'$ equals the diagonal in $G \times G$ and  is essentially connected in $G',$ see \cite[Example 2.3.7]{balib14}.

              The associated space $\fp' \cap \fq'$ equals $\{(X,-X)   \setmid X \in \fp\}$ and has
            \[
            \faq':=\{(X,-X) \setmid X\in\fa \}
            \]
            as a maximal abelian subspace. Its root system is given by
            \[
            \Sigma(\fg',\faq')=\Sigma(\fg,\fa)\times\{0\}\cup\{0\}\times\Sigma(\fg,\fa).
            \]
            Finally, $\faq'$ is contained in the maximal abelian subspace $\fa' := \fa \times \fa$ of $\fp'.$ We put $A' := \exp (\fa') = A \times A$. Note that
            the projection map $\pr_q: \fa' \to \faq'$ is given by
            \begin{equation}
            \label{e: pr q in group case}
            \prq(U,V)=(\half (U-V),\half (V-U)).
            \end{equation}

            Let $P$ and $Q$ be minimal parabolic subgroups of $G$ containing $A$, i.e. $P,Q\in\cP(A)$. Then $P\times Q$ is a minimal parabolic subgroup of $G'$ containing $A'$. Moreover, any minimal parabolic subgroup of $G'$ containing $A'$ is of this form.
            The positive system of $\fa'$-roots associated with $P\times Q$ is given by
           \[
            \Sigma(P\times Q):=\Sigma(P)\times\{0\}\cup\{0\}\times\Sigma(Q),
            \]
            where $\Sigma(P)$ and $\Sigma(Q)$ are positive systems for $\Sigma(\fg,\fa)$ corresponding to the minimal parabolic subgroups $P$ and $Q$.

            In the present setting, our   main result, Theorem \ref{t: conv thm new}, tells us that for $a\in A_{q}'$ we have
            \[
            \prq\circ\fH_{P\times Q}(aH')={\rm conv}(W_{K'\cap H'} \cdot \log a)+\Gamma (P\times Q).
            \]

          In order to     determine  the cone $\Gamma(P\times Q)$, we      need to determine      the set
             $
             \gS(P\times Q , \gs'\Cartan')
             $
             of roots $\gamma\in\Sigma(P\times Q)$ for which $\sigma'\theta'\gamma\in\Sigma(P\times Q)$. Let $\gamma=(\alpha,0)$ be such a root. Then $\alpha\in\Sigma(P)$ and $\sigma'\theta'\gamma=(0,-\alpha)$ must be an element of $\{0\}\times\Sigma(Q)$ so that $\ga  \in \gS(P) \cap \gS(\bar Q).$  Likewise,      if $(0, \gb)$ belongs to this set then
             $\gb \in \gS(Q) \cap \gS(\bar P).$ We thus see that
             \[
            \Sigma(P\times Q,\sigma\theta')=(\Sigma(P)\cap\Sigma(\bar{Q}))
            \times\{0\}\cup\{0\}\times(\Sigma(\bar{P})\cap\Sigma(Q)).\]
            Notice that there are no roots $\gamma\in\Sigma(P\times Q)$ for which $\sigma'\theta'\gamma=\gamma.$
            Thus, $\gS(P \times Q)_- = \gS(P\times Q, \gs'\Cartan')$ and we conclude that
\[
\Gamma(P\times Q)=\Gamma_{\faq'} (\gS(P\times Q, \gs'\Cartan')) =
\sum_{\gamma\in\Sigma(P\times Q,\sigma'\theta')}\R_{\geq 0}\,\prq H'_{\gamma}.
\]
            If $\gamma$ is of the form $(\alpha,0)$ then $H'_{\gamma}=(H_{\alpha},0)$ and
            if $\gamma = (0,\ga),$ then $H'_{\gamma}=(0,H_{\alpha})$.  In view of (\ref{e: pr q in group case}) we now obtain
            \begin{eqnarray*}
\lefteqn{\Gamma(P \times Q) = }\\
                 &= & \sum_{\alpha\in\Sigma(P)\cap\Sigma(\bar{Q})}\R_{\geq 0}( {\half H_{\alpha}},-{\half H_{\alpha}})+
\sum_{\alpha\in\Sigma(\bar{P})\cap\Sigma(Q)}\R_{\geq 0}(-{\half H_{\alpha}},{\half H_{\alpha}})\\
                &=& \sum_{\alpha\in\Sigma(P)\cap\Sigma(\bar{Q})}\R_{\geq 0}
({ H_{\alpha}},- {H_{\alpha}})+
\sum_{\alpha\in\Sigma(\bar{P})\cap\Sigma(Q)}\R_{\geq 0}({H_{-\alpha}},-
{H_{-\alpha}})\\
                &=& \sum_{\alpha\in\Sigma(P)\cap\Sigma(\bar{Q})}\R_{\geq 0}({H_{\alpha}},-{H_{\alpha}})+
\sum_{\alpha\in\Sigma(P)\cap\Sigma(\bar{Q})}\R_{\geq 0}({H_{\alpha}},-{H_{\alpha}})\\
                &=&  \sum_{\alpha\in\Sigma(P)\cap\Sigma(\bar{Q})}\R_{\geq 0}(H_{\alpha},-H_{\alpha}).
 \end{eqnarray*}
                 We will identify $\faq'$ with $\fa$ via the map $(X,-X)\mapsto X$. Thus,
            \[\Gamma_{\faq'}(\Sigma(P\times Q))=\Gamma_{\fa}(\Sigma(P)\cap\Sigma(\bar{Q})).
\]

For $Q=P$, the resulting cone is the zero one, and we retrieve the non-linear convexity theorem of Kostant \cite{kostant1973} for the group $G.$  At the other extreme, for $Q = \bar P$, the resulting cone is maximal, and we retrieve the convexity theorem of 
 \cite{ban1986} for the pair $(G',H').$

            Taking $a = e$ we obtain, with the same identification      $\faq' \simeq \fa$,
            \begin{equation}\label{e: a=e-cone}\prq\circ\fH_{P\times Q}(H')=\Gamma_{\fa}(\Sigma(P)\cap\Sigma(\bar{Q}))
\end{equation}
            On the other hand,
\begin{eqnarray*}
\prq\circ\fH_{P\times Q}(H')  &=&
\prq\circ\fH_{P\times Q}({\rm diag}(G\times G))\\
            &=& \prq(\{(\fH_{P}(g),\fH_{Q}(g))\setmid g\in G\})\\
            &=&  \prq(\{(\fH_{P}(kan_p),\fH_{Q}(kan_p))\setmid k\in K,a\in A, n_p\in N_{P}\})\\
            &=& \prq(\{(\log a,\fH_{Q}(an_pa^{-1})+\log a )\setmid a\in A, n_p\in N_{P}\})\\
            &=& \prq(\{(\log a,\fH_{Q}(n_p)+\log a)\setmid a\in A, n_p\in N_{P}\})\\
            &=& \{(-\half \fH_{Q}(n_p),\half \fH_{Q}(n_p))\setmid n_p\in N_{P}\}.
\end{eqnarray*}

      Using the same identification $\faq'\simeq \fa$ as above, we conclude that
 \begin{eqnarray*}
            \prq\circ\fH_{P\times Q}(H')&=& -\frac{1}{2}\fH_{Q}(N_{P})\\
            &=& -\half \fH_{Q}((N_{P}\cap\bar{N}_{Q})(N_{P}\cap N_{Q}))\\
            &=& -\half \fH_{Q}(N_{P}\cap\bar{N}_{Q}).
            \end{eqnarray*}
            Thus, by equation (\ref{e: a=e-cone}), we obtain that
            \[-\frac{1}{2}\fH_{Q}(N_{P}\cap\bar{N}_{Q})=\Gamma_{\fa}(\Sigma(P)\cap\Sigma(\bar{Q})),\]
            which is equivalent to
            \[
\fH_{Q}(N_{P}\cap\bar{N}_{Q})=\Gamma_{\fa}(\Sigma(\bar{P})\cap\Sigma(Q)).\]
            Thus we retrieve the identity of Lemma \ref{l: Gindikin-Karpelevic}, which, of course,
            was used in the proof of our main theorem.

\bibliographystyle{amsplain}
%\bibliography{convexity}

\begin{thebibliography}{10}

\bibitem{AndFJSchlicht2012}
N.B. Andersen, M.~Flensted-Jensen, and H.~Schlichtkrull, \emph{Cuspidal
  discrete series for semisimple symmetric spaces}, J. Funct. Anal.
  \textbf{263} (2012), no.~8, 2384--2408.

\bibitem{atiyah1982}
M.~F. Atiyah, \emph{Convexity and commuting {H}amiltonians}, Bull. London Math.
  Soc. \textbf{14} (1982), no.~1, 1--15.

\bibitem{balib14}
D. B\u alibanu, \emph{Convexity theorems for symmetric spaces and representations of n-Lie algebras. Two studies in Lie theory.} Thesis Universiteit Utrecht (2014), 1-131.

\bibitem{ban1986}
E.P. van~den Ban, \emph{A convexity theorem for semisimple symmetric spaces},
  Pacific J. Math. \textbf{124} (1986), no.~1, 21--55.

\bibitem{Benoist84b}
Y.~Benoist, \emph{Analyse harmonique sur les espaces sym\'etriques nilpotents},
  J. Funct. Anal. \textbf{59} (1984), no.~2, 211--253.

\bibitem{Benoist84}
\bysame, \emph{Multiplicit\'e un pour les espaces sym\'etriques exponentiels},
  M\'em. Soc. Math. France (N.S.) (1984), no.~15, 1--37, Harmonic analysis on
  Lie groups and symmetric spaces (Kleebach, 1983).

\bibitem{duistermaat1983-conv}
J.~J. Duistermaat, \emph{Convexity and tightness for restrictions of
  {H}amiltonian functions to fixed point sets of an antisymplectic involution},
  Trans. Amer. Math. Soc. \textbf{275} (1983), no.~1, 417--429.

\bibitem{duistermaat1984}
\bysame, \emph{On the similarity between the {I}wasawa projection and the
  diagonal part}, M\'em. Soc. Math. France (N.S.) (1984), no.~15, 129--138,
  Harmonic analysis on Lie groups and symmetric spaces (Kleebach, 1983).

\bibitem{duiskolkvar1983}
J.~J. Duistermaat, J.~A.~C. Kolk, and V.~S. Varadarajan, \emph{Functions, flows
  and oscillatory integrals on flag manifolds and conjugacy classes in real
  semisimple {L}ie groups}, Compositio Math. \textbf{49} (1983), no.~3,
  309--398.

\bibitem{flensted1978}
M.~Flensted-Jensen, \emph{Spherical functions of a real semisimple {L}ie group.
  {A} method of reduction to the complex case}, J. Funct. Anal. \textbf{30}
  (1978), no.~1, 106--146.

\bibitem{fothotto2006}
P.~Foth and M.~Otto, \emph{A symplectic approach to van den {B}an's convexity
  theorem}, Doc. Math. \textbf{11} (2006), 407--424 (electronic).

\bibitem{GindKarp62}
S.~G. Gindikin and F.~I. Karpelevi{\v{c}}, \emph{Plancherel measure for
  symmetric {R}iemannian spaces of non-positive curvature}, Dokl. Akad. Nauk
  SSSR \textbf{145} (1962), 252--255.

\bibitem{guilleminsternberg1982}
V.~Guillemin and S.~Sternberg, \emph{Convexity properties of the moment
  mapping}, Invent. Math. \textbf{67} (1982), no.~3, 491--513.

\bibitem{guilleminsternberg1984}
\bysame, \emph{Convexity properties of the moment mapping. {II}}, Invent. Math.
  \textbf{77} (1984), no.~3, 533--546.

\bibitem{HC1958}
Harish-Chandra, \emph{Spherical functions on a semisimple {L}ie group. {I}},
  Amer. J. Math. \textbf{80} (1958), 241--310.

\bibitem{heckman1980}
G.J. Heckman, \emph{Projections of orbits and asymptotic behavior of
  multiplicities for compact {L}ie groups}, Thesis, Rijksuniversiteit Leiden
  (1980), 1--71.

\bibitem{helgason1978}
S.~Helgason, \emph{Differential geometry, {L}ie groups, and symmetric spaces},
  Pure and Applied Mathematics, vol.~80, Academic Press Inc. [Harcourt Brace
  Jovanovich Publishers], New York, 1978.

\bibitem{helgason1984}
\bysame, \emph{Groups and geometric analysis}, Pure and Applied Mathematics,
  vol. 113, Academic Press Inc., Orlando, FL, 1984, Integral geometry,
  invariant differential operators, and spherical functions.

\bibitem{hilgertneeb1998}
J.~Hilgert and K.-H. Neeb, \emph{Poisson {L}ie groups and non-linear convexity
  theorems}, Math. Nachr. \textbf{191} (1998), 153--187.

\bibitem{Kirwan1984}
F.~Kirwan, \emph{Convexity properties of the moment mapping. {III}}, Invent.
  Math. \textbf{77} (1984), no.~3, 547--552.

\bibitem{knapp2002}
A.~W. Knapp, \emph{Lie groups beyond an introduction}, second ed., Progress in
  Mathematics, vol. 140, Birkh\"auser Boston Inc., Boston, MA, 2002.

\bibitem{kostant1973}
B.~Kostant, \emph{On convexity, the {W}eyl group and the {I}wasawa
  decomposition}, Ann. Sci. \'Ecole Norm. Sup. (4) \textbf{6} (1973), 413--455
  (1974).

\bibitem{chevalley1958}
M.~Lazard, \emph{Groupes semi-simples: structure de \mbox{$B$} et de
  \mbox{$G/B$}}, S\'em. Claude Chevalley \textbf{1} (1956-58), no.~13, 13 pp.

\bibitem{luratiu1991}
J.-H. Lu and T.~Ratiu, \emph{On the nonlinear convexity theorem of {K}ostant},
  J. Amer. Math. Soc. \textbf{4} (1991), no.~2, 349--363.

\bibitem{Most55}
G.~D. Mostow, \emph{Some new decomposition theorems for semi-simple groups},
  Mem. Amer. Math. Soc. \textbf{1955} (1955), no.~14, 31--54.

\bibitem{ross1979}
W.~Rossmann, \emph{The structure of semisimple symmetric spaces}, Canad. J.
  Math. \textbf{31} (1979), no.~1, 157--180.

\end{thebibliography}
%\end{document}
\providecommand{\bysame}{\leavevmode\hbox to3em{\hrulefill}\thinspace}
\providecommand{\MR}{\relax\ifhmode\unskip\space\fi MR }
% \MRhref is called by the amsart/book/proc definition of \MR.
\providecommand{\MRhref}[2]{%
  \href{http://www.ams.org/mathscinet-getitem?mr=#1}{#2}
}
\providecommand{\href}[2]{#2}

%Names -------------------------------------------------------------------
\def\adritem#1{\hbox{\small #1}}
\def\distance{\hbox{\hspace{3.5cm}}}
\def\apetail{@}
\def\addBan{\vbox{
\adritem{E.~P.~van den Ban}
\adritem{Mathematical Institute}
\adritem{Utrecht University}
\adritem{PO Box 80 010}
\adritem{3508 TA Utrecht}
\adritem{The Netherlands}
\adritem{E-mail: E.P.vandenBan{\apetail}uu.nl}
}
}
\def\addBalibanu{\vbox{
\adritem{D. B\u alibanu}
\adritem{Mathematical Institute}
\adritem{Utrecht University}
\adritem{PO Box 80 010}
\adritem{3508 TA Utrecht}
\adritem{The Netherlands}
\adritem{E-mail: danabalibanu{\apetail}gmail.com}
}
}
\mbox{}
\vfill
\hbox{\vbox{\addBalibanu}\vbox{\distance}\vbox{\addBan}}

\end{document}